\documentclass[11pt, a4paper]{article}
\usepackage{graphicx}
\usepackage{color}
\usepackage{amsmath}
\usepackage{amssymb}
\usepackage{amscd}
\usepackage{bbm}
\usepackage{tikz}
\usepackage{tikz-cd}
\usepackage{framed}
\usepackage[normalem]{ulem}
\usetikzlibrary{positioning,arrows.meta,bending,calc}
\usepackage{hyperref}
\hypersetup{
    bookmarks=true,         
    colorlinks=true,       
    linkcolor=red,          
    citecolor=green,        
    filecolor=magenta,      
    urlcolor=cyan           
}
\usepackage[most]{tcolorbox}
\usepackage[utf8]{inputenc}
\usepackage{amsthm}
\usepackage{bbm} 
\usepackage{glossaries}
\usepackage[linesnumbered,lined,boxed,commentsnumbered]{algorithm2e}

\usepackage{marginnote}

\newtheorem{Theorem}{Theorem}
\newtheorem{Assumption}{Assumption}
\newtheorem{Definition}{Definition}
\newtheorem{Lemma}{Lemma}
\newtheorem{Proposition}{Proposition}
\newtheorem{Corollary}{Corollary}
\newtheorem{Remark}{Remark}

\newenvironment{Theorembis}[1]
     {\renewcommand{\theTheorem}{\ref{#1} (informal)}%
     \addtocounter{Theorem}{-1}%
     \begin{Theorem}}
     {\end{Theorem}} 
\newenvironment{Propositionbis}[1]
    {\renewcommand{\theProposition}{\ref{#1} (informal)}%
    \addtocounter{Proposition}{-1}%
    \begin{Proposition}}
    {\end{Proposition}}

\newenvironment{Corollaryrewrite}[1]
    {\renewcommand{\theCorollary}{\ref{#1} (rewrite)}%
    \addtocounter{Corollary}{-1}%
    \begin{Corollary}}
    {\end{Corollary}}

\newenvironment{Theoremrewrite}[1]
     {\renewcommand{\theTheorem}{\ref{#1} (rewrite)}%
     \addtocounter{Theorem}{-1}%
     \begin{Theorem}}
     {\end{Theorem}} 

\newcounter{BigConst}                     

\newcounter{SmallConst}                     

\newcounter{gamma}                     

\newcounter{kappa}                     

\newcounter{delta}                     

\newcounter{theta}                     

\newcounter{L}                     
\newcommand{\nL}{                   
    \refstepcounter{L}             
    \ensuremath{L_{\theL}}
    }
\newcommand{\oL}[1]{\ensuremath{L_{\ref*{#1}}}}  

\newcounter{eps}                     

\newcommand{\inr}[1]{\bigl< #1 \bigr>}

\newcommand{\norm}[1]{\left\|#1\right\|}%
\newcommand{\vertiii}[1]{{\left\vert\kern-0.25ex\left\vert\kern-0.25ex\left\vert #1 
    \right\vert\kern-0.25ex\right\vert\kern-0.25ex\right\vert}}

\newcommand{\vertiiii}[1]{{\left\vert\kern-0.25ex\left\vert\kern-0.25ex\left\vert\kern-0.25ex\left\vert #1 
    \right\vert\kern-0.25ex\right\vert\kern-0.25ex\right\vert\kern-0.25ex\right\vert}}

\newcommand{\Ent}{{\mathrm{Ent}}}

\newcommand{\beginproof}{{\bf Proof. {\hspace{0.2cm}}}}
\def \endproof
{{\mbox{}\nolinebreak\hfill\rule{2mm}{2mm}\par\medbreak}}

\DeclareMathOperator*{\argmin}{argmin}
\DeclareMathOperator*{\supp}{supp}

\DeclareMathOperator{\conv}{conv}

\DeclareMathOperator*{\KL}{KL}

\DeclareMathOperator*{\diam}{diam}

\DeclareMathOperator*{\Range}{Range}

\def\ds1{\textrm{1\kern-0.25emI}} 

\newcommand{\1}{\ensuremath{\mathbbm{1}}}

\newcommand \N{\mathbb{N}}

\newcommand \cD{{\cal D}}

\newcommand \cF{{\cal F}}

\newcommand \cH{{\cal H}}

\newcommand \cK{{\cal K}}
\newcommand \cL{{\cal L}}

\newcommand \cN{{\cal N}}
\newcommand \cO{{\cal O}}
\newcommand \cP{{\cal P}}
\newcommand \cQ{{\cal Q}}

\newcommand \cT{{\cal T}}
\newcommand \cU{{\cal U}}

\newcommand \cX{{\cal X}}

\newcommand \bE{{\mathbb E}}

\newcommand \bN{{\mathbb N}}

\newcommand \bP{{\mathbb P}}

\newcommand \bR{{\mathbb R}}

\newcommand \bT{{\mathbb T}}

\newcommand{\bbeta}{{\boldsymbol{\beta}}}

\newcommand{\vtheta}{{\boldsymbol{\theta}}}
\newcommand{\valpha}{{\boldsymbol{\alpha}}}
\newcommand{\vbeta}{{\boldsymbol{\beta}}}

\newcommand{\ve}{{\boldsymbol{e}}}

\newcommand{\vv}{{\boldsymbol{v}}}

\newcommand{\vzero}{{\boldsymbol{0}}}
\newcommand{\vx}{{\boldsymbol{x}}}

\newcommand{\vu}{{\boldsymbol{u}}}
\newcommand{\vw}{{\boldsymbol{w}}}

\newcommand{\vr}{\boldsymbol{r}}
\newcommand{\vn}{{\boldsymbol{n}}}

\newcommand{\vy}{{\boldsymbol{y}}}

\newcommand{\vW}{{\boldsymbol{W}}}

\DeclareMathOperator*{\Tr}{Tr}

\newcommand{\mysymbol}[3]{%
\newglossaryentry{#1}{%
      name={\ensuremath{#2}},%
      text={\ensuremath{#2}},%
      description={#3},%
      sort={#1}%
    }%
\expandafter\newcommand\expandafter{\csname smb#1\endcsname}{\gls{#1}}%
\expandafter\newcommand\expandafter{\csname #1\endcsname}{\ensuremath{#2}}%
}

\newcommand{\op}{\text{op}}

\usepackage{mathtools}
\mathtoolsset{showonlyrefs=true}

\usepackage{setspace} 
\usepackage[english]{babel}

\usepackage{enumitem}
\usepackage{xcolor}
\definecolor{darkblue}{RGB}{0, 0, 150}
\hypersetup{
    colorlinks=true,      
    allcolors=darkblue      
}

\usepackage{tikz}
\usepackage{amsfonts, amsmath}
\usetikzlibrary{cd, positioning, arrows.meta}
\usepackage{subcaption}
\usetikzlibrary{positioning, arrows.meta}

\newcommand{\fea}{\mathrm{feat}}
\newcommand{\neu}{\mathrm{neur}}

\begin{document}
\title{The Geometry of Statistical Feature Learning\\ in Mean-Field Langevin Dynamics}



\author{%
Zong Shang\thanks{ZS and TW contributed equally to this work.}\textsuperscript{\,\,\,}\thanks{CREST, ENSAE, Institut Polytechnique de Paris, 5, avenue Henry Le Chatelier 91120 Palaiseau, France. Email: \href{mailto:zong.shang@ensae.fr}{zong.shang@ensae.fr}}\and
Tomoya Wakayama\protect\footnotemark[1]\textsuperscript{\,\,\,}\thanks{RIKEN-AIP, Nihonbashi 1-4-1, Chuo-ku, Tokyo 103-0027, Japan. Email: \href{mailto:tomoya.wakayama@riken.jp}{tomoya.wakayama@riken.jp}}\and
Guillaume Lecu{\'e}\thanks{ESSEC Business School, 3 avenue Bernard Hirsch, 95021 Cergy-Pontoise, France. Email: \href{mailto:lecue@essec.edu}{lecue@essec.edu}}\and 
Taiji Suzuki\protect\footnotemark[3]\textsuperscript{\,\,\,}\thanks{Department of Mathematical Informatics, the University of Tokyo, Hongo 7-3-1, Bunkyo-ku, Tokyo 113-8656, Japan. Email: \href{mailto:taiji@mist.i.u-tokyo.ac.jp}{taiji@mist.i.u-tokyo.ac.jp}}
}

\date{\today}
\maketitle

\begin{abstract}
We introduce a geometric formulation of statistical feature learning for supervised regression. Feature learning is defined through a base--fiber decomposition: the base is the feature-side geometry produced by training, and the fiber is the learned feature space where estimation is performed. We prove this property for spherical mean-field Langevin dynamics, viewed as the Wasserstein gradient flow of a negative entropy-regularized empirical risk. In Gaussian multi-index models, the low-temperature stationary distribution concentrates near the hidden indices, forms a multi-spike structure, and yields parameter recovery with high probability, even though negative entropy regularization penalizes concentration. This concentration has a sharp transition at temperature $\lambda\asymp 1$. In Gaussian single-index models, the stationary measure satisfies a concentration property, with parity determining whether it lives on $S_2^{d-1}$ or $\mathbb{RP}^{d-1}$. The induced learned feature space aligns with the regression signal and yields rates $d/N$ and $Md/N$, up to logarithmic factors.
\end{abstract}

{
\hypersetup{linkcolor=black}
\tableofcontents
}

\section{Introduction}

This paper studies one of the central questions in the theory of neural networks: \emph{feature learning}. 
The remarkable performance of neural networks across a wide range of tasks is widely attributed to their ability to learn features adaptively during training~\cite{goodfellow_deep_2016}.
Motivated by this phenomenon, much of the existing literature focuses on whether the hidden-layer features of neural networks undergo substantial changes along the training dynamics, thereby distinguishing the feature-learning regime from the fixed-feature regime (also known as the lazy-training regime) \cite{chizat_lazy_2019,woodworth_kernel_2020,geigerDisentanglingFeatureLazy2020,yang_tensor_2021,atanasov_neural_2022,chen2022on,bachGradientDescentInfinitely2023,caronOverparameterisedShallowNeural2024}. However, a statistical understanding of feature learning, especially from the perspective of statistical prediction and estimation, requires answering two more direct questions: 
\begin{enumerate}
    \item \emph{what features are learned by training the neural network?} and
    \item \emph{why these learned features improve estimation?}
\end{enumerate}

We treat these questions through a geometric formulation: the first is formalized for general supervised-regression algorithms via a base--fiber organization of the learned feature space, while the second is analyzed for mean-field Langevin dynamics (MFLD) through its variational and PDE representations as a Wasserstein gradient flow and a nonlinear Fokker--Planck equation. We now fix the supervised regression framework in which the feature-learning property is stated. 

Given a probability space $(\cX,\bP_X)$, let $f^\star \in L^2(\bP_X)$ be an unknown function, called the regression function. Let $X$ be a random variable with probability distribution $\bP_X$, and let $\xi$ be another random variable in $\bR$, centered and independent of $X$. Define $Y = f^\star(X) + \xi$, called the label/output of $X$. Let $\bP$ be the joint distribution of $(X, Y)$. Let $\ell: (y_1, y_2) \in \bR^2 \mapsto (y_1 - y_2)^2$ be the squared loss function. A supervised regression problem is uniquely defined by the triplet $(\bP_X, f^\star, \xi)$. Let $\cF \subset L^2(\bP_X)$ be a collection of functions, called the statistical model. For convenience, we fix the sample size $N \in \mathbb{N}_+$ and let $\cD:=\{(X_i, Y_i)_{i=1}^N\}$ be a set of $N$ independent copies of $(X, Y)$, referred to as the training sample. We let $\{\hat f^{(t)}_N: (\boldsymbol{x}_i, y_i)_{i=1}^N \in (\cX\times\bR)^N \mapsto \hat f^{(t)}_N((\boldsymbol{x}_i,y_i)_{i=1}^N;\bullet) \in \mathcal{F}\}_{t \in \bT}$ be a learning algorithm, i.e., a collection of $\mathcal{F}$-valued measurable mappings, where $\hat f^{(t)}_N(\bullet) := \hat f^{(t)}_N((\boldsymbol{x}_i,y_i)_{i=1}^N;\bullet)$ is the output of the algorithm at time $t$. Here, $\bT$ equals either $\mathbb{N}$ or $\mathbb{R}_+\cup\{0\}$, corresponding to discrete-time and continuous-time algorithms, respectively. If the learning algorithm terminates at some finite time $T\in\bT$, we define $\hat f^{(t)}_N = \hat f^{(T)}_N$ for all $t > T$. We say the algorithm is convergent if for $\bP^{\otimes N}$-almost all  $\{(\vx_i,y_i)_{i=1}^N\}\subset (\cX\times\bR)^N$, $\lim_{t\to\infty}\hat f_N^{(t)}$ exists, where the limit is in the $L^2(\bP_X)$ sense. We denote this limit by $\hat f_N$, which is referred to as an estimator. In particular, at $t=0$, we use $f^{(0)}$ instead of $\hat f^{(0)}$. In the context of statistical prediction, we use the trained estimator $\hat f_N$ to predict the output $Y$ for a test input $X$. Its prediction risk (error) is defined as $P\ell_{\hat f_N} = \bE[(Y - \hat f_N(X))^2 \mid (X_i, Y_i)_{i=1}^N]$. Its $L^2(\bP_X)$-estimation error coincides with its population excess risk $P\ell_{\hat f_N} - P\ell_{f^\star}$, given by $\|\hat f_N - f^\star\|_{L^2(\bP_X)}^2$.

\subsection{Mean-field Shallow Neural Networks}

In this paper, the statistical model $\cF$ is the class of mean-field shallow neural networks.
Let $\sigma:\bR\to\bR$ be a continuous function, called an activation function. Let $A,W\in\bR_+\cup\{\infty\}$ be parameters. A \textbf{mean-field shallow neural network} (MFSNN) refers to functions of the form $f_\nu(\cdot)=\int_\Theta a\sigma(\langle\cdot,\boldsymbol{w}\rangle)\,\mathrm{d}\nu(a,\boldsymbol{w})$, where $\nu$ is a probability measure on $\Theta = [-A, A]\times WB_2^d$ where $WB_2^d = \{\vv\in\bR^d:\|\vv\|_2\leq W\}$ and $\|\cdot\|_2$ is the Euclidean norm on $\bR^d$. That is, $\cF = \{f_\nu(\cdot):\nu\in\cP(\Theta)\}$ is our statistical model, with $\cP(\Theta)$, the set of all probability measures on $\Theta$, as the parameter space. The MFSNN can be regarded as a neural network model parameterized by probability measures. In our definition, we allow $\nu$ to be a discrete probability measure, so an MFSNN does not necessarily correspond to an infinite-width neural network.

Note that since the hidden layer (the $\vw$ weights) and the output layer (the $a$ weights) are decoupled in the mapping $(a,\boldsymbol{w})\mapsto a\sigma(\langle\boldsymbol{w},\cdot\rangle)$, the MFSNN admits a natural fiber bundle-type representation. More precisely, the function class $\cF$ can be viewed through a base--fiber structure whose base is $\cP(WB_2^d)$.  For each $\varphi\in\cP(WB_2^d)$, the ambient fiber is the Hilbert space $L^2(\varphi)$.\footnote{In geometry, when one speaks of a ``base-fiber'' structure, it is usually assumed that the base forms a manifold, and so $\varphi$ is assumed to be in $\cP_{\mathrm{ac}}(WB_2^d)$ -- the set of absolutely continuous probability measures. In that case, $\cP_{\mathrm{ac}}(WB_2^d)$ admits a manifold structure in the sense of Otto calculus, \cite[Chapter 8]{villani_topics_2021}. However, for our purposes, there is no need to assume $\varphi$ to be absolutely continuous.} From this perspective, any $f_\nu\in\cF$ with $\nu\in\cP(\Theta)$ can be represented by a pair $(a,\varphi)$, where $\varphi$ is the $\boldsymbol{w}$-marginal of $\nu$, and $a(\boldsymbol{w})=\bE[\alpha\mid\boldsymbol{W}=\boldsymbol{w}]$ for $(\alpha,\boldsymbol{W})\sim\nu$: for all $\vx\in\bR^d$
\begin{equation*}
    f_\nu(\vx) = \bE\left[\alpha \sigma(\inr{\vW, \vx})\right] =  \bE\left[a(\boldsymbol{W})\sigma(\inr{\vW, \vx})\right] = \int_{W B_2^d} a(\vw)\sigma(\inr{\vw, \vx})d\varphi(\vw).
\end{equation*}
 When $A<\infty$, the neural-network model uses only the feasible part of the fiber $L^2(\varphi)$, determined by the output-weight constraint. This base--fiber viewpoint provides the global geometric organization used in the paper; the subsequent analysis refines it through multi-spike concentration on the base and low-dimensional alignment in the learned feature space.\footnote{The passage from $\nu$ to $(a,\varphi)$ is not injective. The network output depends on the conditional distribution of $\alpha$ given $\boldsymbol{W}$ only through its first moment $a(\boldsymbol{w})=\bE[\alpha\mid\boldsymbol{W}=\boldsymbol{w}]$. Thus different conditional laws of $\alpha$ with the same conditional mean give the same pair $(a,\varphi)$, and hence the same function $f_\nu$.}

\subsection{Mean-field Langevin Dynamics}

We introduce the training algorithm studied in this paper: MFLD, and then identify its long-time limit with the negative-entropy regularized empirical risk minimizer.

For a measurable function $h$ of $(X,Y)$, we write $Ph:=\mathbb E[h(X,Y)]$ and $P_Nh:=N^{-1}\sum_{i=1}^N h(X_i,Y_i)$. For $f\in\mathcal F$, define $\ell_f(x,y):=(y-f(x))^2$. When $f=f_\nu$, we write $\ell_\nu:=\ell_{f_\nu}$. Thus $P_N\ell_\nu=N^{-1}\sum_{i=1}^N(Y_i-f_\nu(X_i))^2$. For $\lambda\geq0$, define
\begin{align}\label{eq:def_empirical_free_energy}
P_N\ell_\nu^\lambda:=P_N\ell_\nu+\lambda\Ent^-(\nu).
\end{align}
Here, the {\bf{negative Shannon entropy}} is $\Ent^- : \nu \in \cP(\Theta) \mapsto \int_\Theta \frac{\mathrm{d}\nu}{\mathrm{d}\mathrm{Leb}}(\boldsymbol{\theta}) \log\left(\frac{\mathrm{d}\nu}{\mathrm{d}\mathrm{Leb}}(\boldsymbol{\theta})\right) \,\mathrm{d}\mathrm{Leb}(\boldsymbol{\theta})$, and $\frac{\mathrm{d}\nu}{\mathrm{d}\mathrm{Leb}}$ is a probability density function (pdf) of $\nu$ with respect to the Lebesgue measure $\mathrm{Leb}$. We set $\Ent^-(\nu)=+\infty$ if $\nu$ is not absolutely continuous with respect to the dominating Lebesgue measure. In the language of statistical physics, $\lambda$ is commonly referred to as the temperature of the system.

\paragraph{Mean-field Langevin dynamics.}
We now introduce the training algorithm considered in this paper. The MFLD is the Wasserstein gradient flow of $\nu\mapsto P_N\ell_\nu^\lambda$ on $\cP(\Theta)$ (more precisely, $\cP_{\mathrm{ac}}(\Theta)$ when $\lambda>0$). Let $\nu_0$ be the uniform distribution over $\Theta$. Let $\mathrm{int}(\Theta)$ be the interior set of $\Theta$. Writing $\mathrm{d}\nu_t=\rho_t \,\mathrm{d}\mathrm{Leb}$ and $V_t(\cdot):=(\delta P_N\ell_{\nu_t}/\delta\nu)(\cdot)$, where $(\delta P_N\ell_{\nu_t}/\delta\nu):(a,\vw)\mapsto -\frac{2a}{N}\sum_{i=1}^N(Y_i-f_{\nu_t}(X_i))\sigma(\langle\vw,X_i\rangle)$ is the first-order functional derivative of $P_N\ell_\bullet$ evaluated at $\nu_t$ \cite[Section 5.4.1]{carmona_probabilistic_2018}, this gradient flow formally satisfies the nonlinear Fokker--Planck equation:
\begin{align}\label{eq:nonlinear_Fokker_Planck}
\forall t\geq0,\quad 
\left\{\begin{array}{cc}
 &\partial_t\rho_t  = \nabla_{\boldsymbol{\theta}}\cdot\big(\rho_t\nabla_{\boldsymbol{\theta}} V_t+\lambda\nabla_{\boldsymbol{\theta}}\rho_t\big)\quad\mbox{ on } \mathrm{int}(\Theta), \mbox{ and }  \\
  &\big(\rho_t\nabla_{\boldsymbol{\theta}} V_t+\lambda\nabla_{\boldsymbol{\theta}}\rho_t\big)\cdot\boldsymbol{n}=0 \quad\text{on }\partial\Theta, 
\end{array}
\right.
\end{align}
for any $\vn\in N_\Theta(\vtheta)$, the normal cone of $\Theta$ evaluated at $\vtheta\in\partial\Theta$, \cite[Definition 5.2.3]{hiriart-urrutyConvexAnalysisMinimization1993}; for the compact product domain considered in this paper, the boundary condition is understood in the usual no-flux, or reflecting, sense. We say that $(\nu_t)_{t\geq 0}$ exhibits a mean-field effect, since in the corresponding nonlinear Fokker--Planck equation, the interaction potential function $V_t(\vtheta)=-\frac{2a}{N}\sum_{i=1}^N\big(Y_i-f_{\nu_t}(X_i)\big)\sigma(\langle \vw,X_i\rangle)$ implies that each particle $\vtheta=(a,\vw)\in\Theta$ interacts with the entire system, i.e., all other particles, through the integral term $f_{\nu_t}(X_i) = \int_\Theta a\sigma(\langle\vw,X_i\rangle)\mathrm{d}\nu_t(a,\vw)$. The above PDE corresponds to the reflected nonlinear McKean--Vlasov equation
\begin{align*}
\mathrm{d}\boldsymbol{\theta}_t = -\nabla_{\boldsymbol{\theta}} V_t(\boldsymbol{\theta}_t)\,\mathrm{d}t + \sqrt{2\lambda}\,\mathrm{d}\boldsymbol{B}_t + \,\mathrm{d}\boldsymbol{K}_t, \qquad \nu_t=\mathrm{Law}(\boldsymbol{\theta}_t),
\end{align*}
where $\boldsymbol{K}_t$ is the reflection process that keeps $\boldsymbol{\theta}_t$ inside $\Theta$, \cite{tanakaStochasticDifferentialEquations1979,lionsStochasticDifferentialEquations1984}. A projected Euler discretization gives the noisy gradient descent algorithm with step size $\eta$:
\begin{align*}
\boldsymbol{\theta}_{k+1} = \mathrm{Proj}_\Theta\Big( \boldsymbol{\theta}_k-\eta\nabla_{\boldsymbol{\theta}} V_k(\boldsymbol{\theta}_k)+\sqrt{2\lambda\eta}\,\boldsymbol{G}_k \Big), \qquad \boldsymbol{G}_k\sim\mathcal N(\boldsymbol{0},I_{d+1}),
\end{align*}
where $\mathrm{Proj}_\Theta$ denotes the Euclidean projection onto $\Theta$.

This is a full-batch mean-field training algorithm: the drift $\nabla_{\boldsymbol{\theta}} V_t$ is computed from the full empirical risk $P_N\ell_\nu$, and hence uses all $N$ samples at each time. On unbounded parameter spaces, quantitative convergence results for MFLD have already been established, for instance in \cite{hu_mean-field_2020,nitanda_convex_2022,chizat_mean-field_2022}. Since the parameter space is compact here, we prove, for completeness, the convergence of $(\nu_t)_{t\geq 0}$ to the variational limit defined below, together with its convergence rate (see Proposition~\ref{prop:convergence_speed} in the Appendix).

\paragraph{Long-time limit and negative-entropy regularized empirical risk minimization (RERM).}
The long-time limit of MFLD is characterized by the negative-entropy RERM problem. More precisely, the convergence result proved in Proposition~\ref{prop:convergence_speed} in the appendix shows that
\begin{align}\label{eq:def_hat_nu}
\nu_t\overset{t\to\infty}{\longrightarrow}\hat{\nu}_\lambda \in \arg\min\big(P_N\ell_\nu^\lambda : \nu \in \cP(\Theta)\big).
\end{align}
The estimator studied in this paper is $f_{\hat\nu_\lambda}$. When the minimizer is not unique \footnote{The Negative Shannon entropy is strictly convex. If $\lambda>0$ and \eqref{eq:def_hat_nu} achieves a minimum then it is unique. This may however not be the case when $\lambda=0$ that is for ``mean-field neural network''}, $\hat\nu_\lambda$ is understood as the limit selected by the above MFLD initialized from the prescribed initial distribution; equivalently, all random objects derived from $\hat\nu_\lambda$ (e.g., $\hat\varphi_\lambda$ and $\hat g_N$ defined later) are understood with respect to this selected limiting measure. In particular, its pdf admits the following self-consistent Gibbs measure representation, which makes the analysis of its properties challenging:
\begin{align*}
    \hat\nu_\lambda = \frac{\exp\left(-\frac{1}{\lambda}\frac{\delta P_N\ell_{\hat\nu_\lambda}}{\delta\nu}\right)\mathrm{d}\vtheta}{\int \exp\left(-\frac{1}{\lambda}\frac{\delta P_N\ell_{\hat\nu_\lambda}}{\delta\nu}\right)\mathrm{d}\vtheta},\mbox{ where }\frac{\delta P_N\ell_{\hat\nu_\lambda}}{\delta\nu}:(a,\vw) \mapsto -\frac{2a}{N}\sum_{i=1}^N(Y_i - f_{\hat\nu_\lambda}(X_i))\sigma(\langle\vw,X_i\rangle).
\end{align*}

\begin{Remark}
The negative Shannon entropy is not invariant with respect to the dominating measure. It is standard to choose the Lebesgue measure as a dominating measure when $\Theta$ can be endowed with such a measure. Our analysis also applies when $\Theta$ is finite. In that case, the dominating measure is usually the counting measure and for $\nu = \sum_{\boldsymbol{\theta}\in\Theta} a_{\boldsymbol{\theta}} \delta_{\boldsymbol{\theta}}$ (where $a_{\boldsymbol{\theta}}\geq0$ and $\sum_{\boldsymbol{\theta}} a_{\boldsymbol{\theta}}=1$), $\Ent^-(\nu) = \sum_{\boldsymbol{\theta} \in\Theta} a_{\boldsymbol{\theta}} \log(a_{\boldsymbol{\theta}})$, with the convention $0 \log(0) = 0$.
\end{Remark}

\begin{Remark}\label{remark:scaling}
Although the negative Shannon entropy is not invariant with respect to the dominating measure, under a scaling of the dominating measure the minimizer $\hat\nu_\lambda$ and the solution of the Wasserstein gradient flow remain invariant. In fact, let $\alpha>0$ be an arbitrary positive real number. If we choose the dominating measure to be $\alpha \,\mathrm{d}\mathrm{Leb}$, then the negative entropy satisfies $\Ent^-_\alpha(\nu)=\int_\Theta \frac{\mathrm{d}\nu}{\alpha \,\mathrm{d}\mathrm{Leb}}\log\Big(\frac{\mathrm{d}\nu}{\alpha \,\mathrm{d}\mathrm{Leb}}\Big)\alpha \,\mathrm{d}\mathrm{Leb} = \Ent^-(\nu)-\log(\alpha)$. Hence, $\argmin(P_N\ell_\nu+\lambda\Ent^-_\alpha(\nu):\nu\in\cP(\Theta)) = \argmin(P_N\ell_\nu+\lambda\Ent^-(\nu):\nu\in\cP(\Theta)).$ Such a scaling still does not alter the Wasserstein gradient flow, since the first variation is likewise invariant under the addition or subtraction of constants. In what follows, we may freely apply any scaling to the negative Shannon entropy without changing $\hat\nu_\lambda$.
\end{Remark}

\begin{Remark}\label{remark:aggregation}
The estimator $\hat\nu_\lambda$ can also be viewed as a continuous analogue of entropic convex aggregation, where for each $(a,\vw)\in\Theta$, $\vx\mapsto a\sigma(\langle\vw,\vx\rangle)$ plays the role of an expert; see, for instance \cite{koltchinskii_sparse_2009}.
\end{Remark}

\subsection{Our contributions}
This paper aims to identify the geometric--statistical mechanism underlying the feature learning phenomenon, in particular by relating it to mean-field Langevin dynamics and to the long-time behavior of the associated nonlinear Fokker--Planck equation. Accordingly, our main contributions are the following three points.
\begin{enumerate}
\item \textit{A geometric definition of the feature-learning property.} We introduce a mathematical definition of the feature-learning property for learning algorithms used in supervised regression in Definition~\ref{def:feature_learning_general}. This definition relies on a base--fiber structure: training selects a feature-side base point, while estimation is performed in the fiber over this base point, namely the learned feature space induced by it. It addresses two fundamental questions in the theory of neural networks: (i) \textit{what features are learned by the algorithm?} and (ii) \textit{how these learned features improve estimation?}.

\item \textit{Multi-spike concentration of the stationary distribution in the low-temperature regime.} Our analysis reveals that the stationary probability distribution of the nonlinear Fokker--Planck equation \eqref{eq:def_hat_varphi_sphere} associated with the Gaussian multi-index problem, as a \emph{random probability measure}, develops a multi-spike structure in the low-temperature regime: its local barycenters around each hidden index concentrates near the corresponding hidden index with high probability, thereby yielding parameter recovery. We further prove that this concentration phenomenon undergoes a sharp phase transition at the temperature scale $\lambda\asymp 1$. In the base--fiber viewpoint, this concentration describes the structure learned by the hidden-layer marginal on the base. The phenomenon is opposite to the direction of the regularization: negative entropy penalizes sparse or highly concentrated hidden-layer distributions, yet the low-temperature stationary distribution still develops multi-spike concentration. We present this result in Proposition~\ref{prop:stationary_distribution}. Our analysis further shows that, for the Gaussian single-index problem, this random probability measure satisfies a concentration property in the low-temperature regime with high probability. Moreover, depending on the parity of the information index of the link function, this concentration phenomenon takes place on either $S_2^{d-1}$ or $\bR\bP^{d-1}=S_2^{d-1}/\{\pm1\}$. We present this result in Proposition~\ref{prop:concentration_Levy_Milman}.

\item \textit{Rate improvement via alignment in the learned feature space.} For Gaussian single-index and multi-index problems with well-specified link functions, the multi-spike concentration on the base induces low-dimensional alignment in the learned feature space. Using this alignment, we establish that spherical MFLD achieves minimax optimal prediction rates, up to logarithmic factors (Theorems~\ref{thm:fixed-output-general-ie} and~\ref{thm:multi-index-fixed-output}). This is precisely the advantage of feature learning: practitioners do not need to know the latent directions in advance. Instead, MFLD automatically learns the hidden-layer marginal from the data, shaping a feature space that is intrinsically suited for estimation through its own training dynamics. In doing so, it simultaneously constructs a low-dimensional representation of the signal and an estimator capable of leveraging it. Unlike traditional high-dimensional statistical frameworks that assume an a priori structure (such as sparsity), here the low-dimensional structure emerges dynamically within the data-dependent feature space—yielding a genuinely a posteriori low-dimensional representation. Our results also establish the convergence of the moments tensor associated with this random probability measure respectively in Theorems~\ref{thm:fixed-output-general-ie} and~\ref{thm:multi-index-fixed-output}. In Section~\ref{sec:comparision_with_other_rates}, we compare this feature-learning approach to the estimation error with two other commonly used methods.
\end{enumerate}
Our analysis also yields the following two byproducts.
\begin{enumerate}
\setcounter{enumi}{3}
\item In Proposition~\ref{prop:hat_g_N}, we uncover a self-regularization (implicit bias) property of MFLD. Specifically, the latent estimator of MFLD can be cast as a regularized empirical risk minimizer (RERM) within the learned feature space, driven by a random regularization functional. This result provides a theoretical justification for two-stage or two-timescale training strategies commonly employed in neural network analysis.

\item Another consequence is that, under a support recovery condition, LASSO also exhibits the feature-learning property. In the appendix, we present this result and point out its connection with the support recovery property. This shows that feature learning, in our sense, is not restricted to neural networks, and thus introduces a new perspective for analyzing traditional or new statistical methods, namely whether they possess the feature-learning property.
\end{enumerate}

\subsection{Organization of the paper}

Section~\ref{sec:feature_learning_MFLD} presents the base--fiber formulation of the feature-learning property. Section~\ref{sec:feature_learning_Gaussian_single_multi_index} verifies this property for spherical MFLD in Gaussian index models and proves both the low-temperature concentration of its stationary distribution and the minimax optimal convergence rates. Finally, Section~\ref{sec:future} discusses our results and outlines directions for future research. The proofs of the results in this paper are provided in the Appendix.

\paragraph{Notation.}
Let $\|\cdot\|_2$ be the Euclidean norm. We write $B_2^d:=\{\boldsymbol w\in\mathbb R^d:\|\boldsymbol w\|_2\le 1\}$, $WB_2^d:=\{\boldsymbol w\in\mathbb R^d:\|\boldsymbol w\|_2\le W\}$, and $S_2^{d-1}:=\{\boldsymbol w\in\mathbb R^d:\|\boldsymbol w\|_2=1\}$. Let $\bR\bP^{d-1}=S_2^{d-1}/\{\pm 1\}$ be the real projection space. For a probability measure $\mu$, $\|\cdot\|_{L^2(\mu)}$ denotes the usual $L^2(\mu)$-norm. For a bounded function $g$, $\|g\|_\infty$ denotes its supremum norm. We write $\mathcal P_{\mathrm{ac}}(\Theta)$ for the set of probability measures on $\Theta$ that are absolutely continuous with respect to the chosen dominating measure. For elements $u,v$ of a Hilbert space $\cH$, $u\otimes v$ denotes the rank-one operator $h\mapsto \langle h,v\rangle_{\cH}u$. For $\boldsymbol u\in\mathbb R^d$, $\boldsymbol u^{\otimes m}$ denotes its $m$-fold tensor product. Denote by $\operatorname{Sym}^m(\mathbb R^d)$ the space of order-$m$ symmetric tensors, equipped with the Frobenius inner product characterized by $\langle \boldsymbol u^{\otimes m},\boldsymbol v^{\otimes m}\rangle_F=\langle \boldsymbol u,\boldsymbol v\rangle^m$. The corresponding Frobenius norm is denoted by $\|\cdot\|_F$. Let $\tau$ denote the uniform probability measure on $S_2^{d-1}$. For $\varphi\in\mathcal P(S_2^{d-1})$, define the negative Shannon entropy relative to $\tau$ by $\Ent_\tau^-(\varphi):=\int_{S_2^{d-1}}\log(d\varphi/d\tau)\,d\varphi$ if $\varphi\ll\tau$, and $\Ent_\tau^-(\varphi):=+\infty$ otherwise. For $k\in\mathbb N$, $C_b^k(\mathbb R)$ denotes the space of $k$-times continuously differentiable functions whose derivatives up to order $k$ are bounded. We use standard asymptotic notation as follows. Unless otherwise specified, all asymptotic notation used throughout this paper is understood in the regime where $d$ is fixed and $N\to\infty$. We use $O$ or $\cO$ to denote the big-O notation. For random variables $Z_N$ and positive deterministic sequences $a_N$, $Z_N=o_{\mathbb P}(a_N)$ means $Z_N/a_N\to0$ in probability, and $Z_N=O_{\mathbb P}(a_N)$ means that $Z_N/a_N$ is bounded in probability. For nonnegative $Z_N$, we write $Z_N=\Omega_{\mathbb P}(a_N)$ if there exists a constant $c>0$ such that $\mathbb P(Z_N\ge c a_N)\to1$. For deterministic nonnegative quantities, $A\lesssim B$ means $A\le CB$ for a numerical constant $C$ independent of the relevant problem parameters, and $A\asymp B$ means $A\lesssim B$ and $B\lesssim A$. We set $\psi:[0,\infty)\to\mathbb R$ by $\psi(0)=0$ and $\psi(t)=t(1+\log(e/t))$ for $t>0$.

\section{The Base--Fiber Geometry of Feature Learning}\label{sec:feature_learning_MFLD}

In this section, we formalize the feature-learning property from the base--fiber geometric viewpoint, which is the central conceptual contribution of this paper, and explain how, when applied to MFLD, this property is related to the evolution of the nonlinear Fokker--Planck equation in \eqref{eq:nonlinear_Fokker_Planck}.
Since feature learning is a key ability of neural networks, a substantial body of work has studied it, mostly focusing on training dynamics, examining whether the hidden-layer parameters, representations, or tangent kernels undergo nontrivial changes during training~\cite{chizat_lazy_2019,woodworth_kernel_2020,geigerDisentanglingFeatureLazy2020,yang_tensor_2021,atanasov_neural_2022,chen2022on,bachGradientDescentInfinitely2023,caronOverparameterisedShallowNeural2024}. However, a precise mathematical definition that identifies the geometry through which learned features improve estimation has not yet been systematically formulated. We take a geometric--statistical viewpoint, rather than only an optimization viewpoint, and propose a definition of the feature-learning property for learning algorithms in supervised regression. This definition should be viewed as a first attempt to formalize the phenomenon, with the aim of characterizing whether the features and the low-dimensional representation of the target produced by training can reduce the estimation error in a quantifiable way through the geometry of the learned feature space. MFLD serves as the main dynamical object for which this property is verified in the paper.

Before giving the definition, we provide an intuitive description. To this end, it is useful to distinguish \emph{feature engineering} from feature learning. 
In this paper, we use the term feature engineering broadly to refer to the problem-informed pre-specification of a representation class or estimation procedure, such as a family of features, a dictionary, a feature map, a kernel function, or a tailored estimator.
In statistical learning, this viewpoint often appears in the form of basis expansions, dictionaries or (static) feature maps: the original covariates are replaced, or augmented, by prescribed transformations, and estimation is then performed in the induced feature space; see, for instance \cite[Chapter~5]{hastie_elements_2009}. 
Typical examples include choosing Fourier or spline bases, specifying kernels and hyper-parameters for kernel regression, or selecting other function systems motivated by prior structural knowledge of the problem \cite{wahbaSplineModelsObservational1990,tsybakov_introduction_2009,steinwart_support_2008}. Another instructive example is the single-index model. If one knows in advance that the target function has the form $f^\star(\boldsymbol{x})=h(\langle \boldsymbol{w}^\star,\boldsymbol{x}\rangle)$ for some $\boldsymbol{w}^\star\in\bR^d$ and $h:\bR\to\bR$, then one may design an estimator or an algorithm around this low-dimensional structure: estimate, or otherwise exploit, the index direction $\boldsymbol{w}^\star$, and then perform nonparametric estimation on the one-dimensional function $h$. In this sense, single-index methods may be viewed as a form of feature engineering based on prior low-dimensional structure; their advantage is that they reduce a high-dimensional nonparametric estimation problem to a lower-dimensional one, thereby improving the convergence rate \cite{hardleOptimalSmoothingSingleIndex1993,gaiffasOptimalRatesAdaptation2007,brunaSurveyAlgorithmsMultiindex2025}. 

However, feature engineering has an intrinsic limitation. It may prescribe a feature space, or, in the base--fiber language, a fiber, but this does not ensure that the regression function is positioned in that fiber in a way that the estimator can effectively exploit. Thus, the effectiveness of pre-specified features depends not only on their own structure, but also on the position of the regression function relative to the exploitable directions. Even when the chosen features are natural and well structured, the resulting estimator can be effective only if the regression function is sufficiently aligned with those directions~\cite{shangFeatureSpaceDecomposition2026}. This motivates the following notion, which formalizes this in-fiber alignment.

\subsection{The Alignment Property in a Fixed Fiber}\label{sec:alignment_property}

In this subsection, the base point is fixed: we work in a prescribed Hilbert (feature) space, which should be viewed as a fixed fiber. This is the fiber-level part of the geometry developed in the paper. The relevant geometry is not the evolution of a measure on the base, but the relative position, inside this fiber, of the representative (oracle) of the regression function with respect to the principal directions of the covariance operator, alongside the head estimator's capacity to leverage this alignment for enhanced prediction. The notion of the alignment property was introduced in \cite{lecueSharpConvergenceRates2025} to describe how an estimator implicitly uses the principal directions of a reproducing kernel Hilbert space (RKHS) to make predictions. Before defining this notion, we recall standard  notation related to RKHS, referring the reader to \cite{scholkopf_learning_2001,shawe-taylor_kernel_2004,steinwart_support_2008,saitoh_theory_2016}. 

For an RKHS $(\cH,\langle\cdot,\cdot\rangle_\cH)$ on $\cX$, we view $\cH$ as the fixed fiber, denote by $\phi:\cX\to\cH$ its canonical feature map, and denote by $\Sigma=\bE[\phi(X)\otimes_\cH\phi(X)]$ the corresponding integral operator, that is, $\Sigma f=\bE[f(X)\phi(X)]$ for $f\in\cH$. We assume that $\Sigma$ admits a spectral decomposition, and denote its eigenvalue-eigenvector pairs by $(\sigma_j,\boldsymbol{e}_j)_{j=1}^\infty$, with $\sigma_1\geq\sigma_2\geq\cdots$. We identify $\cH'$, the dual space of $\cH$, with $\cH$ itself through the Riesz representation theorem, that is, we identify the bounded linear functional $g:h\in\cH\mapsto\langle g,h\rangle_\cH\in\bR$ with its uniquely corresponding element $g\in\cH$. Therefore, in the sequel, we shall not distinguish between $\langle g,\phi(\vx)\rangle_\cH$ and $g(\phi(\vx)) = (g\circ\phi)(\vx)$.


\begin{Definition}[Alignment property]\label{def:alignment_property}
Let $(\mathbb P_X,f^\star,\xi)$ be a supervised regression problem. Let $\mathcal H$ be a separable RKHS with canonical feature map $\phi:\mathcal X\to\mathcal H$, and suppose that the covariance operator $\Sigma=\mathbb E[\phi(X)\otimes_{\mathcal H}\phi(X)]$ admits an eigendecomposition $(\sigma_j,\ve_j)_{j\ge1}$. Let $\hat g_N,g_{\mathcal H}\in\mathcal H$ and $(k_N)_N$ be a non-decreasing sequence of integers. Given non-negative weights $\{\gamma_j\}_{j>k}$, $0<\delta<1$, a tolerance $\varepsilon_N(k,\delta)\ge 0$ such that $\varepsilon_N(k_N,\delta) = o_\bP(1)$ and $\varepsilon_N(k_N,\delta)$ is independent of $(g_\cH, \ve_j)_{j>k_N}$, and a sequence of non-decreasing deterministic functions $\omega_N:\mathbb R_+\to\mathbb R_+$ satisfying $\omega_N(0)=0$ and $\lim_{\eta\downarrow0}\limsup_{N\to\infty}\omega_N(\eta)=0,$ we say that $\hat g_N$ satisfies the $(g_{\mathcal H},k_N,\delta;\varepsilon_N,\omega_N)$-alignment property with respect to $\{\gamma_j\}_{j>k}$ if, with probability at least $1-\delta$, $\left\| \hat g_N\circ\phi-g_{\mathcal H}\circ\phi \right\|_{L^2(\mathbb P_X)}^2 \le \varepsilon_N(k_N,\delta) + \omega_N\big( \sum_{j>k_N}\gamma_j \langle g_{\mathcal H},\ve_j\rangle_{\mathcal H}^2 \big). $
\end{Definition}

An estimator $\hat g_N$ satisfying the alignment property has the following characteristic: when most of the energy of the target function $g_\cH$ is carried by the first $k_N$ eigenfunctions of $\cH$, or equivalently when the weighted tail energy $(\boldsymbol{e}_j)_{j>k}$ is small, the estimator can exploit this structure and achieves a faster estimation error $\left\|\hat g_N\circ\phi - g_\cH\circ\phi\right\|_{L^2(\mathbb P_X)}$; in other words, $\hat g_N$'s convergence rate gets faster as $g_\cH$ aligns more with the principal directions in $\cH$. Several estimators are known to satisfy this property through the Feature Space Decomposition method, including ridge regression, gradient flow, gradient descent, and principal components regression; see \cite[Chapter~1]{shangFeatureSpaceDecomposition2026}. A natural choice for $g_\cH$ is the oracle: i.e. the closest element in $\cH$ to $f^\star$ w.r.t. the $L^2(\bP_X)$-metric. It may however, not the only possible choice (see Corollary~\ref{coro:feature_learning_multi}).

However, an estimator $\hat g_N$ satisfying the alignment property can only leverage on a \textit{pre-existing} favorable alignment. In particular, when the alignment of the signal with the top $k_N$ eigenvectors in $\cH$ is poor, it is unable to modify this alignment structure and therefore may end up with bad statistical properties. This is precisely one of the limitations of feature engineering: manually selected features do not necessarily provide a favorable alignment; see \cite[Definition 6]{lecueSharpConvergenceRates2025}. The capacity to align the signal with a feature space tailored to the target prediction task is precisely a defining hallmark of feature learning, which we formally introduce below.

\subsection{A geometric and statistically oriented formulation of Feature Learning}

We now state the feature-learning property for a general learning algorithm in supervised regression. The definition is kept independent of the particular structure of MFLD; its MFLD realization will be identified in the next subsection.

\begin{Definition}[Feature-learning property]
\label{def:feature_learning_general}
Consider a supervised regression problem $(\bP_X,f^\star,\xi)$ and a convergent algorithm with final estimator $\hat f_N$. Let $K_0:\cX\times\cX\to\bR$ be the initial kernel and let $\{ (\vx_i,y_i)_{i=1}^N\in(\cX\times\bR)^N\mapsto K_\fea^{(N)}:\cX\times\cX\to\bR \}_{N}$ be an algorithm-induced kernel-generating rule. Let $\cH_\fea^{(N)}$ be the RKHS generated by $K_\fea^{(N)}$, and $\Sigma_\fea^{(N)} = \bE[K_\fea^{(N)}(X,\cdot)\otimes_{\cH_\fea^{(N)}} K_\fea^{(N)}(X,\cdot)|(X_i,Y_i)_{i=1}^N]$. We denote by $(\sigma_j^{(N)},\ve_j^{(N)})_{j\geq 1}$ the eigenvalue-eigenfunction pairs of $\Sigma_\fea^{(N)}$ with $\sigma_1^{(N)}\geq \sigma_2^{(N)}\geq\cdots$. We say that the algorithm or $\hat f_N$ has the \textbf{feature learning property} via the feature spaces $(\cH_\fea^{(N)})_{N}$ when solving the supervised learning problem $(\bP_X,f^\star,\xi)$, if there exist an algorithm-induced latent representative $\hat g_N\in\cH_\fea^{(N)}$ of $\hat f_N$, a target representative $g_\fea^{(N)}\in\cH_\fea^{(N)}$ of $f^\star$, integers $(k_N)_N$ with $k_N=o(N)$, prescribed non-negative real numbers $\{\gamma_j^{(N)}\}_{j>k_N}$ such that the following conditions hold when $N\to\infty$
\begin{enumerate}
    \item \label{item:feature_evolution} $\| K_\fea^{(N)} - K_0 \|_{L^2(\bP_X\otimes\bP_X)}=\Omega_\bP(1)$; 
    \item \label{item:latent} $\hat f_N(\vx) = \hat g_N(\phi_\fea^{(N)}(\vx))$ for any $\vx\in\cX$, where $\phi_\fea^{(N)}(\vx) = K_\fea^{(N)}(\vx,\cdot)$;
    \item \label{item:approximation_error} $\| f^\star - g_\fea^{(N)}(\phi_\fea^{(N)}(\cdot))\|_{L^2(\bP_X)}=o_\bP(1)$;
    \item \label{item:top_k} $\sum_{j>k_N}\gamma_j^{(N)}\langle g_\fea^{(N)},\ve_j^{(N)}\rangle_{\cH_\fea^{(N)}}^2 = o_\bP(1)$;
    \item \label{item:alignment} $\hat g_N$ satisfies the $(g_\fea^{(N)},k_N,\delta_N;\varepsilon_N,\omega_N)$-alignment property with respect to $\{\gamma_j^{(N)}\}_{j>k_N}$ with 
    $\delta_N\downarrow 0$, $\varepsilon_N(k_N,\delta_N)=o(1)$, and an admissible sequence $\{\omega_N\}_{N\ge1}$.
\end{enumerate}
Here, $\cH_\fea^{(N)}$ is called the \textbf{learned (input-space) feature space} and the top $k_N$ eigenvectors of $\Sigma_\fea^{(N)}$ are the \textbf{learned features}. $\{\gamma_j^{(N)}\}_{j>k_N}$ are fixed as part of the estimator-specific alignment structure and are not chosen post hoc from the tail coefficients of $g_\fea^{(N)}$.
\end{Definition}
We explain the meaning of Definition~\ref{def:feature_learning_general}. For ease of exposition, we drop the superscript $(N)$ in the following. Item~\emph{\ref{item:feature_evolution}} means that a nontrivial feature evolution has occurred during and after training: the constructed (RKHS) feature space  obtained after training is no longer the initial one (represented by their kernels). This is the dynamical condition commonly used in the existing literature, where feature learning is identified through whether the features undergo evolution, e.g., \cite{yang_tensor_2021}. From our perspective, the alignment between $f^\star$ and $\cH_\fea^{(N)}$ should improve. In particular, estimators such as spectral methods are excluded from this definition even in situation where their a priori given feature space has a good representation of $f^\star$ from the beginning because there is no dynamic/evolution improving the alignment: no features are learned, they are only used.  

Item~\emph{\ref{item:latent}} means that the estimator $\hat f_N$ can be factorized into a learned feature map $\phi_\fea$ and a latent estimator $\hat g_N$ (see Figure~\ref{fig:estimator_factorization}). This is the point where the base--fiber organization enters the definition: $\phi_\fea$ represents the feature-side object produced by training, while $\mathcal H_\fea$ is the learned feature space in which the latent estimator $\hat g_N$ acts. Thus this factorization separates the process of learning the features from the process of using the learned features for estimation. 

\begin{figure}[htbp]
\centering
\begin{subfigure}[b]{0.48\textwidth}
\centering
\begin{tikzpicture}[
    node distance=2.2cm and 1.8cm,
    auto,
    >=Stealth,
    every node/.style={font=\large},
    box/.style={circle, draw=blue!80!black, fill=blue!5, thick, minimum size=1.2cm}
]

    \node[box] (H) {$\mathcal{H}_{\text{fea}}$};
    \node[box, below left=2cm and 1.5cm of H] (X) {$\mathcal{X}$};
    \node[box, below right=2cm and 1.5cm of H] (R) {$\mathbb{R}$};

    \draw[->, very thick, line width=1.2pt] (X) -- node[above left=-2pt] {$\phi_{\text{fea}}$} (H);
    \draw[->, very thick, line width=1.2pt] (H) -- node[above right=-2pt] {$g_{\text{fea}}$} (R);
    
    \draw[->, very thick, dashed, line width=1.2pt, blue!80!black] (X) -- node[below] {$f^\star$} (R);

    \node[font=\Large, blue!60!black] at (0, -1.8) {$\approx$};

\end{tikzpicture}
\caption{Approximate target factorization: $f^\star \approx g_{\text{fea}} \circ \phi_{\text{fea}}$}
\label{fig:oracle_factorization}
\end{subfigure}
\hfill
\begin{subfigure}[b]{0.48\textwidth}
\centering
\begin{tikzpicture}[
    node distance=2.2cm and 1.8cm,
    auto,
    >=Stealth,
    every node/.style={font=\large},
    box/.style={circle, draw=red!80!black, fill=red!5, thick, minimum size=1.2cm}
]

    \node[box] (H) {$\mathcal{H}_{\text{fea}}$};
    \node[box, below left=2cm and 1.5cm of H] (X) {$\mathcal{X}$};
    \node[box, below right=2cm and 1.5cm of H] (R) {$\mathbb{R}$};

    \draw[->, very thick, line width=1.2pt] (X) -- node[above left=-2pt] {$\phi_{\text{fea}}$} (H);
    \draw[->, very thick, line width=1.2pt] (H) -- node[above right=-2pt] {$\hat{g}_N$} (R);
    
    \draw[->, very thick, dashed, line width=1.2pt, red!80!black] (X) -- node[below] {$\hat{f}_N$} (R);

    \node[font=\Large, red!60!black] at (0, -1.8) {$=$};

\end{tikzpicture}
\caption{Exact estimator factorization: $\hat{f}_N = \hat{g}_N \circ \phi_{\text{fea}}$}
\label{fig:estimator_factorization}
\end{subfigure}

\caption{$f^\star$ and $\hat f_N$ factorizations via the learned feature space $\mathcal{H}_{\text{fea}}$. The canonical feature map $\phi_{\text{fea}}$ produces $k$ top eigenvectors (the learned features) that are used to represent well $f^\star$ (i.e. $f^\star \approx g_{\text{fea}} \circ \phi_{\text{fea}}$ and \textit{item~4}): it is the low-dimensional structure emerging from feature learning. Then $\hat{g}_N$ (the head) leverages this alignment to enhance its predictive capability.}
\label{fig:feature_learning_pyramid}
\end{figure}

Item~\emph{\ref{item:approximation_error}} means that the learned feature space $\cH_\fea$ can approximate the regression function $f^\star$ well (see Figure~\ref{fig:estimator_factorization}). As discussed in Section~\ref{sec:alignment_property}, good approximation capability alone does not necessarily imply a small estimation error; one additionally requires the representative $g_\fea$ of $f^\star$ in $\cH_\fea$ to be favorably positioned relative to the directions that the head estimator $\hat g_N$ can exploit in order to reduce the estimation error. Items~\emph{\ref{item:top_k}}~and~\emph{\ref{item:alignment}} are designed to capture this requirement. Specifically, item~\emph{\ref{item:top_k}} requires the representative $g_{\fea}$ of $f^\star$ in the learned feature space $\cH_\fea$ to be essentially supported on the top $k$ (learned) directions of the covariance operator $\Sigma_{\fea}$. This is where feature learning differs from the classical high-dimensional statistics paradigm where low-dimensional structures are assumed a priori; here, with feature learning, this low-dimensional structure appears in the constructed/data-dependent feature space $\cH_\fea$. Item~\emph{\ref{item:alignment}} then requires the latent estimator $\hat g_N:\cH_\fea\to \bR$ to exploit this positional relationship through the alignment property. In other words, if the representative $g_{\fea}$ has small weighted tail energy (in the eigenbasis of $K_\fea$), then the prediction error rate  $\|\hat g_N\circ \phi_\fea-g_\fea\circ\phi_\fea\|^2_{L^2(\mathbb P_X)}$, will be faster since the latter is faster as the tail energy of $g_\fea$ gets smaller (that is the meaning of the alignment property of $\hat g_N$). Since $\hat f_N(\cdot)=\hat g_N(\phi_\fea(\cdot))$, this converts the learned signal-feature alignment into an improved convergence rate of the prediction error of the final estimator. The condition $k=o(N)$ ensures that the effective number of relevant learned directions remains negligible compared with the sample size in order to enhance consistency of $\hat f_N$, since we expect a $\cO(k/N)$ convergence rate.

\emph{Definition~\ref{def:feature_learning_general} characterizes feature learning as the simultaneous process of (i) constructing a data-dependent, task-relevant representation of \(f^\star\), whose effective component is predominantly \(k\)-dimensional, and (ii) effectively utilizing this representation via a simple estimator \(\hat{g}_N\).} Structurally, this definition encapsulates feature evolution (Item~1), feature learning proper (Items~3 and 4), and the exploitation of these features by the head (Items~2 and 5). Intuitively, the feature map $\phi_{\text{fea}}$ constructs a $k$-dimensional subspace within $\mathcal{H}_{\text{fea}}$ that captures most of the spectral energy of $f^\star$. By establishing this alignment between the target signal and the learned feature space, $\phi_{\text{fea}}$ allows the $\hat{g}_N$ to efficiently estimate the corresponding $k$ coefficients via a ridge-type regression. We naturally view this factorization $\hat{f}_N(\vx) = \hat{g}_N(\phi_{\text{fea}}^{(N)}(\vx))$ through the lens of a \textbf{polar decomposition} of $\hat f_N$: $\phi_{\text{fea}}^{(N)}$ aligns the feature geometry, while \textbf{the head} $\hat{g}_N$ acts as a linear estimator that aggregates these aligned components for prediction. Most importantly, feature learning constructs this low-dimensional structure within the signal while providing an estimator capable of directly exploiting it. Ultimately, by leveraging this created low-dimensional structure, the estimator $\hat f_N$ achieves an oracle-like convergence rate of $\mathcal{O}(k/N)$, mirroring the regime where $f^\star$ is a $k$-sparse signal in $\mathcal{H}_{\text{fea}}$.

Finally, let us comment on the relationship between feature learning and adaptivity in Statistics. In \cite[Section 2]{lepskiTheoryAdaptiveEstimation2023}, an estimator is said adaptive if it can simultaneously achieve minimax optimality over a set of classes of problems; like the set of all Sobolev function classes. In Theorems~\ref{thm:fixed-output-general-ie}-\ref{thm:rip-multi-index-fixed-output} below, we indeed prove that, when the link function is well-specified, MFLD without bias terms\footnote{Here, we refer to the bias parameter $b$ in the neuron activation $\sigma(\langle w, x \rangle + b)$, which is omitted in this work. This structural bias should not be confused with the concepts of explicit or implicit bias, which describe the propensity of an estimator to favor specific structures via explicit or implicit regularization techniques.} 
is adaptive to Gaussian single-/multi-index problems, i.e. adaptive to the direction(s) of the low dimensional structure. However, in Section~\ref{sec:counter_example}, we provide counterexamples showing that \emph{such} MFLD is not adaptive to single-index problems with misspecified link functions. This is the first way in which our work differs from the existing literature on adaptivity. Rather than designing a new estimator that is adaptive over a prescribed class of single- or multi-index problems, as in \cite{bietti_learning_2022,bietti_learning_2025,ba_high-dimensional_2022, ba_learning_2023,dandi_how_2024,damian_neural_2022,damian_smoothing_2023, leeNeuralNetworkLearns2024}, our goal is to study the adaptivity of a natural training algorithm, namely MFLD. More precisely, we aim to identify the class of problems for which MFLD is adaptive, and to characterize the boundary beyond which this adaptivity breaks down.

In addition, the feature-learning property defined in this paper is different from minimax optimality in the sense of \cite{lepskiTheoryAdaptiveEstimation2023, tsybakov_introduction_2009}: what we aim for is problem-specific adaptivity, rather than adaptivity in the minimax sense over a class of problems. These are two different types of questions. As shown in Definition~\ref{def:feature_learning_general}, we focus on the property of the estimator $\hat f_N$ for a specific supervised regression problem $(\bP_X,f^*,\xi)$, rather than a minimax property that holds uniformly over a class of problems. Thus, this is a more fine-grained property as opposed to the worst case approach in minimax theory.

It is worth noting that our problem-specific adaptivity does not contradict the no-free-lunch theorem. The no-free-lunch theorem states that no learning rule can perform well uniformly over all possible statistical problems~\cite[Section~7.1]{devroye_probabilistic_1996}. Our point is different: we do not claim that MFLD, or neural networks more generally, can solve every problem without structure. We show that for certain structured problems, the relevant structure need not be given to the statistician in advance; it can be learned from the training data and then used for estimation. In this sense, the problems solved by feature learning should not be viewed as ``free lunches''. They are better viewed as ``inexpensive lunches'': the useful structure is already present in the data-generating mechanism and the algorithm needs to discover and exploit it at some statistical cost. The type of structure that can be learned depends on both the architecture of the neural network and the algorithm used to train it (in particular on its explicit or implicit biases). For instance, in this work, we show that shallow neural networks trained by MFLD can learn the single-/multi-index structure between $Y$ and $X$.  The no-free-lunch theorem rules out success that is freely available for all problems, not the possibility that many important problems carry learnable structure. From this viewpoint, the empirical success of neural networks aligns with a fundamental selection effect inherent to real-world tasks in science and engineering: these problems are rarely arbitrary high-dimensional tasks, but rather possess intrinsic low-dimensional or geometric structures that training dynamics can automatically uncover. Consequently, a central objective is to characterize the spectrum of structures that a specific pairing of architecture and optimization algorithm can discover autonomously, utilizing unifying concepts such as feature learning and implicit bias.

In what follows, we use MFLD as the main example to instantiate the above definition. We also give the LASSO as another example in the appendix.

\subsection{The Base--Fiber structure of MFLD}

We now specialize the preceding definition through the long-time limit $\hat\nu_\lambda$ of MFLD. The construction below identifies the base--fiber structure generated by $\hat\nu_\lambda$ in this dynamical setting.

Recall $\hat\nu_\lambda$ in \eqref{eq:def_hat_nu}. Let $(\hat A,\hat W)\sim\hat\nu_\lambda$, where $\hat A\in[-A,A]$ and $\hat W\in WB_2^d$ a.s., and denote by $\hat\varphi_\lambda$ the marginal distribution of $\hat W$.
Define the neuron map $\varphi_{\neu}:\cX\to L^2(\hat\varphi_\lambda)$ by $\varphi_{\neu}(\boldsymbol{x})(\boldsymbol{w})=\sigma(\langle \boldsymbol{w},\boldsymbol{x}\rangle)$. We assume that $\mathbb E_X\big[\|\varphi_{\neu}(X)\|_{L^2(\hat\varphi_\lambda)}^2|\cD\big]<\infty$, $\bP^{\otimes N}$-almost surely.
Let
\begin{align}\label{eq:def_K_fea}
    K_{\fea}:(\boldsymbol{x}_1,\boldsymbol{x}_2)\in\cX\times\cX
    \mapsto
    \mathbb E_{\hat W\sim\hat\varphi_\lambda}
    \big[
        \sigma(\langle\hat W,\boldsymbol{x}_1\rangle)
        \sigma(\langle\hat W,\boldsymbol{x}_2\rangle)
    \big]
    =
    \langle
        \varphi_{\neu}(\boldsymbol{x}_1),
        \varphi_{\neu}(\boldsymbol{x}_2)
    \rangle_{L^2(\hat\varphi_\lambda)} .
\end{align}
Let $\cH_\fea$ be the RKHS on the input space $\cX$ generated by $K_{\fea}$, and denote its canonical feature map by $\phi_{\fea}(\boldsymbol{x}):= K_{\fea}(\boldsymbol{x},\cdot) \in \cH_\fea$. We call $L^2(\hat\varphi_\lambda)$ the \textbf{learned fiber}, or learned coefficient space; and $\cH_\fea$ the \textbf{learned feature space}, or learned input-space RKHS, induced by $\hat\varphi_\lambda$. The connection between these two spaces is defined through the following equation (see, for instance, \cite[Theorem 4.21]{steinwart_support_2008}):
\begin{align}\label{eq:quotient_H_fea}
    \forall g\in\cH_\fea,\quad \norm{g}_{\cH_\fea} = \inf\left(\norm{a}_{L^2(\hat\varphi_\lambda)}:\, g(\cdot) = \langle a,\varphi_{\neu}(\cdot)\rangle_{L^2(\hat\varphi_\lambda)}\right).
\end{align}By the Riesz representation theorem together with \eqref{eq:quotient_H_fea}, for any $g\in\cH_{\fea}'$, the dual space of $\cH_\fea$, identified with its Riesz representative, there exists a unique $a\in L^2(\hat\varphi_\lambda)$ that achieves the infimum of \eqref{eq:quotient_H_fea}, such that for all $\vx\in\cX$
\begin{align}\label{eq:correspondence_g_a}
    g(\phi_\fea(\vx)) = \langle\phi_\fea(\vx),g\rangle_{\cH_\fea} = \langle a,\varphi_{\neu}(\vx)\rangle_{L^2(\hat\varphi_\lambda)} = \bE_{\hat W\sim\hat\varphi_\lambda}[a(\hat W)\sigma(\langle\vx,\hat W\rangle)].
\end{align}

We now explain the rationale behind the generating rule $(\vx_i,y_i)_{i=1}^N \in(\cX\times\bR)^N \mapsto K_\fea$ for the reproducing kernel. 

Consider the nonlinear Fokker--Planck equation initialized from the uniform distribution $\nu_0$ on $\Theta$, namely the family $(\nu_t)_{t\geq0}$ described by \eqref{eq:nonlinear_Fokker_Planck}. Let $(A^{(t)},W^{(t)})\sim\nu_t$, and denote by $\varphi_t$ the distribution of $W^{(t)}$. The flow $(\nu_t)_{t\geq0}$ evolves in the full parameter (probability measures) space $\cP(\Theta)$, while $(\varphi_t)_{t\geq0}$ is its projection onto the hidden-layer base parameter space $\cP(W B_2^d)$. For each $t\geq0$, let $K^{(t)}$ and $\cH^{(t)}$ be the kernel and input-space RKHS generated by $\varphi_t$ in the same way as above, namely $K^{(t)}(\vx_1,\vx_2)=\langle\varphi_{\neu}(\vx_1),\varphi_{\neu}(\vx_2)\rangle_{L^2(\varphi_t)}$. At $t=0$, the hidden-layer marginal is the initialization, and $\cH^{(0)}$ corresponds to the random features kernel~\cite{ghorbani_linearized_2021,bartlett_deep_2021}. As $t$ increases, the nonlinear Fokker--Planck flow moves in the parameter-measure space and, through the marginal map, moves on the hidden-layer base; correspondingly, $\cH^{(t)}$ is the feature space formed at time $t$ from the initial random-features space $\cH^{(0)}$. This evolution from $\cH^{(0)}$ to $\cH^{(t)}$ is the feature-learning process of MFLD. Since the empirical free energy $P_N\ell_\nu^\lambda$ is in general not displacement convex on $\cP(\Theta)$, the finite-time dynamics may spend time near spurious stationary points, in the Wasserstein geometry. Nevertheless, the thermal fluctuations induced by the Langevin diffusion select the long-time limit $\hat\nu_\lambda$. Hence, as $t\to\infty$, the hidden-layer marginal $\varphi_t$ converges to $\hat\varphi_\lambda$, and $\cH^{(t)}$ converges to the final learned feature space $\cH_{\fea}$. In this sense, the feature learning process of MFLD is geometrically described by the path $\cH^{(0)}\to\cH^{(t)}\to\cH_{\fea}$ induced by the nonlinear Fokker--Planck flow through the map $\nu_t\mapsto\varphi_t\mapsto\cH^{(t)}$. 

This perspective separates the base feature learning dynamics from the fiber-wise estimation in MFLD. The nonlinear Fokker--Planck flow evolves in $\cP_{\mathrm{ac}}(\Theta)$, but the feature-learning component is described by the motion of its hidden-layer marginal path $(\varphi_t)_{t\geq0}$ on the base $\cP_{\mathrm{ac}}(WB_2^d)$. Once the limiting base point $\hat\varphi_\lambda$ is reached, the use of these features for estimation takes place in the fiber $L^2(\hat\varphi_\lambda)$ over this base point. Thus $(\varphi_t)_{t\geq0}$ is responsible for evolving the initial features into the learned features along the base, while the final output layer, equivalently the latent/head estimator $\hat g_N$, is responsible for using these features inside the learned fiber for estimation. To illustrate this point, we let
\[
    \hat a_N(\boldsymbol w)
    :=
    \mathbb E[\hat A\mid \hat W=\boldsymbol w],
    \qquad
    \hat\varphi_\lambda\text{-a.e.},
\]
since $|\hat A|\le A$, we have $\hat a_N\in L^2(\hat\varphi_\lambda)$.
By \eqref{eq:correspondence_g_a}, there exists a unique $\hat g_N\in\cH_\fea$ such that the following polar decomposition of the limit estimator of MFLD holds $\bP_X$ almost surely for any $\vx$
\[
    \hat f_N(\boldsymbol x)
    =
    \mathbb E_{(\hat A,\hat W)\sim\hat\nu_\lambda}
    \left[
        \hat A\sigma(\langle \hat W,\boldsymbol x\rangle)
    \right] 
    =
    \mathbb E_{\hat W\sim\hat\varphi_\lambda}
    \left[
        \hat a_N(\hat W)
        \sigma(\langle \hat W,\boldsymbol x\rangle)
    \right] 
    = \hat g_N(\phi_\fea(\vx)).
\]
Therefore, $\hat g_N$ is a naturally arising latent estimator, whose role is to make predictions using the features learned by $\phi_\fea$. In particular, it does not directly observe the input $\vx$, but only its feature map representation $\phi_\fea(\vx)$.

The important point is that this RKHS $\cH_\fea$ is not fixed before training: it is learned during training. In feature engineering, a statistician chooses a feature map a priori, and estimation is then carried out in this fixed structure. In MFLD, by contrast, the hidden-layer marginal $\hat\varphi_\lambda$ is produced by the algorithm itself. Therefore, the kernel $K_\fea$ (and, consequently, its canonical feature map $\phi_\fea$, RKHS and top eigenvectors) adapts to this specific supervised regression problem through the information carried by the training data. This is a key difference between feature learning and feature engineering: MFLD learns not only the coefficients, but also the feature representation itself in which those coefficients are estimated. The alignment property can therefore be used along directions discovered by training, rather than along directions fixed a priori.

\begin{Remark}[Connection with Barron spaces]
The introduction of $\cH_\fea$ is natural from the viewpoint of Barron spaces. Functions in Barron spaces are well approximated by shallow neural networks \cite{barron_universal_1993} and so Barron spaces serve as statistical models. In the case $W=1$ and for the ReLU activation function,  the Barron space is equivalent to $\bigcup_{\varphi\in\mathcal P(S_2^{d-1})}\cH_\varphi$, where $\cH_\varphi$ is the RKHS generated by $K_\varphi(\boldsymbol{x}_1,\boldsymbol{x}_2)=\mathbb E_{\boldsymbol{W}\sim\varphi}[\sigma(\langle \boldsymbol{W},\boldsymbol{x}_1\rangle)\sigma(\langle \boldsymbol{W},\boldsymbol{x}_2\rangle)]$, see \cite{eMathematicalPerspectiveMachine2023}. From this perspective, feature learning in MFLD can be understood as selecting, from this family of RKHSs, the specific data-dependent input-space RKHS $\cH_\fea$ determined by $\hat\varphi_\lambda$. The associated coefficient space is $L^2(\hat\varphi_\lambda)$, and a coefficient function $a\in L^2(\hat\varphi_\lambda)$ induces the predictor $x\mapsto\langle a,\varphi_{\neu}(x)\rangle_{L^2(\hat\varphi_\lambda)}=\int_{WB_2^d}a(\boldsymbol w)\sigma(\langle \boldsymbol w,x\rangle)\,\mathrm d\hat\varphi_\lambda(\boldsymbol w)$. The polar decomposition $\hat f_N = \hat g_N(\phi_\fea(\vx))$ is particularly relevant in Barron space since the probability measure $\hat\varphi_\lambda$ on $S_2^{d-1}$ plays the role of a '\textit{multi-dimensional direction/angle}' and $\hat a_N$ in the definition of the latent estimator $\hat g_N(\phi_\fea(\vx))  = \langle \hat a_N,\varphi_{\neu}(\vx)\rangle_{L^2(\hat\varphi_\lambda)}$ plays the role of a '\textit{multi-dimensional radius}' in the multiple directions -- in particular, the top eigenvectors -- in $\cH_\fea \cong \cH_{\hat \varphi_\lambda}$.
\end{Remark}

\subsection{Self-Regularization and Alignment in the Learned Feature Space}

We explore Definition~\ref{def:feature_learning_general}'s implications for both interpretability and generalization of neural networks
 
\paragraph{Regarding interpretability.} Definition~\ref{def:feature_learning_general} answers the question raised at the beginning of this paper: \emph{what features are learned by neural networks?} The learned features are the top $k$ eigenvectors in $\cH_\fea$ (i.e. the eigenfunctions of its operator $\Sigma_{\fea}$). Admittedly, to theoretically dissect the significance of these features—for instance, to explore which physically meaningful properties of $f^\star$ they reflect—it remains necessary to conduct case-specific analyses within concrete model frameworks.

\paragraph{In terms of generalization.} Factorizations depicted in Figure~\ref{fig:estimator_factorization} naturally yields to the estimation error decomposition: 
    \[
    \|\hat f_N-f^\star\|_{L^2(\mathbb P_X)}
    \le
    \| \hat g_N\circ\phi_\fea - g_\fea\circ\phi_\fea\|_{L^2(\mathbb P_X)}
    +
    \| g_{\fea}\circ\phi_\fea-f^\star\|_{L^2(\mathbb P_X)},
    \]
    which provides a new approach for analyzing the prediction performances of estimators possessing the feature-learning property. The estimation error of $f^\star$ can be decomposed into the estimation error of $g_\fea$ and the approximation error of $f^\star$ to the learned feature space $\cH_\fea$. The hope is that the created $k$-dimensional representation of $f^\star$ in $\cH_\fea$ can be exploited to achieve a final $\cO(\sqrt{k/N})$ convergence rate.

The feature-learning property of $\hat f_N$ depends on both the (created) alignment of $f^\star$ with to top eigenvectors of $\cH_\fea$ as well as on the ability  of the latent estimator $\hat g_N$ to exploit this alignment, i.e. if it satisfies the alignment property as introduced in Definition~\ref{def:alignment_property}. The following proposition shows that for MFLD, such a $\hat g_N$ always exists (as constructed via \eqref{eq:correspondence_g_a}) and that it has a self-regularization / implicit bias property. The proof of the following proposition is deferred to Appendix~\ref{app:mfld-foundations}.

\begin{Propositionbis}{prop:hat_g_N}
Assume $0<A<\infty$ and $\lambda>0$. There exists an extended-valued convex functional $\Psi:\cH_\fea\to\mathbb R\cup\{+\infty\}$ such that, $\mathbb P^{\otimes N}$-almost surely, the latent estimator $\hat g_N(\cdot)$ satisfies 
\[
    \hat g_N\in
    \argmin_{g\in\cH_\fea}
    \left\{ \frac1N\sum_{i=1}^N
        \Bigl(Y_i-g\circ\phi_\fea(X_i)\Bigr)^2
        +
        \lambda\Psi(g)
    \right\}.
\]
Moreover, $\Psi$ is $1/A^2$-strongly convex:
for any $g_1,g_2\in\operatorname{dom}(\Psi)$ and any
$\zeta_2\in\partial\Psi(g_2)$,
$$ \Psi(g_1)-\Psi(g_2)-\langle\zeta_2,g_1-g_2\rangle_{\cH_\fea}
\ge (2A^2)^{-1}\|g_1-g_2\|_{\cH_\fea}^2.$$
\end{Propositionbis}

Proposition~\ref{prop:hat_g_N} yields three key insights:
\begin{enumerate}
    \item Proposition~\ref{prop:hat_g_N} characterizes the self-regularization effect in the limit of MFLD. Specifically, its latent estimator $\hat{g}_N$ is a RERM on $\cH_\fea$ with a data-dependent regularization function that behaves as a ridge-like regularization. This observation shows that MFLD used to train a SNN has an implicit bias toward some structure: signals that are mostly supported on the top eigenvectors of $\cH_\fea$. It is interesting to note that $\hat f_N = f_{\hat\nu_\lambda}$ has an explicit\footnote{The negative Shannon entropy is sometimes looked by the MFLD community as an implicit bias appearing only in the long-time limit \eqref{eq:def_hat_nu} but not explicitly in noisy gradient descent/flow (even though, it appears explicitly in \eqref{eq:nonlinear_Fokker_Planck} as the Wasserstein's gradient flow of the entropy regularized empirical risk). However, starting our analysis from the definition of $\hat\nu_\lambda$, the negative Shannon entropy appears explicitly.} bias toward probability measures with a small negative Shannon entropy, see \eqref{eq:def_empirical_free_energy}. The connection between the two explicit and implicit biases is not straightforward and needs to go through the factorization via the learned feature space  $\cH_\fea$ (see Figure~\ref{fig:estimator_factorization}). 

    \item Furthermore, this regularization term possesses a nontrivial Bregman divergence almost surely, which implies that $\Psi$ is strongly convex with respect to $\|\cdot\|_{\cH_\fea}$ norm. Hence, the head estimator $\hat g_N$ is a  'ridge-type' regression estimation. Furthermore, we know that ridge (and many other spectral methods)  satisfy the alignment property from Definition~\ref{def:alignment_property} \cite{gavrilopoulosGeometricalAnalysisKernel2025,lecueSharpConvergenceRates2025}, and so, it is legitimate to anticipate that it is also the case for $\hat g_N$, even though we were not able to formally prove it in this work since it may require to further develop a specific feature space decomposition analysis \cite{lecueSharpConvergenceRates2025} for the regularization function $\Psi$ appearing in Proposition~\ref{prop:hat_g_N}.

    \item In existing studies on the feature learning theory of shallow neural networks, many works model the training process by manually modifying the training algorithm such that the output layer performs a ridge regression (e.g., the two-stage training and two-timescale training; see \cite{bietti_learning_2022,berthier_learning_2024,marion_leveraging_2023,takakura_mean-field_2024}). Proposition~\ref{prop:hat_g_N} theoretically justifies and confirms the validity of such modeling. It also shows that SNN trained by MFLD does it by itself: there is no need to implicitly force this ridge regularization; it does it implicitly.
\end{enumerate}

\section{Low-Temperature Geometry and Estimation for Spherical MFLD in Gaussian Index Models}\label{sec:feature_learning_Gaussian_single_multi_index}

In this section, we verify the feature-learning property of spherical MFLD for Gaussian index models. The central object is the long-time limit $\hat\varphi_\lambda$ of the spherical nonlinear Fokker--Planck equation. We show that, in the low-temperature regime $\lambda = o(1)$, this stationary hidden-layer marginal develops a multi-spike structure on $S_2^{d-1}$, with its local barycenters concentrating near the hidden indices with high probability in Gaussian multi-index problems. Moreover, in the Gaussian single-index model, it exhibits a concentration phenomenon that depends on the parity of the information index in low temperature regime. This low-dimensional structure formed on the base appearing in the low-temperature regime is then used to obtain sharper estimation error rates for the regression function via the feature learning property.

Throughout this section, we consider Gaussian single-index and multi-index supervised regression models with well-specified link functions. The estimator is spherical MFLD on $S_2^{d-1}$, defined later in \eqref{eq:def_hat_varphi_sphere}.

\subsection{Gaussian index models as a lens on Statistical Feature Learning}

Gaussian single- and multi-index models provide a useful lens through which to study the statistical notion of feature learning developed in this paper. In these models, the regression function depends on the covariate only through one or several unknown one-dimensional projections. If the statistician knows in advance that the problem possesses a low-dimensional structure, then one could have designed a dedicated estimator (via an explicit regularization term, for instance) specifically for this problem. This is the viewpoint of feature engineering and structure inducing regularization. Our goal here is different. We ask whether a generic mean-field neural-network model, trained using MFLD without being given the fact that a low-dimensional structure exists, can learn a feature space in which the target function is well approximated by a low-dimensional object that can be efficiently estimated.

This distinction is the reason why index models are useful in this section. Classical methods for single- and multi-index models often use the prior knowledge that the target has an index structure, see \cite{brunaSurveyAlgorithmsMultiindex2025,gaiffasOptimalRatesAdaptation2007}. In contrast, in neural-networks theory, one would like to understand when an association 'neural network architecture / algorithm' can discover this structure from the data by itself with no a priori given hint on that structure. If this happens, then the improvement in the estimation error rate is not due to a hand-crafted feature map, but is due to the feature learning capability of the training dynamics. This viewpoint has motivated a growing line of work on neural networks for single- and multi-index models \cite{ben_arous_online_2021,ben_arous_high-dimensional_2022,damian_neural_2022,ba_high-dimensional_2022,abbe_sgd_2023,dandi_how_2024,cui_asymptotics_2024,mousavi-hosseini_neural_2023,damian_smoothing_2023,mousavi-hosseini_towards_2023,leeNeuralNetworkLearns2024,nichani_provable_2023,bietti_learning_2022,bietti_learning_2025,ba_learning_2023,han_precise_2025,montanari_dynamical_2025,montanariPhaseTransitionsFeature2026}.

We then introduce the well-specified Gaussian index models considered in this section. Let $(\mathrm{He}_j)_{j\ge 0}$ be the probabilist's one-dimensional Hermite polynomials \cite[pp. 16]{pisier_volume_1989}, normalized by
$
    \mathbb E[\mathrm{He}_j(G)\mathrm{He}_k(G)]
    =
    k!\mathbf 1_{j=k},
    \mbox{ where }
    G\sim \mathcal N(0,1).
$
For the chosen activation function $\sigma$, define its $k$-th Hermite coefficient by
$
    b_k=\mathbb E[\sigma(G)\mathrm{He}_k(G)].
$
The information exponent of $\sigma$ is
\[
    \mathrm{IE}(\sigma)
    =
    \min\{k\in\mathbb N_+:\ b_k\neq 0\},
\]
with the convention that $\mathrm{IE}(\sigma)=+\infty$ if no such $k$ exists. This index measures the first Hermite level at which the activation function carries information about a hidden direction $\vw^\star_j$ (see the definition of the multi-index model below). In many analyses of single-index learning, this quantity appears as a measure of difficulty. These difficulties arise from two main sources: statistical challenges and challenges related to the training dynamics.
\begin{enumerate}
    \item From a statistical perspective, the information index determines the phase transition point of the alignment efficiency between the regression function and the feature functions of a given RKHS; see \cite[Definition 6]{lecueSharpConvergenceRates2025}. For example, for analytic spectral methods on random features kernels with well-specified activation functions (that is, on $\cH^{(0)}$), such as kernel ridge regression, gradient descent, or gradient flow, one can prove that their estimation error for $f^\star$ is at least $\bE[\xi^2]\frac{d^{\mathrm{IE}(\sigma)}}{N}$~ \cite{lecueSharpConvergenceRates2025}. Indeed,  when  $\sigma$ is orthogonal to the first $k$ one-dimensional Hermite polynomials in $L^2(G)$, so is $\sigma(\inr{\vw, X})$ to all $d^{\mathrm{IE}(\sigma)}$, $d$-dimensional Hermite polynomials in $\bR^d$.

    \item From the perspective of training dynamics, the information index characterizes the first feature direction captured after the algorithm starts to run, \cite{ben_arous_online_2021,ben_arous_high-dimensional_2022}. When $d$ is large, the initialization of SGD makes the landscape near the equator $\{\vw:\langle\vw,\vw_\star\rangle=0\}$ relatively flat, and hence the algorithm remains near saddle points for a long time, until it escapes at around time $t\sim d^{\mathrm{IE}(\sigma)/2-1}$.
\end{enumerate}


\begin{Assumption}[Single-index model]\label{ass:fixed-output-general-ie}
    For fixed $d\geq 2$, let $X\sim \mathcal N(\boldsymbol 0,I_d)$. The response variable is generated as $Y=f^\star(X)+\xi$, where $\xi$ is independent of $X$, satisfies $\mathbb E[\xi]=0$, and there exists $B_\xi>0$ such that $\|\xi\|_{L^\infty}\leq B_\xi$.
    There exists $\boldsymbol w^\star\in S_2^{d-1}$ such that $f^\star(x)=\sigma(\langle \boldsymbol w^\star,x\rangle)$. The activation function satisfies $\sigma\in C_b^3(\mathbb R)$, and there exist constants $B_\sigma,L_\sigma,M_\sigma,T_\sigma>0$ such that $\|\sigma\|_{L^\infty}\leq B_\sigma$, $\|\sigma'\|_{L^\infty}\leq L_\sigma$, $\|\sigma''\|_{L^\infty}\leq M_\sigma$, and $\|\sigma^{(3)}\|_{L^\infty}\leq T_\sigma$.
Moreover, the information exponent of $\sigma$ is finite: $\mathrm{IE}(\sigma)<\infty$.
\end{Assumption}

\begin{Assumption}[Multi-index model]\label{ass:multi_index}
    For fixed $d\geq 2$, let $X \sim \mathcal{N}(\boldsymbol{0}, I_d)$, and let $\xi$ be a zero-mean random variable independent of $X$, with $\|\xi\|_{L^\infty}\le B_\xi$. There exist $\boldsymbol{w}_1^\star, \dots, \boldsymbol{w}_M^\star \in S_2^{d-1}$ and $a_1^\star,\cdots,a_M^\star>0$ such that $\sum_{j=1}^Ma_j^\star = 1$ and $f^\star(\cdot) =  \sum_{j=1}^M a_j^\star\sigma(\langle \boldsymbol{w}_j^\star, \cdot \rangle)$. For fixed $M\geq 2$,
    \begin{equation}\label{eq:target_separation}
    \Delta_\star:=\min_{i\neq j}\bigl(1-\langle \boldsymbol{w}_i^\star,\boldsymbol{w}_j^\star\rangle\bigr)>0.
    \end{equation} The activation function is $\sigma \in C_b^3(\mathbb{R})$, and there exist $B_\sigma, L_\sigma, M_\sigma$, and $T_\sigma$, satisfying $\|\sigma\|_{L^\infty} \leq B_\sigma, \|\sigma'\|_{L^\infty} \leq L_\sigma, \|\sigma''\|_{L^\infty} \leq M_\sigma, \|\sigma^{(3)}\|_{L^\infty} \leq T_\sigma$. $b_1 \neq 0, \dots, b_M \neq 0$, where $b_j$ is the $j$-th Hermite coefficient of $\sigma$.
\end{Assumption}

The assumptions above are well-specified in the following sense. The same activation function $\sigma$ is used both in the target function and in the learning algorithm. Thus the main question is not whether the neural-network class can represent the target; it can. The main question is to show whether that the training procedure can learn that such a structure exists and then find and exploit the hidden directions (without being told that they exist). Comments on the activation functions are given in Section~\ref{sec:comment_activation_functions} in appendix.


In this section, we introduce a special form of MFLD in order to disentangle the feature learning phase from the 'feature utilization' phase, and thereby study only the features learned by $\phi_\fea$. We study these models using MFLD on the Euclidean sphere, namely, we take $\mathcal{F} = \{f_\varphi(\cdot) = \int_{S_2^{d-1}}\sigma(\langle \cdot,\boldsymbol{w}\rangle)\,\mathrm{d}\varphi(\boldsymbol{w}): \varphi\in\mathcal{P}(S_2^{d-1})\}$. Let $\varphi_0$ and $\tau$ be the uniform distributions over $S_2^{d-1}$. Let $\nabla_S$ be the Riemannian gradient on $S_2^{d-1}$, and $\nabla_S\cdot$ be the Riemannian divergence (see \cite[Chapter 3]{boumal_introduction_2023}). Let $P_N\ell_\varphi = \frac{1}{N}\sum_{i=1}^N\ell(f_\varphi(X_i),Y_i)$ be the empirical risk. For any $t\geq 0$ and $\lambda\geq 0$, let $\varphi_t(\mathrm{d}\vw) = \rho_t(\vw)\tau(\mathrm{d}\vw)$, where
\begin{align}\label{eq:def_hat_varphi_sphere}
    \partial_t\rho_t = \nabla_S\cdot(\rho_t\nabla_S\frac{\delta P_N\ell_{\varphi_t}}{\delta\varphi}+\lambda\nabla_S\rho_t).
\end{align}We let $\hat\varphi_\lambda = \lim_{t\to\infty}\varphi_t$ in KL divergence (hence $f_{\varphi_t}\to \hat f_N=f_{\hat\varphi_\lambda}$ in $L^2(\bP_X)$), whose existence and convergence speed are shown in Proposition~\ref{prop:convergence_spherical_MFLD} in the Appendix.
The spherical MFLD $(\varphi_t)_{t\geq 0}$ differs slightly from the previously studied $(\nu_t)_{t\geq 0}$ in that here we only train the hidden layer. Although this simplification is made to reduce the difficulty of the analysis, this form of MFLD still exhibits the feature-learning property. We emphasize that $\hat\varphi_\lambda$ is a \emph{random} probability measure. Since the nonlinear Fokker-Planck equation involves a random interaction potential function, all statements made below that hold with high probability are understood with respect to $\bP^{\otimes N}$, the joint distribution of the data.

\subsection{Low-Temperature Multi-Spike Structure on the Base}\label{sec:stationary_distribution_concentration}

We next record the low-temperature structure of the stationary hidden-layer marginal $\hat\varphi_\lambda$ - the main probabilistic contribution of the paper. This result describes the geometric structure formed on the base by the long-time limit of the spherical nonlinear Fokker--Planck equation. Quantitatively, it is the localization estimate underlying the feature-learning mechanism and the estimation error convergence rates proved later.

\paragraph{Gaussian multi-index problem.} The proof of Proposition~\ref{prop:stationary_distribution} is given in Section~\ref{sec:proof_statioanry}.
Recall that $\psi$ is defined by $\psi(0)=0$ and $\psi(t)=t(1+\log(e/t))$ for $t>0$.

\begin{Proposition}\label{prop:stationary_distribution}
Grant Assumption~\ref{ass:multi_index}. Let $0<c_0<1$, $C\geq 1$ and $C_0\geq e$ be absolute constants depending only on $M,\Delta_\star^{-1},B_\sigma,L_\sigma,M_\sigma,T_\sigma,B_\xi$ and $\max_{1\leq k\leq M}|b_k|^{-1}$. For any $x\geq 1$, $N\geq 2$ and $\lambda d\leq c_0$, let
\begin{align*}
        r_*^2 = C\left[
        \frac{Md+d\log(C_0dN)+x}{N}
        +
        \psi(\lambda d)
        \right].
\end{align*}
Let $V_1,\ldots,V_M$ be the Voronoi partition generated by $\vw_1^\star,\ldots,\vw_M^\star$, that is, $V_j = \{\vw\in S_2^{d-1}: j=\min(\argmin_{1\leq\ell\leq M}\|\vw-\vw_\ell^\star\|_2)\}$. There is a constant $C_{\rm vor}$, depending only on $M$, $\Delta_\star^{-1}$, and $\max_{1\le k\le M}|b_k|^{-1}$, such that with probability at least $1-4\exp(-x)$,
\begin{align}\label{eq:uniform_parameter_recovery}
&\max_{1\le j\le M}\bigg|\int_{V_j}\hat\varphi_\lambda(\mathrm{d}\vw)-a_j^\star\bigg|+\max_{1\le j\le M}\int_{V_j}\big\|\vw-\vw_j^\star\big\|_2^2\hat\varphi_\lambda(\mathrm{d}\vw)\le C_{\rm vor}r_*.
\end{align}In particular, for any $\rho>0$, let $S_\rho = \cup_{j=1}^M\{\vv\in S_2^{d-1}:\|\vv-\vw_j^\star\|_2\leq \rho\}$. Then with the same probability, $\hat\varphi_\lambda(S_2^{d-1}\backslash S_\rho)\leq\frac{r_*}{\rho^2}$.
\end{Proposition}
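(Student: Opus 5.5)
The plan is to combine a variational (free-energy) comparison with an identifiability argument based on Hermite moment tensors.

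\textbf{Step 1: basic inequality from the Gibbs variational problem.} $\hat\varphi_\lambda$ minimises $\varphi\mapsto P_N\ell_\varphi+\lambda\Ent^-_\tau(\varphi)$ over $\mathcal P(S_2^{d-1})$; this is the limit identified in Proposition~\ref{prop:convergence_spherical_MFLD}, and by Remark~\ref{remark:scaling} we may use $\tau$ as the reference measure. I compare it with a smoothed version of the target measure $\varphi^\star=\sum_j a_j^\star\delta_{\vw_j^\star}$, namely
\[
\varphi_\varepsilon=\sum_j a_j^\star\,\mathrm{Unif}\big(\mathrm{Cap}(\vw_j^\star,\varepsilon)\big).
\]
Since $\Ent^-_\tau\ge 0$, this gives
\[
P_N\ell_{\hat\varphi_\lambda}-P_N\ell_{\varphi^\star}\le P_N\ell_{\varphi_\varepsilon}-P_N\ell_{\varphi^\star}+\lambda\Ent^-_\tau(\varphi_\varepsilon).
\]
The two terms on the right behave as follows.
\begin{itemize}
\item By symmetry of the caps, the first-order Taylor term of $\sigma$ cancels, so $\|f_{\varphi_\varepsilon}-f^\star\|_\infty\lesssim\varepsilon^2$. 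This uses $\sigma\in C_b^3$.
\item $\Ent^-_\tau(\varphi_\varepsilon)\le (d-1)\log(C/\varepsilon)$.
\end{itemize}
Choosing $\varepsilon^2\asymp\lambda d$ balances the two and gives $\lambda d\log(e/(\lambda d))\lesssim\psi(\lambda d)$; this is where the condition $\lambda d\le c_0$ enters.

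\textbf{Step 2: from empirical to population excess risk.} Write the empirical excess risk as
\[
\|f_\varphi-f^\star\|_{L^2(P_N)}^2-2P_N[\xi(f_\varphi-f^\star)].
\]
I would apply a localized (offset / Bernstein-type) uniform bound over $\{f_\varphi:\varphi\in\mathcal P(S_2^{d-1})\}$. The key structural point is that $f_\varphi-f^\star=\langle T(\varphi)-T(\varphi^\star),\cdot\rangle$ depends linearly on $\varphi$. Up to a truncation of the Hermite expansion at a level chosen so that the remainder is $O(1/N)$, it therefore lives in a linear space of effective dimension $\lesssim Md+d\log(C_0dN)$. A chaining / union bound on a $1/N$-net of the sphere, with tail parameter $x$, then yields
\[
\|f_{\hat\varphi_\lambda}-f^\star\|_{L^2(\mathbb P_X)}^2\le r_*^2
\]
with probability $1-4e^{-x}$. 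This gives the fast $d/N$ rate rather than the slow $\sqrt{d/N}$ one.

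\textbf{Step 3: identifiability through moment tensors.} For $X\sim\mathcal N(0,I_d)$ and $\|\vw\|_2=1$, the Hermite expansion gives
\[
\|f_\varphi-f^\star\|_{L^2}^2=\sum_{k\ge 0}\frac{b_k^2}{k!}\Big\|\int\vw^{\otimes k}\,\mathrm d\varphi-\int\vw^{\otimes k}\,\mathrm d\varphi^\star\Big\|_F^2.
\]
Since $b_1,\dots,b_M\neq0$, every polynomial $p$ of degree $\le M$ with bounded coefficients satisfies
\[
\Big|\int p\,\mathrm d\hat\varphi_\lambda-\int p\,\mathrm d\varphi^\star\Big|\lesssim r_*.
\]
I then use two families of test polynomials.
\begin{itemize}
\item \emph{Local second moments.} Take $Q(\vw)=\prod_{l=1}^M\big(1-\langle\vw,\vw_l^\star\rangle\big)$, of degree $M$. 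It vanishes at every atom, and on $V_j$ the separation $\Delta_\star$ gives $Q(\vw)\ge c\,\|\vw-\vw_j^\star\|_2^2$. Since $Q\ge0$, it follows that $\sum_j\int_{V_j}\|\vw-\vw_j^\star\|^2\,\mathrm d\hat\varphi_\lambda\le C r_*$.
\item \emph{Masses.} Take the Lagrange-type polynomials $p_j(\vw)=\prod_{l\neq j}\big(1-\langle\vw,\vw_l^\star\rangle\big)/\big(1-\langle\vw_j^\star,\vw_l^\star\rangle\big)$, of degree $M-1$, with $p_j(\vw_l^\star)=\mathbf 1_{j=l}$.
\end{itemize}

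\textbf{Main obstacle.} The hard step is converting the $p_j$ into the Voronoi mass bound at order $r_*$ rather than $\sqrt{r_*}$. The linear Taylor term $\int_{V_l}\langle\nabla p_j(\vw_l^\star),\vw-\vw_l^\star\rangle\,\mathrm d\hat\varphi_\lambda$ is only controlled by Cauchy--Schwarz at order $\sqrt{r_*}$. To fix this, I would first try correcting $p_j$ by adding multiples of the linear functions $\langle\vw,\cdot\rangle$ and of $Q$-type terms, chosen so that the corrected polynomial has vanishing gradient at every atom; with $M$ atoms this needs degree of order $2M$. Hence the Hermite non-degeneracy would have to be used up to that level, or the rate weakened. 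This is where I expect the constant $C_{\rm vor}$ and the precise degree bookkeeping to be delicate.

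\textbf{Tail bound.} The final claim is immediate from the second-moment bound by Markov's inequality: on $V_j\setminus S_\rho$ one has $\|\vw-\vw_j^\star\|_2\ge\rho$, so $\hat\varphi_\lambda(S_2^{d-1}\setminus S_\rho)\le r_*/\rho^2$ after absorbing constants.
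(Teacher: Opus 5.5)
Your architecture matches the paper's: a basic inequality against a smoothed cap competitor, a uniform localized deviation bound giving Theorem~\ref{thm:multi-index-fixed-output}, then the deterministic identifiability of Proposition~\ref{prop:multi-index-voronoi-concentration} and Markov. Your local-second-moment argument with $Q(\vw)=\prod_l(1-\langle\vw,\vw_l^\star\rangle)$ is exactly Proposition~\ref{prop:multi-index-localization}. The first gap is Step~2, which carries the whole rate.

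Hermite truncation does not produce a space of dimension $Md+d\log(dN)$. The chaoses of degree $\le K$ span a space of dimension $\binom{d+K}{K}$. For $\sigma\in C_b^3$ the tail $\sum_{k>K}b_k^2/k!$ is only guaranteed to decay like $K^{-3}$, so an $O(1/N)$ remainder forces $K$ polynomial in $N$. Linearity of $\varphi\mapsto f_\varphi$ does not reduce dimension either: $\{f_\varphi\}$ is the closed convex hull of ridge functions and has nonparametric entropy. A net of the sphere does not by itself cover a class indexed by $\mathcal P(S_2^{d-1})$.

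What makes the \emph{localized} class parametric is identifiability, so your Step~3 must be run before Step~2. Since it holds for every $\varphi$, any $\varphi$ with $\|f_\varphi-f^\star\|_{L^2(\bP_X)}\le r$ has $S_\star(\varphi)\le C_{\rm loc}r$. A second-order Taylor expansion around $\vw_j^\star$ on each Voronoi cell then puts $f_\varphi-f^\star$ in the sum of two pieces. The first is a radius-$Cr$ ball of the $M(d+1)$-dimensional span of $\sigma(\langle\vw_j^\star,\cdot\rangle)$ and $\vx\mapsto\sigma'(\langle\vw_j^\star,\vx\rangle)x_\ell$. The second is $Cr\,\overline{\operatorname{conv}}(\mathcal Q_1)$ with $\mathcal Q_1=\{\vx\mapsto\sigma''(\langle\vu,\vx\rangle)\langle\ve,\vx\rangle^2\}$. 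The first piece costs $r\sqrt{Md/N}$. The second is controlled by $\bE\sup_{\|\vu\|_2\le 1}\|N^{-1}\sum_i\varepsilon_i\sigma''(\langle\vu,X_i\rangle)X_iX_i^\top\|_{\rm op}\lesssim\sqrt{d\log(dN)/N}$; this is where a fine net, over $\vu$, legitimately enters (Lemma~\ref{lem:second-order-multiplier-general-ie}, Proposition~\ref{prop:multi-index-rademacher}). Bousquet's inequality and peeling then give $|(P-P_N)\mathcal L_\varphi|\le\frac14\|f_\varphi-f^\star\|_{L^2(\bP_X)}^2+C(Md+d\log(dN)+x)/N$ uniformly in $\varphi$ (Lemma~\ref{lem:multi-index-isomorphism}). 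This also absorbs the noise cross term of your competitor.

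Incidentally, in Step~1 the bound $\|f_{\varphi_\varepsilon}-f^\star\|_\infty\lesssim\varepsilon^2$ is false, because the remainder contains $\langle\vw-\vw_j^\star,\vx\rangle^2$, which is unbounded in $\vx$. What holds, and suffices, is the $L^2(\bP_X)$ bound via $\bE\langle\vu,X\rangle^4=3\|\vu\|_2^4$. Also, the first-order term is $O(\varepsilon^2)$ rather than zero, since the cap mean is $(1-\alpha)\vw_j^\star$ with $\alpha\le\varepsilon^2/2$.

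Your unresolved ``main obstacle'' is also a gap as stated, since the degree-$2M$ repair would need $b_k\neq0$ beyond $k=M$, which is not assumed. It closes at degree $M$. The observation you are missing is that on the sphere $1-\langle\vw,\vw_l^\star\rangle=\tfrac12\|\vw-\vw_l^\star\|_2^2$, so each linear factor already vanishes to second order at its atom. Concretely, the gradient of $p_j$ at $\vw_i^\star$, $i\ne j$, is a multiple of $\vw_i^\star$, and $\langle\vw-\vw_i^\star,\vw_i^\star\rangle=-\tfrac12\|\vw-\vw_i^\star\|_2^2$. Those Taylor terms are therefore quadratic and bounded by $S_\star$.

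The only genuinely first-order term is $\langle\int_{V_j}(\vw-\vw_j^\star)\,\varphi(\mathrm d\vw),\nabla p_j(\vw_j^\star)\rangle$. The paper bounds the Voronoi barycenter at order $\|f_\varphi-f^\star\|_{L^2(\bP_X)}$, not its square root, using the degree-$M$ certificates $q(\vw)=\langle\vw-\vw_j^\star,\ve\rangle\prod_{\ell\ne j}(1-\langle\vw,\vw_\ell^\star\rangle)$ with $\ve\in S_2^{d-1}$. These vanish at every atom. Their gradient at $\vw_j^\star$ is $\ve\prod_{\ell\ne j}(1-\langle\vw_j^\star,\vw_\ell^\star\rangle)$, of norm at least $\Delta_\star^{M-1}$. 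At the other atoms their gradients are parallel to $\vw_i^\star$. Feeding this barycenter bound into your $p_j$ gives the mass bound at order $r_*$ using only $b_1,\ldots,b_M\neq0$.
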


When $\lambda=o(1)$, the rate $r_*$ tends to zero. Hence, throughout the low-temperature regime, the stationary measure of the spherical nonlinear Fokker--Planck equation develops a multi-spike structure around the true parameters. We have also obtained (see Proposition~\ref{prop:multi-index-voronoi-concentration}) an  upper bound for $\|\int_{V_j}(\vw-\vw_j^\star)\,\mathrm{d}\hat\varphi_\lambda(\vw)\|_2$.  That is, as $N\to\infty$, the centroid of the stationary distribution within each Voronoi cell $V_j$ associated with each hidden index $\vw_j^\star$, that is, $\int_{V_j}\vw\mathrm{d}\hat\varphi_\lambda$, converges in probability to $\vw_j^\star$ in the $\|\cdot\|_2$ norm, while the corresponding weight of the stationary distribution within that Voronoi cell, that is, $\int_{V_j}\mathrm{d}\hat\varphi$, converges in probability to $a_j^\star$. In this sense, after evolving from the uniform initialization, the nonlinear Fokker--Planck equation recovers the hidden parameters of the regression function. This is the structure formed on the base in the feature-learning mechanism.

We emphasize that this structure is not imposed by an explicit sparsity-inducing regularization. Neither the spherical nonlinear Fokker--Planck equation nor the variational problem defining $\hat\varphi_\lambda$ contains such a term. The negative Shannon entropy acts in the opposite direction: it penalizes sparse or singular hidden-layer distributions, since these measures have infinite negative Shannon entropy relative to the uniform measure. Nevertheless, the dynamics produces an approximately sparse stationary measure and achieves parameter recovery. In the neural network literature, this phenomenon is commonly referred to as implicit regularization; see, for instance, \cite{bartlett_deep_2021}. Proposition~\ref{prop:stationary_distribution} shows that, in the present setting, this implicit regularization is realized through the feature-learning mechanism.

By contrast, when $\lambda=\omega(1)$, Lemma~\ref{lemma:high_temperature_regime} in the appendix shows that, for every $\rho\to0$ and Borel sets sequence $(A_\rho)_{\rho>0}$ such that $\tau(A_\rho)\to 0$, one has $\hat\varphi_\lambda(S_2^{d-1}\backslash A_\rho)\geq1-o(1)$ almost surely. Thus the multi-spike concentration of the stationary hidden-layer marginal undergoes a sharp phase transition at the temperature scale $\lambda\asymp1$.

Although this involves a slight abuse of terminology, we shall still refer to the phenomenon described in Proposition~\ref{prop:stationary_distribution} as a concentration property. We prefer not to use the term localization, since it can easily be confused with localization techniques in mathematical statistics, \cite{koltchinskii_oracle_2011}. From the perspective of statistical physics, Proposition~\ref{prop:stationary_distribution} may be viewed as an analogue of the Bovier--Gayrard localization theorem for the Hopfield model in mean-field spin glasses; see \cite[Theorem 4.3.2]{talagrandMeanFieldModels2011}.

\paragraph{Gaussian single-index problem.}
In Proposition~\ref{prop:stationary_distribution}, the concentration phenomenon we obtain is not measure concentration in the sense of L\'evy--Milman \cite{milmanNewProofTheorem1971,ledouxConcentrationMeasurePhenomenon2005}. In this paragraph, we show that, in the Gaussian single-index problem, one can observe such a measure concentration phenomenon, and that this phenomenon depends strongly on the parity of the information index.

\begin{Proposition}\label{prop:concentration_Levy_Milman}
Grant Assumption~\ref{ass:fixed-output-general-ie}. Let $\lambda>0$ and $\varepsilon = \frac{2\sqrt{\mathrm{IE}(\sigma)!}}{\kappa_{\mathrm{IE}(\sigma)}|b_{\mathrm{IE}(\sigma)}|} r_*$, where $r_*^2 \sim \frac{d\log(dN)}{N}+\psi(\lambda d)$. For all $x>0$, with probability at least $1-4\exp(-x)$, the following hold.
\begin{enumerate}
    \item If $\mathrm{IE}(\sigma)$ is odd: Let $Z\sim \hat\varphi_\lambda$. For any $1$-Lipschitz function $F:S_2^{d-1}\to\bR$ (with respect to $\|\cdot\|_2$),
    \begin{align*}
        \forall t > 2\sqrt{2\varepsilon},\quad  \bP\left[ \left| F(Z) - \bE [F(Z)] \right| \geq t \right]\leq \frac{8\varepsilon}{t^2}.
    \end{align*}
    Moreover, for any $\rho>0$, $\hat\varphi_\lambda(S_2^{d-1}\backslash B_2(\vw_\star;\rho))\leq \frac{\varepsilon}{\rho^2}$, where $B_2(\vw_\star;\rho)=\{\vw:\|\vw-\vw_\star\|_2\leq \rho\}$.

    \item If $\mathrm{IE}(\sigma)$ is even: For all $\vw\in S_2^{d-1}$, let $[\vw] := \{\vw,-\vw\}$ be the antipodal equivalence class of $\vw$. Let $\bR\bP^{d-1}=S_2^{d-1}/\{\pm 1\}$ be the equivalence class modulo sign, with quotient map $\pi:\vw\in S_2^{d-1}\to [\vw] \in  \bR\bP^{d-1}$. Define $d_{\bR\bP}([\vu],[\vv])=\min\{\|\vu-\vv\|_2,\|\vu+\vv\|_2\}$ as the projective metric. Let $\bar\varphi_\lambda = \pi_\sharp\hat\varphi_\lambda$ and $Z\sim\bar\varphi_\lambda$. Then, for any $1$-Lipschitz function $G:\bR\bP^{d-1}\to\bR$ with respect to metric $d_{\bR\bP}$,
    \begin{align*}
        \forall t>2\sqrt{2\varepsilon},\quad \bP\left( \left| G(Z) - \bE[G(Z)] \right| \geq t \right)\leq \frac{8\varepsilon}{t^2}.
    \end{align*}Moreover, for any $\rho>0$, $\bar\varphi_\lambda(\bR\bP^{d-1}\backslash B_{\bR\bP}([\vw_\star];\rho))\leq \frac{\varepsilon}{\rho^2}$ and $\hat\varphi_\lambda(S_2^{d-1}\backslash (B_2(\vw_\star;\rho)\cup B_2(-\vw_\star;\rho)))\leq \frac{\varepsilon}{\rho^2}$, where $B_{\bR\bP}([\vw_\star];\rho) = \{[\vw]\in\bR\bP^{d-1}:d_{\bR\bP}([\vw];[\vw_\star])\leq \rho\}$.
\end{enumerate}   
\end{Proposition}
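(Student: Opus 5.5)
The plan is to reduce both parts to a single second-moment bound, namely
\[
\int \|\vw-\vw_\star\|_2^2\,\hat\varphi_\lambda(\mathrm d\vw)\le\varepsilon \quad\text{(odd case)},\qquad \int d_{\bR\bP}([\vw],[\vw_\star])^2\,\hat\varphi_\lambda(\mathrm d\vw)\le\varepsilon \quad\text{(even case)}.
\]
Once this bound holds, the rest is Chebyshev-type bookkeeping. Set $k:=\mathrm{IE}(\sigma)$.

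\textbf{Step 1: risk bound.} On an event of probability at least $1-4e^{-x}$ I would prove $\|f_{\hat\varphi_\lambda}-f^\star\|_{L^2(\bP_X)}\le r_*$. I would follow the same route that gives $r_*$ in Proposition~\ref{prop:stationary_distribution}, specialised to $M=1$. First compare free energies. By optimality, $P_N\ell_{\hat\varphi_\lambda}+\lambda\Ent^-_\tau(\hat\varphi_\lambda)$ is at most the free energy of a competitor $\varphi_{\rm cap}$, the uniform measure on a spherical cap around $\vw_\star$ of radius $\asymp\sqrt{\lambda d}$. Its approximation error is $O(\lambda d)$ by the smoothness of $\sigma$. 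Its entropy is $\asymp d\log(1/(\lambda d))$, so the entropy cost is $\lambda d\log(e/(\lambda d))$, which produces $\psi(\lambda d)$. Also $\Ent^-_\tau(\hat\varphi_\lambda)\ge 0$. Next, convert empirical excess risk into population excess risk with a uniform deviation bound over $\{f_\varphi:\varphi\in\cP(S_2^{d-1})\}$. I would use a covering of the sphere at scale $1/N$, bounded $\sigma,\sigma'$ and bounded $\xi$, together with a localized Bernstein/peeling argument. This yields the $d\log(dN)/N$ term. Since this mirrors the multi-index proof, I take it as available.

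\textbf{Step 2: Hermite projection.} For Gaussian $X$, Mehler's formula gives
\[
\|f_{\hat\varphi_\lambda}-f^\star\|_{L^2}^2=\sum_{j}\frac{b_j^2}{j!}\Big\|\int\vw^{\otimes j}\,\mathrm d\hat\varphi_\lambda-\vw_\star^{\otimes j}\Big\|_F^2 .
\]
Keeping only $j=k$ and pairing with the unit tensor $\vw_\star^{\otimes k}$ gives
\[
1-\int\langle\vw,\vw_\star\rangle^k\,\mathrm d\hat\varphi_\lambda\le\frac{\sqrt{k!}}{|b_k|}\,r_*.
\]

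\textbf{Step 3: elementary inequality on $[-1,1]$.} Write $1-s^k=(1-s)(1+s+\dots+s^{k-1})$ with $s=\langle\vw,\vw_\star\rangle$.
\begin{itemize}
\item For odd $k$ the second factor is bounded below by some $\kappa_k>0$ on $[-1,1]$. Hence $\|\vw-\vw_\star\|_2^2=2(1-s)\le\frac{2}{\kappa_k}(1-s^k)$.
\item For even $k$ we have $1-s^k\ge\kappa_k(1-s^2)$. Moreover $d_{\bR\bP}^2=2(1-|s|)\le 2(1-s^2)$.
\end{itemize}
Combining with Step~2 gives exactly the second-moment bound with $\varepsilon=\frac{2\sqrt{k!}}{\kappa_k|b_k|}r_*$. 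Matching the paper's $\kappa_k$ to this constant is the one point to check.

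\textbf{Step 4: tail bounds and Lipschitz concentration.} The tail statements follow from Markov's inequality: $\hat\varphi_\lambda(\|\vw-\vw_\star\|_2>\rho)\le\varepsilon/\rho^2$. In the even case, $d_{\bR\bP}>\rho$ means $\vw$ lies outside $B_2(\vw_\star;\rho)\cup B_2(-\vw_\star;\rho)$, which gives both projective statements.

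For concentration, take $F$ $1$-Lipschitz. Then
\[
|F(Z)-\bE F(Z)|\le D+\bE D,\qquad D:=\|Z-\vw_\star\|_2 \ (\text{resp. } d_{\bR\bP}(Z,[\vw_\star])).
\]
By Jensen, $\bE D\le\sqrt\varepsilon$. For $t>2\sqrt{2\varepsilon}$ we get $t-\sqrt\varepsilon\ge t(1-1/(2\sqrt2))\ge t/\sqrt8$. Therefore
\[
\bP(|F(Z)-\bE F(Z)|\ge t)\le\bP(D\ge t-\sqrt\varepsilon)\le\frac{\varepsilon}{(t-\sqrt\varepsilon)^2}\le\frac{8\varepsilon}{t^2}.
\]

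The main obstacle is Step~1, specifically obtaining the sharp $d\log(dN)/N$ rate for the uniform deviation over all probability measures rather than a slow $1/\sqrt N$ rate. This requires a localized (Bernstein plus peeling) argument exploiting that the loss is quadratic in $f_\varphi$ and that $\{f_\varphi\}$ is the convex hull of a $d$-parameter Lipschitz family. I would reuse the empirical-process lemma from the proof of Proposition~\ref{prop:stationary_distribution}.
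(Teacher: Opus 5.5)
Your Steps~2--4 are exactly the paper's argument. The Hermite projection and the $\kappa_{\mathrm{IE}(\sigma)}$ inequality are Proposition~\ref{prop:m-chaos-localization}. In the even case, your chain $1-|s|\le 1-s^2\le 1-s^k$ gives the same constant $\kappa_{\mathrm{IE}(\sigma)}=1$ as the paper's direct $1-|s|\le 1-|s|^{k}$. Your Jensen--Markov step is Lemma~\ref{lemma:levy_milman_concentration}. The paper obtains the input $\|f_{\hat\varphi_\lambda}-f^\star\|_{L^2(\bP_X)}\le r_*$ by citing the single-index Theorem~\ref{thm:fixed-output-general-ie}. Step~1 is the one place where your proposal needs correcting.

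As you justify it, Step~1 does not go through, for two reasons.

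\emph{The multi-index result does not specialise.} Proposition~\ref{prop:stationary_distribution} cannot be ``specialised to $M=1$''. Assumption~\ref{ass:multi_index} requires $M\ge 2$ and $b_1,\dots,b_M\neq 0$, so at $M=1$ it would only cover $\mathrm{IE}(\sigma)=1$. Its one-centre localisation around $\vw_\star$ is also false when $\mathrm{IE}(\sigma)$ is even: for even $\sigma$, $\delta_{-\vw_\star}$ has zero excess risk.

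\emph{The net argument gives only a slow rate.} A $1/N$-net of the sphere plus peeling does not produce the $d\log(dN)/N$ term. Peeling needs a local complexity proportional to the radius $r$. A net of neurons only controls the whole convex hull $\{f_\varphi\}$ at level $\sqrt{d\log(dN)/N}$, independently of $r$, which is the slow rate you wanted to avoid.

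The missing idea is that your Step~2--3 inequality holds for \emph{every} $\varphi$, not just for $\hat\varphi_\lambda$. Set $s(\vw)=\operatorname{sgn}(\langle\vw,\vw_\star\rangle)^{\mathrm{IE}(\sigma)+1}$. Then any $\varphi$ with $\|f_\varphi-f^\star\|_{L^2(\bP_X)}\le r$ satisfies $\int\|\vw-s(\vw)\vw_\star\|_2^2\,\varphi(\mathrm d\vw)\lesssim r$. A second-order Taylor expansion around $s(\vw)\vw_\star$ then splits $f_\varphi-f^\star$ into two pieces:
\begin{itemize}
\item an $O(r)$ element of the $(2d+1)$-dimensional space spanned by $\sigma(-\langle\vw_\star,\cdot\rangle)-\sigma(\langle\vw_\star,\cdot\rangle)$ and $\vx\mapsto\sigma'(\pm\langle\vw_\star,\vx\rangle)x_j$;
\item an element of $O(r)\,\overline{\operatorname{conv}}(\mathcal Q_1)$, where $\mathcal Q_1$ consists of the functions $\sigma''(\langle\vu,\cdot\rangle)\langle\ve,\cdot\rangle^2$.
\end{itemize}
The second piece is controlled by Lemma~\ref{lem:second-order-multiplier-general-ie}. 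Together these give Proposition~\ref{prop:parametric-rademacher-general-ie}. Peeling then yields Lemma~\ref{lem:localized-isomorphism-general-ie} and Theorem~\ref{thm:fixed-output-general-ie}.

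If you replace Step~1 by a citation of that theorem, your proof is complete and coincides with the paper's.
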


We now provide some comments on this proposition.
\begin{enumerate}

    \item The concentration behavior of the long-time limit of the spherical nonlinear Fokker-Planck equation depends on the parity of the information index. This dependence on the information index is different from the relation between the training dynamics of SGD and the information index discovered in \cite{ben_arous_high-dimensional_2022} since the latter mainly concerns the landscape of the training dynamics near its initial position, namely around $t\approx 0$. In contrast, Proposition~\ref{prop:concentration_Levy_Milman} concerns the concentration behavior of the long-time limit, namely as $t\to\infty$. Its relation with $\mathrm{IE}(\sigma)$ is as follows. By the Hermite decomposition of the estimation error (proved in \eqref{eq:Hermite_expansion_estimation_error}) $\| f_\varphi - f^\star \|_{L^2(\bP_X)}^2 = \sum_{k\geq 1}\frac{b_k^2}{k!} \| \int \vw^{\otimes k}\varphi(\mathrm d\vw) - \vw_\star^{\otimes k}\|_F^2$, a small estimation error forces $\| \int \vw^{\otimes \mathrm{IE}(\sigma)}\varphi(\mathrm d\vw) - \vw_\star^{\otimes \mathrm{IE}(\sigma)}\|_F\geq \langle\vw_\star^{\otimes \mathrm{IE}(\sigma)}, \vw_\star^{\otimes\mathrm{IE}(\sigma)} - \int\vw^{\otimes\mathrm{IE}(\sigma)}\varphi(\mathrm{d}\vw)\rangle_F =  1-\int\langle\vw,\vw_\star\rangle^{\mathrm{IE}(\sigma)}\mathrm{d}\varphi(\vw)$ to converge to $0$ (this inequality is proved in Proposition~\ref{prop:m-chaos-localization} in appendix). When $\mathrm{IE}(\sigma)$ is even, one has $\langle \vw^{\otimes \mathrm{IE}(\sigma)},\vw_\star^{\otimes \mathrm{IE}(\sigma)}\rangle_F = |\langle \vw,\vw_\star\rangle|^{\mathrm{IE}(\sigma)}$, and hence $d_{\bR\bP}^2([\vw],[\vw_\star]) \leq 2(1-|\langle\vw,\vw_\star\rangle|)\leq 2(1-|\langle\vw,\vw_\star\rangle|^{\otimes\mathrm{IE}(\sigma)})$ is forced to decrease to $0$ when the estimation error decreases to $0$. Therefore, when $\mathrm{IE}(\sigma)$ is even, the stationary distribution of the nonlinear Fokker-Planck equation develops a two-spike structure on the sphere: its probability mass is primarily concentrated near
    $\{\vw:\|\vw-\vw_\star\|_2\leq \rho\}\cup\{\vw:\|\vw+\vw_\star\|_2\leq \rho\}$,
    where $\rho>0$ is a small constant.
    In contrast, when $\mathrm{IE}(\sigma)$ is odd, the inequality
    $\int \big(1-\langle \vw,\vw_\star\rangle^{\mathrm{IE}(\sigma)}\big)\mathrm d\varphi(\vw)\leq \big\|\int \vw^{\otimes\mathrm{IE}(\sigma)}\varphi(\mathrm d\vw)-\vw_\star^{\otimes\mathrm{IE}(\sigma)}\big\|_F$
    implies a single-spike structure concentrated near
    $\{\vw:\|\vw-\vw_\star\|_2\leq \rho\}$.

    \item From the statistical physics viewpoint, Proposition~\ref{prop:stationary_distribution} should not be interpreted as a statement about an exact $\mathbb Z_2$-symmetry of the self-consistent Hamiltonian. It is instead a leading-order selection statement for a low-temperature nonlinear Gibbs fixed point. In the Curie--Weiss and Hopfield models, the antipodal structure of the low-temperature states is tied to the even dependence of the Hamiltonian on the relevant order parameter \cite[pp. 239]{talagrandMeanFieldModels2011}. In the present problem, the full empirical Hamiltonian need not be antipodally symmetric; the dichotomy is determined by the first non-vanishing Hermite chaos. If $\mathrm{IE}(\sigma)$ is odd, the leading interaction distinguishes $\boldsymbol w_\star$ from $-\boldsymbol w_\star$, and the low-temperature geometry selects an oriented direction on $S_2^{d-1}$. If $\mathrm{IE}(\sigma)$ is even, the leading interaction factors through the antipodal quotient, and the natural low-temperature state space is $\mathbb{RP}^{d-1}$.
    
    \item 

    As a Riemannian manifold, or more generally as a metric measure space, the appearance of measure concentration on $\bR\bP^{d-1}$ is to be expected; note that $d_{\bR\bP}$ is equivalent to the Riemannian geodesic distance on $\bR\bP^{d-1}$; see \cite[Section 5.7]{bakryAnalysisGeometryMarkov2014}. However, we emphasize that the novelty of Proposition~\ref{prop:concentration_Levy_Milman} lies in the fact that $\bar\varphi_\lambda$ is not the Riemannian volume measure, but rather the long-time limit of a random spherical nonlinear Fokker-Planck equation arising from the Gaussian single-index problem, that concentrates around the hidden index. Nevertheless, it still satisfies a measure concentration property. To the best of our knowledge, this phenomenon has not been observed before.
\end{enumerate}

\paragraph{Moments tensors govern the low-temperature geometry.} The low-temperature geometry described in Proposition~\ref{prop:stationary_distribution} and Proposition~\ref{prop:concentration_Levy_Milman} is governed by the convergence of moments tensors. For any $m\in\bN_+$, denote by $\cT_m(\varphi) = \int \vw^{\otimes m}\varphi(\mathrm d\vw)$ the $m$-th moments tensor, whenever the integral exists. The convergence of $\|\cT_m(\hat\varphi_\lambda)-\cT_m(\varphi^\star)\|_F$ is a widely studied object in nonconvex optimization, for instance in the likelihood landscape of mixture models, \cite{fanLikelihoodLandscapeMaximum,katsevichLikelihoodMaximizationMoment2023} and in the saddle-to-saddle dynamics of gradient flows in learning Gaussian multi-index models, \cite{bietti_learning_2025}. In the Gaussian multi-index problem, we proved in Proposition~\ref{prop:multi-index-localization} that for any degree $M$ polynomial $Q:\vw\in S_2^{d-1}\mapsto c_0 + \sum_{m=1}^M\langle A_m,\vw^{\otimes m}\rangle_F \in \bR$ with some symmetric tensors $A_m\in\mathrm{Sym}_m(\bR^d)$ (see, for instance, \cite[Equation 9.7]{michalekInvitationNonlinearAlgebra2021} for the correspondence between homogeneous polynomials and symmetric tensors), the following holds almost surely:
\begin{align*}
    &\left| \int Q\mathrm{d}\hat\varphi_\lambda - \int Q \mathrm{d}\varphi^\star \right| \leq \sum_{m=1}^M\|A_m\|_F\| \cT_m(\hat\varphi_\lambda) - \cT_m(\varphi^\star)\|_F.
\end{align*}Therefore, the proofs of Proposition~\ref{prop:stationary_distribution} and Proposition~\ref{prop:concentration_Levy_Milman} proceed by constructing suitable polynomials, sometimes referred to as certificates. Therefore, the emergence of the low-temperature geometry of $\hat\varphi_\lambda$ requires the first $M$ moments tensors of $\hat\varphi_\lambda$ to converge, in Frobenius norm, to the corresponding moments tensors of $\varphi^\star$. An analogous phenomenon also holds in the single-index model: its concentration property requires the $\mathrm{IE}(\sigma)$-th moments tensor to converge, in Frobenius norm, to the corresponding moments tensor of $\delta_{\vw_\star}$.

We emphasize that the present paper studies the long-time limit, which is fundamentally different from the finite-time training dynamics considered in the existing literature. In particular, $\frac{b_m}{\sqrt{m!}}\cT_m(\varphi^\star)$ corresponds to the Hermite tensor introduced in \cite{bietti_learning_2025}. In \cite{bietti_learning_2025}, the authors proved that Hermite polynomials govern the saddle-to-saddle dynamics of gradient flow in Gaussian multi-index problems; see also \cite{abbe_sgd_2023}. This is a finite-time phenomenon. In contrast, such a phenomenon is not visible in the long-time regime that interests us here, more precisely when $t=\Omega(\exp(N))$. Understanding feature learning for MFLD at finite times, in particular the estimation error of moments tensors at finite times and its influence on the resulting low-temperature geometry, is an interesting direction for future research. The theoretical framework developed in the next subsection links moments tensors, low-temperature geometry, and their impact on statistical properties. It thereby provides a geometric--statistical perspective for studying the feature learning property of finite-time training dynamics.

\subsection{Dimension Reduction, Top-\texorpdfstring{$k$}{k} Signal Approximation - Theoretical Framework Based on the Feature-Learning Property}\label{sec:proof_strategy}

We now explain how the low-temperature geometry yields a dimension reduction of the statistical model $\mathcal F$. The control of \eqref{eq:uniform_parameter_recovery} shows that the stationary hidden-layer marginal concentrates near the hidden indices. Hence, although $\mathcal F=\{f_\varphi:\varphi\in\mathcal P(S_2^{d-1})\}$ is an infinite-dimensional nonlinear mean-field model, the part selected by $\hat\varphi_\lambda$ is governed by the first-order behavior of the ridge functions near $\boldsymbol w_1^\star,\ldots,\boldsymbol w_M^\star$. More precisely, Proposition~\ref{prop:multi-index-rademacher} in the appendix shows that $f_{\hat\varphi_\lambda}$ lies within a negligible $L^2(\mathbb P_X)$-distance of the first-order space
\begin{align}\label{eq:def_finite_dimensional_space}
&\operatorname{Span}\big\{\sigma(\langle\boldsymbol w_j^\star,\cdot\rangle),\ \boldsymbol x\mapsto\sigma'(\langle\boldsymbol w_j^\star,\boldsymbol x\rangle)\langle\boldsymbol u,\boldsymbol x\rangle:1\leq j\leq M,\ \boldsymbol u\in\bR^d\big\}.
\end{align}
This space has dimension at most $M(d+1)$. Thus the low-temperature multi-spike structure compresses the infinite-dimensional nonlinear model $\mathcal F$ to an $O(Md)$-dimensional first-order model, up to a negligible remainder.

The same finite-dimensional space also captures the main component of the regression function. Since $f^\star(\cdot)=\sum_{j=1}^M a_j^\star\sigma(\langle\boldsymbol w_j^\star,\cdot\rangle)$, the target belongs to the zeroth-order part of \eqref{eq:def_finite_dimensional_space}. Therefore, the low-temperature geometry selects a learned feature subspace in which the main component of $f^\star$ is represented by only $k=M(d+1)$ learned directions. Equivalently, with the choice $\gamma_j=\sigma_j$, the target representative $\mathbf 1_{\fea}\in L^2(\hat\varphi_\lambda)$ has negligible weighted tail energy outside the top $k$ learned directions, namely $\sum_{j>k}\sigma_j\langle g_{\fea},e_j\rangle_{L^2(\hat\varphi_\lambda)}^2=o_{\mathbb P}(1)$ - which we prove later in Corollary~\ref{coro:feature_learning_multi}. This is the top-$k$ signal approximation component of the feature-learning property, obtained from the dimension reduction induced by the low-temperature geometry.

We now summarize the estimation-error framework provided by the feature-learning property. This framework does not replace the uniform convergence argument; since $\hat f_N$ is a data-dependent random function, some uniform or localized stochastic control remains unavoidable. Its role is to identify where this stochastic control should be carried out: not directly over the original infinite-dimensional nonlinear model $\mathcal F$, but around the low-dimensional structure revealed by the learned representation near $f^\star$. The central step is a dimension reduction of $\cF$ around $f^\star$: the learned representation should not merely make $f^\star$ representable, but should make the main component of $f^\star$ effectively approximable by a small number of leading learned directions in the learned feature space. The localized uniform convergence argument is then carried out around these learned directions, yielding an estimation error bound for the final estimator. In the proof, we use localization techniques; see \cite{koltchinskii_oracle_2011}, to establish the connection between the above dimensionality reduction phenomenon and the estimation error. This eventually yields, simultaneously, Proposition~\ref{prop:stationary_distribution}, Proposition~\ref{prop:concentration_Levy_Milman}, and the estimation error convergence rate in Theorem~\ref{thm:fixed-output-general-ie} and Theorem~\ref{thm:multi-index-fixed-output} below.

\subsection{Statistical consequences of the emergence of a low-dimensional structure at low-temperature}

We now record the statistical consequences of the low-temperature geometry described above. The following results show that the dimension reduction induced by the stationary hidden-layer marginal leads to the $d/N$ and $Md/N$ estimation rates where $d$ and $dM$ are the dimensions of the low-dimensional representations of $f^\star$ emerged in the feature space under the single index and multiple index model assumptions respectively.

\subsubsection{The $d/N$ convergence rate in the Gaussian single-index model}

The following theorem is the main result of this subsection. The proof can be found in Section~\ref{sec:proof_thm_fixed-output-general-ie}.

\begin{Theorem}
\label{thm:fixed-output-general-ie}
Grant Assumption~\ref{ass:fixed-output-general-ie}. There exist constants $c_0\in(0,1)$, $C\ge1$, and $C_0\ge e$, depending only on $\mathrm{IE}(\sigma),B_\sigma,L_\sigma,M_\sigma,T_\sigma,B_\xi,|b_{\mathrm{IE}(\sigma)}|^{-1}$, such that the following holds. For any $x\ge1$, $N\ge2$, and any $\lambda\geq0$ such that $\lambda d\le c_0$, let
\begin{align}\label{eq:def_r_star_single}
    r_*^2 = C\left[ \frac{d\log(C_0dN)+x}{N} + \psi(\lambda d) \right].
\end{align}
Then, with probability at least $1-4e^{-x}$,
\begin{align}
\label{eq:result_single_index}
\|f_{\hat\varphi_\lambda}-f^\star\|_{L^2(\bP_X)}^2 + \lambda\Ent_\tau^-(\hat\varphi_\lambda) \le r_*^2,\mbox{ and }\norm{\bE[\hat W^{\otimes \mathrm{IE}(\sigma)}|(X_i,Y_i)_{i=1}^N]-(\boldsymbol{w}^\star)^{\otimes \mathrm{IE}(\sigma)}}_F^2\leq \frac{\mathrm{IE}(\sigma)!}{b_{\mathrm{IE}(\sigma)}^2}r_*^2.
\end{align}
\end{Theorem}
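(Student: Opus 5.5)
The plan is to combine the variational (basic) inequality for $\hat\varphi_\lambda$ with a localized uniform deviation bound, and then read off the tensor bound from the Hermite expansion of the excess risk.

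\textbf{Basic inequality.} Since $\hat\varphi_\lambda$ minimizes $P_N\ell_\varphi+\lambda\Ent_\tau^-(\varphi)$ (Proposition~\ref{prop:convergence_spherical_MFLD}), I compare it with a comparator $\varphi_\epsilon$. Take $\varphi_\epsilon$ to be the normalized uniform measure on the spherical cap $\{\vw:\|\vw-\vw^\star\|_2\le\epsilon\}$. This comparator satisfies $\Ent_\tau^-(\varphi_\epsilon)\approx (d-1)\log(C/\epsilon)$ and $\|f_{\varphi_\epsilon}-f^\star\|_{L^2}^2\lesssim\epsilon^4+\epsilon^2/d$, using a second-order Taylor expansion of $\sigma$ and the symmetry of the cap, which cancels the first-order term. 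Optimizing gives $\epsilon^2\asymp\lambda d$ and a comparator cost $\lesssim\lambda d(1+\log(e/(\lambda d)))=\psi(\lambda d)$. Writing $Y=f^\star+\xi$, the basic inequality becomes
\[
\|f_{\hat\varphi}-f^\star\|_{N}^2+\lambda\Ent^-_\tau(\hat\varphi)\le \|f_{\varphi_\epsilon}-f^\star\|_N^2+\lambda\Ent^-_\tau(\varphi_\epsilon)+2(P_N\xi(f_{\hat\varphi}-f_{\varphi_\epsilon})),
\]
with $\Ent^-_\tau\ge0$.

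\textbf{Localized uniform control.} This is the main obstacle. The class $\{f_\varphi\}$ is the convex hull of ridge functions and is not $d$-dimensional, so a naive Rademacher bound only gives $\sqrt{d/N}$. I would localize around $f^\star$ following the strategy of Section~\ref{sec:proof_strategy}. First, obtain a crude slow-rate consistency, so that $\|f_{\hat\varphi}-f^\star\|$ is small. Then use the Hermite expansion \eqref{eq:Hermite_expansion_estimation_error} and the moment-tensor inequality of Proposition~\ref{prop:m-chaos-localization} to deduce $1-\int\langle\vw,\vw^\star\rangle^{\mathrm{IE}}d\hat\varphi\lesssim\|f_{\hat\varphi}-f^\star\|$. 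Hence most of the mass of $\hat\varphi$ lies near $\vw^\star$, or near $\pm\vw^\star$ in the even case.

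On this event, expand $f_{\hat\varphi}-f^\star$ to first order around $\pm\vw^\star$. The result lies in the $O(d)$-dimensional space spanned by $\sigma(\langle\pm\vw^\star,\cdot\rangle)$ and $\sigma'(\langle\pm\vw^\star,x\rangle)\langle\vu,x\rangle$, plus a second-order remainder controlled by $\int\|\vw\mp\vw^\star\|^2d\hat\varphi$, which is of the order of the excess risk itself. For the linear part, the multiplier and quadratic processes are handled by finite-dimensional chaining and Bernstein's inequality, using boundedness of $\xi$ and $\sigma$, at cost $(d+x)/N$. For the remainder, use a net over the sphere at scale $1/(dN)$ together with the Lipschitz constants $L_\sigma,M_\sigma$; this produces the $d\log(C_0dN)$ term. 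Combining the pieces with a peeling/fixed-point argument of Koltchinskii type, including a lower isometry between $\|\cdot\|_N$ and $\|\cdot\|_{L^2}$ on the localized class, yields the first inequality with $r_*^2$ as in \eqref{eq:def_r_star_single}, and probability $1-4e^{-x}$ from the union of the four concentration events.

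\textbf{Tensor bound.} By \eqref{eq:Hermite_expansion_estimation_error}, $\|f_\varphi-f^\star\|^2\ge\frac{b_{\mathrm{IE}}^2}{\mathrm{IE}!}\|\cT_{\mathrm{IE}}(\varphi)-(\vw^\star)^{\otimes\mathrm{IE}}\|_F^2$, since every term in the expansion is nonnegative. Combined with the first inequality, this gives the second claim immediately, because $\bE[\hat W^{\otimes m}\mid\cD]=\cT_m(\hat\varphi_\lambda)$.
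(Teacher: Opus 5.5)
Your plan is essentially the paper's proof: the basic inequality against the normalized spherical cap (giving $\psi(\lambda d)$), the deterministic Hermite--tensor localization of Proposition~\ref{prop:m-chaos-localization}, the split of $f_\varphi-f^\star$ into a part in a $(2d+1)$-dimensional first-order space plus a second-order remainder handled by a net argument (which produces the $d\log(dN)$ term), a Talagrand/peeling isomorphism, and the chaos projection for the tensor bound. Two small corrections: (i) the preliminary slow-rate consistency step is unnecessary, because $S_\varphi\lesssim\|f_\varphi-f^\star\|_{L^2(\bP_X)}$ holds for every $\varphi$, so the decomposition is valid on every $L^2$-ball; (ii) the remainder is controlled only linearly in $r=\|f_\varphi-f^\star\|_{L^2(\bP_X)}$, not by the excess risk $r^2$, which is exactly why its normalized class must have Rademacher complexity of order $\sqrt{d\log(dN)/N}$ as in Lemma~\ref{lem:second-order-multiplier-general-ie}.
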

We make several remarks on Theorem~\ref{thm:fixed-output-general-ie}.
\begin{enumerate}
    \item To the best of our knowledge, Theorem~\ref{thm:fixed-output-general-ie} is the first result, under such general assumptions, in which MFLD attains the minimax optimal rate (up to logarithmic factors) for both $f^\star$ and $\boldsymbol{w}^\star$, with the information exponent entering only through constants. This implies that the practitioner need not know in advance that the problem has a well-specified single-index structure: a ``blind'' application of MFLD discovers and adapts to it, exhibiting the feature-learning phenomenon. Moreover, $\lambda\asymp \frac{1}{N}$ is the choice that yields the fastest computational convergence rate of the algorithm among all temperature parameters (see Proposition~\ref{prop:convergence_spherical_MFLD} in the Appendix), while preserving the optimal statistical properties in Theorem~\ref{thm:fixed-output-general-ie}.

    \item We explain the difference between this paper and prior work such as \cite{ben_arous_online_2021}. This paper studies statistical learning problems and thus focuses on the estimation error of the single- or multi-index regression function, that is, $f^\star$, while work such as \cite{ben_arous_online_2021} mainly focuses on the estimation error of the parameter, namely the single-index $\boldsymbol{w}^\star$. These are two different types of problems. In fact, when there is misspecification between the link function and the activation function of the neural network, good parameter recovery may result in poor recovery of the regression function \cite{ben_arous_online_2021}.

    \item Importantly, although the case $\lambda=0$ is allowed, Theorem~\ref{thm:fixed-output-general-ie} does not yield benign overfitting. This is because for a bounded model such as $\mathcal{P}(S_2^{d-1})$, the observation vector $(Y_1, \dots, Y_N)$ does not necessarily lie in $P_{\sigma}\mathcal{F} = \{(f(X_i))_{i=1}^N : f \in \mathcal{F}\}$, and thus an overfitting estimator may not exist (see Proposition~\ref{prop:no_interpolation_spherical_mean_field} for a formal statement). Thus, the case $\lambda=0$ corresponds to empirical risk minimization, or to a mean-field shallow neural network without Langevin diffusion; see, for instance \cite{mei_mean_2018,rotskoff_trainability_2022,chizat_global_2018,sirignano_mean_2020}.

\end{enumerate}

\subsubsection{The $dM/N$ convergence rate in the Gaussian Multi-Index Model}\label{sec:application_Gaussian_Multi_Index}

The main result of this section is the following theorem. The proof of Theorem~\ref{thm:multi-index-fixed-output} can be found in Section~\ref{sec:proof_thm_multi-index-fixed-output}.
\begin{Theorem}\label{thm:multi-index-fixed-output}
    Grant Assumption~\ref{ass:multi_index}.
    For any $x\geq 1$, $N\geq 2$ and $\lambda d\leq c_0$ where $c_0$ is the constant in Proposition~\ref{prop:stationary_distribution}, for $r_*$ in Proposition~\ref{prop:stationary_distribution}, with probability at least $1-4e^{-x}$, for any $1\leq m\leq M$,
\begin{align*}
    &\|f_{\hat\varphi_\lambda}-f^\star\|_{L^2(\bP_X)}^2+\lambda\Ent_\tau^-(\hat\varphi_\lambda)
    \le r_*^2, \mbox{ and } 
    \norm{\bE\left[(\hat W)^{\otimes m}\big|(X_i,Y_i)_{i=1}^N\right]-\sum_{j=1}^Ma_j^\star(\boldsymbol{w}_j^\star)^{\otimes m}}_F^2
    \le \frac{m!}{b_m^2}r_*^2.
\end{align*}
\end{Theorem}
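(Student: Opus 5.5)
The plan is to treat $\hat\varphi_\lambda$ as the minimizer of $\varphi\mapsto P_N\ell_\varphi+\lambda\Ent_\tau^-(\varphi)$ over $\mathcal P(S_2^{d-1})$, use the basic inequality against a smoothed competitor, and then run a localized empirical-process argument around the $O(Md)$-dimensional first-order space \eqref{eq:def_finite_dimensional_space}.

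\textbf{Basic inequality.} The competitor is $\varphi_h:=\sum_j a_j^\star \mathrm{Unif}(\text{cap of radius } h \text{ around } \boldsymbol w_j^\star)$.
\begin{itemize}
\item Since $\sigma\in C_b^3$, a second-order Taylor expansion of $\sigma(\langle\boldsymbol w,\cdot\rangle)$ around $\boldsymbol w_j^\star$ gives $\|f_{\varphi_h}-f^\star\|_{L^2(\mathbb P_X)}^2\lesssim h^4$. The first-order term averages to $O(h^2)$ along the cap axis.
\item The entropy satisfies $\Ent_\tau^-(\varphi_h)\le \sum_j a_j^\star\log(1/\tau(\mathrm{cap}_h))\lesssim d\log(1/h)+\log M$.
\item Choosing $h^2\asymp\lambda d$ makes $h^4+\lambda d\log(1/h)\lesssim\psi(\lambda d)$. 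This is where $\psi$ enters.
\item With $\ell_\varphi-\ell_{f^\star}=(f_\varphi-f^\star)^2-2\xi(f_\varphi-f^\star)$, optimality of $\hat\varphi_\lambda$ gives
\[
\|\hat f-f^\star\|_{L^2(P_N)}^2+\lambda\Ent_\tau^-(\hat\varphi_\lambda)\le \psi(\lambda d)+2P_N[\xi(\hat f-f_{\varphi_h})]+\text{cross terms}.
\]
\end{itemize}

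\textbf{Localization.} The noise term must be controlled uniformly over $\{f_\varphi:\|f_\varphi-f^\star\|_{L^2}\le r\}$. A crude Rademacher bound over all of $\mathcal F$ would give only a slow rate, so I will bootstrap.
\begin{itemize}
\item First, a coarse bound $\|\hat f-f^\star\|\le r_0=o(1)$ via a global uniform bound over $\mathcal F$. The class is uniformly bounded, Lipschitz in $\boldsymbol w$, and has covering numbers controlled by those of $S_2^{d-1}$, giving $\sqrt{d\log N/N}$.
\item Convert $r_0$ into moment-tensor closeness through the Hermite identity $\|f_\varphi-f^\star\|^2=\sum_k\frac{b_k^2}{k!}\|\mathcal T_k(\varphi)-\mathcal T_k(\varphi^\star)\|_F^2$. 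Then apply Proposition~\ref{prop:stationary_distribution}-type certificate polynomials to get Voronoi concentration, i.e. mass near the spikes.
\item Inside this localized set, decompose $f_\varphi=\text{(element of the }M(d+1)\text{-dimensional span)}+R_\varphi$, with $\|R_\varphi\|_\infty\lesssim\sum_j\int_{V_j}\|\boldsymbol w-\boldsymbol w_j^\star\|^2\,\mathrm d\varphi$ plus contributions from far mass.
\item The linear part contributes a Bernstein/chi-square fluctuation of order $r\sqrt{(Md+x)/N}$. The remainder is small by localization and is handled by the same covering bound, now multiplied by a small radius.
\item A fixed-point argument in $r$ (Koltchinskii-style peeling) then gives $r^2\lesssim r_*^2$ with probability $1-4e^{-x}$. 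This yields the first inequality; the empirical-to-population norm transfer uses the same localized bound.
\end{itemize}

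\textbf{Moment bound.} For each $m\le M$, the Hermite identity gives
\[
\|f_{\hat\varphi_\lambda}-f^\star\|^2\ge \frac{b_m^2}{m!}\|\mathcal T_m(\hat\varphi_\lambda)-\mathcal T_m(\varphi^\star)\|_F^2,
\]
using $b_m\neq0$ for $m\le M$. Combined with $\Ent_\tau^-\ge0$, this gives the second inequality immediately.

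\textbf{Main obstacle.} The hard step is the localization bootstrap: showing that $L^2$-closeness forces the mass of $\varphi$ to sit near $\{\boldsymbol w_j^\star\}$ with quadratic control, so that the remainder $R_\varphi$ is second order. This needs certificate polynomials of degree $\le M$ that separate the $\boldsymbol w_j^\star$ using $\Delta_\star>0$, for example Lagrange-type polynomials in $\langle\boldsymbol w,\boldsymbol w_j^\star\rangle$. I would build these first and check that their Frobenius norms depend only on $M$ and $\Delta_\star^{-1}$.
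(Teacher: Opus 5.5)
Your outline follows the paper's proof. The ingredients match: the cap-mixture competitor giving $\psi(\lambda d)$ (Lemma~\ref{lem:multi-dirac-competitor}); the certificate polynomial $\prod_j(1-\langle\vw,\vw_j^\star\rangle)$ combined with the Hermite--tensor bound, giving $S_\star(\varphi)\le C_{\rm loc}\|f_\varphi-f^\star\|_{L^2(\bP_X)}$; the split of $f_\varphi-f^\star$ into the $M(d+1)$-dimensional first-order span plus a second-order remainder; localized Rademacher bounds with Bousquet--Talagrand peeling; and the chaos projection for the moment tensors.

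\textbf{The remainder step fails as written.} The remainder is
$R_\varphi(\vx)=\int\|\vw-\vw_{j(\vw)}^\star\|_2^2\int_0^1(1-t)\,\sigma''(\cdots)\,\langle\ve_{\vw},\vx\rangle^2\,\mathrm{d}t\,\varphi(\mathrm{d}\vw)$.
This is quadratic in $\vx$, so $\|R_\varphi\|_\infty=\infty$. Equivalently, $f_\varphi-f^\star$ is bounded while the linear part $\sigma'(\langle\vw_j^\star,\vx\rangle)\langle\cdot,\vx\rangle$ is not.

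What does hold is $R_\varphi\in\frac{S_\star(\varphi)}{2}\,\overline{\conv}(\mathcal{Q}_1)$, where
$\mathcal{Q}_1=\{\vx\mapsto\sigma''(\langle\vu,\vx\rangle)\langle\ve,\vx\rangle^2:\|\vu\|_2\le 1,\ \ve\in S_2^{d-1}\}$.
This class is unbounded, so a sup-norm or Hoeffding-type covering argument will not control it. The paper handles it in Lemma~\ref{lem:second-order-multiplier-general-ie}:
\begin{itemize}
\item It writes the Rademacher average as $\ve^\top A_N(\vu)\ve$, with $A_N(\vu)=N^{-1}\sum_i\varepsilon_i\sigma''(\langle\vu,X_i\rangle)X_iX_i^\top$.
\item It applies Bernstein's inequality for sub-exponential variables on a $1/4$-net of the sphere in $\ve$.
\item It takes a fine net in $\vu$ and passes to all of $\vu$ using Lipschitz continuity via $T_\sigma$ and $\mathbb E\|X\|_2^3$.
\end{itemize}

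Two further remarks on this step.
\begin{itemize}
\item $S_\star(\varphi)$ is only \emph{linear} in $\|f_\varphi-f^\star\|_{L^2(\bP_X)}$, and this cannot be improved. Putting mass $\epsilon$ at a point far from every spike makes both quantities of order $\epsilon$. Linear control is enough, because the remainder then contributes $r\sqrt{d\log(dN)/N}$, the same order as the linear part.
\item Far mass needs no separate treatment. The integral-form Taylor expansion around the nearest spike is exact for every $\vw$, since the segment stays in $B_2^d$.
\end{itemize}

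\textbf{The coarse bootstrap is unnecessary.} The certificate bound is deterministic and holds for every $\varphi\in\mathcal P(S_2^{d-1})$. So the decomposition is available directly on each ball $\{\|f_\varphi-f^\star\|_{L^2(\bP_X)}\le r\}$, and peeling handles all radii at once. If you keep the bootstrap, correct its justification: $\cF$ is a closed convex hull, so its covering numbers are not controlled by those of $S_2^{d-1}$. Only its Rademacher complexity is inherited from $\{\sigma(\langle\vw,\cdot\rangle):\vw\in S_2^{d-1}\}$, which still gives your coarse rate.
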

We discuss several implications of Theorem~\ref{thm:multi-index-fixed-output}.
\begin{enumerate}

    \item From a Bayesian viewpoint, $\hat\varphi_\lambda$ may be interpreted as the posterior distribution on the model's parameters $\{\vw_1^\star,\cdots,\vw_M^\star\}$, obtained by evolving the initial uniform prior $\varphi_0$ through MFLD; the information contained in the training samples is incorporated into the hidden-layer distribution through this evolution.

    \item In Theorem~\ref{thm:multi-index-fixed-output}, we require the multi-indices to have a large angular separation, i.e., \eqref{eq:target_separation}. Note that the absolute constant $C$ depends on $\Delta_\star^{-M}$. This implies that when $M$ is large (e.g., $M \gg \exp(d)$), or there exists a very small angular separation between two indices, the rate deteriorates rapidly. We conjecture that, in order to achieve parameter recovery, this separation condition cannot be removed, just as there is a gap between parameter recovery and density estimation in the Gaussian mixture problem; see \cite{dossOptimalEstimationHighdimensional2023}. However, if the goal is only prediction, we conjecture that this condition should be removable.

    \item We emphasize that the minimax optimal rate in Theorem~\ref{thm:multi-index-fixed-output} (as well as Theorem~\ref{thm:fixed-output-general-ie}) is achieved when $\lambda \lesssim 1/N$, which falls into the low-temperature regime of the system; and in particular, when $\lambda=0$, which corresponds to the mean-field neural network without Langevin diffusion. We emphasize that, when $\lambda=0$, we only establish the statistical properties, whereas the convergence of the corresponding algorithm remains an open problem (see \cite{chizat_global_2018}).
    
    \item Although the spherical MFLD can achieve the minimax optimal convergence rate for the estimation error, Proposition~\ref{prop:convergence_spherical_MFLD} in the Appendix shows an $\Omega(\exp(N))$ time guarantee to converge. This is because, in the Wasserstein space $(\cP_{\mathrm{ac}}(S_2^{d-1}),W_2)$ where $W_2$ is the Wasserstein-$2$ distance, the (regularized) empirical loss landscape $\{P_N\ell_\varphi^\lambda:\varphi\in\cP_2(S_2^{d-1})\}$ is no longer displacement convex and thus admits many spurious stationary points \cite[Lemma A.2]{chizat_mean-field_2022}. Consequently, the evolution of the Wasserstein gradient flow can be trapped in metastable states; however, due to the thermal fluctuations induced by the diffusion term in Langevin dynamics, it can escape local minima on a time scale larger than $\exp(N)$ and converge to $\hat\varphi_\lambda$.
\end{enumerate}

The above multi-index result applies under a general separation condition, but its constants may deteriorate rapidly with the number of indices. We now present a complementary refinement for a better-conditioned multi-index structure. The key assumption is that the Gram matrix of the true directions is close to the identity.
This result should be viewed as an $M$-uniform prediction theorem under an additional geometric assumption, rather than as a replacement for the general separated multi-index theorem.

\begin{Assumption}[Multi-index model with frame structure]\label{ass:rip_multi_index}
Let $d\geq 2$, $X\sim\mathcal N(0,I_d)$, and let $\xi$ be a zero-mean random variable independent of $X$, with $\|\xi\|_{L^\infty}\le B_\xi$. There exist $M\ge2$, $\vw_1^\star,\ldots,\vw_M^\star\in S_2^{d-1}$, $a_1^\star,\ldots,a_M^\star>0$, $\sum_{j=1}^M a_j^\star=1$, such that $f^\star(\cdot)=\sum_{j=1}^M a_j^\star\sigma(\langle \vw_j^\star,\cdot\rangle)$. Let $W_\star=[\vw_1^\star|\cdots|\vw_M^\star]\in\mathbb R^{d\times M}$ and $G_\star=W_\star^\top W_\star$. There exists $\eta\in(0,1/2)$, independent of $M,d,N$, such that  $\|G_\star-I_M\|_{\rm op}\le \eta .$
The activation function $\sigma\in C_b^3(\mathbb R)$, and there exist $B_\sigma,L_\sigma,M_\sigma,T_\sigma<\infty$, satisfying $\|\sigma\|_{L^\infty}\le B_\sigma$, $\|\sigma'\|_{L^\infty}\le L_\sigma$, $\|\sigma''\|_{L^\infty}\le M_\sigma$, and $\|\sigma^{(3)}\|_{L^\infty}\le T_\sigma$. Moreover, $b_2b_3b_4\ne0$, where $b_j$ is the $j$-th Hermite coefficient of $\sigma$.
\end{Assumption}

The frame condition says that the true index directions are not only separated, but almost orthonormal as a whole system.
Thus no linear combination of the true features is strongly amplified or strongly cancelled. This is the same geometric idea as the frame assumptions used in compressed sensing, high-dimensional statistics~\cite{buhlmann2011statistics,foucartMathematicalIntroductionCompressive2013} and dictionnary learning~\cite{gribonval2010dictionary}.
In the present proof, this condition replaces separation-based conditioning by uniform frame rates. It is a widely used assumption, as in \cite{dandi_how_2024,bietti_learning_2025}.


\begin{Theorembis}{thm:rip-multi-index-fixed-output}
Grant Assumption~\ref{ass:rip_multi_index}. There exist constants $c_0\in(0,1)$, $C\ge1$, and $C_0\ge e$ that are independent of $M,d,N$, such that the following holds. Let $x\ge1$, $N\ge2$, and $\lambda\ge0$ satisfy $\lambda d\le c_0$. For $\lambda\ge 0$, define
\[
r_{\rm frame}^2:=C\left[\frac{M d\log(C_0dN)+x}{N}+\frac{M (d\log(C_0dN))^2}{N^2}+\psi(\lambda d)\right].
\]
Then, with probability at least $1-4e^{-x}$, for $m=2,3,4$, 
\[
\|f_{\hat\varphi_\lambda}-f^\star\|_{L^2(\mathbb P_X)}^2+ \lambda\Ent_\tau^-(\hat\varphi_\lambda)\le r_{\rm frame}^2,\mbox{ and }\Big\|\int_{S_2^{d-1}} \vw^{\otimes m}\hat\varphi_\lambda(\mathrm{d}\vw)-\sum_{j=1}^M a_j^\star (\vw_j^\star)^{\otimes m}\Big\|_F^2\le \frac{m!}{b_m^2}r_{\rm frame}^2,
\]
If $N\ge d\log(C_0dN)$, then for $\lambda\ge 0$, $\|f_{\hat\varphi_\lambda}-f^\star\|_{L^2(\mathbb P_X)}^2+\lambda\Ent_\tau^-(\hat\varphi_\lambda)\le C\left[\frac{M d\log(C_0dN)+x}{N}+\psi(\lambda d)\right]$.
\end{Theorembis}

The main improvement is in the prediction rate. Up to logarithmic factors, the leading statistical term is $Md/N$, which is the natural parametric scale for $M$ unknown directions in $d$ dimensions, together with their mixture weights. In particular, the constants in the rate do not hide an exponential dependence on $M$. This is the advantage of the frame condition: under a well-conditioned multi-index structure, prediction behaves as if one were estimating $O(Md)$ effective parameters, rather than paying the much larger conditioning cost that appears in the general separated case.

After proving the feature-learning property of MFLD for Gaussian single-/multi-index problems with well-specified link functions, a natural question is whether spherical MFLD can also solve Gaussian single-/multi-index problems with misspecified link functions. In Section~\ref{sec:counter_example}, we provide a counterexample showing that there exists a Gaussian single-index problem defined by a Sobolev link function for which spherical MFLD is not consistent. This naturally leads to the following question: which problems can spherical MFLD effectively solve? We provide such a class of functions in Section~\ref{sec:problems_solved_byMFLD}. We already know how to prove that, when simultaneously training a two-layer mean-field neural network with bias, MFLD achieves minimax optimal convergence rates for the Gaussian multi-index problem (as well as some other high-dimensional regression models); however, this is not the main focus of the present paper, and we will present this part in a future sequel.



\subsection{Feature-Learning Property and Non-Laziness of Spherical MFLD}\label{subsec:FL-to-nonlazy}

We now assemble the preceding ingredients. The low-temperature dimension reduction gives the top-$k$ signal approximation, while the alignment property controls how the latent estimator uses the learned feature space. It remains to record the nontrivial feature evolution: the learned feature kernel cannot remain close to the random-features kernel at initialization. This also show the non-laziness of spherical MFLD.

Recall that for $\varphi\in\mathcal P(S_2^{d-1})$, we define $K_\varphi(\boldsymbol{x},\boldsymbol{x}')=\int_{S_2^{d-1}}\sigma(\langle \boldsymbol{w},\boldsymbol{x}\rangle)\sigma(\langle \boldsymbol{w},\boldsymbol{x}'\rangle)\,\mathrm{d}\varphi(\boldsymbol{w})$. Let $\tau$ be the uniform distribution on $S_2^{d-1}$. The proof of Corollary~\ref{cor:feature-kernel-movement} may be found in Section~\ref{sec:proof_coro_feature_kernel_movement}.

\begin{Corollary}[Feature-kernel evolution]\label{cor:feature-kernel-movement}Let $x\ge1$, $N\ge2$, and $\lambda d\le c_0$, and let $r_*$ be defined as in Theorem~\ref{thm:fixed-output-general-ie} and Theorem~\ref{thm:multi-index-fixed-output} respectively. There exists $C_{\rm ker}\ge1$, depending only on $M,\Delta_\star^{-1},B_\sigma,L_\sigma$, and $\max_{1\le m\le M}|b_m|^{-1}$ in the multi-index case, and depending only on $\mathrm{IE}(\sigma),B_\sigma,L_\sigma,M_\sigma,T_\sigma,B_\xi$, and $|b_{\mathrm{IE}}|^{-1}$ in the single-index case, such that the following holds.
\begin{enumerate}
    \item Grant Assumption~\ref{ass:fixed-output-general-ie}. If $r_*\leq 1$ and $C_{\mathrm{ker}}\sqrt{r_*}\leq\frac{|b_{\mathrm{IE}(\sigma)}|^2}{2\mathrm{IE}(\sigma)!}(1-\frac{(2(\mathrm{IE}(\sigma))-1)!!}{d(d+2)\cdots(d+2(\mathrm{IE}(\sigma))-2)})$, then with probability at least $1-4e^{-x}$, $\|K_{\hat\varphi_\lambda}-K_\tau\|_{L^2(\bP_X\otimes \bP_X)}\ge \frac{|b_{\mathrm{IE}(\sigma)}|^2}{2\mathrm{IE}(\sigma)!}(1-\frac{(2(\mathrm{IE}(\sigma))-1)!!}{d(d+2)\cdots(d+2(\mathrm{IE}(\sigma))-2)}).$

    \item Grant Assumption~\ref{ass:multi_index}.  If $r_*\le1$ and $C_{\rm ker}\sqrt{r_*}\le \frac{1}{2}\max_{1\le m\le M}\frac{|b_m|^2}{m!}\big\|\sum_{j=1}^M a_j^\star(\vw_j^\star)^{\otimes m}\otimes(\vw_j^\star)^{\otimes m}-\int_{S_2^{d-1}}\vw^{\otimes m}\otimes\vw^{\otimes m}\,\mathrm{d}\tau(\vw)\big\|_F$, then with the same probability, $$\|K_{\hat\varphi_\lambda}-K_\tau\|_{L^2(\bP_X\otimes \bP_X)}\ge \frac{1}{2}\max_{1\le m\le M}\frac{|b_m|^2}{m!}\Big\|\sum_{j=1}^M a_j^\star(\vw_j^\star)^{\otimes m}\otimes(\vw_j^\star)^{\otimes m}-\int_{S_2^{d-1}}\vw^{\otimes m}\otimes\vw^{\otimes m}\,\mathrm{d}\tau(\vw)\Big\|_F.$$
\end{enumerate}

\end{Corollary}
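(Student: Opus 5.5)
Writing $K_\varphi$ in Hermite form turns the kernel distance into a weighted sum of Frobenius distances between fourth-order-type moment tensors; the result then follows from a lower bound at $\varphi^\star$ and a perturbation argument.

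\emph{Hermite/Mehler expansion.} For $X,X'$ independent standard Gaussians and unit $\vw,\vw'$, Mehler's formula gives
\[
\bE\big[\sigma(\langle\vw,X\rangle)\sigma(\langle\vw',X\rangle)\big]=\sum_{k\ge0}\frac{b_k^2}{k!}\langle\vw,\vw'\rangle^k .
\]
Expanding $\|K_\varphi-K_\tau\|_{L^2(\bP_X\otimes\bP_X)}^2$ as a double integral over $(\vw,\vw')$ with respect to $(\varphi-\tau)^{\otimes2}$, and using the identity twice (once in $X$, once in $X'$), yields
\[
\|K_\varphi-K_\tau\|_{L^2}^2=\sum_{k\ge0}\frac{b_k^4}{(k!)^2}\big\|\cT_{k,k}(\varphi)-\cT_{k,k}(\tau)\big\|_F^2,
\qquad \cT_{k,k}(\varphi)=\int \vw^{\otimes k}\otimes\vw^{\otimes k}\,\mathrm d\varphi .
\]
Here I use $\int\!\!\int\langle\vw,\vw'\rangle^{2k}\,\mathrm d\mu\,\mathrm d\mu=\|\int\vw^{\otimes 2k}\mathrm d\mu\|_F^2$ with $\mu=\varphi-\tau$; the $k=0$ term vanishes. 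Dropping all terms except one gives, for each $m$,
\[
\|K_\varphi-K_\tau\|_{L^2}\ge \frac{b_m^2}{m!}\big\|\cT_{m,m}(\varphi)-\cT_{m,m}(\tau)\big\|_F .
\]

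\emph{Triangle inequality at $\varphi^\star$.} Let $\varphi^\star=\delta_{\vw^\star}$ in the single-index case and $\varphi^\star=\sum_j a_j^\star\delta_{\vw_j^\star}$ in the multi-index case. Then
\[
\|K_{\hat\varphi_\lambda}-K_\tau\|\ge \|K_{\varphi^\star}-K_\tau\|-\|K_{\hat\varphi_\lambda}-K_{\varphi^\star}\|.
\]
For the first term I keep only the index $m$. In the multi-index case this gives exactly the stated quantity, after maximizing over $m\le M$. In the single-index case I take $m=\mathrm{IE}(\sigma)$ and use $\|A-B\|_F\ge \langle A-B,A\rangle/\|A\|_F$ with $A=(\vw^\star)^{\otimes 2m}$, $\|A\|_F=1$. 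This gives $1-\int\langle\vw,\vw^\star\rangle^{2m}\mathrm d\tau=1-\frac{(2m-1)!!}{d(d+2)\cdots(d+2m-2)}$, the Gaussian/spherical even moment.

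\emph{Perturbation term (main obstacle).} I need $\|K_{\hat\varphi_\lambda}-K_{\varphi^\star}\|_{L^2}\le \tfrac12 C_{\rm ker}\sqrt{r_*}$; the hypothesis $C_{\rm ker}\sqrt{r_*}\le$ (half the main term) then closes the argument. The kernel depends on $\varphi$ bilinearly, so write
\[
K_{\hat\varphi}-K_{\varphi^\star}=\int\!\!\int \Big(\sigma(\langle\vw,\vx\rangle)\sigma(\langle\vw,\vx'\rangle)-\sigma(\langle\vw',\vx\rangle)\sigma(\langle\vw',\vx'\rangle)\Big)\,\mathrm d\pi(\vw,\vw')
\]
for a coupling $\pi$ of $\hat\varphi_\lambda$ and $\varphi^\star$. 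The integrand has $L^2(\bP_X\otimes\bP_X)$ norm at most $2B_\sigma L_\sigma\|\vw-\vw'\|_2$, so $\|K_{\hat\varphi}-K_{\varphi^\star}\|\lesssim B_\sigma L_\sigma\int\|\vw-\vw'\|\,\mathrm d\pi\le B_\sigma L_\sigma W_2(\hat\varphi_\lambda,\varphi^\star)$.

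In the multi-index case I build $\pi$ from the Voronoi cells: move the mass in $V_j$ to $\vw_j^\star$ and rebalance the weight discrepancy $|\hat\varphi(V_j)-a_j^\star|$ at cost at most $2$. Proposition~\ref{prop:stationary_distribution} then gives $W_2^2\lesssim C_{\rm vor} r_*$, hence a bound $\sqrt{r_*}$.

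In the single-index case the difficulty is an even $\mathrm{IE}(\sigma)$, where mass may sit near $-\vw^\star$ and $W_2$ to $\delta_{\vw^\star}$ is not small. I would instead bound the kernel difference directly through the expansion:
\[
\|K_{\hat\varphi}-K_{\varphi^\star}\|^2=\sum_k \frac{b_k^4}{(k!)^2}\|\cT_{k,k}(\hat\varphi)-\cT_{k,k}(\varphi^\star)\|_F^2 .
\]
Each term is controlled by $2\int(1-\langle\vw,\vw^\star\rangle^{2k})\,\mathrm d\hat\varphi\le 2k\int(1-\langle\vw,\vw^\star\rangle^{2})\,\mathrm d\hat\varphi$. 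By Proposition~\ref{prop:concentration_Levy_Milman} the last integral is at most $\int d_{\bR\bP}^2\,\mathrm d\bar\varphi\lesssim\varepsilon\asymp r_*$. Summability of $\sum_k k\,b_k^4/(k!)^2$ follows from $\sum_k b_k^2/k!=\bE\sigma(G)^2\le B_\sigma^2$ together with $b_k^2/k!\le B_\sigma^2$ and $\sigma\in C^3_b$ (decay from integration by parts). This yields $\lesssim\sqrt{r_*}$ and fixes $C_{\rm ker}$.
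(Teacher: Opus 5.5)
Your multi-index argument (Voronoi collapse plus the $2B_\sigma L_\sigma$ Lipschitz coupling), your lower bound through the $C_m\otimes C_m$ chaos block, and the odd-$\mathrm{IE}(\sigma)$ single-index case are sound and match the paper. In the odd case, Proposition~\ref{prop:concentration_Levy_Milman} gives $\int\|\vw-\vw^\star\|_2^2\,\mathrm d\hat\varphi_\lambda\le\varepsilon$, so the same coupling to $\delta_{\vw^\star}$ works.

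The gap is the even-$\mathrm{IE}(\sigma)$ perturbation step. Your expansion of the squared kernel distance is not an identity. For a signed measure $\mu$, multiplying the two Mehler series (one in $X$, one in $X'$) gives
\[
\|K_\mu\|_{L^2(\bP_X\otimes\bP_X)}^2=\sum_{k,l\ge0}\frac{b_k^2\,b_l^2}{k!\,l!}\Big\|\int\vw^{\otimes(k+l)}\,\mu(\mathrm d\vw)\Big\|_F^2 ,
\]
with one term per block $C_k\otimes C_l$. Your formula keeps only $k=l$. Dropping terms is harmless for the lower bound. However, you then use the diagonal sum as an \emph{upper} bound for $\|K_{\hat\varphi_\lambda}-K_{\delta_{\vw^\star}}\|$, and the discarded blocks with $k+l$ odd are exactly the ones that distinguish $\vw^\star$ from $-\vw^\star$.

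The only input you invoke, the $\bR\bP^{d-1}$-concentration, is compatible with $\hat\varphi_\lambda\approx\frac12\delta_{\vw^\star}+\frac12\delta_{-\vw^\star}$. For such a measure $\cT_{k+l}(\hat\varphi_\lambda)\approx0$ whenever $k+l$ is odd. Hence $\|K_{\hat\varphi_\lambda}-K_{\delta_{\vw^\star}}\|\gtrsim|b_{\mathrm{IE}(\sigma)}b_l|/\sqrt{\mathrm{IE}(\sigma)!\,l!}$ for every odd $l$ with $b_l\neq0$, and Assumption~\ref{ass:fixed-output-general-ie} does not exclude such $l$. So the claimed $O(\sqrt{r_*})$ bound does not follow from what you use. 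Recovering it would require also exploiting the odd chaoses of $f_{\hat\varphi_\lambda}-f^\star$ to control the mass near $-\vw^\star$.

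The missing observation is that you never need the whole kernel to be close to $K_{\delta_{\vw^\star}}$. You only need its $C_m\otimes C_m$ block with $m=\mathrm{IE}(\sigma)$, which sees only the even-order tensor $\cT_{2m}$ and is therefore blind to the sign.

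The paper implements this by comparing $\hat\varphi_\lambda$ with $\tilde\varphi_\lambda=\hat\varphi_\lambda(\{\langle\vw,\vw^\star\rangle\ge0\})\,\delta_{\vw^\star}+\hat\varphi_\lambda(\{\langle\vw,\vw^\star\rangle<0\})\,\delta_{-\vw^\star}$. The coupling $\vw\mapsto\pm\vw^\star$ gives $\|K_{\hat\varphi_\lambda}-K_{\tilde\varphi_\lambda}\|\le2B_\sigma L_\sigma\sqrt{\varepsilon}$ from the projective concentration alone. Since $\mathrm{He}_m(-s)\mathrm{He}_m(-s')=\mathrm{He}_m(s)\mathrm{He}_m(s')$, the $C_m\otimes C_m$ block of $K_{\tilde\varphi_\lambda}$ equals that of $K_{\delta_{\vw^\star}}$, so your lower bound applies to $\tilde\varphi_\lambda$ verbatim.

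Alternatively, keep your own estimate but apply the triangle inequality inside the block:
\[
\|K_{\hat\varphi_\lambda}-K_\tau\|\ge\frac{b_m^2}{m!}\Big(\|(\vw^\star)^{\otimes2m}-\cT_{2m}(\tau)\|_F-\|\cT_{2m}(\hat\varphi_\lambda)-(\vw^\star)^{\otimes2m}\|_F\Big).
\]
The last norm is at most $\sqrt{2m\varepsilon}$ by your bound $2\int(1-\langle\vw,\vw^\star\rangle^{2m})\,\mathrm d\hat\varphi_\lambda\le 2m\int d_{\bR\bP}^2\,\mathrm d\bar\varphi_\lambda$. This route also makes the summability over $k$ unnecessary.
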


Corollary~\ref{cor:feature-kernel-movement} shows that the learned feature kernel is separated from the random-features kernel whenever the learned hidden-layer distribution has recovered the hidden multi-index structure. Consequently, any MFLD trajectory $(\varphi_t)_{t\ge0}$ initialized at $\varphi_0=\tau$ and satisfying $K_{\varphi_t}\to K_{\hat\varphi_\lambda}$ in $L^2(\bP_X\otimes\bP_X)$ is non-lazy in the sense of feature-kernel evolution. 
This conclusion complements the lazy-training viewpoint of~\cite{chizat_lazy_2019}. It is not obtained from the usual scaling argument in the training-dynamics literature, such as~\cite{yang_tensor_2021,chizat_lazy_2019}, where one studies an infinite-width limit of neural networks. Rather, it follows from the recovery of the hidden multi-index structure by the terminal hidden-layer distribution, while the initial kernel $K_\tau$ is rotationally invariant.

The next corollary combines the feature-kernel evolution with the top-$k$ signal approximation obtained from the low-temperature dimension reduction and the alignment property of the latent estimator. Its proof may be found in Section~\ref{sec:prop_feature_learning_multi}.

\begin{Corollary}[Feature-learning property]\label{coro:feature_learning_multi}
Under the assumptions of Corollary~\ref{cor:feature-kernel-movement}, let $M=1$ in the single-index case and let $M$ be the number of indices in the multi-index case. Let $\mathbf 1_{\fea}\in L^2(\hat\varphi_\lambda)$ be the constant-one function, and take $a_{\fea}=\hat a_N=\mathbf 1_{\fea}$. Let $g_{\fea},\hat g_N\in\cH_\fea$ be the corresponding elements in the learned feature space. Then $g_\fea\circ\phi_\fea=\hat g_N\circ\phi_\fea=f_{\hat\varphi_\lambda}$. Moreover, with $k=M(d+1)$, let $(\sigma_j,\ve_j)_{j\ge1}$ be the eigenvalue--eigenvector pairs of $\Sigma=\mathbb E[\phi_{\fea}(X)\otimes \phi_{\fea}(X)\mid (X_i,Y_i)_{i=1}^N]$, and take the weights $\gamma_j=\sigma_j$. Then $\hat g_N$ satisfies the $(g_{\fea},k,\delta;\varepsilon_N,\omega_N)$-alignment property with respect to these weights, for instance with $\omega\equiv0$, and $\sum_{j>k}\gamma_j\langle g_{\fea},\ve_j\rangle_{\cH_\fea}^2=o_{\mathbb P}(1)$. In the multi-index case, together with Corollary~\ref{cor:feature-kernel-movement}, for any $\lambda=o(1)$ satisfying the assumptions above, the spherical MFLD $\hat\varphi_\lambda$ satisfies the feature-learning property from Definition~\ref{def:feature_learning_general}.
\end{Corollary}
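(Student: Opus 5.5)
The plan is to verify the items of Definition~\ref{def:feature_learning_general} one by one, with $K_0=K_\tau$, $\cH_\fea$ generated by $K_{\hat\varphi_\lambda}$, and $a_\fea=\hat a_N=\mathbf 1_\fea$.

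\emph{Factorization.} In spherical MFLD only the hidden layer is trained, so $f_{\hat\varphi_\lambda}(\vx)=\langle \mathbf 1_\fea,\varphi_{\neu}(\vx)\rangle_{L^2(\hat\varphi_\lambda)}$. By \eqref{eq:correspondence_g_a}, the element of $\cH_\fea$ induced by $\mathbf 1_\fea$ is a single $g\in\cH_\fea$. Setting $g_\fea=\hat g_N$ equal to this element gives $g_\fea\circ\phi_\fea=\hat g_N\circ\phi_\fea=f_{\hat\varphi_\lambda}$, which is item~\ref{item:latent}.

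\emph{Alignment and approximation.} Since $\hat g_N=g_\fea$, the alignment property holds trivially: $\|\hat g_N\circ\phi_\fea-g_\fea\circ\phi_\fea\|^2_{L^2(\bP_X)}=0\le\varepsilon_N$. We can therefore take $\omega_N\equiv0$, $\varepsilon_N\equiv0$ and $\delta_N=4e^{-x_N}$ with $x_N\to\infty$ slowly, for instance $x_N=\log N$, so that $\delta_N\downarrow0$ and $r_*\to0$. Item~\ref{item:approximation_error} then reduces to $\|f_{\hat\varphi_\lambda}-f^\star\|_{L^2}^2\le r_*^2=o_\bP(1)$. This follows from Theorems~\ref{thm:fixed-output-general-ie} and~\ref{thm:multi-index-fixed-output}, since $\lambda=o(1)$ gives $\psi(\lambda d)\to0$. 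Item~\ref{item:feature_evolution} is exactly Corollary~\ref{cor:feature-kernel-movement}: the stated lower bound is a positive constant independent of $N$, and the smallness hypothesis on $r_*$ holds for $N$ large.

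\emph{Tail energy (the main step).} We need $\sum_{j>k}\sigma_j\langle g_\fea,\ve_j\rangle^2=o_\bP(1)$. With eigenvectors normalized in $\cH_\fea$, $\sigma_j\langle g,\ve_j\rangle_{\cH_\fea}^2=\langle g\circ\phi_\fea,\ve_j\rangle_{L^2(\bP_X)}^2$, so this sum equals $\|(I-\Pi_k)f_{\hat\varphi_\lambda}\|_{L^2(\bP_X)}^2$. Here $\Pi_k$ is the $L^2(\bP_X)$-orthogonal projection onto the span of the top $k$ eigenfunctions of $\Sigma$. Because $\Pi_k$ is not explicit, I would argue variationally. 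Let $F$ be the space \eqref{eq:def_finite_dimensional_space}, of dimension at most $k=M(d+1)$. By Proposition~\ref{prop:multi-index-rademacher}, $f_{\hat\varphi_\lambda}$ lies within $o_\bP(1)$ of $F$; in fact $f^\star\in F$ and $\|f_{\hat\varphi_\lambda}-f^\star\|=o_\bP(1)$ already suffice. Write $f_{\hat\varphi_\lambda}=\langle \mathbf 1,\varphi_{\neu}\rangle$. The top-$k$ eigenspace of $\Sigma$ is the image under $a\mapsto\langle a,\varphi_{\neu}\rangle$ of the top-$k$ right singular space of the operator $T:L^2(\hat\varphi_\lambda)\to L^2(\bP_X)$. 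Then
\[
\|(I-\Pi_k)T\mathbf 1\|\le\sigma_{k+1}(T)\,\|\mathbf 1\|_{L^2(\hat\varphi_\lambda)}=\sqrt{\sigma_{k+1}},
\]
so it suffices to show $\sigma_{k+1}=o_\bP(1)$. By Eckart--Young, $\sum_{j>k}\sigma_j\le\|T-\Pi_F T\|_{HS}^2=\int\|(I-\Pi_F)\sigma(\langle\vw,\cdot\rangle)\|^2_{L^2(\bP_X)}\,\hat\varphi_\lambda(\mathrm d\vw)$. For $\vw\in V_j$, a first-order Taylor expansion around $\vw_j^\star$ gives $\|(I-\Pi_F)\sigma(\langle\vw,\cdot\rangle)\|^2\lesssim M_\sigma^2\|\vw-\vw_j^\star\|^4\lesssim\|\vw-\vw_j^\star\|_2^2$. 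Integrating over each cell and applying Proposition~\ref{prop:stationary_distribution}, the sum is at most $C_{\rm vor}M r_*=o_\bP(1)$. In the single-index case with even $\mathrm{IE}(\sigma)$, $F$ must include the expansion around $-\vw^\star$ as well, using Proposition~\ref{prop:concentration_Levy_Milman}; this changes $k$ only by a constant factor, which I would absorb or handle by adjusting $k$.

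The hardest step is this Hilbert--Schmidt/Taylor tail bound, in particular uniform control of the Taylor remainder for $\vw$ far from the spikes; there the crude bound $\|(I-\Pi_F)\sigma\|\le B_\sigma$ combined with $\hat\varphi_\lambda(S_\rho^c)\le r_*/\rho^2$ should suffice. Finally, $k=o(N)$ holds because $d$ and $M$ are fixed, which completes the verification of Definition~\ref{def:feature_learning_general} in the multi-index case.
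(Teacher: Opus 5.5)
Your proposal is correct and follows essentially the paper's route. The paper likewise reduces item~4 to $\sum_{j>k}\sigma_j$, using $\|g_\fea\|_{\cH_\fea}\le\|\mathbf 1_\fea\|_{L^2(\hat\varphi_\lambda)}=1$ where you use the bound $\sigma_{k+1}\|\mathbf 1_\fea\|^2$. It then controls that sum by Ky Fan's principle with the rank-$M(d+1)$ first-order Taylor kernel $\sigma^{(1)}(\vx,\vw)$ around the nearest spike, giving $3M_\sigma^2S_\star(\hat\varphi_\lambda)=O_{\mathbb P}(r_*)$. The only difference is the side of the projection: the paper projects in the coefficient space onto functions of $\vw$ that are affine on each Voronoi cell, whereas you project in the input space onto $F$. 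Both projections are dominated by the same kernel $\sigma^{(1)}$.

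There are two cosmetic points. First, the remainder bound $\tfrac34M_\sigma^2\|\vw-\vw_j^\star\|_2^4\le3M_\sigma^2\|\vw-\vw_j^\star\|_2^2$ holds for every $\vw\in S_2^{d-1}$. Your ``hardest step'' therefore needs no separate treatment of $\vw$ far from the spikes. Second, your identity $\sigma_j\langle g,\ve_j\rangle_{\cH_\fea}^2=\langle g,\ve_j\rangle_{L^2(\bP_X)}^2$ requires the $L^2(\bP_X)$-normalized eigenfunctions $\ve_j/\sqrt{\sigma_j}$ in place of $\ve_j$.

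The one substantive point is the single-index case with even $\mathrm{IE}(\sigma)$. There the paper only says ``analogous'', which is not literally right with $M=1$, and your fallback $k=2(d+1)$ proves less than the stated $k=d+1$. You can keep $k=d+1$ by splitting into two cases.
\begin{itemize}
\item If $\sigma$ is even, then $\sigma(\langle\vw,\cdot\rangle)=\sigma(\langle-\vw,\cdot\rangle)$. Folding $\hat\varphi_\lambda$ onto $\{\langle\vw,\vw_\star\rangle\ge0\}$ therefore leaves $K_\fea$, $\Sigma$ and $g_\fea$ unchanged. The folded measure $\tilde\varphi$ satisfies $\int\|\vw-\vw_\star\|_2^2\,\mathrm d\tilde\varphi=S_{\mathrm{IE}(\sigma)}(\hat\varphi_\lambda)=O_{\mathbb P}(r_*)$.
\item If $\sigma$ is not even, some odd $m$ has $b_m\neq0$. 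Running the odd-case argument of Proposition~\ref{prop:m-chaos-localization} at chaos $m$ gives
$\int\|\vw-\vw_\star\|_2^2\,\mathrm d\hat\varphi_\lambda\le\frac{2\sqrt{m!}}{\kappa_m|b_m|}\|f_{\hat\varphi_\lambda}-f^\star\|_{L^2(\bP_X)}$, with $\kappa_m:=\min_{t\in[-1,1]}\frac{1-t^m}{1-t}>0$.
This is a single spike around $\vw_\star$.
\end{itemize}
In either case your Taylor and Hilbert--Schmidt bound then goes through with $F$ built around $\vw_\star$ alone.
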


The main motivation to study spherical MFLD is to focus the analysis on the feature learning part of the algorithm, i.e. how the weights $\vw_j^\star$ are learned by MFLD and not on the estimation property of the head estimator $\hat g_N$. As a result, our choice of $g_\fea$ and $\hat g_N$ is trivially given by $a_{\fea}=\hat a_N=\mathbf 1_{\fea}$:  for $\hat \vW\sim \hat\varphi_\lambda$, we have for all $x\in\bR^d$, $g_\fea(x)= \hat g_N(x) = \inr{\hat a_N, \varphi_{\rm neur}(x)}_{L^2(\hat\varphi_\lambda)} = \bE[\sigma(\inr{\hat \vW, x})].$

\section{Summary and Outlook}\label{sec:future}

We have introduced a geometric formulation of the feature-learning property and developed a framework based on it. The organizing principle is a base--fiber mechanism: training produces a feature-side base point, the base point determines the learned fiber and the induced learned feature space, and estimation is performed through a latent estimator using this learned structure. In this formulation, feature learning is not identified merely with dynamics of parameters or kernels. It is identified with the construction of a RKHS in which most of the energy of an approximation of the regression function is supported on  a small number of top directions (the learned features) that can then be exploited by the final head estimator.

We have applied this framework to mean-field Langevin dynamics. In this case, MFLD is viewed as the Wasserstein gradient flow of a negative entropy-regularized empirical risk , or equivalently, at the pdf level, as a nonlinear Fokker--Planck equation. Its long-time limit selects the base point of the learned geometry. For spherical MFLD, this base point is the stationary hidden-layer marginal $\hat\varphi_\lambda$, which induces both the learned fiber $L^2(\hat\varphi_\lambda)$ and the learned feature space $\mathcal H_{\fea}$. This gives a geometric separation between learning features on the base and using the induced feature structure for estimation. Our main results establish two related consequences of this viewpoint.
\begin{enumerate}
\item In the low-temperature regime, the stationary measure of the random spherical nonlinear Fokker--Planck equation develops a multi-spike structure on the base in Gaussian multi-index problem. More precisely, its local barycenters around the hidden indices concentrate near the corresponding hidden index, and hence the long-time limit achieves parameters recovery. This phenomenon is not imposed by an explicit sparsity-inducing regularization; the negative entropy acts in the opposite direction. Thus the multi-spike structure is a low-temperature geometric feature of the stationary measure. In addition, we prove that, in the Gaussian single-index problem, the concentration property of this random probability measure depends on the parity of the information index of the link function.

\item This low-temperature geometry induces a dimension reduction of the mean-field model. Around the hidden indices, the effective part of the nonlinear model is governed by a first-order $O(Md)$-dimensional structure, and the main component of $f^\star$ is captured by a small number of leading directions in the learned feature space. It shows a good alignment of the signal with the feature space onto which a ridge-like head estimator can leverage to get improved convergence rates. As a statistical consequence, spherical MFLD attains the minimax optimal rates $d/N$ in the single-index case and $Md/N$ in the $M$-index case, up to logarithmic factors, for well-specified link functions. In this sense, the low-dimensional structure need not be supplied to the statistician in advance: it is formed by the Wasserstein dynamics and then used for estimation.
\end{enumerate}

One natural direction is to move from the spherical MFLD studied in this paper to the full MFLD. In the present work, only the hidden layer is trained, and we analyze the low-temperature long-time limit of the corresponding nonlinear Fokker--Planck equation on $S_2^{d-1}$. A more general setting would allow the output layer, the bias terms and the hidden layer to evolve simultaneously, and would require studying the Wasserstein gradient flow of MFLD, or equivalently its nonlinear Fokker--Planck equation, on the full parameter space $\Theta$. A basic question is then whether the low-temperature stationary measure of the full MFLD still forms a geometric structure favorable for estimation, and whether this structure still induces a dimension reduction around $f^\star$. This becomes especially important for misspecified single-index and multi-index models, where recovery of the hidden parameters alone need not imply good estimation of the regression function. In such cases, the evolution of the output layer may be essential for forming the correct learned feature space. Thus a central goal is to understand how the low-temperature geometry of the full MFLD generates learned feature spaces and leads to an effective estimation mechanism in more general low-dimensional nonparametric regression problems.


\section*{Acknowledgments}


This work was carried out during ZS's long-term visit to RIKEN–AIP, Japan. ZS gratefully acknowledges financial support from the RIKEN–AIP Overseas Student Collaboration Program. 
TW was partially supported by JSPS KAKENHI (26K21188) and RIKEN Incentive Research Project. Part of this work was completed while ZS was visiting Cornell University. ZS thanks Florentina Bunea for her warm hospitality, and acknowledges the financial support provided by the École Universitaire de Recherche `DATA EFM'.
TS was partially supported by JSPS KAKENHI (24K02905) and JST CREST (PMJCR2015). This research is supported by the National Research Foundation, Singapore and the Ministry of Digital Development and Information under the AI Visiting Professorship Programme (award number AIVP-2024-004). Any opinions, findings and conclusions or recommendations expressed in this material are those of the author(s) and do not reflect the views of National Research Foundation, Singapore and the Ministry of Digital Development and Information.

The authors also thank Francis Bach, Etienne Boursier, Florentina Bunea, Victor-Emmanuel Brunel, Ahmed El Alaoui, Anna Korba, Lucas Resende, Pierfrancesco Urbani, Martin Wainwright, and Marten Wegkamp for valuable discussions.

\appendix


\section{Mathematical and Variational Preliminaries}
This section collects the basic notation and mathematical tools used in the appendix. We also derive Hermite calculus and variational identities that are used repeatedly in the proofs.

\subsection{Notation.}
For $r,\rho>0$, define $B_{L^2(\bP_X)}(h_0;r):=\{h:\|h-h_0\|_{L^2(\bP_X)}\le r\}$ and $S_{L^2(\bP_X)}(h_0;r):=\{h:\|h-h_0\|_{L^2(\bP_X)}=r\}$. Let $\bR\bP^{d-1}=S_2^{d-1}/\{\pm 1\}$ be the real projection space. When $h_0=f_{\nu^\star}$, we also write $B_{L^2(\bP_X)}(r;\nu^\star):=\{\nu\in\cP(\Theta):\|f_\nu-f_{\nu^\star}\|_{L^2(\bP_X)}\le r\}$, and define $S_{L^2(\bP_X)}(r;\nu^\star)$ analogously. For probability measures $\mu,\nu$, let $\KL(\mu\|\nu):=\int\log(d\mu/d\nu)\,d\mu$ if $\mu\ll\nu$, and $\KL(\mu\|\nu):=+\infty$ otherwise. We write $B_2^d:=\{\boldsymbol w\in\bR^d:\|\boldsymbol w\|_2\le1\}$, $S_2^{d-1}:=\{\boldsymbol w\in\bR^d:\|\boldsymbol w\|_2=1\}$, and $WB_2^d:=\{\boldsymbol w\in\bR^d:\|\boldsymbol w\|_2\le W\}$. Let $\tau$ be the uniform probability measure on $S_2^{d-1}$, and let $\Ent_\tau^-(\varphi):=\int\log(d\varphi/d\tau)\,d\varphi$ if $\varphi\ll\tau$, and $+\infty$ otherwise. For $\boldsymbol u\in\bR^d$, $\boldsymbol u^{\otimes m}$ denotes the $m$-fold tensor product; $\operatorname{Sym}^m(\bR^d)$ is equipped with the Frobenius inner product characterized by $\langle\boldsymbol u^{\otimes m},\boldsymbol v^{\otimes m}\rangle_F=\langle\boldsymbol u,\boldsymbol v\rangle^m$, and the corresponding norm is $\|\cdot\|_F$. For a finite signed measure $\nu$ on $S_2^{d-1}$, define $\mathcal T_m(\nu):=\int_{S_2^{d-1}}\boldsymbol w^{\otimes m}\nu(d\boldsymbol w)$. For a real random variable $Z$, define its $\psi_1$-Orlicz norm by $\|Z\|_{\psi_1}:=\inf\{c>0:\mathbb E\exp(|Z|/c)\le 2\}$. We say that $Z$ is sub-exponential if $\|Z\|_{\psi_1}<\infty$. For a linear operator or matrix $A$, $\|A\|_{\rm op}$ denotes its operator norm induced by the Euclidean norm; for matrices, $\|A\|_{\rm op}:=\sup_{\|u\|_2=1}\|Au\|_2$. Here $\Delta_M:=\{\boldsymbol\lambda\in\mathbb R_+^M:\sum_{j=1}^M\lambda_j=1\}$ denotes the probability simplex.


\begin{Definition}[Covering number]
Let $(T,d)$ be a semi-metric space and let $\eta>0$. An $\eta$-net of $T$ is a subset $\mathcal N_\eta\subset T$ such that for every $t\in T$, there exists $s\in\mathcal N_\eta$ with $d(t,s)\le\eta$. The covering number $N(T,d,\eta)$ is the minimal cardinality of an $\eta$-net of $T$, and $\log N(T,d,\eta)$ is called the metric entropy.
\end{Definition}

\subsection{Hermite functions}\label{sec:Hermite_functions_Background}
Let $\bP_X$ be the standard Gaussian measure $\cN(0, I_d)$ on $\mathbb{R}^d$. We first introduce the probabilist' normalized Hermite polynomials $\{\mathrm{He}_k\}_{k \ge 0}$ corresponding to a one-dimensional standard Gaussian random variable $G \sim N(0,1)$. This sequence can be defined by the power series expansion $\exp(tz - t^2/2) = \sum_{k=0}^\infty \frac{\mathrm{He}_k(z)}{k!} t^k$. This sequence forms an orthogonal basis of $L^2(\cN(0,1))$, satisfying the orthogonality relation $\mathbb{E}[\mathrm{He}_j(G)\mathrm{He}_k(G)] = k!\mathbf{1}_{\{j=k\}}$, \cite[pp. 16]{pisier_volume_1989}.

In the multivariate case, for a multi-index $\boldsymbol{\alpha} = (\alpha_1, \dots, \alpha_d) \in \mathbb{N}^d$, define the multivariate Hermite polynomial as $\mathrm{He}_{\boldsymbol{\alpha}}(\boldsymbol{x}) = \prod_{j=1}^d \mathrm{He}_{\alpha_j}(x_j)$. Since $X\sim \cN(0,I_d)$ has independent standard Gaussian coordinates, the one-dimensional orthogonality directly yields $\mathbb{E}[\mathrm{He}_{\boldsymbol{\alpha}}(X)\mathrm{He}_{\boldsymbol{\beta}}(X)] = \boldsymbol{\alpha}!\mathbf{1}_{\{\boldsymbol{\alpha}=\boldsymbol{\beta}\}}$, where the multi-index factorial is $\boldsymbol{\alpha}! = \prod_{j=1}^d \alpha_j!$.

Define the $m$-th homogeneous subspace $C_m$ of $L_2(\bP_X)$ as the linear space spanned by all multivariate Hermite polynomials of total degree $|\boldsymbol{\alpha}| = \sum_{j=1}^d \alpha_j = m$, namely $C_m := \operatorname{span}\{\mathrm{He}_{\boldsymbol{\alpha}}(\cdot) : |\boldsymbol{\alpha}| = m\}$, whose dimension is $|\{\valpha\in\bN^d:\, |\valpha|=m\}|$. Since polynomials are dense in $L^2(\bP_X)$, and $C_m$ of different degrees are mutually orthogonal, we naturally obtain the orthogonal decomposition $L^2(\bP_X) = \bigoplus_{m=0}^\infty C_m$.

For any given unit direction vector $\boldsymbol{v} = (v_1, \dots, v_d)^\top \in S_2^{d-1}$ and $\boldsymbol{x} \in \mathbb{R}^d$, by the independence among the polynomial bases, we can derive the multivariate expansion of $\mathrm{He}_m(\langle \boldsymbol{v}, \boldsymbol{x} \rangle)$. Considering $\|\boldsymbol{v}\|_2 = 1$, we separate its generating function:
\begin{align*}
\sum_{m=0}^\infty \frac{\mathrm{He}_m(\langle \boldsymbol{v}, \boldsymbol{x} \rangle)}{m!} t^m &= \exp\left(t \langle \boldsymbol{v}, \boldsymbol{x} \rangle - \frac{t^2}{2}\right) = \prod_{j=1}^d \exp\left(t v_j x_j - \frac{(tv_j)^2}{2}\right) \\
&= \prod_{j=1}^d \left( \sum_{\alpha_j=0}^\infty \frac{\mathrm{He}_{\alpha_j}(x_j)}{\alpha_j!} (tv_j)^{\alpha_j} \right) = \sum_{m=0}^\infty \left( \sum_{|\boldsymbol{\alpha}|=m} \frac{\mathrm{He}_{\boldsymbol{\alpha}}(\boldsymbol{x})}{\boldsymbol{\alpha}!} \boldsymbol{v}^{\boldsymbol{\alpha}} \right) t^m.
\end{align*}
Comparing the coefficients of $t^m$ on both sides yields
\begin{align}\label{eq:Hermite_on_inner_product}
    \mathrm{He}_m(\langle \boldsymbol{v}, \boldsymbol{x} \rangle) = \sum_{|\boldsymbol{\alpha}|=m} \frac{m!}{\boldsymbol{\alpha}!} \boldsymbol{v}^{\boldsymbol{\alpha}} \mathrm{He}_{\boldsymbol{\alpha}}(\boldsymbol{x})
\end{align}where $\boldsymbol{v}^{\boldsymbol{\alpha}}=\prod_j v_j^{\alpha_j}$. This shows that $\mathrm{He}_m(\langle \boldsymbol{v}, \cdot \rangle) \in C_m$. In fact, these directional projections completely span the entire space $C_m$, which we prove in the following lemma~\ref{lemma:C_m_is_Span_Hermite}.

\begin{Lemma}\label{lemma:C_m_is_Span_Hermite}
Let $d \ge 2$ and $m \ge 1$. We have $C_m = \operatorname{span}\{\boldsymbol{x} \mapsto \mathrm{He}_m(\langle \boldsymbol{v}, \boldsymbol{x} \rangle) : \boldsymbol{v} \in S_2^{d-1}\}$.
\end{Lemma}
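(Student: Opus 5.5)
The inclusion ``$\supseteq$'' is already given by \eqref{eq:Hermite_on_inner_product}: for every $\boldsymbol v\in S_2^{d-1}$, $\mathrm{He}_m(\langle\boldsymbol v,\cdot\rangle)\in C_m$, and $C_m$ is a linear space. For the reverse inclusion we will show that the span of the ridge Hermite functions has dimension $\dim C_m$. Since $C_m$ is finite dimensional, it suffices to show that its orthogonal complement inside $C_m$ is trivial.

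So let $h=\sum_{|\boldsymbol\alpha|=m} c_{\boldsymbol\alpha}\mathrm{He}_{\boldsymbol\alpha}\in C_m$ satisfy $\mathbb E[h(X)\mathrm{He}_m(\langle\boldsymbol v,X\rangle)]=0$ for all $\boldsymbol v\in S_2^{d-1}$. By \eqref{eq:Hermite_on_inner_product} and the orthogonality relation $\mathbb E[\mathrm{He}_{\boldsymbol\alpha}\mathrm{He}_{\boldsymbol\beta}]=\boldsymbol\alpha!\mathbf 1_{\boldsymbol\alpha=\boldsymbol\beta}$, this inner product equals
\begin{align*}
\sum_{|\boldsymbol\alpha|=m}\frac{m!}{\boldsymbol\alpha!}\boldsymbol v^{\boldsymbol\alpha}c_{\boldsymbol\alpha}\boldsymbol\alpha! = m!\,p(\boldsymbol v),\qquad p(\boldsymbol v):=\sum_{|\boldsymbol\alpha|=m}c_{\boldsymbol\alpha}\boldsymbol v^{\boldsymbol\alpha}.
\end{align*}
Thus the homogeneous degree-$m$ polynomial $p$ vanishes on the unit sphere. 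By homogeneity, $p(t\boldsymbol v)=t^m p(\boldsymbol v)$, so $p$ vanishes on all of $\mathbb R^d$. A polynomial vanishing identically on $\mathbb R^d$ has all coefficients zero, since the monomials are linearly independent as functions. Hence $c_{\boldsymbol\alpha}=0$ for all $\boldsymbol\alpha$, so $h=0$.

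It follows that the span of the ridge functions is a subspace of the finite-dimensional space $C_m$ whose orthogonal complement in $C_m$ is zero. The span is finite dimensional and therefore closed, so it equals $C_m$.

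The only point requiring care is the bookkeeping of the factorials. The factor $\boldsymbol\alpha!$ produced by orthogonality cancels exactly the $1/\boldsymbol\alpha!$ in \eqref{eq:Hermite_on_inner_product}, so the pairing reduces to a plain polynomial evaluation. After that, the homogeneity extension from the sphere to $\mathbb R^d$ is immediate. The hypotheses $d\ge2$ and $m\ge1$ play no essential role in the argument.
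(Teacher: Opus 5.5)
Your proposal is correct and follows essentially the same route as the paper, which also splits an element of $C_m$ into its projection onto the closed, finite-dimensional span plus an orthogonal remainder, computes the pairing with $\mathrm{He}_m(\langle \boldsymbol v,\cdot\rangle)$ as $m!\sum_{|\boldsymbol\alpha|=m}c_{\boldsymbol\alpha}\boldsymbol v^{\boldsymbol\alpha}$, and uses homogeneity to conclude that all $c_{\boldsymbol\alpha}$ vanish. Your remark that the hypotheses $d\ge 2$ and $m\ge 1$ are not needed for the argument is also accurate.
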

\beginproof We already know that $\operatorname{span}\{\boldsymbol{x} \mapsto \mathrm{He}_m(\langle \boldsymbol{v}, \boldsymbol{x} \rangle) : \boldsymbol{v} \in S_2^{d-1}\}\subset C_m$. Now we prove the reverse inclusion. Take any $\phi\in C_m$. Since $\operatorname{span}\{\boldsymbol{x} \mapsto \mathrm{He}_m(\langle \boldsymbol{v}, \boldsymbol{x} \rangle) : \boldsymbol{v} \in S_2^{d-1}\}$ is of finite dimension, it is closed in $L^2(\bP_X)$. Write $\phi = \phi_{\|} + \phi_\perp$, where $\phi_{\|}$ is the orthogonal projection (in the $L^2(\bP_X)$ sense) of $\phi$ onto $\operatorname{span}\{\boldsymbol{x} \mapsto \mathrm{He}_m(\langle \boldsymbol{v}, \boldsymbol{x} \rangle) : \boldsymbol{v} \in S_2^{d-1}\}$, and $\phi_\perp$ is orthogonal to $\operatorname{span}\{\boldsymbol{x} \mapsto \mathrm{He}_m(\langle \boldsymbol{v}, \boldsymbol{x} \rangle) : \boldsymbol{v} \in S_2^{d-1}\}$. Since $\phi,\phi_{\|}\in C_m$, $\phi_\perp\in C_m$ as well. Once we can prove that such $\phi_\perp=\vzero$, then necessarily $\forall \phi\in C_m$, there holds $\phi=\phi_{\|}\in \operatorname{span}\{\boldsymbol{x} \mapsto \mathrm{He}_m(\langle \boldsymbol{v}, \boldsymbol{x} \rangle) : \boldsymbol{v} \in S_2^{d-1}\}$.

Suppose $\phi_\perp(\boldsymbol{x}) = \sum_{|\boldsymbol{\alpha}|=m} c_{\boldsymbol{\alpha}} \mathrm{He}_{\boldsymbol{\alpha}}(\boldsymbol{x}) \in C_m$ is orthogonal to the set $\{\mathrm{He}_m(\langle \boldsymbol{v}, \cdot \rangle) : \boldsymbol{v} \in S_2^{d-1}\}$ in the sense of $L^2(\bP_X)$. For any $\boldsymbol{v} \in S_2^{d-1}$, using the orthogonality of multivariate Hermite polynomials, we compute the inner product:
\begin{align*}
\mathbb{E}[\phi_\perp(X)\mathrm{He}_m(\langle \boldsymbol{v}, X \rangle)] = \mathbb{E}\left[ \left(\sum_{|\boldsymbol{\beta}|=m} c_{\boldsymbol{\beta}} \mathrm{He}_{\boldsymbol{\beta}}(X)\right) \left(\sum_{|\boldsymbol{\alpha}|=m} \frac{m!}{\boldsymbol{\alpha}!} \boldsymbol{v}^{\boldsymbol{\alpha}} \mathrm{He}_{\boldsymbol{\alpha}}(X)\right) \right] = m! \sum_{|\boldsymbol{\alpha}|=m} c_{\boldsymbol{\alpha}} \boldsymbol{v}^{\boldsymbol{\alpha}} = 0.
\end{align*}
This implies that the homogeneous polynomial $\sum_{|\boldsymbol{\alpha}|=m} c_{\boldsymbol{\alpha}} \boldsymbol{v}^{\boldsymbol{\alpha}}$ is identically zero on the unit sphere $S_2^{d-1}$. By the homogeneity of multivariate polynomials, this polynomial must be identically zero on the entire $\mathbb{R}^d$, hence all combination coefficients $c_{\boldsymbol{\alpha}} = 0$. Thus $\phi_\perp = \vzero$, which completes the proof.
\endproof

Next, we provide the conditional expectation identity for $\mathrm{He}_k$ under different one-dimensional projection directions.

\begin{Lemma}\label{lemma:conditional_expectation_Hermite}
For any $\boldsymbol{v}, \boldsymbol{w} \in S_2^{d-1}$ and integer $k \ge 0$, we have
\begin{equation}
\mathbb{E}[\mathrm{He}_k(\langle \boldsymbol{v}, X \rangle) \mid \langle \boldsymbol{w}, X \rangle] = \langle \boldsymbol{v}, \boldsymbol{w} \rangle^k \mathrm{He}_k(\langle \boldsymbol{w}, X \rangle). \label{eq:hermite_regression}
\end{equation}
\end{Lemma}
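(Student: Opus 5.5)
Set $U:=\langle \boldsymbol w,X\rangle$ and $t:=\langle\boldsymbol v,\boldsymbol w\rangle$. Since $\boldsymbol v,\boldsymbol w\in S_2^{d-1}$ and $X\sim\mathcal N(\boldsymbol 0,I_d)$, decompose $\langle\boldsymbol v,X\rangle = tU+\sqrt{1-t^2}\,Z$, where $Z:=\langle \boldsymbol u,X\rangle$ for a unit vector $\boldsymbol u\perp\boldsymbol w$ with $\boldsymbol v=t\boldsymbol w+\sqrt{1-t^2}\,\boldsymbol u$. If $|t|=1$ we simply take $\sqrt{1-t^2}=0$, and any unit $\boldsymbol u$ will do. Because $(U,Z)$ is jointly Gaussian with zero covariance, $Z\sim\mathcal N(0,1)$ is independent of $U$. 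The whole proof then reduces to a one-dimensional statement: for independent standard Gaussians $U,Z$,
\[
\mathbb E\big[\mathrm{He}_k(tU+\sqrt{1-t^2}Z)\mid U\big]=t^k\mathrm{He}_k(U).
\]

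To prove this, I would use the generating function $\exp(sz-s^2/2)=\sum_{k\ge0}\mathrm{He}_k(z)s^k/k!$ recalled in Section~\ref{sec:Hermite_functions_Background}. Plug in $z=tU+\sqrt{1-t^2}Z$ and take the conditional expectation given $U$. Using independence and $\mathbb E e^{aZ}=e^{a^2/2}$,
\[
\mathbb E\big[\exp(s(tU+\sqrt{1-t^2}Z)-s^2/2)\mid U\big]=\exp\big(stU-s^2t^2/2\big)=\sum_{k\ge0}\mathrm{He}_k(U)\frac{(st)^k}{k!}.
\]
On the left, I exchange the conditional expectation with the power series in $s$ and then compare coefficients of $s^k$. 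This yields $\mathbb E[\mathrm{He}_k(\langle\boldsymbol v,X\rangle)\mid U]=t^k\mathrm{He}_k(U)$, which is \eqref{eq:hermite_regression}.

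The one step that needs care is justifying the interchange of sum and conditional expectation. I would use dominated convergence: $\sum_k |\mathrm{He}_k(z)||s|^k/k!$ is bounded by $\exp(|s||z|+s^2/2)$ up to constants, and this is integrable against the Gaussian, so the interchange is legitimate.

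Alternatively, one can avoid the interchange altogether. Expand $\mathrm{He}_k(tU+\sqrt{1-t^2}Z)$ with the Hermite addition formula
\[
\mathrm{He}_k(a x+b y)=\sum_j\binom kj a^jb^{k-j}\mathrm{He}_j(x)\mathrm{He}_{k-j}(y),\qquad a^2+b^2=1,
\]
which itself follows from the same generating function. Then $\mathbb E\,\mathrm{He}_{k-j}(Z)=\mathbf 1_{j=k}$ kills every term except $j=k$, leaving $t^k\mathrm{He}_k(U)$. Either route is short.
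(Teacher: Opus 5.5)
Your proof is correct and follows essentially the same route as the paper: write $\langle\boldsymbol v,X\rangle=tU+\sqrt{1-t^2}\,Z$ with $Z$ a standard Gaussian independent of $U$, take the conditional expectation of the generating function given $U$, and compare coefficients of $s^k$. The differences are minor. You absorb the case $|t|=1$, which the paper treats separately by parity. You also justify the interchange with the explicit dominating function $\exp(|s||z|+s^2/2)$ (or avoid it through the finite addition formula), where the paper simply appeals to Fubini.
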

\beginproof
Let $G_{\boldsymbol{w}} = \langle \boldsymbol{w}, X \rangle$, $G_{\boldsymbol{v}} = \langle \boldsymbol{v}, X \rangle$, and denote the inner product correlation coefficient as $\alpha = \langle \boldsymbol{v}, \boldsymbol{w} \rangle$.
If $\alpha = \pm 1$, then $\boldsymbol{v} = \pm \boldsymbol{w}$, and thus $G_{\boldsymbol{v}} = \pm G_{\boldsymbol{w}}$. By the parity of Hermite polynomials $\mathrm{He}_k(-x) = (-1)^k\mathrm{He}_k(x)$, we have $\mathbb{E}[\mathrm{He}_k(\pm G_{\boldsymbol{w}}) \mid G_{\boldsymbol{w}}] = (\pm 1)^k \mathrm{He}_k(G_{\boldsymbol{w}}) = \alpha^k \mathrm{He}_k(G_{\boldsymbol{w}})$, making the conclusion obvious.

Suppose $\alpha \neq \pm 1$. Since $X \sim N(0, I_d)$, the vector $(G_{\boldsymbol{w}}, G_{\boldsymbol{v}})^\top$ forms a centered joint Gaussian distribution with $\mathbb{E}[G_{\boldsymbol{w}}^2]=\mathbb{E}[G_{\boldsymbol{v}}^2]=1$ and covariance $\mathbb{E}[G_{\boldsymbol{w}} G_{\boldsymbol{v}}] = \alpha$.
We construct the random variable $Z = \frac{G_{\boldsymbol{v}} - \alpha G_{\boldsymbol{w}}}{\sqrt{1-\alpha^2}}$. We briefly compute its second moment and its covariance with $G_{\boldsymbol{w}}$: $\mathbb{E}[Z^2] = (1 - 2\alpha^2 + \alpha^2)/(1-\alpha^2) = 1$, and $\mathbb{E}[G_{\boldsymbol{w}} Z] = (\alpha - \alpha)/\sqrt{1-\alpha^2} = 0$. Since $(G_{\boldsymbol{w}}, Z)$ is jointly Gaussian, $\mathbb{E}[G_{\boldsymbol{w}} Z] = 0$ implies that $Z$ and $G_{\boldsymbol{w}}$ are independent. Then $G_{\boldsymbol{v}} = \alpha G_{\boldsymbol{w}} + \sqrt{1-\alpha^2}Z$, where $Z \sim N(0,1)$ and is independent of $G_{\boldsymbol{w}}$.

Using this decomposition, we compute the conditional expectation of the moment generating function:
\begin{align*}
\mathbb{E}\left[\exp\left(t G_{\boldsymbol{v}} - \frac{t^2}{2}\right) \mathrel{\Big|} G_{\boldsymbol{w}}\right] = \exp\left(\alpha t G_{\boldsymbol{w}} - \frac{\alpha^2 t^2}{2}\right) \mathbb{E}\left[\exp\left(t \sqrt{1-\alpha^2}Z - \frac{(1-\alpha^2)t^2}{2}\right) \mathrel{\Big|} G_{\boldsymbol{w}}\right].
\end{align*}
Since $Z$ is independent of $G_{\boldsymbol{w}}$, the conditional expectation on the right side degenerates to an unconditional expectation, which equals $1$. Writing both sides in series form and using Fubini's theorem to exchange the order of the series and the conditional expectation:
\begin{align*}
\sum_{k=0}^\infty \frac{\mathbb{E}[\mathrm{He}_k(G_{\boldsymbol{v}}) \mid G_{\boldsymbol{w}}]}{k!} t^k = \exp\left((\alpha t) G_{\boldsymbol{w}} - \frac{(\alpha t)^2}{2}\right) = \sum_{k=0}^\infty \left( \alpha^k \mathrm{He}_k(G_{\boldsymbol{w}}) \right) \frac{t^k}{k!}.
\end{align*}
Comparing the coefficients of $t^k$ on both sides directly yields Equation \eqref{eq:hermite_regression}.
\endproof

We use the following tensor notation, letting $\operatorname{Sym}^m(\mathbb{R}^d)$ be the space of order-$m$ symmetric tensors, equipped with the Frobenius inner product uniquely determined by $\langle \boldsymbol{u}^{\otimes m}, \boldsymbol{v}^{\otimes m} \rangle_F = \langle \boldsymbol{u}, \boldsymbol{v} \rangle^m$, see, for instance, \cite[pp. 137]{michalekInvitationNonlinearAlgebra2021}. Using the conditional expectation \eqref{eq:hermite_regression}, we can further reveal the isometric property between the space $C_m$ and the tensor space. Setting $k=m$ in \eqref{eq:hermite_regression}, multiplying both sides by $\mathrm{He}_m(\langle \boldsymbol{w}, X \rangle)$, and taking expectation using the law of total expectation:
\begin{align}\label{eq:inner_product_Hermite_polynomials}
    \begin{aligned}
        \mathbb{E}[\mathrm{He}_m(\langle \boldsymbol{w}, X \rangle)\mathrm{He}_m(\langle \boldsymbol{v}, X \rangle)] &= \mathbb{E}\Big[\mathrm{He}_m(\langle \boldsymbol{w}, X \rangle) \mathbb{E}[\mathrm{He}_m(\langle \boldsymbol{v}, X \rangle) \mid \langle \boldsymbol{w}, X \rangle]\Big] \\
        &= \alpha^m \mathbb{E}[\mathrm{He}_m(\langle \boldsymbol{w}, X \rangle)^2] = m! \langle \boldsymbol{w}^{\otimes m}, \boldsymbol{v}^{\otimes m} \rangle_F,
    \end{aligned}
\end{align}
where we used $\langle \boldsymbol{w}, X \rangle \sim N(0,1)$ to deduce $\mathbb{E}[\mathrm{He}_m(\langle \boldsymbol{w}, X \rangle)^2] = m!$, and $\alpha^m = \langle \boldsymbol{w}, \boldsymbol{v} \rangle^m = \langle \boldsymbol{w}^{\otimes m}, \boldsymbol{v}^{\otimes m} \rangle_F$. This core relation specifies an isometric isomorphism between the $m$-th homogeneous subspace $C_m$ and the $m$-th order symmetric tensor space $\operatorname{Sym}^m(\mathbb{R}^d)$ endowed with the Frobenius inner product. Specifically, this isomorphism is given by the following bidirectional mapping:
\begin{align}\label{eq:isomorphism_Hermite_tensors}
C_m \ni \frac{1}{\sqrt{m!}}\mathrm{He}_m(\langle \boldsymbol{v}, \cdot \rangle) \longleftrightarrow \boldsymbol{v}^{\otimes m} \in \operatorname{Sym}^m(\mathbb{R}^d), \quad \forall \boldsymbol{v} \in S_2^{d-1}
\end{align}where $C_m$ is equipped with the $L_2(\bP_X)$ inner product and $\operatorname{Sym}^m(\mathbb{R}^d)$ is equipped with its Frobenius inner product. Since these generators span the entire space and preserve their inner products, this explicitly establishes the isomorphism mapping relationship between the two Hilbert spaces.

Another result is a decomposition of the estimation error of single-index problem in terms of Hermite polynomials, that is, for $f_\varphi(\vx) = \int \sigma(\langle\vw,\vx\rangle)\mathrm{d}\varphi(\vw)$ and $f^\star(\vx) = \sigma(\langle\vw_\star,\vx\rangle)$. By Hermite expansion, $\sigma(\langle\vw,\vx\rangle) = \sum_{k\geq 0}\frac{b_k}{k!}\mathrm{He}_k(\langle\vw,\vx\rangle)$, where $b_k = \bE[\sigma(G)\mathrm{He}_k(G)]$, and $G\sim\cN(0,1)$. Then $f_\varphi(\vx) - f^\star(\vx) = \sum_{k\geq 1}\frac{b_k}{k!}\int \mathrm{He}_k(\langle\vw,\vx\rangle)\varphi(\mathrm{d}\vw) - \mathrm{He}_k(\langle\vw_\star,\vx\rangle)$. Since different $C_k$ are orthogonal in $L^2(\bP_X)$, $\|f_\varphi(\vx) - f^\star(\vx)\|_{L^2(\bP_X)}^2 = \sum_{k\geq 1}\frac{b_k^2}{(k!)^2}\|\int \mathrm{He}_k(\langle\vw,\vx\rangle)\varphi(\mathrm{d}\vw) - \mathrm{He}_k(\langle\vw_\star,\vx\rangle)\|_{L^2(\bP_X)}^2$. Moreover, by \eqref{eq:isomorphism_Hermite_tensors}, $\|\int \mathrm{He}_k(\langle\vw,\vx\rangle)\varphi(\mathrm{d}\vw) - \mathrm{He}_k(\langle\vw_\star,\vx\rangle)\|_{L^2(\bP_X)}^2 = k!\| \int \vw^{\otimes k}\varphi(\mathrm{d}\vw) - \vw_\star^{\otimes k} \|_F^2$, and consequently,
\begin{align}\label{eq:Hermite_expansion_estimation_error}
    \|f_\varphi - f^\star\|_{L^2(\bP_X)}^2 = \sum_{k\geq 1}\frac{b_k^2}{k!}\bigg\| \int \vw^{\otimes k}\varphi(\mathrm{d}\vw) - \vw_\star^{\otimes k} \bigg\|_F^2.
\end{align}

\subsection{The variational solution}\label{sec:variational_solution}

\begin{Lemma}\label{lemma:variation}
Let $q$ be a probability measure on $[-A, A]$ that admits a strictly positive pdf $q(a)$ with respect to the Lebesgue measure almost everywhere. For any $u \in (-A, A)$, define
\begin{equation}\label{eq:def_psi}
    \psi(u) = \inf \left( \int_{-A}^{A} p(a) \log \frac{p(a)}{q(a)} \,\mathrm{d}a : p \ge 0, \int_{-A}^{A} p(a) \,\mathrm{d}a = 1, \int_{-A}^{A} a p(a) \,\mathrm{d}a = u \right),
\end{equation}
and $\psi(u) = \infty$ if $u \notin (-A, A)$. Then the following hold:
\begin{enumerate}
    \item Let $Z : t \in \mathbb{R} \mapsto Z(t) = \int_{-A}^{A} q(a) \exp(ta) \,\mathrm{d}a$ be the partition function, and let $F(t) = \log(Z(t))$ be the free energy function. Then $F$ is convex, and for any $u \in (-A, A)$, there exists a unique $t(u) \in \mathbb{R}$ such that
    \begin{equation*}
        \psi(u) = \sup_{t \in \mathbb{R}} \{tu - F(t)\} = t(u)u - F(t(u)).
    \end{equation*}In particular, $\psi$ is convex, differentiable and $\psi'(u)=t(u)$.
    \item For any $u \in (-A, A)$, $\psi(u)$ is achieved in \eqref{eq:def_psi}  at a unique pdf $p_{t(u)}$ up to a null set, where for any $t \in \mathbb{R}$,
    \begin{equation*}
        p_t : a \in (-A, A) \mapsto p_t(a) = \frac{q(a)\exp(ta)}{Z(t)}\1_{[-A, A]}(a).
    \end{equation*}
    \item  The Bregman divergence of $\psi$: $D_{\psi}:(u, v)\in(-A, A)^2 \to \psi(u) - \psi(v) - \psi'(v)(u - v)$ satisfies: for any $u, v \in (-A, A)$,
    \begin{equation*}
        D_{\psi}(u, v) \ge \frac{1}{2A^2}(u - v)^2.
    \end{equation*}
\end{enumerate}
\end{Lemma}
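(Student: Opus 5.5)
The three parts follow from exponential-family and Legendre-duality arguments for the tilted family $p_t$. Throughout, $q$ is a density on the compact interval $[-A,A]$, so $Z(t)=\int q(a)e^{ta}\,\mathrm da$ is finite and smooth in $t$. Differentiation under the integral is justified by dominated convergence, since $|a|\le A$. This gives
\[
F'(t)=\bE_{p_t}[a]=:m(t),\qquad F''(t)=\mathrm{Var}_{p_t}(a)>0 .
\]
The variance is strictly positive because $q>0$ almost everywhere, so $p_t$ is not a point mass. Hence $F$ is strictly convex and $m$ is strictly increasing.

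To see that $m$ maps $\bR$ onto $(-A,A)$, I will show $m(t)\to A$ as $t\to+\infty$, and symmetrically $m(t)\to -A$. For any $\epsilon>0$, the mass $p_t([-A,A-\epsilon])$ is bounded by
\[
\frac{e^{t(A-\epsilon)}}{\int_{A-\epsilon/2}^{A}q(a)e^{ta}\,\mathrm da}\le \frac{e^{-t\epsilon/2}}{q([A-\epsilon/2,A])}\to 0,
\]
using $q([A-\epsilon/2,A])>0$, which holds since $q>0$ a.e. So $m$ is a continuous increasing bijection $\bR\to(-A,A)$. For each $u\in(-A,A)$ let $t(u)=m^{-1}(u)$; this is the unique maximiser of the strictly concave map $t\mapsto tu-F(t)$.

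For part 1, I use the Gibbs variational inequality. For any admissible $p$ with mean $u$ and any $t$,
\[
0\le \KL(p\|p_t)=\int p\log\frac{p}{q}-tu+F(t).
\]
Hence $\int p\log(p/q)\ge \sup_t\{tu-F(t)\}$, and equality holds exactly when $p=p_{t(u)}$, because $p_{t(u)}$ has mean $u$ and makes the KL term vanish. This proves both part 1 and part 2, including uniqueness up to null sets (KL is zero iff $p=p_t$ a.e.). As the Legendre transform of a smooth strictly convex $F$ with $F'=m$ bijective onto $(-A,A)$, $\psi$ is convex and differentiable there. The envelope theorem, or the inverse-function theorem applied to $m$, gives $\psi'(u)=t(u)$. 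Outside $(-A,A)$ no admissible density exists, or the supremum is $+\infty$, which is consistent with the convention $\psi=\infty$.

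For part 3, which I expect to be the only real step, I use the standard identity that the Bregman divergence of the conjugate equals a KL divergence:
\[
D_\psi(u,v)=\KL(p_{t(u)}\|p_{t(v)}).
\]
This follows by expanding: $\psi(u)=t(u)u-F(t(u))$, $\psi'(v)=t(v)$, and $\KL(p_s\|p_r)=(s-r)m(s)-F(s)+F(r)$. Then Pinsker's inequality gives
\[
\KL\ge 2\,\mathrm{TV}^2 ,
\]
and for any bounded test function $g$,
\[
|\bE_{p_{t(u)}}g-\bE_{p_{t(v)}}g|\le 2\|g\|_\infty\,\mathrm{TV}.
\]
Taking $g(a)=a$ with $\|g\|_\infty=A$ gives $|u-v|\le 2A\,\mathrm{TV}$. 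Combining,
\[
D_\psi(u,v)\ge 2\cdot\frac{(u-v)^2}{4A^2}=\frac{(u-v)^2}{2A^2}.
\]
An alternative route is to write $\psi''(u)=1/F''(t(u))$ and bound $F''\le A^2$, since a variance on $[-A,A]$ is at most $A^2$. This gives $\psi''\ge 1/A^2$, and integrating twice yields the same bound. I would mention this route as a check.
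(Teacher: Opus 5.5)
Your proof is correct. Parts 1 and 2 follow essentially the paper's route. Both rest on the identity $\KL(p\|p_t)=\int p\log(p/q)-tu+F(t)$ for feasible $p$, and both obtain $\psi'(u)=t(u)$ from Danskin or the chain rule. Your surjectivity argument for $F'$, via the tilted mass concentrating near $\pm A$, is more direct than the paper's. The paper proves only $F(t)/t\to A$ and leaves implicit the passage to $F'(t)\to A$ through monotonicity of $F'$.

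The genuine difference is Part 3. The paper argues by curvature: differentiating $F'(t(u))=u$ gives $\psi''(u)=1/\mathrm{Var}_{p_{t(u)}}[a]\ge 1/A^2$, and Taylor's theorem with Lagrange remainder concludes. This is exactly the route you keep as a check. Your main argument instead uses the exact identity $D_\psi(u,v)=\KL(p_{t(u)}\|p_{t(v)})$, then Pinsker and $|u-v|\le 2A\,\mathrm{TV}$, and the constants indeed give $(u-v)^2/(2A^2)$.

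What each route buys:
\begin{itemize}
\item Your route never needs $\psi$ to be twice differentiable. It also exhibits the Bregman divergence of the output-layer regularizer as a relative entropy between the optimal conditional laws, which fits the paper's entropic picture.
\item The curvature route is local and gives more for free. Since $\mathrm{Var}_{p_{t(u)}}[a]\le A^2-u^2$, it yields $\psi''(u)\ge (A^2-u^2)^{-1}$, a modulus that blows up at the endpoints and that the uniform Pinsker bound does not capture.
\end{itemize}
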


\beginproof We first show that for any $u\in(-A, A)$ there exists a unique $t(u)\in\bR$ such that $\int_{-A}^A a p_{t(u)}(a) \,\mathrm{d}a = u$. By definition, $F:t\in\bR \to \log Z(t)=\log \bE \exp(tU)$ for $U\sim q$. Its first and second derivatives are given by: for all $t\in\bR$,
    \begin{align*}
        F'(t) &= \frac{Z'(t)}{Z(t)} = \frac{\int_{-A}^{A} a q(a) \exp(ta) \,\mathrm{d}a}{\int_{-A}^{A} q(a) \exp(ta) \,\mathrm{d}a} = \mathbb{E}_{p_t}[a],
        \end{align*}and
        \begin{align*}
        F''(t) &= \frac{Z''(t)Z(t) - (Z'(t))^2}{Z(t)^2} = \mathbb{E}_{p_t}[a^2] - (\mathbb{E}_{p_t}[a])^2 = \mathrm{Var}_{p_t}[a].
    \end{align*}
    Since $q(a) > 0$ almost everywhere, the support of $p_t$ is $[-A, A]$ for all $t \in \mathbb{R}$, implying that $p_t$ is not a Dirac measure. Consequently, $F''(t) = \mathrm{Var}_{p_t}[a] > 0$ for all $t \in \mathbb{R}$, which entails that $F'(t)$ is strictly monotonically increasing. 
    Since for any $t>0$, $a\mapsto \exp(ta)$ is increasing on $[-A,A]$,  there hold $Z(t)=\int_{-A}^Aq(a)\exp(ta)\mathrm{d}a\leq \exp(tA)\int_{-A}^Aq(a)\mathrm{d}a=\exp(tA)$ and, for any $0<\varepsilon<A$, $Z(t)\geq \int_{A-\varepsilon}^Aq(a)\exp(ta)\mathrm{d}a\geq \exp(t(A-\varepsilon))\int_{A-\varepsilon}^Aq(a)\mathrm{d}a$. Therefore for any $t>0$ and $0<\varepsilon<A$, $\exp(t(A-\varepsilon))\int_{A-\varepsilon}^Aq(a)\mathrm{d}a\leq Z(t)\leq \exp(tA)$. Taking logarithmic and dividing by $t$, we obtain $A-\varepsilon+\frac{1}{t}\log\int_{A-\varepsilon}^Aq(a)\mathrm{d}a\leq \frac{F(t)}{t}\leq A$, and consequently $\lim_{t\to\infty}\frac{F(t)}{t}=A$. Similarly, $\lim_{t \to -\infty} F'(t) = -A$. Since $F'$ is continuous and strictly increasing, the intermediate value theorem guarantees that for any $u \in (-A, A)$, there exists a unique $t(u) \in \mathbb{R}$ such that $F'(t(u)) = u$, which means that for any $u \in (-A, A)$, there exists a unique $t(u) \in \mathbb{R}$ such that $\mathbb{E}_{p_{t(u)}}[a] = u$.

\begin{enumerate}
    \item We first prove item~\emph{2.} Let $u\in(-A, A)$. Let $t(u) \in \mathbb{R}$ be the unique solution to $\mathbb{E}_{p_{t(u)}}[a] = u$.  Let us show that $p_{t(u)}$ is the unique global minimizer of the optimization problem \eqref{eq:def_psi} defining $\psi(u)$. Let $p$ be any other pdf satisfying the feasible constraints of \eqref{eq:def_psi}. We evaluate the difference in the objective functional: by $p_t(a) = \frac{q(a)\exp(ta)}{Z(t)} \1_{[-A, A]}(a)$, there holds $\log\frac{p_{t(u)}(a)}{q(a)}  = t(u)a-\log Z(t(u))$, hence
    \begin{align*}
    &\int_{-A}^{A} p(a) \log \frac{p(a)}{q(a)} \,\mathrm{d}a - \int_{-A}^{A} p_{t(u)}(a) \log \frac{p_{t(u)}(a)}{q(a)} \,\mathrm{d}a \\
    &= \int_{-A}^{A} p(a) \log \frac{p(a)}{p_{t(u)}(a)} \,\mathrm{d}a + \int_{-A}^{A} (p(a) - p_{t(u)}(a)) \log \frac{p_{t(u)}(a)}{q(a)} \,\mathrm{d}a \\
    &= \mathrm{KL}(p \| p_{t(u)}) + \int_{-A}^{A} (p(a) - p_{t(u)}(a)) (t(u)a - \log Z(t(u))) \,\mathrm{d}a \\
    &= \mathrm{KL}(p \| p_{t(u)}) + t(u) \bigg(\int_{-A}^{A} a p(a) \,\mathrm{d}a - \int_{-A}^{A} a p_{t(u)}(a) \,\mathrm{d}a\bigg)\\
    &\quad - \log Z(t(u)) \bigg(\int_{-A}^{A} p(a) \,\mathrm{d}a - \int_{-A}^{A} p_{t(u)}(a) \,\mathrm{d}a\bigg) \\
    &= \mathrm{KL}(p \| p_{t(u)}) + t(u)(u - u) - \log Z(t(u))(1 - 1) = \mathrm{KL}(p \| p_{t(u)}) \ge 0,
    \end{align*}with equality holding if and only if $p = p_{t(u)}$ almost everywhere. Thus, $p_{t(u)}$ is the unique global minimizer.

    \item We prove item~\emph{1.}  Substituting the optimal pdf $p_{t(u)}$ into the variational objective gives:
    \begin{equation*}
        \psi(u) = \int_{-A}^{A} p_{t(u)}(a) (t(u)a - \log Z(t(u))) \,\mathrm{d}a = t(u)u - F(t(u)).
    \end{equation*}
    To demonstrate strong duality, consider the function $g(t) = tu - F(t)$. Its derivative is $g'(t) = u - F'(t)$. The critical point condition $g'(t) = 0$ is exactly $F'(t) = u$, which is uniquely satisfied by $t = t(u)$. Furthermore, $g''(t) = -F''(t) < 0$, making $g(t)$ strictly concave. Thus, the supremum is uniquely attained at $t(u)$, establishing that $\psi(u) = \sup_{t \in \mathbb{R}} \{tu - F(t)\} = t(u)u - F(t(u))$, which identifies $\psi$ as the Fenchel-Legendre conjugate of $F$, in particular, it is convex. Moreover, since $\sup_{t \in \mathbb{R}} \{tu - F(t)\}$ has a unique solution given by $t(u)$ by Danskin theorem, $\psi$ is differentiable at any point $u$ and $\psi'(u)= t(u)$. 

    \item We prove item~\emph{3.} We deduce from Danskin theorem that $\psi'(u)=t(u)$. It can be also directly proved: from the duality identity $\psi(u) = t(u)u - F(t(u))$ and the relation $F'(t(u)) = u$, we differentiate $\psi(u)$ with respect to $u$ utilizing the chain rule:
    \begin{equation*}
        \psi'(u) = t(u) + t'(u)u - F'(t(u))t'(u) = t(u) + t'(u)u - u t'(u) = t(u).
    \end{equation*}
    Moreover, taking derivatives with respect to $u$ on both sides of $F'(t(u)) = u$ yields $t'(u) = \frac{1}{F''(t(u))}$, thus
    \begin{equation*}
        \psi''(u) = t'(u) = \frac{1}{F''(t(u))} > 0.
    \end{equation*}
    Recalling that $F''(t(u)) = \mathrm{Var}_{p_{t(u)}}[a]$, and noting that the random variable $a$ is almost surely bounded in $[-A, A]$, we can bound the variance:
    \begin{equation*}
        \mathrm{Var}_{p_{t(u)}}[a] = \mathbb{E}_{p_{t(u)}}[a^2] - u^2 \le A^2 - u^2 \le A^2.
    \end{equation*}
    Consequently, we obtain a uniform lower bound on the curvature of $\psi$: $\psi''(u) = \frac{1}{\mathrm{Var}_{p_{t(u)}}[a]} \ge \frac{1}{A^2}$ for all $u \in (-A, A)$.

    Finally, for any $u, v \in (-A, A)$, applying Taylor's theorem with the Lagrange remainder guarantees the existence of some $\xi$ strictly between $u$ and $v$ such that:
    \begin{equation*}
        \psi(u) = \psi(v) + \psi'(v)(u - v) + \frac{1}{2}\psi''(\xi)(u - v)^2.
    \end{equation*}
    Rearranging this expression immediately yields the required lower bound for the Bregman divergence: for all $u,v\in(-A, A)$,
    \begin{equation*}
        D_{\psi}(u, v) = \psi(u) - \psi(v) - \psi'(v)(u - v) = \frac{1}{2}\psi''(\xi)(u - v)^2 \ge \frac{1}{2A^2}(u - v)^2. \qedhere
    \end{equation*}
\end{enumerate}
\endproof

\begin{Remark}\label{eq:rem_psi_zero} 
When $q$ is symetric, for instance when $q:a\to (2A)^{-1}\1_{[-A, A]}(a)$ is the pdf of the uniform distribution over $[-A, A]$, then $Z(0)=1$, $p_0=q$ and $\int_{-A}^A a p_0(a)da=0$. As a consequence, by unicity of $t(u)$ for all $u\in(-A, A)$, we deduce that $t(0)=0$ and $\psi'(0)=t(0)=0$. 
\end{Remark}

\subsection{Chain rule of negative Shannon entropy}\label{sub:proof_chain_rule_negative_Shannon_entropy}

Let $Q$ be a probability measure on $\Theta = [-A, A] \times WB_2^d$ with a tensor product structure $Q(\,\mathrm{d}a, \,\mathrm{d}\boldsymbol{w}) = q(a)\,\mathrm{d}a \otimes Q_W(\,\mathrm{d}\boldsymbol{w})$, where $q(a)$ is a strictly positive pdf on $[-A, A]$ almost everywhere. For $\nu\in\cP(\Theta)$, we define the divergence functional $\mathcal{D}(\nu \| Q) := \int \nu \log(\nu/Q)$.
We prove the following identity:
\begin{align}\tag{\ref{eq:chain_rule_Shannon_entropy}}
    \mathcal{D}(\nu \| Q) = \mathcal{D}(\varphi \| Q_W) + \int_{WB_2^d} \left( \int_{-A}^A \nu(a|\boldsymbol{w}) \log \frac{\nu(a|\boldsymbol{w})}{q(a)} \,\mathrm{d}a \right) \,\mathrm{d}\varphi(\boldsymbol{w})
\end{align}where $\varphi$ is the marginal of $\nu$ on $W B_2^d$. Denote $\rho = \frac{\mathrm{d}\nu}{\mathrm{d}Q}$ as the Radon-Nikodym derivative of $\nu$ with respect to the reference measure $Q$. Since $Q(\,\mathrm{d}a, \,\mathrm{d}\boldsymbol{w}) = q(a)\,\mathrm{d}a \otimes Q_W(\,\mathrm{d}\boldsymbol{w})$, we can decompose the pdf as $\rho(a,\boldsymbol{w})=\rho(\boldsymbol{w})\rho(a|\boldsymbol{w})$, where $\rho(\boldsymbol{w}) = \frac{\mathrm{d}\varphi}{\mathrm{d}Q_W}(\boldsymbol{w})$ is the pdf of the marginal $\varphi$ with respect to $Q_W$, and $\rho(a|\boldsymbol{w})$ is the pdf of the conditional measure $\nu(\cdot|\boldsymbol{w})$ with respect to $q(a)\,\mathrm{d}a$, where $\nu(\mathrm{d}a|\vw)=\nu(a|\vw)\mathrm{d}a=\rho(a|\vw)q(a)\mathrm{d}a$. As a result,
\begin{align*}
    \mathcal{D}(\nu \| Q) &= \int_{WB_2^d}\int_{-A}^A \rho(\boldsymbol{w})\rho(a|\boldsymbol{w})\left(\log\rho(\boldsymbol{w})+\log\rho(a|\boldsymbol{w})\right) q(a)\,\mathrm{d}a\,\mathrm{d}Q_W(\boldsymbol{w})\\
    &=\int_{WB_2^d}\rho(\boldsymbol{w})\log\rho(\boldsymbol{w})\left(\int_{-A}^A\rho(a|\boldsymbol{w})q(a)\,\mathrm{d}a\right)\,\mathrm{d}Q_W(\boldsymbol{w}) \\
    &\quad + \int_{WB_2^d}\left(\int_{-A}^A \rho(a|\boldsymbol{w})\log\rho(a|\boldsymbol{w})q(a)\,\mathrm{d}a\right)\rho(\boldsymbol{w})\,\mathrm{d}Q_W(\boldsymbol{w}).
\end{align*}
Since $\int_{-A}^A \rho(a|\boldsymbol{w})q(a)\,\mathrm{d}a=1$, $\int_{-A}^A \rho(a|\boldsymbol{w})\log\rho(a|\boldsymbol{w})q(a)\,\mathrm{d}a = \int_{-A}^A \nu(a|\boldsymbol{w}) \log \frac{\nu(a|\boldsymbol{w})}{q(a)} \,\mathrm{d}a$, and $\,\mathrm{d}\varphi(\boldsymbol{w}) = \rho(\boldsymbol{w})\,\mathrm{d}Q_W(\boldsymbol{w})$, we obtain the chain rule of relative entropy, that is, \eqref{eq:chain_rule_Shannon_entropy}.

\section{The Feature Learning Property of the LASSO}

In this section, we show that the LASSO \cite{tibshirani1996regression} satisfies the Feature Learning Property introduced in Definition~\ref{def:feature_learning_general}. Our objective is not to re-derive the standard $\mathcal{O}(s\log(d)/N)$ convergence rate of the LASSO via its polar decomposition under a feature learning perspective. Rather, our goal is to demonstrate that feature learning in the LASSO can be naturally identified with its support recovery capability. Specifically, under the assumption of exact support recovery, we prove that the LASSO satisfies the Feature Learning Property. However, because Definition~\ref{def:feature_learning_general} extends beyond feature learning per se (Items~3 and 4) to encompass both feature evolution (Item~1) and feature utilization by the linear head (Items~2 and 5), additional evolution and estimation conditions are required to verify all components of Definition~\ref{def:feature_learning_general}.

Let $(\boldsymbol{e}_1,\cdots,\boldsymbol{e}_d)$ be the canonical basis of $\bR^d$. Let $X$ be an isotropic random vector, that is, $\bE X=0$ and $\Sigma = \bE[XX^\top]=I_d$. Let $Y=\langle X,\boldsymbol{\beta}^\star\rangle + \xi$, where $\boldsymbol{\beta}^\star\in\bR^d$ -- so that $f^\star:\vx\mapsto \langle\vx,\bbeta^\star\rangle$ --  and denote $S^\star = \supp(\boldsymbol{\beta}^\star)$ and $s=|S^\star|$. Let $\hat{\boldsymbol{\beta}}\in\argmin(P_N\ell_{\boldsymbol{\beta}}+\lambda\|\boldsymbol{\beta}\|_1)$, where $\ell$ is the squared loss, and $\lambda\geq 0$ is some tuning parameter. The LASSO with parameter $\lambda$ is defined by $\hat f_N:\vx\mapsto \langle\vx,\hat\bbeta\rangle$. 

Let $\hat S = \supp(\hat{\boldsymbol{\beta}})$ be the support of the LASSO. From feature learning perspective, it is natural to look at $\hat S$ as the set of the coordinates of $\bR^d$ that the LASSO decides to be relevant for the prediction task: they are the '\textit{features learned}', even though, no features have been actually constructed, they have been only selected from a pre-existing (large) set of features $\boldsymbol{e}_1,\cdots,\boldsymbol{e}_d$. We say that support recovery holds when $\hat S= S^\star$. A necessary and sufficient condition for support recovery of the LASSO has been identified by \cite{zhao2006model}.    

Let us now identity the feature space $\cH_\fea$ through which we factorize the LASSO (and $f^\star$) in order to prove LASSO's  feature learning property. The most natural choice is to consider the space spanned by the features selected by the LASSO, that is $\cH_\fea = {\rm span}(\ve_j:j\in\hat S) = \bR^{\hat S}$. By doing so, we take $(\vx_i,y_i)_{i=1}^N\mapsto K_{\fea}(x,x'):=\langle x_{\hat S},x'_{\hat S}\rangle$ as the generating rule of the reproducing kernel, its canonical feature map $\phi_\fea:\vx\in\bR^d\to \vx_{\hat S}\in\bR^{\hat S}$ and $\Sigma_\fea = I_{\bR^{\hat S}}$ is the identity operator on $\bR^{\hat S}$. The head estimator is $\hat g_N:\vx\in\bR^{\hat S}\to \inr{\hat\vbeta_{\hat S}, \vx}$ and, for this choice, Figure~\ref{fig:estimator_factorization}'s factorization holds for the LASSO: for all $\vx\in\bR^d$,
\begin{equation*}
  \hat f_N(\vx) = \inr{\vx, \hat\vbeta} = \inr{\vx_{\hat S}, \hat\vbeta_{\hat S}} = \hat g_N(\vx_{\hat S}) =   \hat g_N(\phi_\fea(\vx)).
\end{equation*}In particular, \textit{Item~2} from Definition~\ref{def:feature_learning_general} holds.
Regarding $f^\star$'s factorization via $\cH_\fea$ as depicted in Figure~\ref{fig:oracle_factorization}, we take $g_\fea$ to be the oracle of $f^\star$ in $\cH_\fea$, i.e. the closest element in $\cH_\fea$ to $f^\star$ w.r.t. the $L^2(\bP_X)$ metric: $g_\fea:\vx\in\bR^{\hat S}\to \inr{\vbeta^\star_{\hat S}, \vx}$. We have 
\begin{equation*}
    \norm{f^\star - g_\fea(\phi_\fea)}_{L^2(\bP_X)}^2 = \norm{\vbeta^\star_{\hat S \Delta S}} 
\end{equation*}where $\hat S \Delta S = (\hat S \cup S)\backslash (\hat S \cap S)$ is the symmetric difference between  $\hat S$ and $S$. \textit{Item~3} from Definition~\ref{def:feature_learning_general} requires this approximation term to tend to zero. Under support recovery, this term equals to zero, and so $f^\star$'s factorization is exact in that case -- support recovery is a sufficient condition (even though not necessary) to guarantee \textit{Item~3} and \textit{Item~4}  from Definition~\ref{def:feature_learning_general}.

Regarding feature evolution from \textit{Item~1} in Definition~\ref{def:feature_learning_general}, an algorithm like LARS \cite{efron2004least} starts from $\boldsymbol{\beta}^{(0)}=\boldsymbol{0}$ and so $K_0\equiv 0$, and $\cH_0=\{\boldsymbol{0}\}$. In order to fulfill \textit{Item~1}, we need $\cH_\fea=\bR^{\hat S}$ be go away from $\cH_0=\{\boldsymbol{0}\}$. This requires $\hat S\neq \emptyset$ and so, under support recovery, $\vbeta^\star\neq 0$. Hence,  feature evolution from \textit{Item~1} requires $\vbeta^\star\neq 0$. Indeed, let $X,X'\sim\bP_X$ with $X$ independent of $X'$. Then 
$$\|K_\fea - K_0\|_{L^2(\bP_X\otimes\bP_X)}^2 = \|K_{\fea}\|_{L^2(\bP_X\otimes\bP_X)}^2 = \bE[ \langle X_{\hat S},X_{\hat S}'\rangle^2 |\hat S] = \sum_{i,j\in\hat S}(\bE[x_ix_j])^2 = |\hat S|$$ because $\bE[x_j x_i]=\1_{i=j}$ for all $i,j\in[d]$.  In the case $\vbeta^\star= 0$, starting from $\cH_0=\{\boldsymbol{0}\}$ is the best possible choice, since $\cH_0$ contains a unique element which is the signal itself. Hence, there is nothing more to learn since $\cH_0$ perfectly identifies the best possible space approaching $f^\star$. 

Finally, under support recovery, there is no tail energy of $\vbeta^\star$ in the feature space and so  \textit{Item~5} follows from the consistency of the LASSO. As we said previously, this item is not strictly speaking about feature learning than their utilization. That is why Definition~\ref{def:feature_learning_general} contains a notion of statistical efficiency through its \textit{Item~5} that is implied by the consistency of the estimator studied here.

To conclude, the LASSO satisfies the feature learning property as introduced in Definition~\ref{def:feature_learning_general} under support recovery, consistency and for $\vbeta^\star\neq 0$ (to allow for feature evolution for an algorithm implementing the LASSO and starting at $\boldsymbol{\beta}^{(0)}=\boldsymbol{0}$). LASSO's polar decomposition and $f^\star$'s factorization go through the learned feature space $\bR^{\hat S}$ with the learned features  $\{\ve_j:j\in\hat S\}$.

\section{Self Regularization (implicit bias) of MFLD}\label{app:mfld-foundations}

In this section, we prove a self-regularization property of MFLD on the output layer. The main point is that, once the hidden layer distribution achieves stationarity, the output layer solves a strongly convex regularized problem: the head estimator $\hat g_N$ is a RERM with strongly convex regularization function.

Throughout this section, $L^2(\hat\varphi_\lambda)$ denotes the learned coefficient Hilbert space, and define the prediction operator
\[
T_{\fea}:L^2(\hat\varphi_\lambda)\to L^2(\mathbb P_X),\qquad (T_{\fea}a)(x):=\langle a,\varphi_{\neu}(x)\rangle_{L^2(\hat\varphi_\lambda)}
\]where $\varphi_{\neu}(x)(w)=\sigma(\inr{x,w})$.
By Remark~\ref{remark:scaling}, we take the output-weight reference pdf to be the uniform pdf $q(\alpha)=1/(2A)$ on $[-A,A]$ in this section. This does not change the minimizer.

\begin{Proposition}\label{prop:hat_g_N}
Define $\psi(u):=\inf\{\mathcal D(\rho\|q):\rho\in\mathcal P([-A,A]), \int \alpha\,\mathrm{d}\rho(\alpha)=u\}$ for $u\in(-A,A)$, and set $\psi(u)=+\infty$ otherwise. For all $\lambda>0$, define the extended-valued convex functional $\Psi:{L^2(\hat\varphi_\lambda)}\to\mathbb R\cup\{+\infty\}$ by $\Psi(a):=\int\psi(a(\boldsymbol w))\,\mathrm{d}\hat\varphi_\lambda(\boldsymbol w)$, with the convention that $\Psi(a)=+\infty$ if $a(\boldsymbol w)\notin(-A,A)$ on a set of positive $\hat\varphi_\lambda$-measure. Then, $\mathbb P^{\otimes N}$-almost surely, $\hat a_N \in{L^2(\hat\varphi_\lambda)}$ satisfies
\[
\forall\lambda>0,\quad\hat a_N\in\arg\min_{a\in{L^2(\hat\varphi_\lambda)}}\bigg\{\frac1N\sum_{i=1}^N(Y_i-(T_{\fea}a)(X_i))^2+\lambda\Psi(a)\bigg\}.
\]
Moreover, for any $a_1,a_2\in\operatorname{dom}(\Psi)$ and any $\zeta_2\in\partial\Psi(a_2)$, $\Psi(a_1)-\Psi(a_2)-\langle\zeta_2,a_1-a_2\rangle_{{L^2(\hat\varphi_\lambda)}}\ge (2A^2)^{-1}\|a_1-a_2\|_{{L^2(\hat\varphi_\lambda)}}^2$. Consequently, for
\begin{align*}
    \Psi_{\cH_\fea}:g\in\cH_\fea\mapsto \inf\bigg\{\Psi(a):\, a\in L^2(\hat\varphi_\lambda),\, g\circ\phi_\fea(\cdot) = \langle a ,\varphi_{\neu}(\cdot)\rangle_{L^2(\hat\varphi_\lambda)}\bigg\},
\end{align*}there hold
\begin{align*}
    \forall\lambda>0,\quad \hat g_N \in \argmin_{g\in\cH_\fea}\left\{ \frac{1}{N}\sum_{i=1}^N\left(Y_i - g\circ\phi_\fea(X_i)\right)^2 + \lambda\Psi_{\cH_\fea}(g) \right\},
\end{align*}and for any $g_1,g_2\in\mathrm{dom}(\Psi_{\cH_\fea})$ and any $\eta_2\in\partial\Psi_{\cH_\fea}(g_2)$,
\begin{align*}
    \Psi_{\cH_\fea}(g_1) - \Psi_{\cH_\fea}(g_2) - \langle\eta_2,g_1-g_2\rangle_{\cH_\fea} \geq \frac{1}{2A^2}\norm{g_1-g_2}_{\cH_\fea}^2.
\end{align*}
\end{Proposition}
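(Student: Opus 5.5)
The plan is to decompose the free energy along the base--fiber structure and minimize over the fiber variables with the base fixed. By Remark~\ref{remark:scaling}, I replace the Lebesgue reference on $\Theta$ by $Q=q(\alpha)\,\mathrm{d}\alpha\otimes\mathrm{Leb}_W$ with $q=(2A)^{-1}\1_{[-A,A]}$; this changes $\Ent^-$ only by an additive constant. The chain rule \eqref{eq:chain_rule_Shannon_entropy} then gives, for every $\nu$ with $\boldsymbol w$-marginal $\varphi$ and conditionals $\nu(\cdot|\boldsymbol w)$,
\[
P_N\ell^\lambda_\nu=\frac1N\sum_{i=1}^N\Big(Y_i-\int a_\nu(\boldsymbol w)\sigma(\langle\boldsymbol w,X_i\rangle)\,\mathrm d\varphi(\boldsymbol w)\Big)^2+\lambda\mathcal D(\varphi\|\mathrm{Leb}_W)+\lambda\int\mathcal D(\nu(\cdot|\boldsymbol w)\|q)\,\mathrm d\varphi(\boldsymbol w),
\]
where $a_\nu(\boldsymbol w)=\int\alpha\,\nu(\mathrm d\alpha|\boldsymbol w)$. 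With the marginal frozen at $\hat\varphi_\lambda$, the loss depends on the conditionals only through $a_\nu$. For each fixed $a$, the infimum of the last term over conditionals with mean $a(\boldsymbol w)$ is taken pointwise in $\boldsymbol w$. By Lemma~\ref{lemma:variation} (item 2) it is attained by the exponential tilt $p_{t(a(\boldsymbol w))}$ and equals $\Psi(a)$. This family of tilts is jointly measurable in $\boldsymbol w$, since $t(\cdot)=\psi'(\cdot)$ is continuous.

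\emph{Optimality of $\hat a_N$.} Suppose some $a\in L^2(\hat\varphi_\lambda)$ gave a strictly smaller value of $J(a):=P_N(Y-T_\fea a)^2+\lambda\Psi(a)$ than $\hat a_N$. Then $\nu':=\hat\varphi_\lambda(\mathrm d\boldsymbol w)\,p_{t(a(\boldsymbol w))}(\alpha)\,\mathrm d\alpha$ would satisfy $P_N\ell^\lambda_{\nu'}<P_N\ell^\lambda_{\hat\nu_\lambda}$. The reason is that $\hat\nu_\lambda$'s fiber entropy is at least $\Psi(\hat a_N)$, and the base term $\mathcal D(\hat\varphi_\lambda\|\cdot)$ is common to both and finite because $\lambda>0$ and $\hat\nu_\lambda$ has finite free energy. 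This contradicts \eqref{eq:def_hat_nu}. One also needs $\hat a_N(\boldsymbol w)\in(-A,A)$ a.e., so that $\Psi(\hat a_N)<\infty$; this holds because $\hat\nu_\lambda\ll\mathrm{Leb}$, so each conditional law is non-degenerate on $[-A,A]$ and its mean lies in the open interval.

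\emph{Strong convexity of $\Psi$.} This is pointwise. For $\zeta_2\in\partial\Psi(a_2)$, I will argue that $\zeta_2(\boldsymbol w)=\psi'(a_2(\boldsymbol w))$ $\hat\varphi_\lambda$-a.e. This is the standard subdifferential of an integral functional of a differentiable convex integrand (Rockafellar), and it can be verified directly by testing against perturbations $a_2+s h$ with bounded $h$ supported where $a_2$ stays away from $\pm A$. Integrating item 3 of Lemma~\ref{lemma:variation} over $\hat\varphi_\lambda$ then gives the $(2A^2)^{-1}\|a_1-a_2\|^2_{L^2(\hat\varphi_\lambda)}$ bound.

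\emph{Transfer to $\cH_\fea$.} The map $T:a\mapsto\langle a,\varphi_\neu(\cdot)\rangle$ is a coisometry onto $\cH_\fea$ by \eqref{eq:quotient_H_fea}: its kernel is $\ker T$, and it maps $(\ker T)^\perp$ isometrically onto $\cH_\fea$. The infimal projection $\Psi_{\cH_\fea}(g)=\inf\{\Psi(a):Ta=g\}$ is convex, and the loss depends on $a$ only through $Ta$. Hence the minimality of $\hat a_N$ transfers to $\hat g_N=T\hat a_N$, giving $\Psi_{\cH_\fea}(\hat g_N)\le\Psi(\hat a_N)$ and the reverse inequality from the minimality of $J$. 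For strong convexity, the $\tfrac1{A^2}$-strong convexity of $\Psi$ passes to the image under a coisometry. The cleanest route is via the equivalent characterization $\Psi(\theta a_1+(1-\theta)a_2)\le\theta\Psi(a_1)+(1-\theta)\Psi(a_2)-\frac{\theta(1-\theta)}{2A^2}\|a_1-a_2\|^2$. Take near-optimal preimages $a_i$ of $g_i$ and note $\|a_1-a_2\|\ge\|g_1-g_2\|_{\cH_\fea}$. This gives the midpoint-type inequality for $\Psi_{\cH_\fea}$, which is then converted into the subgradient form with any $\eta_2\in\partial\Psi_{\cH_\fea}(g_2)$.

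\emph{Main obstacle.} I expect the main difficulty to be the rigorous identification of $\partial\Psi$ and the attainment issues for $\Psi_{\cH_\fea}$. $\Psi$ is not finite everywhere, since its domain consists of functions valued in $(-A,A)$, and $\psi'$ blows up at $\pm A$. I would handle this by working with the equivalent convexity-with-modulus formulation throughout, rather than with subgradients, so that only domain points are ever needed.
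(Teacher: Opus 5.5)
Your proposal is correct and follows essentially the same route as the paper. Both use the chain rule for the relative entropy with the marginal frozen at $\hat\varphi_\lambda$, fiberwise minimization via the exponential tilts of Lemma~\ref{lemma:variation}, the pointwise Bregman bound integrated over $\hat\varphi_\lambda$, and, for $\Psi_{\cH_\fea}$, the near-optimal-preimage argument with $\|g_1-g_2\|_{\cH_\fea}\le\|a_1-a_2\|_{L^2(\hat\varphi_\lambda)}$ followed by $s\downarrow0$. Your direct competitor argument for the optimality of $\hat a_N$, the identification $\zeta_2=\psi'(a_2)$ a.e., and the explicit transfer of the argmin to $\hat g_N$ fill in steps that the paper only cites or leaves implicit; for the second, deriving the subgradient inequality for $\Psi$ from its integrated modulus of convexity, exactly as for $\Psi_{\cH_\fea}$, is an even simpler way.
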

\beginproof
Let $Q$ be the probability measure on $\Theta = [-A, A] \times WB_2^d$ with a tensor product structure $Q(\,\mathrm{d}\alpha, \,\mathrm{d}\boldsymbol{w}) = q(\alpha)\,\mathrm{d}\alpha \otimes Q_W(\,\mathrm{d}\boldsymbol{w})$, where $q(\alpha) = \frac{1}{2A}$ for all $-A\leq \alpha\leq A$ and $Q_W$ is the uniform distribution over $WB_2^d$. In that case, $Q=\alpha {\rm Leb}$ where $\alpha=(2AW^d {\rm Vol}(B_2^d))^{-1}$ and so,  $\mathcal{D}(\nu \| Q) = \int \nu \log(\nu/Q)=\Ent^-(\nu) + c$ where $c$ is an absolute constant that is independent of $\nu$. Hence, according to Remark~\ref{remark:scaling}, one can replace $\Ent^-(\nu)$ by $\mathcal{D}(\nu \| Q)$ in \eqref{eq:def_hat_nu}  while keeping unchanged the minimizer $\hat\nu_\lambda$. Since $\lambda>0$, we only consider $\nu\in\cP(\Theta)$ such that $\Ent^-(\nu)<\infty$, that is, $\nu\in\cP_{\mathrm{ac}}(\Theta)$.

For any $\nu\in\cP_{\mathrm{ac}}(\Theta)$, let $\varphi:=(P_W)_\sharp\nu$ and  $\nu(\mathrm{d}a|\vw):=\nu(a|\boldsymbol{W} = \vw)\mathrm{d}a=\rho(a|\boldsymbol{W} = \vw)q(a)\mathrm{d}a$ as the conditional probability measure. By the chain rule of negative Shannon entropy (see Section~\ref{sub:proof_chain_rule_negative_Shannon_entropy}):
\begin{align}\label{eq:chain_rule_Shannon_entropy}
\mathcal{D}(\nu \| Q) = \mathcal{D}(\varphi \| Q_W) + \int_{WB_2^d}\bigg(\int_{-A}^A \nu(\alpha|\boldsymbol{w}) \log \frac{\nu(\alpha|\boldsymbol{w})}{q(\alpha)}\,\mathrm{d}\alpha\bigg)\,\mathrm{d}\varphi(\boldsymbol{w}).
\end{align}
Denote $a:\boldsymbol{w}\in WB_2^d\mapsto \int_{-A}^A \alpha\,\mathrm{d}\nu(\alpha|\boldsymbol{w})=\bE_{\nu}[\alpha|\boldsymbol{W} =\boldsymbol{w}]$ where $(\alpha, \boldsymbol{W})\sim \nu$. Since $\nu\in\cP_{\mathrm{ac}}(\Theta)$, $a(\vw)\in(-A,A)$, $\varphi$-a.s.. Then
\begin{align*}
P_N\ell_\nu &= \frac{1}{N}\sum_{i=1}^N\bigg( Y_i - \int_{WB_2^d}\sigma(\langle\boldsymbol{w},X_i\rangle)\bigg[\int_{-A}^A \alpha\,\mathrm{d}\nu(\alpha|\boldsymbol{W} = \boldsymbol{w})\bigg] \,\mathrm{d}\varphi(\boldsymbol{w}) \bigg)^2 \\
&= \frac{1}{N}\sum_{i=1}^N\bigg(Y_i - \int_{WB_2^d} \sigma(\langle\boldsymbol{w},X_i\rangle)a(\boldsymbol{w})\,\mathrm{d}\varphi(\boldsymbol{w}) \bigg)^2,
\end{align*}
which depends only on $(\varphi,a)$, that is, the base point $\varphi$ and the element $a$ in its fiber $L^2(\varphi)$. Hence, we write $P_N\ell_{\varphi,a}$ instead of $P_N\ell_\nu$ in the following: $P_N \ell_{\varphi, a} = \frac{1}{N} \sum_{i=1}^N (Y_i - \bE_{\boldsymbol{W}\sim\varphi}[a(\boldsymbol{W}) \sigma(\langle \boldsymbol{W}, X_i \rangle)])^2$, which implies that the empirical risk depends only on the marginal distribution $\varphi$ and the conditional expectation $a$. Plug \eqref{eq:chain_rule_Shannon_entropy} back to \eqref{eq:def_hat_nu}, we obtain
\begin{align*}
P_N\ell_\nu^\lambda = P_N\ell_{\varphi,a} + \lambda\mathcal{D}(\varphi \| Q_W) + \lambda\int_{WB_2^d}\bigg( \int_{-A}^A \nu(\alpha|\boldsymbol{w}) \log \frac{\nu(\alpha|\boldsymbol{w})}{q(\alpha)}\,\mathrm{d}\alpha \bigg)\,\mathrm{d}\varphi(\boldsymbol{w}).
\end{align*}
The key point here is that the minimization of the regularized empirical risk over the feasible set $\nu\in\cP(\Theta)$ can be decomposed into a two-layer optimization process involving $(\varphi,a)$ and $\nu(\cdot|\boldsymbol{w})$ (the conditional distribution of $\alpha|\boldsymbol{W}=\vw$). Note that $P_N\ell_{\varphi,a}$ and $\lambda\mathcal{D}(\varphi \| Q_W)$ both depend only on $(\varphi,a)$ and not on $\nu(\cdot|\boldsymbol{w})$. Therefore,
\begin{align*}
\min_{\nu\in\cP(\Theta)}P_N\ell_\nu^\lambda &= \min_{(\varphi,a)}\bigg[ P_N\ell_{\varphi,a} + \lambda\mathcal{D}(\varphi \| Q_W) + \lambda\min_{\nu(\cdot|\cdot)\in \cK(\varphi,a)}\int_{WB_2^d}\bigg( \int_{-A}^A \nu(\alpha|\boldsymbol{w}) \log \frac{\nu(\alpha|\boldsymbol{w})}{q(\alpha)}\,\mathrm{d}\alpha \bigg)\,\mathrm{d}\varphi(\boldsymbol{w}) \bigg]
\end{align*}
where
\begin{align*}
\cK(\varphi,a) := \bigg\{\nu(\cdot|\cdot):\, \mbox{for }\varphi-\mathrm{a.s. }\,\boldsymbol{w}, \nu(\cdot|\boldsymbol{w})\in\cP([-A,A]),\mbox{ and } \int_{-A}^A \alpha\,\mathrm{d}\nu(\alpha|\boldsymbol{w})=a(\boldsymbol{w}) \bigg\}.
\end{align*}

We first prove that we can swap the inner minimum and the integral. Once we prove that, then for any $(\varphi,a)$, there holds
\begin{align*}
\min_{\nu(\cdot|\cdot)\in\cK(\varphi,a)}\int_{WB_2^d}\bigg( \int_{-A}^A \nu(\alpha|\boldsymbol{w}) \log \frac{\nu(\alpha|\boldsymbol{w})}{q(\alpha)}\,\mathrm{d}\alpha \bigg)\,\mathrm{d}\varphi(\boldsymbol{w}) = \int_{WB_2^d}\psi(a(\boldsymbol{w}))\,\mathrm{d}\varphi(\boldsymbol{w}).
\end{align*}

It is easy to prove the left-hand-side is not smaller than the right-hand-side. In fact, for any $\nu(\cdot|\cdot)\in\cK(\varphi,a)$, by the definition of $\psi$, there holds $\int_{-A}^A \nu(\alpha|\boldsymbol{w}) \log \frac{\nu(\alpha|\boldsymbol{w})}{q(\alpha)}\,\mathrm{d}\alpha \geq \psi(a(\boldsymbol{w}))$ for $\varphi$-almost all $\vw$. Taking integral under $\varphi$ gives the desired inequality. We now prove the other side. By Lemma~\ref{lemma:variation}, for any $u\in(-A,A)$, there exists a unique $p_{t(u)}\in\cP([-A,A])$, such that $\int_{-A}^A \alpha p_{t(u)}(\alpha)\,\mathrm{d}\alpha = u$, and $\psi(u) = \int p_{t(u)}(\alpha)\log \frac{p_{t(u)}(\alpha)}{q(\alpha)}\,\mathrm{d}\alpha$. Take $\nu^\star(\,\mathrm{d}\alpha|\boldsymbol{w})=p_{t(a(\boldsymbol{w}))}(\alpha)\,\mathrm{d}\alpha$. Then, for $\varphi$-almost all $\vw$, $\int \alpha\,\nu^\star(\,\mathrm{d}\alpha|\boldsymbol{w})=a(\boldsymbol{w})$, $\nu^\star(\cdot|\boldsymbol{w})\in\cP([-A,A])$ and the relative entropy is $\psi(a(\boldsymbol{w}))$. Then $\nu^\star(\cdot|\cdot)\in\cK(\varphi,a)$. Taking integral of $\int_{-A}^A\nu^\star(\alpha|\vw)\log\frac{\nu^\star(\alpha|\vw)}{q(\alpha)}\,\mathrm{d}\alpha=\psi(a(\vw))$ with respect to $\varphi$ gives the desired reverse inequality. Therefore, combining the two inequalities, the claimed equality holds.

By Lemma~\ref{lemma:variation}, $a\mapsto \int_{WB_2^d}\psi(a(\boldsymbol{w}))\,\mathrm{d}\varphi(\boldsymbol{w})$ is a strongly convex function. Since $P_N\ell_\nu^\lambda$ is a convex function of $\nu$, by \cite[Appendix A, 1.3, pp. 387]{hiriart-urrutyConvexAnalysisMinimization1993}, $(\hat\varphi_\lambda,\vw\to \mathbb E[\hat \alpha\mid \hat W=\boldsymbol w])$, where $(\hat \alpha, \hat \vW)\sim \hat\nu_\lambda$, is the global minimizer of $F(\varphi,a) = P_N\ell_{\varphi,a} + \lambda\mathcal{D}(\varphi \| Q_W)+\lambda\int_{WB_2^d}\psi(a)\,\mathrm{d}\varphi$.

Finally, after fixing the learned marginal $\hat\varphi_\lambda$, the term $\lambda\mathcal D(\hat\varphi_\lambda\|Q_W)$ is constant in the optimization problem over $a$. Hence the conditional mean $\vw\to \hat a_N(\vw)=\mathbb E[\hat A\mid \hat W=\boldsymbol w]\in{L^2(\hat\varphi_\lambda)}$ satisfies $\hat a_N\in\arg\min_{a\in{L^2(\hat\varphi_\lambda)}}\{N^{-1}\sum_{i=1}^N(Y_i-(T_{\fea}a)(X_i))^2+\lambda\Psi(a)\}$.

Thus $\hat a_N$ is a convex regularized M-estimator on the learned coefficient Hilbert space ${L^2(\hat\varphi_\lambda)}$. By Lemma~\ref{lemma:variation}, for any $a_1,a_2\in\operatorname{dom}(\Psi)$ and any $\zeta_2\in\partial\Psi(a_2)$, $\Psi(a_1)-\Psi(a_2)-\langle\zeta_2,a_1-a_2\rangle_{{L^2(\hat\varphi_\lambda)}}\ge (2A^2)^{-1}\int(a_1-a_2)^2\,\mathrm{d}\hat\varphi_\lambda=(2A^2)^{-1}\|a_1-a_2\|_{{L^2(\hat\varphi_\lambda)}}^2$.

For any \(0<s<1\) and \(\varepsilon>0\), choose \(a_1,a_2\in L^2(\hat\varphi_\lambda)\) such that \(g_j(\phi_{\mathrm{feat}}(\cdot))=\langle a_j,\phi_{\mathrm{neur}}(\cdot)\rangle_{L^2(\hat\varphi_\lambda)}\) and \(\Psi(a_j)\leq \Psi_{\mathcal H_{\mathrm{feat}}}(g_j)+\varepsilon\) for \(j=1,2\). Then \((1-s)a_2+sa_1\) is a coefficient representation of \((1-s)g_2+sg_1\). By the \(1/(2A^2)\)-strong convexity of \(\Psi\) in \(L^2(\hat\varphi_\lambda)\), \(\Psi_{\mathcal H_{\mathrm{feat}}}\big((1-s)g_2+sg_1\big)\leq \Psi\big((1-s)a_2+sa_1\big)\leq (1-s)\Psi(a_2)+s\Psi(a_1)-\frac{s(1-s)}{2A^2}\|a_1-a_2\|_{L^2(\hat\varphi_\lambda)}^2\leq (1-s)\Psi_{\mathcal H_{\mathrm{feat}}}(g_2)+s\Psi_{\mathcal H_{\mathrm{feat}}}(g_1)+\varepsilon-\frac{s(1-s)}{2A^2}\|g_1-g_2\|_{\mathcal H_{\mathrm{feat}}}^2\), where the last inequality follows from \(\|g_1-g_2\|_{\mathcal H_{\mathrm{feat}}}\leq \|a_1-a_2\|_{L^2(\hat\varphi_\lambda)}\), because \(a_1-a_2\) is a coefficient representation of \(g_1-g_2\). Letting \(\varepsilon\downarrow0\), we obtain \(\Psi_{\mathcal H_{\mathrm{feat}}}\big((1-s)g_2+sg_1\big)\leq (1-s)\Psi_{\mathcal H_{\mathrm{feat}}}(g_2)+s\Psi_{\mathcal H_{\mathrm{feat}}}(g_1)-\frac{s(1-s)}{2A^2}\|g_1-g_2\|_{\mathcal H_{\mathrm{feat}}}^2\). Now let \(\eta_2\in\partial\Psi_{\mathcal H_{\mathrm{feat}}}(g_2)\), then \(\Psi_{\mathcal H_{\mathrm{feat}}}\big((1-s)g_2+sg_1\big)\geq \Psi_{\mathcal H_{\mathrm{feat}}}(g_2)+s\langle \eta_2,g_1-g_2\rangle_{\mathcal H_{\mathrm{feat}}}\). Combining the last two inequalities gives \(\Psi_{\mathcal H_{\mathrm{feat}}}(g_2)+s\langle \eta_2,g_1-g_2\rangle_{\mathcal H_{\mathrm{feat}}}\leq (1-s)\Psi_{\mathcal H_{\mathrm{feat}}}(g_2)+s\Psi_{\mathcal H_{\mathrm{feat}}}(g_1)-\frac{s(1-s)}{2A^2}\|g_1-g_2\|_{\mathcal H_{\mathrm{feat}}}^2\). After rearranging and dividing by \(s\), \(\Psi_{\mathcal H_{\mathrm{feat}}}(g_1)-\Psi_{\mathcal H_{\mathrm{feat}}}(g_2)-\langle \eta_2,g_1-g_2\rangle_{\mathcal H_{\mathrm{feat}}}\geq \frac{1-s}{2A^2}\|g_1-g_2\|_{\mathcal H_{\mathrm{feat}}}^2\). Letting \(s\downarrow0\), we conclude the proof.
\endproof

\section{Proof of the existence of feature-learning for spherical MFLD}\label{app:feature-learning-proof}

In this section, we prove the feature-learning property of the spherical MFLD in the Gaussian index models. The proofs show that the learned measure $\hat\varphi_\lambda$ concentrates near the true directions and that this localization / low-dimensional structure leads to sharp estimation rates.

\subsection{Proof of Theorem~\ref{thm:fixed-output-general-ie} via feature learning}\label{sec:proof_thm_fixed-output-general-ie}

Recall that for $\tau$ the uniform measure over $S_2^{d-1}$,
\begin{align*}
\Ent_\tau^-(\varphi):=
\begin{cases}
\displaystyle \int_{S_2^{d-1}}\log\!\left(\frac{\mathrm{d}\varphi}{\mathrm{d}\tau}\right)\mathrm{d}\varphi,& \varphi\ll\tau,\\[0.7em]
+\infty,&\text{otherwise}.
\end{cases}
\end{align*}
For $\lambda>0$, define the fixed-output entropy-regularized empirical minimizer by
\begin{align}
\label{eq:7.1}
\hat\varphi_\lambda\in\arg\min_{\varphi\in\mathcal P(S_2^{d-1})}\left\{P_N(Y-f_\varphi(X))^2+\lambda\Ent_\tau^-(\varphi)\right\}.
\end{align}
For $\lambda=0$, we define $\hat\varphi_0\in\arg\min_{\varphi\in\mathcal P(S_2^{d-1})}\left\{P_N(Y-f_\varphi(X))^2\right\}$.

We use the following tensor notation. Let $\operatorname{Sym}^{\mathrm{IE}(\sigma)}(\mathbb R^d)$ be the space of order-$\mathrm{IE}(\sigma)$ symmetric tensors, equipped with the Frobenius inner product uniquely determined by $\langle \boldsymbol{u}^{\otimes \mathrm{IE}(\sigma)},\boldsymbol{v}^{\otimes \mathrm{IE}(\sigma)}\rangle_F=\langle \boldsymbol{u},\boldsymbol{v}\rangle^{\mathrm{IE}(\sigma)}.$
For a finite signed measure $\nu$ on $S_2^{d-1}$, define its $\mathrm{IE}(\sigma)$-th moment tensor by
\begin{equation}
\label{eq:def_TIE_sigma}
\mathcal T_{\mathrm{IE}(\sigma)}(\nu):=\int_{S_2^{d-1}}\boldsymbol{w}^{\otimes \mathrm{IE}(\sigma)}\nu(\mathrm{d}\boldsymbol{w}).
\end{equation}
For a probability measure $\varphi$, we write $\mathcal T_{\mathrm{IE}(\sigma)}(\varphi)$, and for the Dirac mass at $\boldsymbol{w}_\star$, $\mathcal T_{\mathrm{IE}(\sigma)}(\delta_{\boldsymbol{w}_\star})=\boldsymbol{w}_\star^{\otimes \mathrm{IE}(\sigma)}$. Let $B_Y = B_\sigma + B_\xi$.

\begin{Lemma}
\label{lem:existence-general-ie}
Grant Assumption~\ref{ass:fixed-output-general-ie}. For every $\lambda\ge 0$, the minimization problem in \eqref{eq:7.1} admits a minimizer.
\end{Lemma}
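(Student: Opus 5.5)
The plan is the direct method of the calculus of variations on the compact space $\mathcal P(S_2^{d-1})$ with the weak topology. By Prokhorov's theorem, $\mathcal P(S_2^{d-1})$ is weakly compact, since $S_2^{d-1}$ is a compact metric space. It is also metrizable, so it suffices to show that the objective
\[
J_\lambda(\varphi):=P_N(Y-f_\varphi(X))^2+\lambda\Ent_\tau^-(\varphi)
\]
is weakly lower semicontinuous and not identically $+\infty$. Properness is immediate: $\varphi=\tau$ gives $\Ent_\tau^-(\tau)=0$ and a finite empirical risk, because $|f_\tau|\le B_\sigma$. The sample $(X_i,Y_i)_{i=1}^N$ is fixed throughout, so the argument is deterministic.

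\emph{Step 1 (continuity of the data-fit term).} For each fixed $X_i$, the map $\boldsymbol w\mapsto\sigma(\langle\boldsymbol w,X_i\rangle)$ is bounded and continuous on $S_2^{d-1}$ by Assumption~\ref{ass:fixed-output-general-ie}, since $\sigma\in C_b^3$. Hence $\varphi\mapsto f_\varphi(X_i)=\int\sigma(\langle\boldsymbol w,X_i\rangle)\,\mathrm d\varphi(\boldsymbol w)$ is weakly continuous. The empirical risk is a finite sum of squares of such affine functionals, so it is weakly continuous on $\mathcal P(S_2^{d-1})$.

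\emph{Step 2 (lower semicontinuity of the entropy).} We have $\Ent_\tau^-(\varphi)=\KL(\varphi\|\tau)$, and relative entropy with respect to a fixed reference measure is weakly lower semicontinuous. This follows from the Donsker--Varadhan representation
\[
\KL(\varphi\|\tau)=\sup_{g\in C_b(S_2^{d-1})}\Big\{\int g\,\mathrm d\varphi-\log\int e^{g}\,\mathrm d\tau\Big\},
\]
because a supremum of weakly continuous functionals is weakly lower semicontinuous. It is also nonnegative. For $\lambda>0$, $J_\lambda$ is therefore a sum of a continuous and a lower semicontinuous functional, hence lower semicontinuous, and it is bounded below by $0$.

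\emph{Step 3 (conclusion).} Take a minimizing sequence $(\varphi_n)$ and extract a weakly convergent subsequence $\varphi_{n_k}\to\bar\varphi$ by compactness. Lower semicontinuity then gives $J_\lambda(\bar\varphi)\le\liminf_k J_\lambda(\varphi_{n_k})=\inf J_\lambda$, so $\bar\varphi$ is a minimizer. For $\lambda=0$, the same argument applies using only Step 1, since a continuous function on a compact space attains its minimum; the minimizer may then be singular, which is allowed. Because $\Ent^-_\tau$ is strictly convex and the data-fit term is convex in $\varphi$ (it is a square of an affine map), uniqueness for $\lambda>0$ follows as a bonus, although it is not required.

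The only step requiring care is Step 2. Justifying the Donsker--Varadhan variational formula, or at least its lower semicontinuity consequence, on a compact metric space is standard, so I would cite it rather than reprove it. All other steps are routine once the boundedness and continuity of $\sigma$ are used.
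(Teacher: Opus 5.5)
Your proposal is correct and follows essentially the same route as the paper. Both arguments use weak compactness and metrizability of $\mathcal P(S_2^{d-1})$, weak continuity of the empirical loss via boundedness and continuity of $\boldsymbol w\mapsto\sigma(\langle\boldsymbol w,X_i\rangle)$, weak lower semicontinuity of $\Ent_\tau^-$ via the Donsker--Varadhan variational formula, and finiteness of the objective at $\varphi=\tau$; your uniqueness remark for $\lambda>0$ is a correct extra that the statement does not require.
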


\beginproof
If $\lambda=0$, the objective is the empirical loss, and if $\lambda>0$, the objective is the sum of the weakly continuous empirical loss and the weakly lower semicontinuous entropy term.

The space $\mathcal P(S_2^{d-1})$, endowed with weak convergence, is compact and metrizable because $S_2^{d-1}$ is compact. For fixed data $(X_i,Y_i)_{i=1}^N$, the map $\varphi\mapsto P_N(Y-f_\varphi(X))^2$ is weakly continuous: for every $i$, the function $\boldsymbol{w}\mapsto\sigma(\langle \boldsymbol{w},X_i\rangle)$ is continuous and bounded on $S_2^{d-1}$, hence $\varphi\mapsto f_\varphi(X_i)$ is weakly continuous. The entropy is weakly lower semicontinuous by the variational representation
\begin{align*}
\Ent_\tau^-(\varphi)=\sup_{\psi\in C(S_2^{d-1})}\left\{\int_{S_2^{d-1}}\psi\,\mathrm{d}\varphi-\log\int_{S_2^{d-1}}e^\psi\,\mathrm{d}\tau\right\}.
\end{align*}
Thus the objective in \eqref{eq:7.1} is lower semicontinuous on a compact set. Since $\varphi=\tau$ has finite objective value, the minimum is attained.
\endproof

\begin{Lemma}
\label{lemma:proj-ridge-feature}
Let $C_{\mathrm{IE}(\sigma)}$ denote the $\mathrm{IE}(\sigma)$-th homogeneous Wiener chaos in $L^2(\bP_X)$ defined in Lemma~\ref{lemma:C_m_is_Span_Hermite}. For any $\boldsymbol{w} \in S_2^{d-1}$, let $\mathrm{Proj}_{\mathrm{IE}(\sigma)} \sigma(\langle \boldsymbol{w}, \cdot \rangle)$ be the orthogonal projection of the function $\sigma(\langle \boldsymbol{w}, \cdot \rangle)$ onto $C_{\mathrm{IE}(\sigma)}$, then $\mathrm{Proj}_{\mathrm{IE}(\sigma)} \sigma(\langle \boldsymbol{w}, \cdot \rangle) = \frac{b_{\mathrm{IE}(\sigma)}}{\mathrm{IE}(\sigma)!} \mathrm{He}_{\mathrm{IE}(\sigma)}(\langle \boldsymbol{w}, \cdot \rangle).$
\end{Lemma}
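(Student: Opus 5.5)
We expand the ridge function in the Hermite basis and then read off its component in the chaos $C_{\mathrm{IE}(\sigma)}$, using the orthogonal decomposition $L^2(\bP_X)=\bigoplus_{m\ge0}C_m$ recalled in Section~\ref{sec:Hermite_functions_Background}.

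Fix $\boldsymbol{w}\in S_2^{d-1}$. Since $X\sim\mathcal N(\boldsymbol 0,I_d)$, we have $G:=\langle\boldsymbol{w},X\rangle\sim\mathcal N(0,1)$. Because $\sigma$ is bounded, $\sigma\in L^2(\mathcal N(0,1))$, and the orthogonal basis $(\mathrm{He}_k)_{k\ge0}$ with $\mathbb E[\mathrm{He}_k(G)^2]=k!$ gives
\[
\sigma(t)=\sum_{k\ge0}\frac{b_k}{k!}\mathrm{He}_k(t)\quad\text{in }L^2(\mathcal N(0,1)).
\]
The map $h\mapsto h(\langle\boldsymbol{w},\cdot\rangle)$ is an isometry from $L^2(\mathcal N(0,1))$ into $L^2(\bP_X)$, because the law of $\langle\boldsymbol{w},X\rangle$ is $\mathcal N(0,1)$. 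Hence the series $\sigma(\langle\boldsymbol{w},\cdot\rangle)=\sum_{k\ge0}\frac{b_k}{k!}\mathrm{He}_k(\langle\boldsymbol{w},\cdot\rangle)$ converges in $L^2(\bP_X)$.

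By \eqref{eq:Hermite_on_inner_product}, each term $\mathrm{He}_k(\langle\boldsymbol{w},\cdot\rangle)$ belongs to $C_k$. The subspaces $C_k$ are mutually orthogonal, and the projection $\mathrm{Proj}_{\mathrm{IE}(\sigma)}$ onto the closed subspace $C_{\mathrm{IE}(\sigma)}$ is continuous. We may therefore apply it term by term to the $L^2$-convergent series. It kills every term with $k\neq\mathrm{IE}(\sigma)$ and fixes the term with $k=\mathrm{IE}(\sigma)$. This leaves
\[
\mathrm{Proj}_{\mathrm{IE}(\sigma)}\sigma(\langle\boldsymbol{w},\cdot\rangle)=\frac{b_{\mathrm{IE}(\sigma)}}{\mathrm{IE}(\sigma)!}\mathrm{He}_{\mathrm{IE}(\sigma)}(\langle\boldsymbol{w},\cdot\rangle).
\]

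The only point needing care is that passing the projection through the infinite sum is legitimate. This follows from the $L^2(\bP_X)$ convergence of the series and the continuity of orthogonal projections, so we do not expect a real obstacle. The identity does not use the minimality in the definition of $\mathrm{IE}(\sigma)$; that minimality only guarantees $b_{\mathrm{IE}(\sigma)}\neq0$, which is what makes this projection the leading nonzero chaos component used later.
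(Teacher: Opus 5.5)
Your proof is correct, but it takes a different route from the paper's.

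\textbf{The paper's route.} The paper never expands $\sigma$ in a series. It uses the Hermite regression identity of Lemma~\ref{lemma:conditional_expectation_Hermite} to compute $\mathbb E[\sigma(\langle\boldsymbol w,X\rangle)\mathrm{He}_{\mathrm{IE}(\sigma)}(\langle\boldsymbol v,X\rangle)]=b_{\mathrm{IE}(\sigma)}\langle\boldsymbol w,\boldsymbol v\rangle^{\mathrm{IE}(\sigma)}$. It compares this with $\mathbb E[\mathrm{He}_{\mathrm{IE}(\sigma)}(\langle\boldsymbol w,X\rangle)\mathrm{He}_{\mathrm{IE}(\sigma)}(\langle\boldsymbol v,X\rangle)]=\mathrm{IE}(\sigma)!\,\langle\boldsymbol w,\boldsymbol v\rangle^{\mathrm{IE}(\sigma)}$. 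This shows the residual $\sigma(\langle\boldsymbol w,\cdot\rangle)-\frac{b_{\mathrm{IE}(\sigma)}}{\mathrm{IE}(\sigma)!}\mathrm{He}_{\mathrm{IE}(\sigma)}(\langle\boldsymbol w,\cdot\rangle)$ is orthogonal to every ridge polynomial $\mathrm{He}_{\mathrm{IE}(\sigma)}(\langle\boldsymbol v,\cdot\rangle)$. To upgrade this to orthogonality against all of $C_{\mathrm{IE}(\sigma)}$, the paper then needs Lemma~\ref{lemma:C_m_is_Span_Hermite}, which says that ridge Hermite polynomials span the chaos.

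\textbf{Your route.} You use completeness of $(\mathrm{He}_k)_{k\ge0}$ in $L^2(\mathcal N(0,1))$, the inclusion $\mathrm{He}_k(\langle\boldsymbol w,\cdot\rangle)\in C_k$ from \eqref{eq:Hermite_on_inner_product}, and mutual orthogonality of the $C_k$. The residual is then an $L^2(\bP_X)$-convergent sum of elements of chaoses $C_k$ with $k\neq\mathrm{IE}(\sigma)$, so it lies in the closed subspace $C_{\mathrm{IE}(\sigma)}^{\perp}$. Your term-by-term application of the continuous projection amounts to exactly this, so you never need the spanning lemma.

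\textbf{What each buys.} The paper's argument avoids series manipulations and any appeal to completeness of the one-dimensional Hermite system. It also produces the cross-moment identity $b_{\mathrm{IE}(\sigma)}\langle\boldsymbol w,\boldsymbol v\rangle^{\mathrm{IE}(\sigma)}$ explicitly, in the same language as the tensor isometry \eqref{eq:isomorphism_Hermite_tensors} used downstream. Both arguments work verbatim at every chaos level $m$, and the lemma is later used in that generality, e.g.\ for $\mathrm{Proj}_m$ with $1\le m\le M$.
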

\beginproof
Fix $\boldsymbol{w}, \boldsymbol{v} \in S_2^{d-1}$, and let $G_{\boldsymbol{w}} = \langle \boldsymbol{w}, X \rangle$, $\alpha = \langle \boldsymbol{w}, \boldsymbol{v} \rangle$. By Lemma~\ref{lemma:conditional_expectation_Hermite}, we directly obtain
\begin{align*}
\mathbb{E}[\mathrm{He}_{\mathrm{IE}(\sigma)}(\langle \boldsymbol{v}, X \rangle) \mid \langle \boldsymbol{w}, X \rangle] = \alpha^{\mathrm{IE}(\sigma)} \mathrm{He}_{\mathrm{IE}(\sigma)}(\langle \boldsymbol{w}, X \rangle).
\end{align*}
Using this identity, we compute the inner product:
\begin{align*}
\mathbb{E}[\sigma(G_{\boldsymbol{w}})\mathrm{He}_{\mathrm{IE}(\sigma)}(G_{\boldsymbol{v}})] &= \mathbb{E}[\sigma(G_{\boldsymbol{w}})\mathbb{E}[\mathrm{He}_{\mathrm{IE}(\sigma)}(G_{\boldsymbol{v}}) \mid G_{\boldsymbol{w}}]] \\
&= \alpha^{\mathrm{IE}(\sigma)} \mathbb{E}[\sigma(G_{\boldsymbol{w}})\mathrm{He}_{\mathrm{IE}(\sigma)}(G_{\boldsymbol{w}})] = b_{\mathrm{IE}(\sigma)} \langle \boldsymbol{w}, \boldsymbol{v} \rangle^{\mathrm{IE}(\sigma)}.
\end{align*}
Meanwhile, from the isometric isomorphism established earlier, we know
\begin{align*}
\mathbb{E}[\mathrm{He}_{\mathrm{IE}(\sigma)}(G_{\boldsymbol{w}})\mathrm{He}_{\mathrm{IE}(\sigma)}(G_{\boldsymbol{v}})] = \mathrm{IE}(\sigma)! \langle \boldsymbol{w}, \boldsymbol{v} \rangle^{\mathrm{IE}(\sigma)}.
\end{align*}
Recalling that $G_{\boldsymbol{w}}=\langle\boldsymbol{w},X\rangle$, $G_{\boldsymbol{v}}=\langle\boldsymbol{v},X\rangle$ and $\bP_X$ is standard Gaussian distribution, combining the two equations above reveals that 
\begin{align*}
    \forall \boldsymbol{v}\in S_2^{d-1},\quad \left< \sigma(\langle\boldsymbol{w},\cdot,\rangle) - \frac{b_{\mathrm{IE}(\sigma)}}{\mathrm{IE}(\sigma)!}\mathrm{He}_{\mathrm{IE}(\sigma)}(\langle\boldsymbol{w},\cdot\rangle),\, \mathrm{He}_{\mathrm{IE}(\sigma)}(\langle\boldsymbol{v},\cdot\rangle) \right>_{L^2(\bP_X)}=0,
\end{align*}
By Lemma~\ref{lemma:C_m_is_Span_Hermite}, which states that the family $\{\mathrm{He}_{\mathrm{IE}(\sigma)}(\langle \boldsymbol{v}, \cdot \rangle) : \boldsymbol{v} \in S_2^{d-1}\}$ completely spans the space $C_{\mathrm{IE}(\sigma)}$, the conclusion holds.
\endproof

\begin{Proposition}
\label{prop:m-chaos-localization}
Grant Assumption~\ref{ass:fixed-output-general-ie}. For $\varphi\in\mathcal P(S_2^{d-1})$ and $\mathcal T_{\mathrm{IE}(\sigma)}(\varphi)$  defined in \eqref{eq:def_TIE_sigma}, we have
\begin{align}\label{eq:tensor-localization}
\|\mathcal T_{\mathrm{IE}(\sigma)}(\varphi)-\boldsymbol{w}_\star^{\otimes \mathrm{IE}(\sigma)}\|_F \le \frac{\sqrt{\mathrm{IE}(\sigma)!}}{|b_{\mathrm{IE}(\sigma)}|}\|f_\varphi-f^\star\|_{L^2(\bP_X)}.
\end{align}
Moreover, define
\begin{align*}
d_{\mathrm{IE}(\sigma)}^2(\boldsymbol{w},\boldsymbol{w}_\star) := \|\boldsymbol{w}-\operatorname{sgn}(\langle \boldsymbol{w},\boldsymbol{w}_\star\rangle)^{\mathrm{IE}(\sigma)+1}\boldsymbol{w}_\star\|_2^2,
\end{align*}
where $\operatorname{sgn}(0)=1$, and
\begin{align*}
S_{\mathrm{IE}(\sigma)}(\varphi):=\int_{S_2^{d-1}}d_{\mathrm{IE}(\sigma)}^2(\boldsymbol{w},\boldsymbol{w}_\star)\varphi(\mathrm{d}\boldsymbol{w}).
\end{align*}
Let
\begin{align*}
\kappa_{\mathrm{IE}(\sigma)}:=
\begin{cases}
\displaystyle \min_{t\in[-1,1]}\frac{1-t^{\mathrm{IE}(\sigma)}}{1-t},& \mathrm{IE}(\sigma)\ \mathrm{odd},\\[1em]
1,& \mathrm{IE}(\sigma)\ \mathrm{even},
\end{cases}
\end{align*}
where the ratio at $t=1$ is understood as its continuous extension, equal to $\mathrm{IE}(\sigma)$. Then $\kappa_{\mathrm{IE}(\sigma)}>0$, and
\begin{align}\label{eq:geom-localization}
S_{\mathrm{IE}(\sigma)}(\varphi)\le \frac{2\sqrt{\mathrm{IE}(\sigma)!}}{\kappa_{\mathrm{IE}(\sigma)} |b_{\mathrm{IE}(\sigma)}|}\|f_\varphi-f^\star\|_{L^2(\bP_X)}.
\end{align}
Thus the prediction error localizes $\varphi$ around $\boldsymbol{w}_\star$ when $\mathrm{IE}(\sigma)$ is odd, and around the projective direction $\{\pm \boldsymbol{w}_\star\}$ when $\mathrm{IE}(\sigma)$ is even.
\end{Proposition}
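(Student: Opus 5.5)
The first inequality \eqref{eq:tensor-localization} will come from the Hermite expansion \eqref{eq:Hermite_expansion_estimation_error}. That identity writes $\|f_\varphi-f^\star\|_{L^2(\bP_X)}^2$ as a sum of nonnegative terms $\frac{b_k^2}{k!}\|\mathcal T_k(\varphi)-\boldsymbol{w}_\star^{\otimes k}\|_F^2$. Dropping every term except $k=\mathrm{IE}(\sigma)$ gives $\frac{b_{\mathrm{IE}(\sigma)}^2}{\mathrm{IE}(\sigma)!}\|\mathcal T_{\mathrm{IE}(\sigma)}(\varphi)-\boldsymbol{w}_\star^{\otimes \mathrm{IE}(\sigma)}\|_F^2\le \|f_\varphi-f^\star\|^2_{L^2(\bP_X)}$, and taking square roots yields the claim. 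Equivalently, one can project $f_\varphi-f^\star$ onto $C_{\mathrm{IE}(\sigma)}$ with Lemma~\ref{lemma:proj-ridge-feature} and then apply the isometry \eqref{eq:isomorphism_Hermite_tensors}. The interchange of $\int\cdot\,\mathrm d\varphi$ with the projection is justified because $\sigma$ is bounded.

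For \eqref{eq:geom-localization}, write $m=\mathrm{IE}(\sigma)$. Using $\|\boldsymbol{w}_\star^{\otimes m}\|_F=1$ and Cauchy--Schwarz,
\[
\int\big(1-\langle\boldsymbol{w},\boldsymbol{w}_\star\rangle^m\big)\varphi(\mathrm d\boldsymbol{w})=\langle \boldsymbol{w}_\star^{\otimes m},\boldsymbol{w}_\star^{\otimes m}-\mathcal T_m(\varphi)\rangle_F\le\|\mathcal T_m(\varphi)-\boldsymbol{w}_\star^{\otimes m}\|_F .
\]
It then suffices to prove the pointwise bound $d_m^2(\boldsymbol{w},\boldsymbol{w}_\star)\le \frac{2}{\kappa_m}(1-\langle\boldsymbol{w},\boldsymbol{w}_\star\rangle^m)$, integrate it against $\varphi$, and combine with \eqref{eq:tensor-localization}. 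Set $t=\langle\boldsymbol{w},\boldsymbol{w}_\star\rangle\in[-1,1]$.

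\emph{Odd $m$.} Here $\operatorname{sgn}(t)^{m+1}=1$, so $d_m^2=\|\boldsymbol{w}-\boldsymbol{w}_\star\|_2^2=2(1-t)$. By definition of $\kappa_m$ we have $1-t^m\ge\kappa_m(1-t)$, which gives exactly the factor $2/\kappa_m$. To see $\kappa_m>0$: the ratio $(1-t^m)/(1-t)=1+t+\dots+t^{m-1}$ is a continuous function on $[-1,1]$. It is positive at $t=-1$, where it equals $1$ because $m$ is odd. For $t\in(-1,1)$ it is positive because both $1-t^m>0$ and $1-t>0$. By compactness its minimum is therefore positive.

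\emph{Even $m$.} Here $\operatorname{sgn}(t)^{m+1}=\operatorname{sgn}(t)$, so $d_m^2=2(1-|t|)$. Since $|t|\le1$ and $m\ge 2$, we have $|t|^m\le|t|$, hence $1-t^m=1-|t|^m\ge1-|t|$, which is the bound with $\kappa_m=1$. The convention $\operatorname{sgn}(0)=1$ is harmless because $t=0$ gives $d_m^2=2$ for either sign.

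The main step is the pointwise comparison; everything else is the orthogonality already established in the Hermite calculus. The localization interpretation then follows by Markov's inequality applied to $S_{\mathrm{IE}(\sigma)}(\varphi)$.
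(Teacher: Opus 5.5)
Your proof is correct and follows the paper's argument. The paper obtains \eqref{eq:tensor-localization} by your ``equivalent'' route: it projects $f_\varphi-f^\star$ onto $C_{\mathrm{IE}(\sigma)}$ via Lemma~\ref{lemma:proj-ridge-feature}, computes the norm of the projection through the Hermite--tensor isometry, and uses that orthogonal projections are contractions (dropping terms from \eqref{eq:Hermite_expansion_estimation_error} is the same computation with Parseval in place of Bessel). It then proves \eqref{eq:geom-localization} exactly as you do, via the pointwise bound $d_{\mathrm{IE}(\sigma)}^2(\boldsymbol{w},\boldsymbol{w}_\star)\le \frac{2}{\kappa_{\mathrm{IE}(\sigma)}}\bigl(1-\langle\boldsymbol{w},\boldsymbol{w}_\star\rangle^{\mathrm{IE}(\sigma)}\bigr)$ in the odd and even cases, combined with Cauchy--Schwarz against $\boldsymbol{w}_\star^{\otimes\mathrm{IE}(\sigma)}$.
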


\beginproof
We first prove the tensor localization bound. Let $\nu:=\varphi-\delta_{\boldsymbol{w}_\star}$. By the linearity of the projection and Lemma~\ref{lemma:proj-ridge-feature}, we have
\begin{align*}
\mathrm{Proj}_{\mathrm{IE}(\sigma)}(f_\varphi-f^\star)(\boldsymbol{x}) =\frac{b_{\mathrm{IE}(\sigma)}}{\mathrm{IE}(\sigma)!}\int_{S_2^{d-1}}\mathrm{He}_{\mathrm{IE}(\sigma)}(\langle \boldsymbol{w},\boldsymbol{x}\rangle)\nu(\mathrm{d}\boldsymbol{w}).
\end{align*}
Using Fubini's theorem and \eqref{eq:inner_product_Hermite_polynomials}, we compute the squared $L^2(\bP_X)$ norm of this projection:
\begin{align*}
&\left\|\int \mathrm{He}_{\mathrm{IE}(\sigma)}(\langle \boldsymbol{w},\cdot\rangle)\nu(\mathrm{d}\boldsymbol{w})\right\|_{L^2(\bP_X)}^2
= \iint\mathbb E[\mathrm{He}_{\mathrm{IE}(\sigma)}(\langle \boldsymbol{w},X\rangle)\mathrm{He}_{\mathrm{IE}(\sigma)}(\langle \boldsymbol{v},X\rangle)]\nu(\mathrm{d}\boldsymbol{w})\nu(\mathrm{d}\boldsymbol{v}) \\
&= \mathrm{IE}(\sigma)!\iint\langle \boldsymbol{w},\boldsymbol{v}\rangle^{\mathrm{IE}(\sigma)}\nu(\mathrm{d}\boldsymbol{w})\nu(\mathrm{d}\boldsymbol{v})= \mathrm{IE}(\sigma)! \left\langle \int \boldsymbol{w}^{\otimes \mathrm{IE}(\sigma)}\nu(\mathrm{d}\boldsymbol{w}), \int \boldsymbol{v}^{\otimes \mathrm{IE}(\sigma)}\nu(\mathrm{d}\boldsymbol{v}) \right\rangle_F \\
&= \mathrm{IE}(\sigma)!\left\|\int \boldsymbol{w}^{\otimes \mathrm{IE}(\sigma)}\nu(\mathrm{d}\boldsymbol{w})\right\|_F^2.
\end{align*}
Recall that $\nu = \varphi-\delta_{\boldsymbol{w}_\star}$, then $\int \boldsymbol{w}^{\otimes \mathrm{IE}(\sigma)}\nu(\mathrm{d}\boldsymbol{w}) = \mathcal T_{\mathrm{IE}(\sigma)}(\varphi)-\boldsymbol{w}_\star^{\otimes \mathrm{IE}(\sigma)}$, and consequently,
\begin{align*}
\|\mathrm{Proj}_{\mathrm{IE}(\sigma)}(f_\varphi-f^\star)\|_{L^2(\bP_X)} = \frac{|b_{\mathrm{IE}(\sigma)}|}{\sqrt{\mathrm{IE}(\sigma)!}}\|\mathcal T_{\mathrm{IE}(\sigma)}(\varphi)-\boldsymbol{w}_\star^{\otimes \mathrm{IE}(\sigma)}\|_F.
\end{align*}
Since orthogonal projection is a contraction in $L^2(\bP_X)$, $\|f_\varphi-f^\star\|_{L^2(\bP_X)}\ge\|\mathrm{Proj}_{\mathrm{IE}(\sigma)}(f_\varphi-f^\star)\|_{L^2(\bP_X)}$, and \eqref{eq:tensor-localization} follows.

We now prove the concentration of the stationary measure.
\begin{enumerate}
    \item When $\mathrm{IE}(\sigma)$ is odd, $\operatorname{sgn}(\langle \boldsymbol{w},\boldsymbol{w}_\star\rangle)^{\mathrm{IE}(\sigma)+1}=1$, so $d_{\mathrm{IE}(\sigma)}^2(\boldsymbol{w},\boldsymbol{w}_\star)=\|\boldsymbol{w}-\boldsymbol{w}_\star\|_2^2=2(1-\langle \boldsymbol{w},\boldsymbol{w}_\star\rangle)$. For $t\in[-1,1]$, we have the algebraic identity: $1-t^{\mathrm{IE}(\sigma)}=(1-t)\sum_{\ell=0}^{\mathrm{IE}(\sigma)-1}t^\ell.$
Recall that when $\mathrm{IE}(\sigma)$ is odd, $\kappa_{\mathrm{IE}(\sigma)} = \min_{t \in [-1, 1]} \frac{1 - t^{\mathrm{IE}(\sigma)}}{1 - t}$. The continuous extension of the function $t \mapsto \frac{1 - t^{\mathrm{IE}(\sigma)}}{1 - t}$ at $t=1$ is $\lim_{t \to 1} \sum_{\ell=0}^{\mathrm{IE}(\sigma)-1} t^\ell = \mathrm{IE}(\sigma) > 0$. Since $\mathrm{IE}(\sigma)$ is odd, $\frac{1 - t^{\mathrm{IE}(\sigma)}}{1 - t} > 0$ for all $t < 1$. Hence $\kappa_{\mathrm{IE}(\sigma)}>0$, and $1-t\le \kappa_{\mathrm{IE}(\sigma)}^{-1}(1-t^{\mathrm{IE}(\sigma)})$ holds uniformly on $[-1,1]$. Applying this to $t = \langle \boldsymbol{w}, \boldsymbol{w}_\star \rangle$ yields
\begin{align}\label{eq:Sm-bound-1}
S_{\mathrm{IE}(\sigma)}(\varphi)=2\int(1-\langle \boldsymbol{w},\boldsymbol{w}_\star\rangle)\varphi(\mathrm{d}\boldsymbol{w}) \le \frac{2}{\kappa_{\mathrm{IE}(\sigma)}}\int(1-\langle \boldsymbol{w},\boldsymbol{w}_\star\rangle^{\mathrm{IE}(\sigma)})\varphi(\mathrm{d}\boldsymbol{w}).
\end{align}

\item If $\mathrm{IE}(\sigma)$ is even, $\operatorname{sgn}(\langle \boldsymbol{w},\boldsymbol{w}_\star\rangle)^{\mathrm{IE}(\sigma)+1}=\operatorname{sgn}(\langle \boldsymbol{w},\boldsymbol{w}_\star\rangle)$, which means $d_{\mathrm{IE}(\sigma)}^2(\boldsymbol{w},\boldsymbol{w}_\star)=\|\boldsymbol{w}-\operatorname{sgn}(\langle \boldsymbol{w},\boldsymbol{w}_\star\rangle)\boldsymbol{w}_\star\|_2^2=2(1-|\langle \boldsymbol{w},\boldsymbol{w}_\star\rangle|)$. Since $0\le |\langle \boldsymbol{w},\boldsymbol{w}_\star\rangle|\le1$, we have $1-|\langle \boldsymbol{w},\boldsymbol{w}_\star\rangle|\le1-|\langle \boldsymbol{w},\boldsymbol{w}_\star\rangle|^{\mathrm{IE}(\sigma)}=1-\langle \boldsymbol{w},\boldsymbol{w}_\star\rangle^{\mathrm{IE}(\sigma)}$. Recall that $\kappa_{\mathrm{IE}(\sigma)}=1$ when $\mathrm{IE}(\sigma)$ is even, so the inequality \eqref{eq:Sm-bound-1} also holds in the even case.
\end{enumerate}

Finally, by the linearity of the tensor Frobenius inner product,
\begin{align*}
\int \langle \boldsymbol{w},\boldsymbol{w}_\star\rangle^{\mathrm{IE}(\sigma)}\varphi(\mathrm{d}\boldsymbol{w}) = \int\langle \boldsymbol{w}^{\otimes \mathrm{IE}(\sigma)},\boldsymbol{w}_\star^{\otimes \mathrm{IE}(\sigma)}\rangle_F\varphi(\mathrm{d}\boldsymbol{w}) = \left\langle \mathcal T_{\mathrm{IE}(\sigma)}(\varphi),\boldsymbol{w}_\star^{\otimes \mathrm{IE}(\sigma)}\right\rangle_F.
\end{align*}
Since $\|\boldsymbol{w}_\star^{\otimes \mathrm{IE}(\sigma)}\|_F=1$, we can write
\begin{align*}
\int(1-\langle \boldsymbol{w},\boldsymbol{w}_\star\rangle^{\mathrm{IE}(\sigma)})\varphi(\mathrm{d}\boldsymbol{w}) &= \langle \boldsymbol{w}_\star^{\otimes \mathrm{IE}(\sigma)}, \boldsymbol{w}_\star^{\otimes \mathrm{IE}(\sigma)} \rangle_F - \langle \mathcal T_{\mathrm{IE}(\sigma)}(\varphi),\boldsymbol{w}_\star^{\otimes \mathrm{IE}(\sigma)}\rangle_F \\
&= \left\langle \boldsymbol{w}_\star^{\otimes \mathrm{IE}(\sigma)}-\mathcal T_{\mathrm{IE}(\sigma)}(\varphi),\boldsymbol{w}_\star^{\otimes \mathrm{IE}(\sigma)}\right\rangle_F.
\end{align*}
Applying the Cauchy-Schwarz inequality, this is bounded by
\begin{align*}
\left\langle \boldsymbol{w}_\star^{\otimes \mathrm{IE}(\sigma)}-\mathcal T_{\mathrm{IE}(\sigma)}(\varphi),\boldsymbol{w}_\star^{\otimes \mathrm{IE}(\sigma)}\right\rangle_F \le \|\mathcal T_{\mathrm{IE}(\sigma)}(\varphi)-\boldsymbol{w}_\star^{\otimes \mathrm{IE}(\sigma)}\|_F \|\boldsymbol{w}_\star^{\otimes \mathrm{IE}(\sigma)}\|_F = \|\mathcal T_{\mathrm{IE}(\sigma)}(\varphi)-\boldsymbol{w}_\star^{\otimes \mathrm{IE}(\sigma)}\|_F.
\end{align*}
Substituting this bound into \eqref{eq:Sm-bound-1} and invoking the tensor localization bound \eqref{eq:tensor-localization}, we obtain \eqref{eq:geom-localization}.
\endproof

\begin{Lemma}
\label{lem:second-order-multiplier-general-ie}
Grant Assumption~\ref{ass:fixed-output-general-ie}. Let $(X_i)_{i=1}^N$ be independent copies of $X$, and let $(\varepsilon_i)_{i=1}^N$ be independent Rademacher variables independent of $(X_i)_{i=1}^N$. For $\boldsymbol{u}\in3B_2^d$, define
\begin{align*}
A_N(\boldsymbol{u}):=\frac1N\sum_{i=1}^N\varepsilon_i\sigma''(\langle \boldsymbol{u},X_i\rangle)X_iX_i^\top.
\end{align*}
There exists a constant $C_{\rm sec}\ge1$, depending only on $(M_\sigma,T_\sigma)$, such that
\begin{align}
\label{eq:19.m.1}
\mathbb E\sup_{\|\boldsymbol{u}\|_2\le3}\|A_N(\boldsymbol{u})\|_{\rm op} \le C_{\rm sec}\left(\sqrt{\frac{D_{d,N}}{N}}+\frac{D_{d,N}}{N}\right), \qquad D_{d,N}:=d\log(edN).
\end{align}
Consequently, for
\begin{align*}
\mathcal Q_1:=\left\{\boldsymbol{x}\mapsto \sigma''(\langle \boldsymbol{u},\boldsymbol{x}\rangle)\langle \boldsymbol{e},\boldsymbol{x}\rangle^2:\|\boldsymbol{u}\|_2\le1,\ \boldsymbol{e}\in S_2^{d-1}\right\},
\end{align*}
one has
\begin{align}
\label{eq:19.m.2}
\mathbb E\sup_{q\in\mathcal Q_1}\left|\frac1N\sum_{i=1}^N\varepsilon_iq(X_i)\right| \le C_{\rm sec}\left(\sqrt{\frac{D_{d,N}}{N}}+\frac{D_{d,N}}{N}\right).
\end{align}
\end{Lemma}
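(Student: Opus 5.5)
We bound the supremum of the Rademacher matrix process $\boldsymbol{u}\mapsto A_N(\boldsymbol{u})$ by a net argument over $3B_2^d\times S_2^{d-1}\times S_2^{d-1}$, using the smoothness $\|\sigma^{(3)}\|_\infty\le T_\sigma$ to control discretization errors. We write $\|A_N(\boldsymbol{u})\|_{\rm op}=\sup_{\boldsymbol{e},\boldsymbol{f}\in S_2^{d-1}}\langle \boldsymbol{e},A_N(\boldsymbol{u})\boldsymbol{f}\rangle$; since $A_N(\boldsymbol{u})$ is symmetric, taking $\boldsymbol{e}=\boldsymbol{f}$ suffices. The standard $1/4$-net on the sphere gives $\|A\|_{\rm op}\le 2\max_{\boldsymbol{e}\in\mathcal N_{1/4}}|\langle \boldsymbol{e},A\boldsymbol{e}\rangle|$, with $|\mathcal N_{1/4}|\le 9^d$. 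For $\boldsymbol{u}$ we take an $\eta$-net $\mathcal U_\eta$ of $3B_2^d$ with $|\mathcal U_\eta|\le(9/\eta)^d$ and $\eta=1/(dN)$. This choice produces the $d\log(edN)$ entropy.

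\textbf{Discretization.} For $\|\boldsymbol{u}-\boldsymbol{u}'\|_2\le\eta$ we have $|\sigma''(\langle \boldsymbol{u},X_i\rangle)-\sigma''(\langle \boldsymbol{u}',X_i\rangle)|\le T_\sigma\eta\|X_i\|_2$. Hence
\[
\|A_N(\boldsymbol{u})-A_N(\boldsymbol{u}')\|_{\rm op}\le T_\sigma\eta\frac1N\sum_{i=1}^N\|X_i\|_2^3 .
\]
Its expectation is at most $T_\sigma\eta\,C d^{3/2}$, which with $\eta=1/(dN)$ is $O(\sqrt d/N)\lesssim \sqrt{D_{d,N}/N}$. (If needed, take $\eta=(dN)^{-2}$; this only changes constants in the logarithm.)

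\textbf{Finite maximum.} For fixed $(\boldsymbol{u},\boldsymbol{e})$ the summands $\varepsilon_i\sigma''(\langle \boldsymbol{u},X_i\rangle)\langle \boldsymbol{e},X_i\rangle^2$ are i.i.d., centered and sub-exponential with $\psi_1$-norm $\lesssim M_\sigma$, since $|\sigma''|\le M_\sigma$ and $\langle \boldsymbol{e},X\rangle^2$ is $\chi^2_1$. Bernstein's inequality gives a tail $2\exp(-cN\min(t^2/M_\sigma^2,t/M_\sigma))$. The standard maximal inequality over $K=|\mathcal U_\eta|\cdot 9^d$ points, with $\log K\lesssim d\log(edN)=D_{d,N}$, then yields
\[
\mathbb E\max\lesssim M_\sigma\Big(\sqrt{\tfrac{\log K}{N}}+\tfrac{\log K}{N}\Big).
\]
Combining the two steps gives \eqref{eq:19.m.1} with $C_{\rm sec}$ depending on $(M_\sigma,T_\sigma)$.

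\textbf{Consequence \eqref{eq:19.m.2}.} Each $q\in\mathcal Q_1$ satisfies $\frac1N\sum_i\varepsilon_iq(X_i)=\langle \boldsymbol{e},A_N(\boldsymbol{u})\boldsymbol{e}\rangle$ with $\|\boldsymbol{u}\|_2\le1\le3$. Its absolute value is therefore at most $\sup_{\|\boldsymbol{u}\|_2\le3}\|A_N(\boldsymbol{u})\|_{\rm op}$, and \eqref{eq:19.m.2} follows from \eqref{eq:19.m.1}.

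\textbf{Main obstacle.} The delicate point is the unboundedness of $\|X_i\|_2$ in the Lipschitz discretization. The expectation bound above handles it, because we only need the estimate in expectation and not with high probability. A second concern is making sure the Bernstein maximal inequality is applied to the sub-exponential (not sub-Gaussian) summands. This is exactly why the $D_{d,N}/N$ term appears.
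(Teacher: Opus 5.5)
Your proposal is correct and follows essentially the same route as the paper: a $1/4$-net on the sphere for the quadratic form, an $\eta$-net of $3B_2^d$ whose discretization error is controlled via $T_\sigma$ and $\mathbb E\|X\|_2^3\lesssim d^{3/2}$, and Bernstein's inequality for sub-exponential summands combined with a union bound over roughly $e^{C D_{d,N}}$ points. The differences are cosmetic: you take $\eta=1/(dN)$ and apply the sub-exponential maximal inequality directly in expectation, whereas the paper takes $\eta=(edN)^{-2}$ and integrates a high-probability tail bound.
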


\beginproof
Fix $\boldsymbol{u}\in3B_2^d$. Let $\mathcal V$ be a $1/4$-net of $S_2^{d-1}$ with $|\mathcal V|\le 9^d$. Since $A_N(\boldsymbol{u})$ is symmetric,
\begin{align}
\label{eq:19.m.3}
\|A_N(\boldsymbol{u})\|_{\rm op}\le2\sup_{\boldsymbol{v}\in\mathcal V}|\boldsymbol{v}^\top A_N(\boldsymbol{u})\boldsymbol{v}|.
\end{align}
For fixed $\boldsymbol{u}$ and $\boldsymbol{v}$ in $3B_2^d$, define $Z_i(\boldsymbol{u},\boldsymbol{v}):=\varepsilon_i\sigma''(\langle \boldsymbol{u},X_i\rangle)\langle \boldsymbol{v},X_i\rangle^2$. The variables $Z_i(\boldsymbol{u},\boldsymbol{v})$ are independent and centered. Moreover, $|Z_i(\boldsymbol{u},\boldsymbol{v})|\le M_\sigma\langle \boldsymbol{v},X_i\rangle^2$, and $\langle \boldsymbol{v},X_i\rangle^2$ is a sub-exponential with a  $\psi_1$-Orlicz norm constant independent of $\vv$. Therefore $\|Z_i(\boldsymbol{u},\boldsymbol{v})\|_{\psi_1}\le CM_\sigma$. Bernstein's inequality for centered sub-exponential variables gives, for every $s\ge1$,
\begin{align*}
\mathbb P\left(\left|\frac1N\sum_{i=1}^NZ_i(\boldsymbol{u},\boldsymbol{v})\right|>CM_\sigma\left(\sqrt{\frac{s}{N}}+\frac{s}{N}\right)\right)\le2e^{-s}.
\end{align*}
Applying the union bound over $\mathcal V$ and using \eqref{eq:19.m.3}, we get, for every fixed $\boldsymbol{u}$ and every $s\ge1$,
\begin{align}
\label{eq:19.m.5}
\mathbb P\left(\|A_N(\boldsymbol{u})\|_{\rm op}>CM_\sigma\left(\sqrt{\frac{d+s}{N}}+\frac{d+s}{N}\right)\right)\le2e^{-s}.
\end{align}

Let $\mathcal N_\eta$ be an $\eta$-net of $3B_2^d$, with $\eta=(edN)^{-2}$ and $|\mathcal N_\eta|\le(C/\eta)^d\le \exp(Cd\log(edN))$. Applying \eqref{eq:19.m.5} on this net and taking a union bound yields, for every $s\ge1$, with probability at least $1-2e^{-s}$,
\begin{align*}
\sup_{\boldsymbol{u}\in\mathcal N_\eta}\|A_N(\boldsymbol{u})\|_{\rm op} \le C\left(\sqrt{\frac{D_{d,N}+s}{N}}+\frac{D_{d,N}+s}{N}\right).
\end{align*}
Integrating this tail bound gives
\begin{align}
\label{eq:19.m.7}
\mathbb E\sup_{\boldsymbol{u}\in\mathcal N_\eta}\|A_N(\boldsymbol{u})\|_{\rm op} \le C\left(\sqrt{\frac{D_{d,N}}{N}}+\frac{D_{d,N}}{N}\right).
\end{align}

It remains to pass from $\mathcal N_\eta$ to the whole ball $3B_2^d$. For $\boldsymbol{u},\boldsymbol{v}\in3B_2^d$,
\begin{align*}
\|A_N(\boldsymbol{u})-A_N(\boldsymbol{v})\|_{\rm op} \le \frac1N\sum_{i=1}^N|\sigma''(\langle \boldsymbol{u},X_i\rangle)-\sigma''(\langle \boldsymbol{v},X_i\rangle)|\,\|X_i\|_2^2 \le T_\sigma\|\boldsymbol{u}-\boldsymbol{v}\|_2\frac1N\sum_{i=1}^N\|X_i\|_2^3.
\end{align*}
Since $\mathbb E\|X\|_2^3\le C d^{3/2}$, we have
\begin{align}
\label{eq:19.m.9}
\mathbb E\sup_{\|\boldsymbol{u}\|_2\le3}\inf_{\boldsymbol{v}\in\mathcal N_\eta}\|A_N(\boldsymbol{u})-A_N(\boldsymbol{v})\|_{\rm op} \le CT_\sigma\eta d^{3/2}\le CN^{-2}.
\end{align}
Combining \eqref{eq:19.m.7} and \eqref{eq:19.m.9}, and enlarging the constant, proves \eqref{eq:19.m.1}.

For $q(\boldsymbol{x})=\sigma''(\langle \boldsymbol{u},\boldsymbol{x}\rangle)\langle \boldsymbol{e},\boldsymbol{x}\rangle^2\in\mathcal Q_1$, $\frac1N\sum_{i=1}^N\varepsilon_iq(X_i)=\boldsymbol{e}^\top A_N(\boldsymbol{u})\boldsymbol{e},$
with $\|\boldsymbol{u}\|_2\le1$. Hence the absolute value is at most $\sup_{\|\boldsymbol{u}\|_2\le1}\|A_N(\boldsymbol{u})\|_{\rm op}$, which is bounded by the left-hand side of \eqref{eq:19.m.1}. This proves \eqref{eq:19.m.2}.
\endproof

\begin{Proposition}
\label{prop:parametric-rademacher-general-ie}
Grant Assumption~\ref{ass:fixed-output-general-ie}. For $r>0$, define
\begin{align*}
(\mathcal{F}-f^\star)\cap B_{L^2(\bP_X)}(0;r):=\left\{f_\varphi-f^\star:\varphi\in\mathcal P(S_2^{d-1}),\ \|f_\varphi-f^\star\|_{L^2(\bP_X)}\le r\right\}.
\end{align*}
Then there exists a constant $C_{\rm rad}\ge1$, depending only on $(\mathrm{IE}(\sigma),L_\sigma,M_\sigma,T_\sigma,|b_{\mathrm{IE}(\sigma)}|^{-1})$, such that
\begin{align}
\label{eq:20.m.1}
\mathbb E\sup_{g\in(\mathcal{F}-f^\star)\cap B_{L^2(\bP_X)}(0;r)}\left|\frac1N\sum_{i=1}^N\varepsilon_ig(X_i)\right| \le C_{\rm rad}r\left(\sqrt{\frac{D_{d,N}}{N}}+\frac{D_{d,N}}{N}\right).
\end{align}
Moreover, define
\begin{align*}
\mathcal L(r):=\left\{(\boldsymbol{x},y)\mapsto (y-f_\varphi(\boldsymbol{x}))^2-(y-f^\star(\boldsymbol{x}))^2:f_\varphi-f^\star\in(\mathcal{F}-f^\star)\cap B_{L^2(\bP_X)}(0;r)\right\}.
\end{align*}
Then there exists a constant $C_{\rm rad}'\ge1$, depending only on $(\mathrm{IE}(\sigma),B_\sigma,L_\sigma,M_\sigma,T_\sigma,B_\xi,|b_{\mathrm{IE}(\sigma)}|^{-1})$, such that
\begin{align}
\label{eq:20.m.2}
\mathbb E\sup_{h\in\mathcal L(r)}\left|\frac1N\sum_{i=1}^N\varepsilon_ih(X_i,Y_i)\right| \le C_{\rm rad}'r\left(\sqrt{\frac{D_{d,N}}{N}}+\frac{D_{d,N}}{N}\right).
\end{align}
\end{Proposition}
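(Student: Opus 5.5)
The plan is to reduce the localized class to a finite-dimensional linear part plus a second-order remainder. Proposition~\ref{prop:m-chaos-localization} makes the remainder small, and Lemma~\ref{lem:second-order-multiplier-general-ie} controls it.

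\emph{Step 1: geometric localization.} Fix $\varphi$ with $\|f_\varphi-f^\star\|_{L^2(\bP_X)}\le r$. By \eqref{eq:geom-localization}, $S_{\mathrm{IE}(\sigma)}(\varphi)\le C r$ with $C=2\sqrt{\mathrm{IE}(\sigma)!}/(\kappa_{\mathrm{IE}(\sigma)}|b_{\mathrm{IE}(\sigma)}|)$. For each $\boldsymbol w$ set $s(\boldsymbol w):=\operatorname{sgn}(\langle \boldsymbol w,\boldsymbol w_\star\rangle)^{\mathrm{IE}(\sigma)+1}\in\{\pm1\}$ and $\boldsymbol c(\boldsymbol w):=s(\boldsymbol w)\boldsymbol w_\star$. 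In the odd case $\boldsymbol c\equiv \boldsymbol w_\star$. In the even case $\boldsymbol c=\pm\boldsymbol w_\star$ according to the hemisphere.

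\emph{Step 2: Taylor expansion around the nearest center.} Write $\sigma(\langle \boldsymbol w,\boldsymbol x\rangle)=\sigma(\langle\boldsymbol c,\boldsymbol x\rangle)+\sigma'(\langle\boldsymbol c,\boldsymbol x\rangle)\langle\boldsymbol w-\boldsymbol c,\boldsymbol x\rangle+R_{\boldsymbol w}(\boldsymbol x)$, with the integral-form remainder
\[
R_{\boldsymbol w}(\boldsymbol x)=\|\boldsymbol w-\boldsymbol c\|_2^2\int_0^1(1-t)\,\sigma''(\langle \boldsymbol c+t(\boldsymbol w-\boldsymbol c),\boldsymbol x\rangle)\langle \boldsymbol e,\boldsymbol x\rangle^2\,\mathrm dt ,
\]
where $\boldsymbol e=(\boldsymbol w-\boldsymbol c)/\|\boldsymbol w-\boldsymbol c\|_2$. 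The point $\boldsymbol c+t(\boldsymbol w-\boldsymbol c)$ lies in $B_2^d$, so the integrand is an element of $\mathcal Q_1$. Integrating against $\varphi$ and subtracting $f^\star$ gives
\[
f_\varphi-f^\star=\underbrace{p_-\big(\sigma(-\langle\boldsymbol w_\star,\cdot\rangle)-\sigma(\langle\boldsymbol w_\star,\cdot\rangle)\big)+\sigma'(\langle\boldsymbol w_\star,\cdot\rangle)\langle \boldsymbol m_+,\cdot\rangle+\sigma'(-\langle\boldsymbol w_\star,\cdot\rangle)\langle \boldsymbol m_-,\cdot\rangle}_{=:L_\varphi}+\int R_{\boldsymbol w}\,\mathrm d\varphi .
\]
Here $p_-=\varphi(s=-1)$ and $\boldsymbol m_\pm=\int_{\{s=\pm1\}}(\boldsymbol w-\boldsymbol c)\,\mathrm d\varphi$. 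In the odd case $p_-=0$ and $\boldsymbol m_-=0$. Hence $L_\varphi$ ranges in a fixed linear space $V\subset L^2(\bP_X)$ with $\dim V\le 2d+1$.

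\emph{Step 3: bounding the two pieces.} The remainder $\int R_{\boldsymbol w}\,\mathrm d\varphi$ is $\int\|\boldsymbol w-\boldsymbol c\|_2^2\,q_{\boldsymbol w}\,\mathrm d\varphi$ with $q_{\boldsymbol w}$ in the (closed) convex hull of $\mathcal Q_1$, and $\int\|\boldsymbol w-\boldsymbol c\|_2^2\,\mathrm d\varphi=S_{\mathrm{IE}(\sigma)}(\varphi)\le Cr$. Two bounds follow.
\begin{itemize}
\item Its Rademacher average is at most $Cr\cdot\sup_{q\in\mathcal Q_1}|N^{-1}\sum_i\varepsilon_iq(X_i)|$. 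By \eqref{eq:19.m.2} its expectation is at most $C r(\sqrt{D_{d,N}/N}+D_{d,N}/N)$.
\item Its $L^2(\bP_X)$ norm is at most $M_\sigma\, S_{\mathrm{IE}(\sigma)}(\varphi)\sup_{\boldsymbol e}\|\langle\boldsymbol e,X\rangle^2\|_{L^2}\le C'r$.
\end{itemize}
Consequently $\|L_\varphi\|_{L^2}\le (1+C')r$. For the linear part I would use the standard bound for a ball of radius $R$ in a $k$-dimensional subspace of $L^2(\bP_X)$: taking an $L^2(\bP_X)$-orthonormal basis $(h_j)$ of $V$ and applying Cauchy--Schwarz,
\[
\mathbb E\sup_{v\in V,\ \|v\|\le R}\Big|\frac1N\sum_i\varepsilon_i v(X_i)\Big|\le R\Big(\sum_j\mathbb E\big(N^{-1}\textstyle\sum_i\varepsilon_ih_j(X_i)\big)^2\Big)^{1/2}=R\sqrt{k/N}.
\]
This step uses only the $L^2$ norm, so a degenerate empirical Gram matrix is harmless. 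It yields $Cr\sqrt{d/N}$. Summing the two contributions gives \eqref{eq:20.m.1}.

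\emph{Step 4: the loss class.} For \eqref{eq:20.m.2}, write $h=\phi_i(g(X_i))$ with $g=f_\varphi-f^\star$ and $\phi_i(t)=(Y_i-f^\star(X_i)-t)^2-(Y_i-f^\star(X_i))^2$. Each $\phi_i$ vanishes at $0$ and is $2(B_\xi+2B_\sigma)$-Lipschitz on $|t|\le 2B_\sigma$. The Ledoux--Talagrand contraction principle, applied conditionally on the data, then transfers \eqref{eq:20.m.1} up to a constant factor.

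\emph{Main obstacle.} I expect the delicate point to be the even-$\mathrm{IE}(\sigma)$ case. There mass near $-\boldsymbol w_\star$ does not make $\sigma(\langle\boldsymbol w,\cdot\rangle)$ close to $f^\star$, so I expand around $-\boldsymbol w_\star$ instead. This produces the extra zeroth-order direction $\sigma(-\langle\boldsymbol w_\star,\cdot\rangle)-\sigma(\langle\boldsymbol w_\star,\cdot\rangle)$ and a second gradient block, and both must be absorbed into $V$. The step is what keeps the linear part finite-dimensional with an $L^2$ norm of order $r$.
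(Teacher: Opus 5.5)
Your proposal is correct and follows essentially the same route as the paper's proof. Both use the sign-adjusted Taylor expansion around $s(\boldsymbol w)\boldsymbol w_\star$, split into a zeroth/first-order part and a second-order remainder: the first lies in a fixed space of dimension at most $2d+1$ (including the extra direction $\sigma(-\langle\boldsymbol w_\star,\cdot\rangle)-\sigma(\langle\boldsymbol w_\star,\cdot\rangle)$ in the even case) and is handled by an orthonormal basis and Cauchy--Schwarz, while the remainder has size $S_{\mathrm{IE}(\sigma)}(\varphi)\lesssim r$ and is handled by Lemma~\ref{lem:second-order-multiplier-general-ie}, followed by the same contraction step with Lipschitz constant $4B_\sigma+2B_\xi$ for the loss class.
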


\beginproof
Throughout the proof, all suprema are understood over separable versions of the processes.
Fix $\varphi\in\cP(S_2^{d-1})$ and write $g_\varphi:=f_\varphi-f^\star$. Let
\begin{align*}
s(\boldsymbol{w}):= \operatorname{sgn}(\langle \boldsymbol{w},\boldsymbol{w}_\star\rangle)^{\mathrm{IE}(\sigma)+1},
\end{align*}
with the convention $\operatorname{sgn}(0)=1$. Recalling $d_{\mathrm{IE}(\sigma)}^2(\boldsymbol{w},\boldsymbol{w}_\star) = \|\boldsymbol{w}-s(\boldsymbol{w})\boldsymbol{w}_\star\|_2^2$, Proposition~\ref{prop:m-chaos-localization} gives
\begin{align}
\label{eq:20.m.3}
S_\varphi:=\int_{S_2^{d-1}}\|\boldsymbol{w}-s(\boldsymbol{w})\boldsymbol{w}_\star\|_2^2\varphi(\mathrm{d}\boldsymbol{w}) \le \frac{2\sqrt{\mathrm{IE}(\sigma)!}}{\kappa_{\mathrm{IE}(\sigma)} |b_{\mathrm{IE}(\sigma)}|}\|f_\varphi-f^\star\|_{L^2(\bP_X)}.
\end{align}

For any fixed $\boldsymbol{w} \in S_2^{d-1}$, we decompose it around the base point $s(\boldsymbol{w})\boldsymbol{w}_\star$. If $\boldsymbol{w} \neq s(\boldsymbol{w})\boldsymbol{w}_\star$, we define the unit direction vector $\boldsymbol{e}_{\boldsymbol{w}} := (\boldsymbol{w} - s(\boldsymbol{w})\boldsymbol{w}_\star) / \|\boldsymbol{w} - s(\boldsymbol{w})\boldsymbol{w}_\star\|_2$. If $\boldsymbol{w} = s(\boldsymbol{w})\boldsymbol{w}_\star$, we arbitrarily choose any unit vector $\boldsymbol{e}_{\boldsymbol{w}} \in S_2^{d-1}$. In both cases, taking the inner product with $\boldsymbol{x}$ gives
\begin{align*}
\langle \boldsymbol{w},\boldsymbol{x}\rangle=s(\boldsymbol{w})\langle \boldsymbol{w}_\star,\boldsymbol{x}\rangle+\|\boldsymbol{w}-s(\boldsymbol{w})\boldsymbol{w}_\star\|_2\langle \boldsymbol{e}_{\boldsymbol{w}},\boldsymbol{x}\rangle.
\end{align*}

Using Taylor's formula with an integral remainder at the base point $s(\boldsymbol{w})\langle \boldsymbol{w}_\star, \boldsymbol{x} \rangle$, we expand $\sigma$:
\begin{align}
\label{eq:20.m.4}
&\sigma(\langle \boldsymbol{w},\boldsymbol{x}\rangle)-\sigma(s(\boldsymbol{w})\langle \boldsymbol{w}_\star,\boldsymbol{x}\rangle) = \sigma'(s(\boldsymbol{w})\langle \boldsymbol{w}_\star,\boldsymbol{x}\rangle)\langle \boldsymbol{w}-s(\boldsymbol{w})\boldsymbol{w}_\star,\boldsymbol{x}\rangle \nonumber \\
&\quad+ \|\boldsymbol{w}-s(\boldsymbol{w})\boldsymbol{w}_\star\|_2^2 \int_0^1(1-t)\sigma''(\langle (1-t)s(\boldsymbol{w})\boldsymbol{w}_\star+t\boldsymbol{w},\boldsymbol{x}\rangle)\langle \boldsymbol{e}_{\boldsymbol{w}},\boldsymbol{x}\rangle^2\mathrm{d}t.
\end{align}

We rewrite the integral part as $q_{\boldsymbol{w}}(\boldsymbol{x})$ by explicitly introducing the probability density $2(1-t)$ on $[0,1]$:
\begin{align*}
q_{\boldsymbol{w}}(\boldsymbol{x}) := \frac{1}{2} \int_0^1 2(1-t)\sigma''(\langle (1-t)s(\boldsymbol{w})\boldsymbol{w}_\star+t\boldsymbol{w},\boldsymbol{x}\rangle)\langle \boldsymbol{e}_{\boldsymbol{w}},\boldsymbol{x}\rangle^2\mathrm{d}t.
\end{align*}
For any fixed $t \in [0,1]$, the vector $(1-t)s(\boldsymbol{w})\boldsymbol{w}_\star+t\boldsymbol{w}$ is a convex combination of two unit vectors, meaning its $\ell_2$-norm is at most $1$. Thus, the integrand $\boldsymbol{x} \mapsto \sigma''(\langle (1-t)s(\boldsymbol{w})\boldsymbol{w}_\star+t\boldsymbol{w},\boldsymbol{x}\rangle)\langle \boldsymbol{e}_{\boldsymbol{w}},\boldsymbol{x}\rangle^2$ belongs to the function class $\mathcal Q_1 := \left\{\boldsymbol{x}\mapsto\sigma''(\langle \boldsymbol{u},\boldsymbol{x}\rangle)\langle \boldsymbol{e},\boldsymbol{x}\rangle^2:\|\boldsymbol{u}\|_2\le1,\ \boldsymbol{e}\in S_2^{d-1}\right\}$. Integrating over the probability density $2(1-t)$ forms a convex combination of elements in $\mathcal Q_1$. Scaling by the $1/2$ factor outside, we deduce that
\begin{align*}
q_{\boldsymbol{w}}\in\frac12\overline{\operatorname{conv}}(\mathcal Q_1),\mbox{ where } \overline{\conv}\mbox{ is the closure of convex hull in }L^2(\bP_X).
\end{align*}
Integrating \eqref{eq:20.m.4} with respect to $\varphi$ gives $g_\varphi(\boldsymbol{x}) = f_\varphi(\boldsymbol{x}) - f^\star(\boldsymbol{x}) = \int_{S_2^{d-1}} (\sigma(\langle \boldsymbol{w}, \boldsymbol{x} \rangle) - \sigma(\langle \boldsymbol{w}_\star, \boldsymbol{x} \rangle)) \varphi(\mathrm{d}\boldsymbol{w})$. We decompose $g_\varphi=H_\varphi+R_\varphi$, where the main term $H_\varphi$ collects the zero-th and first-order terms:
\begin{align}
\label{eq:20.m.7}
H_\varphi(\boldsymbol{x}):= \int_{S_2^{d-1}}\left\{\sigma(s(\boldsymbol{w})\langle \boldsymbol{w}_\star,\boldsymbol{x}\rangle)-\sigma(\langle \boldsymbol{w}_\star,\boldsymbol{x}\rangle) + \sigma'(s(\boldsymbol{w})\langle \boldsymbol{w}_\star,\boldsymbol{x}\rangle)\langle \boldsymbol{w}-s(\boldsymbol{w})\boldsymbol{w}_\star,\boldsymbol{x}\rangle\right\}\varphi(\mathrm{d}\boldsymbol{w}),
\end{align}
and the remainder term is $R_\varphi(\boldsymbol{x}) := \int_{S_2^{d-1}} \|\boldsymbol{w}-s(\boldsymbol{w})\boldsymbol{w}_\star\|_2^2 q_{\boldsymbol{w}}(\boldsymbol{x})\varphi(\mathrm{d}\boldsymbol{w})$. Suppose $S_\varphi>0$
By normalizing with $S_\varphi$, we rewrite $R_\varphi(\boldsymbol{x}) = S_\varphi \int_{S_2^{d-1}} q_{\boldsymbol{w}}(\boldsymbol{x}) \frac{\|\boldsymbol{w} - s(\boldsymbol{w})\boldsymbol{w}_\star\|_2^2}{S_\varphi} \varphi(\mathrm{d}\boldsymbol{w})$. Since the integral is over a valid probability measure and $q_{\boldsymbol{w}} \in \frac{1}{2}\overline{\operatorname{conv}}(\mathcal Q_1)$, the result remains in the same closed convex set. Multiplying back by $S_\varphi$, we obtain
\begin{align}
\label{eq:20.m.8}
R_\varphi\in \frac{S_\varphi}{2}\overline{\operatorname{conv}}(\mathcal Q_1).
\end{align}
When $S_\varphi=0$, then $R_\varphi=0$.
We next show $H_\varphi$ belongs to a specific finite-dimensional space. Define
\begin{align*}
\mathcal V:= \operatorname{span}\left\{ \boldsymbol{x}\mapsto \sigma(-\langle \boldsymbol{w}_\star,\boldsymbol{x}\rangle)-\sigma(\langle \boldsymbol{w}_\star,\boldsymbol{x}\rangle),\ \boldsymbol{x}\mapsto\sigma'(\langle \boldsymbol{w}_\star,\boldsymbol{x}\rangle)x_j,\ \boldsymbol{x}\mapsto\sigma'(-\langle \boldsymbol{w}_\star,\boldsymbol{x}\rangle)x_j:\ 1\le j\le d \right\}.
\end{align*}
Clearly, $\dim(\mathcal V)\le2d+1$. 
If $\mathrm{IE}(\sigma)$ is odd, then $s(\boldsymbol{w})\equiv1$. The term $\sigma(s(\boldsymbol{w})\langle \boldsymbol{w}_\star,\boldsymbol{x}\rangle)-\sigma(\langle \boldsymbol{w}_\star,\boldsymbol{x}\rangle)$ identically vanishes. Thus $H_\varphi$ simplifies to
\begin{align*}
H_\varphi(\boldsymbol{x}) = \sigma'(\langle \boldsymbol{w}_\star,\boldsymbol{x}\rangle)\left\langle\int(\boldsymbol{w}-\boldsymbol{w}_\star)\varphi(\mathrm{d}\boldsymbol{w}),\boldsymbol{x}\right\rangle = \sum_{j=1}^d \left( \int (\boldsymbol{w}_j-\boldsymbol{w}_{\star,j})\varphi(\mathrm{d}\boldsymbol{w}) \right) \sigma'(\langle \boldsymbol{w}_\star,\boldsymbol{x}\rangle)x_j.
\end{align*}
This is exactly a linear combination of the basis functions $\boldsymbol{x} \mapsto \sigma'(\langle \boldsymbol{w}_\star,\boldsymbol{x}\rangle)x_j$, hence $H_\varphi\in\mathcal V$.

If $\mathrm{IE}(\sigma)$ is even, $s(\boldsymbol{w})=\operatorname{sgn}(\langle \boldsymbol{w},\boldsymbol{w}_\star\rangle)$. We split $S_2^{d-1}=A_+\cup A_-$ where $A_+=\{\boldsymbol{w}:\langle \boldsymbol{w},\boldsymbol{w}_\star\rangle\ge0\}$ and $A_-=\{\boldsymbol{w}:\langle \boldsymbol{w},\boldsymbol{w}_\star\rangle<0\}$. 
On $A_+$, $s(\boldsymbol{w})=1$, so the integrand of $H_\varphi(\boldsymbol{x})$ is $\sigma'(\langle \boldsymbol{w}_\star,\boldsymbol{x}\rangle)\langle \boldsymbol{w}-\boldsymbol{w}_\star,\boldsymbol{x}\rangle$. 
On $A_-$, $s(\boldsymbol{w})=-1$, so the integrand becomes $(\sigma(-\langle \boldsymbol{w}_\star,\boldsymbol{x}\rangle)-\sigma(\langle \boldsymbol{w}_\star,\boldsymbol{x}\rangle)) + \sigma'(-\langle \boldsymbol{w}_\star,\boldsymbol{x}\rangle)\langle \boldsymbol{w}+\boldsymbol{w}_\star,\boldsymbol{x}\rangle$. 
Integrating over these two regions separately and summing the results up, we obtain
\begin{align*}
H_\varphi(\boldsymbol{x}) &= \bigg[ \varphi(A_-)(\sigma(-\langle \boldsymbol{w}_\star,\boldsymbol{x}\rangle)-\sigma(\langle \boldsymbol{w}_\star,\boldsymbol{x}\rangle)) \\
&\quad+\sigma'(\langle \boldsymbol{w}_\star,\boldsymbol{x}\rangle)\left\langle\int_{A_+}(\boldsymbol{w}-\boldsymbol{w}_\star)\varphi(\mathrm{d}\boldsymbol{w}),\boldsymbol{x}\right\rangle +\sigma'(-\langle \boldsymbol{w}_\star,\boldsymbol{x}\rangle)\left\langle\int_{A_-}(\boldsymbol{w}+\boldsymbol{w}_\star)\varphi(\mathrm{d}\boldsymbol{w}),\boldsymbol{x}\right\rangle \bigg].
\end{align*}
The first term is a scalar multiple of $\sigma(-\langle \boldsymbol{w}_\star,\boldsymbol{x}\rangle)-\sigma(\langle \boldsymbol{w}_\star,\boldsymbol{x}\rangle)$, and the remaining inner products can be expanded into linear combinations of $\boldsymbol{x} \mapsto \sigma'(\pm\langle \boldsymbol{w}_\star,\boldsymbol{x}\rangle)x_j$ just as in the odd case. Thus, again $H_\varphi\in\mathcal V$.

Notice that $\sup(\|q\|_{L^2(\bP_X)}: q\in\cQ_1) \le M_\sigma\|\langle \boldsymbol{e},X\rangle^2\|_{L^2} = \sqrt3M_\sigma.$
Therefore \eqref{eq:20.m.8} and \eqref{eq:20.m.3} imply
\begin{align*}
\|R_\varphi\|_{L^2(\bP_X)} \le \frac{\sqrt3M_\sigma}{2}S_\varphi \le \frac{\sqrt{3\mathrm{IE}(\sigma)!}M_\sigma}{\kappa_{\mathrm{IE}(\sigma)}|b_{\mathrm{IE}(\sigma)}|}\|f_\varphi-f^\star\|_{L^2(\bP_X)}.
\end{align*}
Since $g_\varphi=H_\varphi+R_\varphi$, if $g_\varphi\in(\mathcal{F}-f^\star)\cap B_{L^2(\bP_X)}(0;r)$, then
\begin{align*}
H_\varphi\in\mathcal V \mbox{ and } \|H_\varphi\|_{L^2(\bP_X)} \le \|g_\varphi\|_{L^2(\bP_X)}+\|R_\varphi\|_{L^2(\bP_X)} \le C_{V,\mathrm{IE}(\sigma)}r,
\end{align*}
where $C_{V,\mathrm{IE}(\sigma)}$ depends only on $(\mathrm{IE}(\sigma),M_\sigma,|b_{\mathrm{IE}(\sigma)}|^{-1})$. Also, by \eqref{eq:20.m.8} and \eqref{eq:20.m.3},
\begin{align}
\label{eq:20.m.13}
R_\varphi\in \frac{\sqrt{\mathrm{IE}(\sigma)!}}{\kappa_{\mathrm{IE}(\sigma)}|b_{\mathrm{IE}(\sigma)}|}r\,\overline{\operatorname{conv}}(\mathcal Q_1) \qquad\text{whenever }g_\varphi\in(\mathcal{F}-f^\star)\cap B_{L^2(\bP_X)}(0;r).
\end{align}

We now bound the expected supremum of the empirical process for $H_\varphi$. Because the set $\{H_\varphi : g_\varphi \in (\mathcal{F}-f^\star)\cap B_{L^2(\bP_X)}(0;r)\}$ strictly falls into the deterministic bounded ball $\{h \in \mathcal V: \|h\|_{L^2} \le C_{V,\mathrm{IE}(\sigma)}r\}$, we can upper bound the supremum over $g_\varphi$ by taking the supremum over this entire ball. Let $\psi_1,\ldots,\psi_q$ be an $L^2(\bP_X)$-orthonormal basis of $\mathcal V$, with $q\le2d+1$. This enlargement of the index set yields:
\begin{align}\label{eq:20.m.14}
\begin{aligned}
    &\mathbb E\sup_{g_\varphi \in (\mathcal{F}-f^\star)\cap B_{L^2(\bP_X)}(0;r)} \left| \frac{1}{N} \sum_{i=1}^N \varepsilon_i H_\varphi(X_i) \right| \le \mathbb E\sup_{\substack{h\in\mathcal V\\ \|h\|_{L^2}\le C_{V,\mathrm{IE}(\sigma)}r}} \left|\frac1N\sum_{i=1}^N\varepsilon_ih(X_i)\right| \\
&\le C_{V,\mathrm{IE}(\sigma)}r \left(\sum_{\ell=1}^q \mathbb E\left[\frac1N\sum_{i=1}^N\varepsilon_i\psi_\ell(X_i)\right]^2\right)^{1/2} = C_{V,\mathrm{IE}(\sigma)}r\sqrt{\frac qN} \le Cr\sqrt{\frac dN}.
\end{aligned}
\end{align}
For the remainder, \eqref{eq:20.m.13} and Lemma~\ref{lem:second-order-multiplier-general-ie} give
\begin{align}
\label{eq:20.m.15}
\mathbb E\sup_{g_\varphi\in(\mathcal{F}-f^\star)\cap B_{L^2(\bP_X)}(0;r)} \left|\frac1N\sum_{i=1}^N\varepsilon_iR_\varphi(X_i)\right| &\le \frac{\sqrt{\mathrm{IE}(\sigma)!}}{\kappa_{\mathrm{IE}(\sigma)}|b_{\mathrm{IE}(\sigma)}|}r\, \mathbb E\sup_{q\in\mathcal Q_1} \left|\frac1N\sum_{i=1}^N\varepsilon_iq(X_i)\right| \nonumber \\
&\le Cr\left(\sqrt{\frac{D_{d,N}}{N}}+\frac{D_{d,N}}{N}\right).
\end{align}
Combining the decomposition $g_\varphi = H_\varphi + R_\varphi$, \eqref{eq:20.m.14}, \eqref{eq:20.m.15}, and $d\le D_{d,N}$ proves \eqref{eq:20.m.1}.

For the loss class, write $g=f_\varphi-f^\star$. Since $Y=f^\star(X)+\xi$, then $(Y-f_\varphi(X))^2-(Y-f^\star(X))^2 = g(X)^2-2\xi g(X).$ Also $|g(X)|\le |f_\varphi(X)|+|f^\star(X)|\le2 B_\sigma$. Conditionally on the data, the map $u\mapsto u^2-2\xi_i u$ is $(4 B_\sigma+2B_\xi)$-Lipschitz on $[-2 B_\sigma,2 B_\sigma]$ and vanishes at $u=0$. By the contraction principle, see, for instance, \cite[Theorem 6.7.1]{vershyninHighDimensionalProbabilityIntroduction2018},
\begin{align*}
\mathbb E\sup_{h\in\mathcal L(r)} \left|\frac1N\sum_{i=1}^N\varepsilon_ih(X_i,Y_i)\right| \le C(4 B_\sigma+2B_\xi) \mathbb E\sup_{g\in(\mathcal{F}-f^\star)\cap B_{L^2(\bP_X)}(0;r)} \left|\frac1N\sum_{i=1}^N\varepsilon_ig(X_i)\right|.
\end{align*}
Using \eqref{eq:20.m.1} proves \eqref{eq:20.m.2}.
\endproof

\begin{Lemma}
\label{lem:localized-isomorphism-general-ie}
Grant Assumption~\ref{ass:fixed-output-general-ie}. There exists a constant $C_{\rm iso}\ge1$, depending only on $\mathrm{IE}(\sigma)$, $B_\sigma$, $L_\sigma$, $M_\sigma$, $T_\sigma$, $B_\xi,$ and $|b_{\mathrm{IE}(\sigma)}|^{-1}$, such that, for every $x\ge1$ and $N\ge2$,
\begin{align}
\label{eq:RIP_single_index}
\bP\left( \forall\varphi\in\cP(S_2^{d-1}),\, \left| (P-P_N)\cL_\varphi\right| \le \frac14\|f_\varphi-f^\star\|_{L^2(\bP_X)}^2 + C_{\rm iso}\frac{D_{d,N}+x}{N}\right)\geq 1-4\exp(-x),
\end{align}where $\cL_\varphi:(\boldsymbol{x},y)\in\cX\times\bR \mapsto (y-f_\varphi(\boldsymbol{x}))^2 - (y-f^\star(\boldsymbol{x}))^2$.
\end{Lemma}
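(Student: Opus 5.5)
We prove \eqref{eq:RIP_single_index} with a localized peeling argument built on Proposition~\ref{prop:parametric-rademacher-general-ie}. For $r>0$ let $Z(r):=\sup\{|(P-P_N)\cL_\varphi|:\varphi\in\cP(S_2^{d-1}),\ \|f_\varphi-f^\star\|_{L^2(\bP_X)}\le r\}$. The functions in $\mathcal L(r)$ satisfy $|\cL_\varphi|=|g^2-2\xi g|\le 4B_\sigma^2+4B_\sigma B_\xi=:b$ and $\mathbb E\cL_\varphi^2\le (2B_\sigma+2B_\xi)^2\|g\|_{L^2}^2\le c\,r^2$.

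The first step is a concentration bound at a fixed radius. We apply Talagrand's inequality in Bousquet's form to the class $\mathcal L(r)$. Symmetrization together with \eqref{eq:20.m.2} gives $\mathbb E Z(r)\le 2C'_{\rm rad} r(\sqrt{D_{d,N}/N}+D_{d,N}/N)$. Hence, with probability at least $1-e^{-s}$,
$$Z(r)\le C\Big(r\sqrt{\tfrac{D_{d,N}}N}+r\tfrac{D_{d,N}}N+r\sqrt{\tfrac sN}+\tfrac sN\Big).$$

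The second step is peeling. Set $r_0^2:=K(D_{d,N}+x)/N$ with $K$ large, and $r_k:=2^kr_0$. Since $\|f_\varphi-f^\star\|_{L^2}\le 2B_\sigma$, only $k\le k_{\max}\lesssim \log N$ shells are needed. On shell $k$ we use $s_k:=x+\log(k+2)^2$ (or $x+k$), so that $\sum_k e^{-s_k}\le 4e^{-x}$. If $\|f_\varphi-f^\star\|\le r_0$, the bound gives $|(P-P_N)\cL_\varphi|\le C r_0^2/\sqrt K+C(D_{d,N}+x)/N\lesssim (D_{d,N}+x)/N$, which gives the additive term. If $r_{k-1}<\|f_\varphi-f^\star\|\le r_k$, then $Z(r_k)\le C r_k(\sqrt{(D_{d,N}+s_k)/N})+C(D_{d,N}+s_k)/N$. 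We then apply $ab\le \epsilon a^2+b^2/(4\epsilon)$ together with $r_k\le 2\|f_\varphi-f^\star\|$, and absorb the factor $s_k\le x+k$ using $k\lesssim\log N\lesssim D_{d,N}$. This yields $\le \frac14\|f_\varphi-f^\star\|^2+C_{\rm iso}(D_{d,N}+x)/N$.

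The main obstacle is this bookkeeping: the $r D_{d,N}/N$ term, the $s/N$ term, and the union-bound cost must all be absorbed into $C_{\rm iso}(D_{d,N}+x)/N$ with a constant independent of $d,N$. Using $D_{d,N}/N\le$ a constant is fine when $D_{d,N}\le N$; otherwise the bound is trivial, since $|(P-P_N)\cL_\varphi|\le 2b$. The failure probability $4e^{-x}$ comes from summing over the shells. Measurability is handled via separable versions, as in Proposition~\ref{prop:parametric-rademacher-general-ie}.
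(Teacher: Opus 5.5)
Your proposal is correct and follows essentially the same route as the paper. Both arguments bound $\mathbb E Z(r)$ by symmetrization and Proposition~\ref{prop:parametric-rademacher-general-ie}, apply Bousquet's form of Talagrand's inequality at a fixed radius after dismissing the trivial large-$(D_{d,N}+x)/N$ regime, and then peel dyadically with a union bound over shells. The only difference is bookkeeping. The paper runs over infinitely many shells with $u_j=x+j+1$, absorbs the union-bound cost via $x+j+1\le 4^j(D_{d,N}+x)$, and obtains $Z_{r_j}\le r_j^2/64$ directly. You instead cap the number of shells at $O(\log N)$ using $\|f_\varphi-f^\star\|_{L^2(\bP_X)}\le 2B_\sigma$, and absorb $k\lesssim\log N\le D_{d,N}$ into the additive term via $ab\le\epsilon a^2+b^2/(4\epsilon)$, which works equally well.
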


\beginproof
Since $Y=f^\star(X)+\xi$, for every $\varphi\in\cP(S_2^{d-1})$,
\begin{align*}
\cL_\varphi(X,Y)=(Y-f_\varphi(X))^2-(Y-f^\star(X))^2=(f_\varphi(X)-f^\star(X))^2-2\xi(f_\varphi(X)-f^\star(X)).
\end{align*}
Using $\mathbb E[\xi\mid X]=0$, we get $P\cL_\varphi=\|f_\varphi-f^\star\|_{L^2(\bP_X)}^2$. Put $U=8B_\sigma^2+8B_\sigma B_\xi$ and $V=2B_\sigma+2B_\xi$. Since $|f_\varphi-f^\star|\le2B_\sigma$, one has $|\cL_\varphi|\le U/2$, $|\cL_\varphi-P\cL_\varphi|\le U$, and, whenever $\|f_\varphi-f^\star\|_{L^2(\bP_X)}\le r$, $P\cL_\varphi^2\le V^2r^2$. Let $C_{\mathcal L}\ge1$ be such that, for every $r>0$, symmetrization and Proposition~\ref{prop:parametric-rademacher-general-ie} give
\begin{align}\label{eq:upper_Rademacher_excess_risk_single_index}
\mathbb E\sup\{|(P-P_N)\cL_\varphi|:\cL_\varphi\in\mathcal L(r)\}\le C_{\mathcal L}r\left(\sqrt{\frac{D_{d,N}}{N}}+\frac{D_{d,N}}{N}\right).
\end{align}
Set $C_1=2C_{\mathcal L}+\sqrt2V+\frac{4U}{3}$, $C_2=(256C_1)^2,$ and $ C_{\rm iso}=\max\{U,C_2/64\}.$
For $r>0$, write $Z_r:=\sup\{|(P-P_N)\cL_\varphi|:\cL_\varphi\in\mathcal L(r)\}$. If $(D_{d,N}+x)/N\ge1$, then $|(P-P_N)\cL_\varphi|\le U\le C_{\rm iso}(D_{d,N}+x)/N$, and \eqref{eq:RIP_single_index} follows. We therefore assume $(D_{d,N}+x)/N<1$. Then $D_{d,N}/N\le\sqrt{D_{d,N}/N}$, and the choice of $C_{\mathcal L}$ gives $\mathbb E Z_r\le C_{\mathcal L}r\sqrt{D_{d,N}/N}$. Applying Bousquet's version of Talagrand's inequality to the symmetric class $\mathcal L(r)\cup(-\mathcal L(r))$, whose centered envelope is bounded by $U$ and whose variance proxy is bounded by $V^2r^2$, yields that, for every $u\ge1$, with probability at least $1-e^{-u}$, $Z_r\le \mathbb E Z_r+\sqrt{\frac{2u}{N}\big(V^2r^2+2U\mathbb E Z_r\big)}+\frac{Uu}{3N}.$
Using $\sqrt{a+b}\le\sqrt a+\sqrt b$ and $2\sqrt{ab}\le a+b$, this implies that,
\begin{align*}
\bP\left( Z_r\le C_1\left[r\sqrt{\frac{D_{d,N}}{N}}+r\sqrt{\frac{u}{N}}+\frac{u}{N}\right]\right)\geq 1-\exp(-u).
\end{align*}
Let $r_0^2=C_2\frac{D_{d,N}+x}{N}$, $r_j=2^jr_0$, $u_j=x+j+1, j\ge0.$
Applying the last display with $r=r_j$ and $u=u_j$, and taking a union bound, gives an event $\mathcal E_x$ satisfying $\mathbb P(\mathcal E_x)\ge1-\sum_{j=0}^\infty e^{-u_j}\ge1-4e^{-x}$. On $\mathcal E_x$, for every $j\ge0$,
\begin{align*}
Z_{r_j}\le C_1\left[r_j\sqrt{\frac{D_{d,N}}{N}}+r_j\sqrt{\frac{x+j+1}{N}}+\frac{x+j+1}{N}\right].
\end{align*}
Since $x\ge1$ and $D_{d,N}\ge1$, one has $x+j+1\le4^j(D_{d,N}+x)$ for every $j\ge0$. Because $r_j^2=4^jC_2(D_{d,N}+x)/N$, the three terms on the right-hand side are respectively bounded by $C_1C_2^{-1/2}r_j^2$, $C_1C_2^{-1/2}r_j^2$, and $C_1C_2^{-1}r_j^2$. The choice $C_2=(256C_1)^2$ gives for any $j\geq 0$, $Z_{r_j}\le\frac1{64}r_j^2$
on $\mathcal E_x$. Now fix $\varphi\in\cP(S_2^{d-1})$. If $\|f_\varphi-f^\star\|_{L^2(\bP_X)}\le r_0$, then $\cL_\varphi\in\mathcal L(r_0)$, and therefore $|(P-P_N)\cL_\varphi|\le\frac1{64}r_0^2\le C_{\rm iso}\frac{D_{d,N}+x}{N}.$
If $r_{j-1}<\|f_\varphi-f^\star\|_{L^2(\bP_X)}\le r_j$ for some $j\ge1$, then $\cL_\varphi\in\mathcal L(r_j)$ and $r_j^2\le4\|f_\varphi-f^\star\|_{L^2(\bP_X)}^2$, so $|(P-P_N)\cL_\varphi|\le\frac1{64}r_j^2\le\frac1{16}\|f_\varphi-f^\star\|_{L^2(\bP_X)}^2.$
Thus, on $\mathcal E_x$, inequality \eqref{eq:RIP_single_index} holds simultaneously for all $\varphi\in\cP(S_2^{d-1})$ with the constant $C_{\rm iso}$ fixed above.
\endproof

\begin{Lemma}
\label{lem:smoothed-dirac-general-ie}
Grant Assumption~\ref{ass:fixed-output-general-ie}. There exists a constant $C_{\rm app}\ge1$, depending only on $(L_\sigma,M_\sigma)$, such that, whenever $0\le \lambda d\le1$,
\begin{align*}
\inf_{\varphi\in\mathcal P(S_2^{d-1})} \left\{ \|f_\varphi-f^\star\|_{L^2(\bP_X)}^2+\lambda\Ent_\tau^-(\varphi) \right\} \le C_{\rm app}\psi(\lambda d),\mbox{ where }\psi(t)=t(1+\log(e/t)).
\end{align*}
\end{Lemma}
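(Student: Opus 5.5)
The plan is to bound the infimum by evaluating the objective at a single explicit candidate: the normalized uniform measure on a spherical cap centred at $\boldsymbol{w}_\star$. For a radius parameter $\rho\in(0,1]$ we set
\[
\varphi_\rho:=\tau(\cdot\mid C_\rho),\qquad C_\rho:=\{\boldsymbol{w}\in S_2^{d-1}:\|\boldsymbol{w}-\boldsymbol{w}_\star\|_2\le\rho\}.
\]
Its entropy is exactly $\Ent_\tau^-(\varphi_\rho)=\log(1/\tau(C_\rho))$. We will then trade off the approximation error $\|f_{\varphi_\rho}-f^\star\|^2$ against $\lambda\log(1/\tau(C_\rho))$ and optimize over $\rho$. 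When $\lambda=0$ the bound is trivial (take $\rho\to0$, or the Dirac mass directly), so we assume $\lambda>0$.

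\emph{Approximation error.} We use the second-order Taylor expansion \eqref{eq:20.m.4} with $s\equiv1$:
\[
\sigma(\langle\boldsymbol{w},\boldsymbol{x}\rangle)-\sigma(\langle\boldsymbol{w}_\star,\boldsymbol{x}\rangle)=\sigma'(\langle\boldsymbol{w}_\star,\boldsymbol{x}\rangle)\langle\boldsymbol{w}-\boldsymbol{w}_\star,\boldsymbol{x}\rangle+O\big(M_\sigma\|\boldsymbol{w}-\boldsymbol{w}_\star\|_2^2\langle\boldsymbol{e}_{\boldsymbol{w}},\boldsymbol{x}\rangle^2\big).
\]
We integrate against $\varphi_\rho$. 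The cap is rotationally symmetric about $\boldsymbol{w}_\star$, so the barycenter satisfies $\int(\boldsymbol{w}-\boldsymbol{w}_\star)\,\mathrm d\varphi_\rho=-c\,\boldsymbol{w}_\star$ with $0\le c=\int(1-\langle\boldsymbol{w},\boldsymbol{w}_\star\rangle)\,\mathrm d\varphi_\rho\le\rho^2/2$. The first-order term therefore has $L^2(\bP_X)$ norm at most $L_\sigma\rho^2/2$. The remainder has norm at most $\sqrt3M_\sigma\rho^2/2$, exactly as in the bound on $R_\varphi$ in Proposition~\ref{prop:parametric-rademacher-general-ie}. This gives
\[
\|f_{\varphi_\rho}-f^\star\|_{L^2(\bP_X)}^2\le C(L_\sigma+M_\sigma)^2\rho^4.
\]
Notably, the symmetry of the cap removes the linear-in-$\rho$ term, so the error is of order $\rho^4$ rather than $\rho^2$. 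Even the crude bound $L_\sigma^2\rho^2$, without exploiting symmetry, would suffice below.

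\emph{Entropy of the cap.} We lower bound $\tau(C_\rho)$. Let $\theta$ be the geodesic angle, with $\rho=2\sin(\theta/2)$. The density of $\theta$ under $\tau$ is proportional to $\sin^{d-2}\theta$, and a standard estimate gives
\[
\tau(C_\rho)\ge\frac{1}{\sqrt{2\pi d}}\Big(\frac{\rho}{2}\Big)^{d-1}\quad\text{or, more crudely,}\quad\tau(C_\rho)\ge(c\rho)^{d}\quad\text{for }\rho\le1.
\]
Hence $\Ent_\tau^-(\varphi_\rho)\le d\log(C/\rho)$ for an absolute constant $C$. This cap-volume lower bound is the only geometric input, and the main point to check is that no factor exponential in $d$ beyond $(c\rho)^d$ leaks in, since such a factor would spoil the $\lambda d$ scaling.

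\emph{Optimization.} The objective is now bounded by
\[
C\rho^2+\lambda d\log(C/\rho).
\]
We choose $\rho^2=\lambda d\le1$, which gives $C\lambda d+\tfrac12\lambda d\log(C^2/(\lambda d))\le C_{\rm app}\,\lambda d\,(1+\log(e/(\lambda d)))=C_{\rm app}\psi(\lambda d)$. If one wants to use the $\rho^4$ approximation error instead, the same choice yields an even smaller first term. The constant $C_{\rm app}$ depends only on $L_\sigma$ and $M_\sigma$ (through $C$ in the approximation bound) together with absolute constants, as claimed.
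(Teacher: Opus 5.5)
Your proposal is correct and is essentially the paper's proof. Like you, the paper takes the normalized uniform measure on the cap $\{\|\vw-\vw_\star\|_2\le\phi\}$, removes the first-order term via rotational symmetry of the barycenter, bounds the remainder by $\frac{\sqrt3M_\sigma}{2}\phi^2$, uses a cap lower bound $\tau\ge(\phi/C_{\rm cap})^{d-1}$, and optimizes the radius. The only difference is cosmetic: the paper chooses $\phi=(\lambda d)^{1/4}$ to exploit the $\phi^4$ approximation error, while your choice $\rho=\sqrt{\lambda d}$ (valid, as you note, even with the crude $L_\sigma^2\rho^2$ bound) gives the same $\psi(\lambda d)$ order with a slightly larger constant in front of the logarithm.
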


\beginproof
If $\lambda=0$, take $\varphi=\delta_{\boldsymbol{w}^\star}$. Then $f_\varphi=f^\star$, so the left-hand side is zero.

Assume $\lambda>0$. For $0<\phi\le1$, let $B_S(\boldsymbol{w}_\star,\phi):=\{\boldsymbol{w}\in S_2^{d-1}:\|\boldsymbol{w}-\boldsymbol{w}_\star\|_2\le\phi\}$ and let $\varphi_\phi:=\tau(\cdot\mid B_S(\boldsymbol{w}_\star,\phi))$. If $\boldsymbol{W}_\phi\sim\varphi_\phi$, write $\boldsymbol{\Delta}_\phi=\boldsymbol{W}_\phi-\boldsymbol{w}_\star$. For any orthogonal transformation $O$ such that $O\boldsymbol{w}_\star=\boldsymbol{w}_\star$, the cap $B_S(\boldsymbol{w}_\star,\phi)$ and the uniform measure on it are invariant, which implies $O(\mathbb{E}\boldsymbol{W}_\phi)=\mathbb{E}[O\boldsymbol{W}_\phi]=\mathbb{E}\boldsymbol{W}_\phi$. The only vectors invariant under all such rotations are scalar multiples of $\boldsymbol{w}_\star$, so $\mathbb{E}\boldsymbol{W}_\phi=c\boldsymbol{w}_\star$ for some scalar $c$. Consequently, $\mathbb{E}\boldsymbol{\Delta}_\phi=\mathbb{E}\boldsymbol{W}_\phi-\boldsymbol{w}_\star=(c-1)\boldsymbol{w}_\star$. By defining $\alpha_\phi=1-c$, we can write $\mathbb{E}\boldsymbol{\Delta}_\phi=-\alpha_\phi \boldsymbol{w}_\star$. To explicitly find $c$, taking the inner product of both sides of $\mathbb{E}\boldsymbol{W}_\phi=c\boldsymbol{w}_\star$ with $\boldsymbol{w}_\star$ yields $\langle \mathbb{E}\boldsymbol{W}_\phi,\boldsymbol{w}_\star\rangle=c\|\boldsymbol{w}_\star\|_2^2=c$. Since the expectation is linear, this implies $c=\mathbb{E}\langle \boldsymbol{W}_\phi,\boldsymbol{w}_\star\rangle$. Substituting this back into the definition of $\alpha_\phi$ gives $\alpha_\phi=1-\mathbb{E}\langle \boldsymbol{W}_\phi,\boldsymbol{w}_\star\rangle$. Since $\boldsymbol{W}_\phi,\boldsymbol{w}_\star\in S_2^{d-1}$, expanding the squared distance gives $\mathbb{E}\|\boldsymbol{W}_\phi-\boldsymbol{w}_\star\|_2^2=2-2\mathbb{E}\langle \boldsymbol{W}_\phi,\boldsymbol{w}_\star\rangle=2\alpha_\phi$, which immediately establishes $\alpha_\phi=\frac{1}{2}\mathbb{E}\|\boldsymbol{W}_\phi-\boldsymbol{w}_\star\|_2^2\ge0$. Finally, since $\|\boldsymbol{W}_\phi-\boldsymbol{w}_\star\|_2\le\phi$ almost surely on the support, we directly obtain $\alpha_\phi\le\phi^2/2$.
For fixed $\boldsymbol{x}$,
\begin{align*}
\sigma(\langle \boldsymbol{W}_\phi,\boldsymbol{x}\rangle)-\sigma(\langle \boldsymbol{w}_\star,\boldsymbol{x}\rangle) = \sigma'(\langle \boldsymbol{w}_\star,\boldsymbol{x}\rangle)\langle\boldsymbol{\Delta}_\phi,\boldsymbol{x}\rangle + \int_0^1(1-t)\sigma''(\langle \boldsymbol{w}_\star+t\boldsymbol{\Delta}_\phi,\boldsymbol{x}\rangle)\langle\boldsymbol{\Delta}_\phi,\boldsymbol{x}\rangle^2\mathrm{d}t.
\end{align*}
Taking expectation in $\boldsymbol{W}_\phi$, then the $L^2(\bP_X)$ norm in $X$, gives
\begin{align*}
\left\|\mathbb E\left[\sigma'(\langle \boldsymbol{w}_\star,X\rangle)\langle\boldsymbol{\Delta}_\phi,X\rangle\right]\right\|_{L^2(\bP_X)} \le L_\sigma\|\mathbb E\boldsymbol{\Delta}_\phi\|_2 \le \frac{L_\sigma}{2}\phi^2.
\end{align*}
For the second-order term, Jensen's inequality and $\mathbb E\langle \boldsymbol{u},X\rangle^4=3\|\boldsymbol{u}\|_2^4$ give
\begin{align*}
\left\| \mathbb E\int_0^1(1-t)\sigma''(\langle \boldsymbol{w}_\star+t\boldsymbol{\Delta}_\phi,X\rangle)\langle\boldsymbol{\Delta}_\phi,X\rangle^2\mathrm{d}t \right\|_{L^2(\bP_X)} \le \frac{\sqrt3M_\sigma}{2}\mathbb E\|\boldsymbol{\Delta}_\phi\|_2^2 \le \frac{\sqrt3M_\sigma}{2}\phi^2.
\end{align*}
Squaring the above directly, we get $\|f_{\varphi_\phi}-f^\star\|_{L^2(\bP_X)}^2\le C\phi^4.$
Moreover, $\Ent_\tau^-(\varphi_\phi)=-\log\tau(B_S(\boldsymbol{w}_\star,\phi)).$
A standard spherical cap lower bound gives, for $0<\phi\le1$, $\tau(B_S(\boldsymbol{w}_\star,\phi))\ge\left(\frac{\phi}{C_{\rm cap}}\right)^{d-1}$
with a universal $C_{\rm cap}>1$. Hence $\Ent_\tau^-(\varphi_\phi)\le(d-1)\log\frac{C_{\rm cap}}{\phi}.$
Choosing $\phi=(\lambda d)^{1/4}\le1$ completes the proof.
\endproof

\begin{Theoremrewrite}
{thm:fixed-output-general-ie}
Grant Assumption~\ref{ass:fixed-output-general-ie}. There exist constants $c_0\in(0,1)$, $C\ge1$, and $C_0\ge e$, depending only on $(\mathrm{IE}(\sigma),B_\sigma,L_\sigma,M_\sigma,T_\sigma,B_\xi,|b_{\mathrm{IE}(\sigma)}|^{-1})$, such that the following holds. Let $x\ge1$, $N\ge2$, and let $\lambda\ge 0$ satisfy $\lambda d\le c_0$. Then, with probability at least $1-4e^{-x}$,
\begin{align}
\label{eq:objective_single_index_estimation_error}
\|f_{\hat\varphi_\lambda}-f^\star\|_{L^2(\bP_X)}^2 + \lambda\Ent_\tau^-(\hat\varphi_\lambda) \le C\left[ \frac{d\log(C_0dN)+x}{N} + \psi(\lambda d) \right],
\end{align}and
\begin{align}\label{eq:proof_estimation_parameter_single}
    \norm{\bE[\hat{\boldsymbol{W}}^{\otimes \mathrm{IE}(\sigma)}|(X_i,Y_i)_{i=1}^N]-(\boldsymbol{w}^*)^{\otimes \mathrm{IE}(\sigma)}}_F^2\leq \frac{\mathrm{IE}(\sigma)!}{b_{\mathrm{IE}(\sigma)}^2}C\left[ \frac{d\log(C_0dN)+x}{N} + \psi(\lambda d) \right].
\end{align}
\end{Theoremrewrite}

\beginproof
Existence of $\hat\varphi_\lambda$ follows from Lemma~\ref{lem:existence-general-ie}. Let $\ell_\varphi(\boldsymbol{x},y):=(y-f_\varphi(\boldsymbol{x}))^2,$ and $\ell_\star(\boldsymbol{x},y):=(y-f^\star(\boldsymbol{x}))^2.$
We work on the random event provided by Lemma~\ref{lem:localized-isomorphism-general-ie}. By optimality of $\hat\varphi_\lambda$, the following inequality holds. If $\lambda\geq 0$, it holds for every $\varphi$ with finite entropy: $P_N\ell_{\hat\varphi_\lambda}+\lambda\Ent_\tau^-(\hat\varphi_\lambda) \le P_N\ell_\varphi+\lambda\Ent_\tau^-(\varphi).$
Subtract $P_N\ell_\star$ from both sides. Let $\Delta_N = C_{\mathrm{iso}}\frac{D_{d,N}+x}{N}$. Since $P(\ell_\varphi-\ell_\star)=\|f_\varphi-f^\star\|_{L^2(\bP_X)}^2$, there holds
\begin{align*}
&\|f_{\hat\varphi_\lambda}-f^\star\|_{L^2(\bP_X)}^2 +\lambda\Ent_\tau^-(\hat\varphi_\lambda)\\
&\le P_N(\ell_{\hat\varphi_\lambda}-\ell_\star) +\lambda\Ent_\tau^-(\hat\varphi_\lambda) +\frac14\|f_{\hat\varphi_\lambda}-f^\star\|_{L^2(\bP_X)}^2+\Delta_N\\
&\le P_N(\ell_\varphi-\ell_\star) +\lambda\Ent_\tau^-(\varphi) +\frac14\|f_{\hat\varphi_\lambda}-f^\star\|_{L^2(\bP_X)}^2+\Delta_N\\
&\le P(\ell_\varphi-\ell_\star) +\frac14\|f_\varphi-f^\star\|_{L^2(\bP_X)}^2 +\lambda\Ent_\tau^-(\varphi) +\frac14\|f_{\hat\varphi_\lambda}-f^\star\|_{L^2(\bP_X)}^2+2\Delta_N\\
&= \frac54\|f_\varphi-f^\star\|_{L^2(\bP_X)}^2 +\lambda\Ent_\tau^-(\varphi) +\frac14\|f_{\hat\varphi_\lambda}-f^\star\|_{L^2(\bP_X)}^2+2\Delta_N.
\end{align*}
Therefore,
\begin{align*}
\|f_{\hat\varphi_\lambda}-f^\star\|_{L^2(\bP_X)}^2 + \lambda\Ent_\tau^-(\hat\varphi_\lambda) \lesssim \left(A_\lambda+\Delta_N\right),\mbox{ where }
A_\lambda:= \inf_{\varphi} \left\{ \|f_\varphi-f^\star\|_{L^2(\bP_X)}^2+\lambda\Ent_\tau^-(\varphi) \right\}.
\end{align*}
By Lemma~\ref{lem:smoothed-dirac-general-ie}, if $\lambda d\le c_0\le1$, then $A_\lambda \le C_{\rm app}\psi(\lambda d).$
Using $D_{d,N}=d\log(edN)\le d\log(C_0dN)$ after increasing $C_0$, proves \eqref{eq:objective_single_index_estimation_error}. Finally, \eqref{eq:proof_estimation_parameter_single} follows from \eqref{eq:objective_single_index_estimation_error} and \eqref{eq:tensor-localization}.
\endproof

\begin{Corollaryrewrite}{prop:concentration_Levy_Milman}
Grant Assumption~\ref{ass:fixed-output-general-ie}. Let $\varepsilon = \frac{2\sqrt{\mathrm{IE}(\sigma)!}}{\kappa_{\mathrm{IE}(\sigma)}|b_{\mathrm{IE}(\sigma)}|} r_*$. With $\bP^{\otimes N}$ probability at least $1-4\exp(-x)$, the following hold.
\begin{enumerate}
    \item If $\mathrm{IE}(\sigma)$ is odd. For any $1$-Lipschitz function $F:S_2^{d-1}\to\bR$ (with respect to $\|\cdot\|_2$),
    \begin{align*}
        \forall t > 2\sqrt{2\varepsilon},\quad  \hat\varphi_\lambda\left( \left| F(\boldsymbol{Z}) - \bE_{Z\sim \hat\varphi_\lambda}[F(\boldsymbol{Z})] \right| \geq t \right)\leq \frac{8\varepsilon}{t^2}.
    \end{align*}Moreover, for any $\rho>0$, $\hat\varphi_\lambda(S_2^{d-1}\backslash B(\boldsymbol{w}_\star;\rho))\leq \frac{r_*}{\rho^2}$, where $B(\boldsymbol{w}_\star;\rho)=\{\boldsymbol{w}:\|\boldsymbol{w}-\boldsymbol{w}_\star\|_2\leq \rho\}$.

    \item If $\mathrm{IE}(\sigma)$ is even. Let $[\boldsymbol{w}] = \{\boldsymbol{w},-\boldsymbol{w}\}$, called  the antipodal equivalence class. Let $\bR\bP^{d-1}=S_2^{d-1}/\{\pm 1\}$ be the equivalence class modulo sign, with quotient map $\pi:S_2^{d-1}\to\bR\bP^{d-1}$. Define $d_{\bR\bP}([\boldsymbol{u}],[\boldsymbol{v}])=\min\{\|\boldsymbol{u}-\boldsymbol{v}\|_2,\|\boldsymbol{u}+\boldsymbol{v}\|_2\}$ be the projective metric. Let $\bar\varphi_\lambda = \pi_\sharp\hat\varphi_\lambda$, then for any $1$-Lipschitz function $G:\bR\bP^{d-1}:\bR$ with respect to metric $d_{\bR\bP}$,
    \begin{align*}
        \forall t>2\sqrt{2\varepsilon},\quad \bar\varphi_\lambda\left( \left| G(Z) - \bE_{Z\sim\bar\varphi_\lambda}[G(Z)] \right| \geq t \right)\leq \frac{8\varepsilon}{t^2}.
    \end{align*}Moreover, for any $\rho>0$, $\bar\varphi_\lambda(\bR\bP^{d-1}\backslash B_{\bR\bP}([\boldsymbol{w}_\star];\rho))\leq \frac{r_*}{\rho^2}$ and $\hat\varphi_\lambda(S_2^{d-1}\backslash (B(\boldsymbol{w}_\star;\rho)\cup B(-\boldsymbol{w}_\star;\rho)))\leq \frac{r_*}{\rho^2}$, where $B_{\bR\bP}([\boldsymbol{w}_\star];\rho) = \{\boldsymbol{w}\in\bR\bP^{d-1}:d_{\bR\bP}(\boldsymbol{w};[\boldsymbol{w}_\star])\leq \rho\}$.
\end{enumerate}Moreover, for $\mu = \frac{1}{2}\delta_{\boldsymbol{w}_\star} + \frac{1}{2}\delta_{-\boldsymbol{w}_\star}$, and $1$-Lipschitz function $G(\boldsymbol{w})=\langle\boldsymbol{w},\boldsymbol{w}_\star\rangle$ with respect to the Euclidean metric, there hold $\bE_{\boldsymbol{Z}\sim\mu}[G(\boldsymbol{Z})]=0$ and $\mu(|G(\boldsymbol{Z})-\bE[G(\boldsymbol{Z})]|\geq 1) = 1$.
\end{Corollaryrewrite}
\beginproof For item~\emph{1.}, we use Lemma~\ref{lemma:levy_milman_concentration} applied to $z=\boldsymbol{w}_\star$. Then the claim follows by Proposition~\ref{prop:m-chaos-localization} and Theorem~\ref{thm:fixed-output-general-ie}. For item~\emph{2.}, we use Lemma~\ref{lemma:levy_milman_concentration} again, to $z=[\boldsymbol{w}_\star]$. The claim follows by Proposition~\ref{prop:m-chaos-localization} and Theorem~\ref{thm:fixed-output-general-ie}. The results on $\hat\varphi_\lambda(S_2^{d-1}\backslash B(\boldsymbol{w}_\star;\rho))$, $\bar\varphi_\lambda(\bR\bP^{d-1}\backslash B_{\bR\bP}([\boldsymbol{w}_\star];\rho))\leq \frac{r_*}{\rho^2}$, and $\hat\varphi_\lambda(S_2^{d-1}\backslash (B(\boldsymbol{w}_\star;\rho)\cup B(-\boldsymbol{w}_\star;\rho)))\leq \frac{r_*}{\rho^2}$ follow from Markov's inequality directly. The last claim follows by the observation that $\int d_{\bR\bP}^2([\boldsymbol{w}],[\boldsymbol{w}_\star])\mathrm{d}\mu(\boldsymbol{w})=0$.
\endproof

\subsection{Proof of Theorem~\ref{thm:multi-index-fixed-output}}\label{sec:proof_thm_multi-index-fixed-output}

\begin{Proposition}
\label{prop:multi-index-localization}
Suppose there exist $\boldsymbol{w}_1^\star,\cdots,\boldsymbol{w}_M^\star\in S_2^{d-1}$ and $a_1^\star,\cdots,a_M^\star>0$ such that $\sum_{j=1}^M a_j^\star =1$. Suppose $\bP_X\sim\cN(\boldsymbol{0}, I_d)$. Then
\begin{equation}\label{eq:tensor_loc}
\forall \varphi\in\mathcal P(S_2^{d-1}),\quad \forall 1\leq m\leq M,\,\qquad \|\mathcal T_m(\varphi)-\mathcal T_m(\varphi_\star)\|_F \le \frac{\sqrt{m!}}{|b_m|}\,\|f_\varphi-f^\star\|_{L^2(\bP_X)}.
\end{equation}
Moreover, if $\Delta_\star = \min(1-\langle\boldsymbol{w}_i^\star,\boldsymbol{w}_j^\star\rangle:i\neq j)>0$, there is a scalar constant $C_{\rm loc}$, depending only on $M$, $\Delta_\star^{-1}$, and $\max_{1\le m\le M}|b_m|^{-1}$, such that
\begin{equation}\label{eq:geo_loc}
S_\star(\varphi):=\int_{S_2^{d-1}}\min_{1\le j\le M}\|\boldsymbol{w}-\boldsymbol{w}_j^\star\|_2^2\,\varphi(\mathrm{d}\boldsymbol{w}) \le C_{\rm loc}\|f_\varphi-f^\star\|_{L^2(\bP_X)}.
\end{equation}
\end{Proposition}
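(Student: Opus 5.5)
The plan has two parts: a chaos-projection argument for \eqref{eq:tensor_loc}, as in Proposition~\ref{prop:m-chaos-localization}, and a polynomial certificate for \eqref{eq:geo_loc}.

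\textbf{Step 1: the tensor bound \eqref{eq:tensor_loc}.} Write $\varphi_\star=\sum_{j=1}^M a_j^\star\delta_{\boldsymbol w_j^\star}$, so that $f^\star=f_{\varphi_\star}$, and put $\nu:=\varphi-\varphi_\star$. The argument of Lemma~\ref{lemma:proj-ridge-feature} uses only Lemma~\ref{lemma:conditional_expectation_Hermite} and Lemma~\ref{lemma:C_m_is_Span_Hermite}, so it applies verbatim at any level $m\ge1$. It gives
\[
\mathrm{Proj}_{C_m}\sigma(\langle\boldsymbol w,\cdot\rangle)=\frac{b_m}{m!}\mathrm{He}_m(\langle\boldsymbol w,\cdot\rangle).
\]
By linearity, $\mathrm{Proj}_{C_m}(f_\varphi-f^\star)=\frac{b_m}{m!}\int\mathrm{He}_m(\langle\boldsymbol w,\cdot\rangle)\nu(\mathrm d\boldsymbol w)$. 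Fubini and \eqref{eq:inner_product_Hermite_polynomials} then give
\[
\|\mathrm{Proj}_{C_m}(f_\varphi-f^\star)\|_{L^2(\bP_X)}=\frac{|b_m|}{\sqrt{m!}}\|\mathcal T_m(\varphi)-\mathcal T_m(\varphi_\star)\|_F .
\]
Under Assumption~\ref{ass:multi_index} we have $b_m\neq0$ for $m\le M$. Since orthogonal projection is a contraction in $L^2(\bP_X)$, \eqref{eq:tensor_loc} follows.

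\textbf{Step 2: the certificate.} Take
\[
Q(\boldsymbol w):=\prod_{j=1}^M\bigl(1-\langle\boldsymbol w,\boldsymbol w_j^\star\rangle\bigr).
\]
This is a polynomial of degree $M$ with the following properties.
\begin{itemize}
\item $Q\ge0$ on $S_2^{d-1}$, and $Q(\boldsymbol w_j^\star)=0$ for every $j$, so $\int Q\,\mathrm d\varphi_\star=0$.
\item Expanding the product gives $Q(\boldsymbol w)=1+\sum_{m=1}^M\langle A_m,\boldsymbol w^{\otimes m}\rangle_F$. Here $A_m=(-1)^m\sum_{|J|=m}\mathrm{Sym}\bigl(\bigotimes_{j\in J}\boldsymbol w_j^\star\bigr)$, with the sum over subsets $J\subset\{1,\dots,M\}$ of size $m$.
\item Each symmetrized product of unit vectors has Frobenius norm at most $1$, so $\|A_m\|_F\le\binom{M}{m}$.
\end{itemize}
Because $\varphi$ and $\varphi_\star$ are both probability measures, the constant term cancels. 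Cauchy--Schwarz in Frobenius norm together with Step 1 then gives
\[
\int Q\,\mathrm d\varphi=\int Q\,\mathrm d\varphi-\int Q\,\mathrm d\varphi_\star\le\sum_{m=1}^M\binom{M}{m}\frac{\sqrt{m!}}{|b_m|}\,\|f_\varphi-f^\star\|_{L^2(\bP_X)} .
\]

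\textbf{Step 3: lower bound for $Q$, the main obstacle.} It remains to show $Q(\boldsymbol w)\ge c\min_j\|\boldsymbol w-\boldsymbol w_j^\star\|_2^2$, with $c$ depending only on $M$ and $\Delta_\star$. Recall $1-\langle\boldsymbol w,\boldsymbol w_j^\star\rangle=\tfrac12\|\boldsymbol w-\boldsymbol w_j^\star\|_2^2$ and $\|\boldsymbol w_i^\star-\boldsymbol w_j^\star\|_2\ge\sqrt{2\Delta_\star}$ for $i\ne j$. I split into two cases.
\begin{itemize}
\item Suppose $\|\boldsymbol w-\boldsymbol w_k^\star\|_2\le\sqrt{2\Delta_\star}/2$ for some $k$. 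The triangle inequality gives $\|\boldsymbol w-\boldsymbol w_j^\star\|_2\ge\sqrt{2\Delta_\star}/2$ for every $j\neq k$. Hence each factor with $j\ne k$ is at least $\Delta_\star/4$, and
\[
Q(\boldsymbol w)\ge(\Delta_\star/4)^{M-1}\cdot\tfrac12\|\boldsymbol w-\boldsymbol w_k^\star\|_2^2\ge\tfrac12(\Delta_\star/4)^{M-1}\min_j\|\boldsymbol w-\boldsymbol w_j^\star\|_2^2 .
\]
\item Otherwise every factor is at least $\Delta_\star/4$. Since $\min_j\|\boldsymbol w-\boldsymbol w_j^\star\|_2^2\le4$, this gives $Q(\boldsymbol w)\ge(\Delta_\star/4)^M\ge\tfrac14(\Delta_\star/4)^M\min_j\|\boldsymbol w-\boldsymbol w_j^\star\|_2^2$.
\end{itemize}
Integrating this pointwise bound against $\varphi$ and combining with Step 2 yields \eqref{eq:geo_loc}. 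The constant is
\[
C_{\rm loc}=4(4/\Delta_\star)^M\sum_{m=1}^M\binom{M}{m}\sqrt{m!}\,|b_m|^{-1},
\]
which depends only on $M$, $\Delta_\star^{-1}$ and $\max_{m\le M}|b_m|^{-1}$. The only delicate point is this case split; the $\Delta_\star^{-M}$ dependence it produces is the one anticipated after Theorem~\ref{thm:multi-index-fixed-output}.
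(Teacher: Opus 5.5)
Your proof is correct and follows the paper's argument essentially step for step. For the tensor bound it uses the same chaos-projection identity $\|\mathrm{Proj}_{C_m}(f_\varphi-f^\star)\|_{L^2(\bP_X)}=\frac{|b_m|}{\sqrt{m!}}\|\mathcal T_m(\varphi)-\mathcal T_m(\varphi_\star)\|_F$. For the geometric bound it uses the same certificate $Q_\star(\boldsymbol w)=\prod_j(1-\langle\boldsymbol w,\boldsymbol w_j^\star\rangle)$ with $\|A_m\|_F\le\binom Mm$ and a two-case pointwise lower bound. The only difference is minor: your case split at $\|\boldsymbol w-\boldsymbol w_k^\star\|_2\le\sqrt{2\Delta_\star}/2$ gives $C_{\rm geo}=4(4/\Delta_\star)^M$. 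That is slightly sharper than the paper's $\max\{2(2/\Delta_\star)^{M-1},4(8/\Delta_\star^2)^M\}$ and matches the $\Delta_\star^{-M}$ dependence quoted after the multi-index theorem.
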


\beginproof
The tensor bound \eqref{eq:tensor_loc} follows from the same Hermite--tensor projection identity used in the single-index finite-information-index case, applied to the signed measure $\nu:=\varphi-\varphi_\star$. Namely, by Lemma ~\ref{lemma:proj-ridge-feature}, for every $1\le m\le M$, $\mathrm{Proj}_m(f_\varphi-f^\star)=\frac{b_m}{m!}\int \mathrm{He}_m(\langle \boldsymbol{w},\cdot\rangle)(\varphi-\varphi_\star)(\mathrm{d}\boldsymbol{w}),$
and
\[
\left\|\int \mathrm{He}_m(\langle \boldsymbol{w},\cdot\rangle)\nu(\mathrm{d}\boldsymbol{w})\right\|_{L^2(\bP_X)}^2 = m!\|\mathcal T_m(\nu)\|_F^2.
\]
Thus $\|\mathrm{Proj}_m(f_\varphi-f^\star)\|_{L^2(\bP_X)}=(|b_m|/\sqrt{m!})\|\mathcal T_m(\varphi)-\mathcal T_m(\varphi_\star)\|_F$. Since $\mathrm{Proj}_m$ is an orthogonal projection, \eqref{eq:tensor_loc} follows.

It remains to prove the measure concentration. Define the nonnegative polynomial
\[
Q_\star(\boldsymbol{w}):=\prod_{j=1}^M\bigl(1-\langle \boldsymbol{w},\boldsymbol{w}_j^\star\rangle\bigr),\qquad \boldsymbol{w}\in S_2^{d-1}.
\]
Each factor is nonnegative on the sphere, and $Q_\star(\boldsymbol{w})=0$ if and only if $\boldsymbol{w}\in\{\boldsymbol{w}_1^\star,\ldots,\boldsymbol{w}_M^\star\}$. Let $\delta(\boldsymbol{w}):=\min_j\|\boldsymbol{w}-\boldsymbol{w}_j^\star\|_2^2=2\min_j(1-\langle \boldsymbol{w},\boldsymbol{w}_j^\star\rangle)$. We first claim that
\begin{equation}\label{eq:poly_lower_bound}
\delta(\boldsymbol{w})\le C_{\rm geo}Q_\star(\boldsymbol{w}),\qquad \boldsymbol{w}\in S_2^{d-1},
\end{equation}
for a constant $C_{\rm geo}=C_{\rm geo}(M,\Delta_\star^{-1})$.

Fix $\boldsymbol{w}$, and choose $j=j(\boldsymbol{w})$ attaining the minimum in $\delta(\boldsymbol{w})$. Then $1-\langle \boldsymbol{w},\boldsymbol{w}_j^\star\rangle=\delta(\boldsymbol{w})/2$. Suppose first that $\delta(\boldsymbol{w})\le \Delta_\star^2/4$. For every $\ell\neq j$,
\begin{align*}
1-\langle \boldsymbol{w},\boldsymbol{w}_\ell^\star\rangle
&= 1-\langle \boldsymbol{w}_j^\star,\boldsymbol{w}_\ell^\star\rangle+\langle \boldsymbol{w}_j^\star-\boldsymbol{w},\boldsymbol{w}_\ell^\star\rangle \ge \Delta_\star-\|\boldsymbol{w}-\boldsymbol{w}_j^\star\|_2 \ge \Delta_\star/2.
\end{align*}
Here the first inequality uses $1-\langle \boldsymbol{w}_j^\star,\boldsymbol{w}_\ell^\star\rangle\ge\Delta_\star$ and Cauchy's inequality $\langle \boldsymbol{w}_j^\star-\boldsymbol{w},\boldsymbol{w}_\ell^\star\rangle\ge-\|\boldsymbol{w}_j^\star-\boldsymbol{w}\|_2\|\boldsymbol{w}_\ell^\star\|_2=-\|\boldsymbol{w}_j^\star-\boldsymbol{w}\|_2$. Hence, in this case,
\[
Q_\star(\boldsymbol{w})\ge \frac{\delta(\boldsymbol{w})}2\left(\frac{\Delta_\star}{2}\right)^{M-1}.
\]
If instead $\delta(\boldsymbol{w})>\Delta_\star^2/4$, then every factor satisfies $1-\langle \boldsymbol{w},\boldsymbol{w}_i^\star\rangle\ge\delta(\boldsymbol{w})/2>\Delta_\star^2/8$, so $Q_\star(\boldsymbol{w})\ge(\Delta_\star^2/8)^M$. Since $\delta(\boldsymbol{w})\le4$ on the unit sphere, \eqref{eq:poly_lower_bound} holds, for example, with
\[
C_{\rm geo}:=\max\left\{2(2/\Delta_\star)^{M-1},\,4(8/\Delta_\star^2)^M\right\}.
\]

Next we expand $Q_\star$. For $J=\{j_1,\ldots,j_m\}\subseteq[M]$, let
$
W_J^\star:=\operatorname{Sym}(\boldsymbol{w}_{j_1}^\star\otimes\cdots\otimes \boldsymbol{w}_{j_m}^\star)\in\operatorname{Sym}^m(\mathbb R^d),
$
where $\operatorname{Sym}$ denotes symmetrization. Since $\boldsymbol{w}^{\otimes m}$ is symmetric,
$
\langle W_J^\star,\boldsymbol{w}^{\otimes m}\rangle_F=\prod_{j\in J}\langle \boldsymbol{w},\boldsymbol{w}_j^\star\rangle.
$
Expanding the product defining $Q_\star$ gives
\begin{align*}
Q_\star(\boldsymbol{w}) &= \prod_{j=1}^M \bigl(1-\langle \boldsymbol{w},\boldsymbol{w}_j^\star\rangle\bigr) = 1 + \sum_{m=1}^M (-1)^m \sum_{\substack{J\subseteq[M]\\ |J|=m}} \prod_{j\in J} \langle \boldsymbol{w},\boldsymbol{w}_j^\star\rangle = 1 + \sum_{m=1}^M \left\langle (-1)^m \sum_{\substack{J\subseteq[M]\\ |J|=m}} W_J^\star, \boldsymbol{w}^{\otimes m}\right\rangle_F.
\end{align*}
Let us define
\begin{equation}\label{eq:coeff_tensor_Am}
A_m := (-1)^m \sum_{\substack{J\subseteq[M]\\ |J|=m}} W_J^\star,
\end{equation}
so that $Q_\star(\boldsymbol{w}) = 1 + \sum_{m=1}^M \langle A_m, \boldsymbol{w}^{\otimes m}\rangle_F$. The tensors $A_m$ are deterministic coefficient tensors. They enter the proof only through their scalar norms. Since symmetrization is a contraction and the target vectors have unit norm, $\|W_J^\star\|_F\le1$. By the triangle inequality, we have $\|A_m\|_F\le\binom Mm$.

Because $\varphi_\star$ is supported on the zeros of $Q_\star$, $\int Q_\star \mathrm{d}\varphi_\star=0$. Therefore $\int Q_\star\,\mathrm{d}\varphi=\int Q_\star\,\mathrm{d}(\varphi-\varphi_\star).$
Thus, substituting the expansion of $Q_\star$ \eqref{eq:coeff_tensor_Am} into the integral, we obtain
\begin{align*}
\int Q_\star\,\mathrm{d}\varphi
&= \int \left( 1 + \sum_{m=1}^M \langle A_m, \boldsymbol{w}^{\otimes m}\rangle_F \right) \mathrm{d}(\varphi-\varphi_\star)(\boldsymbol{w}) = \int 1 \,\mathrm{d}(\varphi-\varphi_\star) + \sum_{m=1}^M \left\langle A_m, \int \boldsymbol{w}^{\otimes m} \mathrm{d}(\varphi-\varphi_\star)(\boldsymbol{w}) \right\rangle_F.
\end{align*}
The constant term disappears because both $\varphi$ and $\varphi_\star$ are probability measures: $\int 1\,\mathrm{d}(\varphi-\varphi_\star)=\varphi(S_2^{d-1})-\varphi_\star(S_2^{d-1})=0$. Hence,
\[
\int Q_\star\,\mathrm{d}\varphi
=
\sum_{m=1}^M\left\langle A_m,\mathcal T_m(\varphi)-\mathcal T_m(\varphi_\star)\right\rangle_F.
\]
By Cauchy's inequality and \eqref{eq:tensor_loc},
\[
\int Q_\star\,\mathrm{d}\varphi
\le
\sum_{m=1}^M \|A_m\|_F\,\|\mathcal T_m(\varphi)-\mathcal T_m(\varphi_\star)\|_F
\le C_{\rm tens}\|f_\varphi-f^\star\|_{L^2(\bP_X)},
\]
where
\[
C_{\rm tens}:=\sum_{m=1}^M\frac{\sqrt{m!}\,\|A_m\|_F}{|b_m|}
\le
\sum_{m=1}^M\frac{\sqrt{m!}\binom Mm}{|b_m|}.
\]
This is a scalar constant depending only on $M$ and the Hermite coefficients $b_1,\ldots,b_M$. Finally, by \eqref{eq:poly_lower_bound},
\[
S_\star(\varphi)=\int\delta(\boldsymbol{w})\varphi(\mathrm{d}\boldsymbol{w})\le C_{\rm geo}\int Q_\star(\boldsymbol{w})\varphi(\mathrm{d}\boldsymbol{w})\le C_{\rm geo}C_{\rm tens}\|f_\varphi-f^\star\|_{L^2(\bP_X)}.
\]
This proves \eqref{eq:geo_loc} with $C_{\rm loc}:=C_{\rm geo}C_{\rm tens}$.
\endproof

\begin{Proposition}
\label{prop:multi-index-voronoi-concentration}
Let $V_1,\ldots,V_M$ be the Voronoi partition generated by $\boldsymbol{w}_1^\star,\ldots,\boldsymbol{w}_M^\star$, that is, $V_j = \{\boldsymbol{w}\in S_2^{d-1}: j=\min(\argmin_{1\leq\ell\leq M}\|\boldsymbol{w}-\boldsymbol{w}_\ell^\star\|_2)\}$. There is a constant $C_{\rm vor}$, depending only on $M$, $\Delta_\star^{-1}$, and $\max_{1\le k\le M}|b_k|^{-1}$, such that for every $\varphi\in\mathcal P(S_2^{d-1})$,
\begin{align*}
&\max_{1\le j\le M}\bigg|\int_{V_j}\varphi(\mathrm{d}\boldsymbol{w})-a_j^\star\bigg|+\max_{1\le j\le M}\bigg\|\int_{V_j}(\boldsymbol{w}-\boldsymbol{w}_j^\star)\varphi(\mathrm{d}\boldsymbol{w})\bigg\|_2 + \max_{1\leq j\leq M}\int_{V_j}\|\vw-\vw_j^\star\|_2^2\varphi(\mathrm{d}\vw)\\
&\qquad\le C_{\rm vor}\|f_\varphi-f^\star\|_{L^2(\bP_X)}.
\end{align*}
\end{Proposition}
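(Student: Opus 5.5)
The plan is to reduce everything to Proposition~\ref{prop:multi-index-localization}, writing $\varepsilon:=\|f_\varphi-f^\star\|_{L^2(\bP_X)}$.

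\textbf{The squared-distance term.} On $V_j$ one has $\|\vw-\vw_j^\star\|_2^2=\min_\ell\|\vw-\vw_\ell^\star\|_2^2$, so
\[
\int_{V_j}\|\vw-\vw_j^\star\|_2^2\,\mathrm{d}\varphi\le S_\star(\varphi)\le C_{\rm loc}\varepsilon
\]
by \eqref{eq:geo_loc}. This term is therefore immediate.

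\textbf{The mass term.} I will use polynomial certificates built from the first $M$ moment tensors, exactly as in the proof of Proposition~\ref{prop:multi-index-localization}. For each $j$, take the Lagrange-type polynomial
\[
P_j(\vw):=\prod_{\ell\neq j}\frac{1-\langle\vw,\vw_\ell^\star\rangle}{1-\langle\vw_j^\star,\vw_\ell^\star\rangle},
\]
which has degree $M-1\le M$. It satisfies $P_j(\vw_k^\star)=\mathbf 1_{k=j}$, so $\int P_j\,\mathrm{d}\varphi_\star=a_j^\star$. Expanding $P_j$ into tensors $A_m^{(j)}$ with $\|A_m^{(j)}\|_F\le\binom{M-1}{m}\Delta_\star^{-(M-1)}$, and applying \eqref{eq:tensor_loc}, gives
\[
\Bigl|\int P_j\,\mathrm{d}\varphi-a_j^\star\Bigr|\le C\varepsilon.
\]
It then remains to compare $\int P_j\,\mathrm{d}\varphi$ with $\varphi(V_j)$. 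On the sphere, $P_j$ is Lipschitz, say with constant $L_P$ depending on $M,\Delta_\star^{-1}$, and bounded by $B_P=(2/\Delta_\star)^{M-1}$. For $\vw\in V_k$,
\[
|P_j(\vw)-\mathbf 1_{k=j}|=|P_j(\vw)-P_j(\vw_k^\star)|\le \min\bigl(L_P\|\vw-\vw_k^\star\|_2,\;B_P+1\bigr).
\]
The Lipschitz bound gives only the first power of the distance, while the localization controls the second moment. So I will instead split each cell according to whether $\|\vw-\vw_k^\star\|_2\le\rho$ or not. On the near part, Cauchy--Schwarz gives $\int L_P\|\vw-\vw_k^\star\|_2\,\mathrm{d}\varphi\le L_P\sqrt{S_\star}$, which is only of order $\sqrt\varepsilon$. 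To get a linear rate, I will use that $P_j-\mathbf 1_{k=j}$ vanishes at $\vw_k^\star$ and has a first-order part that is linear in $\vw-\vw_k^\star$. The error is then
\[
\bigl\langle\nabla P_j(\vw_k^\star),\,\textstyle\int_{V_k}(\vw-\vw_k^\star)\,\mathrm{d}\varphi\bigr\rangle+O\bigl(S_\star(\varphi)\bigr).
\]
This couples the mass bound to the first-moment bound.

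\textbf{The first-moment term.} I will handle it with degree-$M$ certificates $\vw\mapsto\langle\vu,\vw-\vw_j^\star\rangle P_j(\vw)$ for unit $\vu$. Their $\varphi_\star$-integral is $0$, so \eqref{eq:tensor_loc} bounds their $\varphi$-integral by $C\varepsilon$ uniformly in $\vu$. Taylor-expanding this certificate around each $\vw_k^\star$ gives
\[
\int\langle\vu,\vw-\vw_j^\star\rangle P_j\,\mathrm{d}\varphi=\Bigl\langle\vu,\int_{V_j}(\vw-\vw_j^\star)\,\mathrm{d}\varphi\Bigr\rangle+O\bigl(S_\star(\varphi)\bigr).
\]
The cross cells $k\neq j$ contribute only $O(S_\star)$, because both factors vanish there to first order; globally I bound the remainder by the squared distance times a constant, using boundedness on the sphere. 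Taking the supremum over $\vu$ yields
\[
\Bigl\|\int_{V_j}(\vw-\vw_j^\star)\,\mathrm{d}\varphi\Bigr\|_2\le C\varepsilon+C\,S_\star(\varphi)\le C'\varepsilon.
\]
Substituting this back into the mass comparison closes that bound as well.

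\textbf{Main obstacle.} The hard step is the second-order remainder control: showing that $P_j(\vw)-\mathbf 1_{k=j}-\langle\nabla P_j(\vw_k^\star),\vw-\vw_k^\star\rangle$ is bounded by $C\|\vw-\vw_k^\star\|_2^2$ uniformly on $V_k$, and not merely near $\vw_k^\star$. I expect this to follow from smoothness of $P_j$ on a compact set, together with the fact that the distance to $\vw_k^\star$ is bounded away from zero on the far part of the cell. The point to check is that the constants depend only on $M$ and $\Delta_\star^{-1}$, which holds because $P_j$ is a fixed polynomial of degree at most $M$ with controlled coefficients. One technical detail is the gradient term: $\langle\nabla P_j(\vw_k^\star),\vw-\vw_k^\star\rangle$ should use the ambient gradient, so that it stays linear in $\vw$.
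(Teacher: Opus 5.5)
Your architecture is the paper's. You use degree-$\le M$ polynomial certificates controlled through \eqref{eq:tensor_loc}, a cellwise Taylor expansion whose remainders sum to $O(S_\star(\varphi))$, and you bound barycenters first and masses second. Your $P_j$ and $q_{\vu}(\vw):=\langle\vu,\vw-\vw_j^\star\rangle P_j(\vw)$ are the paper's certificates up to normalization, and your direct bound $\int_{V_j}\|\vw-\vw_j^\star\|_2^2\,\mathrm{d}\varphi\le S_\star(\varphi)$ is fine.

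The gap is in the step that carries the argument: the claim that the cells $V_k$, $k\neq j$, contribute only $O(S_\star)$ to $\int q_{\vu}\,\mathrm{d}\varphi$. Your reason, that ``both factors vanish there to first order'', is false. In general $\langle\vu,\vw_k^\star-\vw_j^\star\rangle\neq0$, and only $P_j$ vanishes at $\vw_k^\star$. If $P_j$ vanished only to first order (as you assume in your mass paragraph), the Taylor expansion on $V_k$ would leave the first-order terms $\langle\nabla q_{\vu}(\vw_k^\star),\int_{V_k}(\vw-\vw_k^\star)\,\mathrm{d}\varphi\rangle$. These couple the barycenter of cell $j$ to those of all other cells. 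Nothing in your plan controls them, and the mass step cannot help, since it is itself fed by the first-moment step. Conversely, the point you flag as the main obstacle is routine. The segment from $\vw_k^\star$ to $\vw$ lies in $B_2^d$, where the Hessian of a fixed polynomial with controlled coefficients is bounded, so the remainder bound holds globally with no near/far splitting.

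The missing idea is the spherical identity $1-\langle\vw,\vw_k^\star\rangle=\tfrac12\|\vw-\vw_k^\star\|_2^2$ for $\vw\in S_2^{d-1}$. On the sphere it gives $P_j(\vw)=\prod_{\ell\neq j}\|\vw-\vw_\ell^\star\|_2^2/(2(1-\langle\vw_j^\star,\vw_\ell^\star\rangle))$. Hence, for $k\neq j$ and $\vw\in V_k$, $|q_{\vu}(\vw)|\le 2|P_j(\vw)|\le C(M,\Delta_\star^{-1})\|\vw-\vw_k^\star\|_2^2$. The paper phrases the same fact through gradients. For $k\neq j$, the ambient gradients $\nabla q_{\vu}(\vw_k^\star)$ and $\nabla P_j(\vw_k^\star)$ are multiples of $\vw_k^\star$, i.e. normal to the sphere. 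Then $\langle\vw_k^\star,\vw-\vw_k^\star\rangle=-\tfrac12\|\vw-\vw_k^\star\|_2^2$ turns the would-be first-order terms into second-order ones. With this, your barycenter bound closes. In the mass step you then need the barycenter bound only for cell $j$, the only center at which $\nabla P_j$ need not be normal to the sphere, which is exactly how the paper concludes.
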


\beginproof
For every polynomial $q$ on $\mathbb R^d$ of degree at most $M$, Proposition~\ref{prop:multi-index-localization} gives
\begin{align*}
&\bigg|\int_{S_2^{d-1}}q(\boldsymbol{w})\varphi(\mathrm{d}\boldsymbol{w})-\int_{S_2^{d-1}}q(\boldsymbol{w})\varphi^\star(\mathrm{d}\boldsymbol{w})\bigg|\le C_q\|f_\varphi-f^\star\|_{L^2(\bP_X)}.
\end{align*}
Taylor's formula for $q(\boldsymbol{w}) = q(\boldsymbol{w}_i^\star) + \langle\boldsymbol{w}-\boldsymbol{w}_i^\star,\nabla q(\boldsymbol{w}_i^\star)\rangle+R_i(\boldsymbol{w})$ on the cells $V_1,\ldots,V_M$ gives
\begin{align*}
    &\int_{S_2^{d-1}}q(\boldsymbol{w})\mathrm{d}\varphi(\boldsymbol{w}) - \int_{S_2^{d-1}}q(\boldsymbol{w})\mathrm{d}\varphi^\star(\boldsymbol{w})\\
    &= \sum_{i=1}^M\left( \int_{V_i}\mathrm{d}\varphi(\boldsymbol{w}) - a_i^\star \right)q(\boldsymbol{w}_i^\star) + \sum_{i=1}^M\bigg\langle \int_{V_i}(\boldsymbol{w}-\boldsymbol{w}_i^\star)\mathrm{d}\varphi(\boldsymbol{w}),\nabla q(\boldsymbol{w}_i^\star) \bigg\rangle + \sum_{i=1}^M \int_{V_i}R_i(\boldsymbol{w})\mathrm{d}\varphi(\boldsymbol{w}),
\end{align*}
where $|R_i(\boldsymbol{w})|\leq C_q\|\boldsymbol{w}-\boldsymbol{w}_i^\star\|_2^2$ for some absolute constant $C_q$. Taking absolute value on both sides and using the upper bound from Proposition~\ref{prop:multi-index-localization} together with $\sum_{i=1}^M\int_{V_i}\|\boldsymbol{w}-\boldsymbol{w}_i^\star\|_2^2\mathrm{d}\varphi(\boldsymbol{w})=S_\star(\varphi)\leq C_{\mathrm{loc}}\|f_\varphi - f^\star\|_{L^2(\bP_X)}$ yields
\begin{align}\label{eq:taylor_for_polynomial_q}
&\bigg|\sum_{i=1}^M\bigg(\int_{V_i}\varphi(\mathrm{d}\boldsymbol{w})-a_i^\star\bigg)q(\boldsymbol{w}_i^\star)+\sum_{i=1}^M\bigg\langle\int_{V_i}(\boldsymbol{w}-\boldsymbol{w}_i^\star)\varphi(\mathrm{d}\boldsymbol{w}),\nabla q(\boldsymbol{w}_i^\star)\bigg\rangle\bigg|\le C_q\|f_\varphi-f^\star\|_{L^2(\bP_X)}.
\end{align}
\begin{enumerate}
    \item We first prove the concentration for the local barycenters. Fix $1\le j\le M$ and $\boldsymbol{e}\in S_2^{d-1}$. Apply the last display to the polynomial
\begin{align*}
&q(\boldsymbol{w})=\langle \boldsymbol{w}-\boldsymbol{w}_j^\star,\boldsymbol{e}\rangle\prod_{\ell\ne j}\big(1-\langle \boldsymbol{w},\boldsymbol{w}_\ell^\star\rangle\big).
\end{align*}
In \eqref{eq:taylor_for_polynomial_q}, $q(\boldsymbol{w}_i^\star)=0$ for any $1\leq i\leq M$, hence the first term of the left-hand-side of \eqref{eq:taylor_for_polynomial_q} vanishes, hence $|\sum_{i=1}^M\langle\int_{V_i}(\boldsymbol{w}-\boldsymbol{w}_i^\star)\varphi(\mathrm{d}\boldsymbol{w}),\nabla q(\boldsymbol{w}_i^\star)\rangle|\leq C_q\|f_\varphi - f^\star\|_{L^2(\bP_X)}$. We separate the sum
\begin{align*}
    &\sum_{i=1}^M\langle\int_{V_i}(\boldsymbol{w}-\boldsymbol{w}_i^\star)\varphi(\mathrm{d}\boldsymbol{w}),\nabla q(\boldsymbol{w}_i^\star)\rangle \\
    &= \bigg\langle\int_{V_j}(\boldsymbol{w}-\boldsymbol{w}_j^\star)\mathrm{d}\varphi(\boldsymbol{w}),\nabla q(\boldsymbol{w}_j^\star)\bigg\rangle + \sum_{i\neq j}\bigg\langle \int_{V_i}(\boldsymbol{w}-\boldsymbol{w}_i^\star)\mathrm{d}\varphi(\boldsymbol{w}),\nabla q(\boldsymbol{w}_i^\star)\bigg\rangle.
\end{align*}
For the first term, by computation, $\nabla q(\boldsymbol{w}_j^\star) = \boldsymbol{e}\prod_{\ell\neq j}(1-\langle\boldsymbol{w}_j^\star,\boldsymbol{w}_\ell^\star\rangle)$, together with the definition of $\Delta_\star$ in \eqref{eq:target_separation},
\begin{align*}
    &\left|\bigg\langle\int_{V_j}(\boldsymbol{w}-\boldsymbol{w}_j^\star)\mathrm{d}\varphi(\boldsymbol{w}),\nabla q(\boldsymbol{w}_j^\star)\bigg\rangle\right| \geq \Delta_\star^{M-1}\left|\bigg\langle \int_{V_j}(\boldsymbol{w}-\boldsymbol{w}_j^\star)\mathrm{d}\varphi(\boldsymbol{w}) ,\boldsymbol{e}\bigg\rangle\right|.
\end{align*}
Next we deal with the second term.
By computation, $\nabla q(\boldsymbol{w}_i^\star) = -\langle\boldsymbol{w}_i^\star-\boldsymbol{w}_j^\star,\boldsymbol{e}\rangle\prod_{\ell\neq j,i}(1-\langle\boldsymbol{w}_i^\star,\boldsymbol{w}_\ell^\star\rangle)\boldsymbol{w}_i^\star$; hence
\begin{align*}
&\bigg|\bigg\langle\int_{V_i}(\boldsymbol{w}-\boldsymbol{w}_i^\star)\varphi(\mathrm{d}\boldsymbol{w}),\nabla q(\boldsymbol{w}_i^\star)\bigg\rangle\bigg| =\bigg|\langle\boldsymbol{w}_i^\star-\boldsymbol{w}_j^\star,\boldsymbol{e}\rangle\prod_{\ell\ne j,i}\bigl(1-\langle\boldsymbol{w}_i^\star,\boldsymbol{w}_\ell^\star\rangle\bigr)\int_{V_i}\langle\boldsymbol{w}-\boldsymbol{w}_i^\star,\boldsymbol{w}_i^\star\rangle\varphi(\mathrm{d}\boldsymbol{w})\bigg|\\
&=\frac{1}{2}\bigg|\langle\boldsymbol{w}_i^\star-\boldsymbol{w}_j^\star,\boldsymbol{e}\rangle\prod_{\ell\ne j,i}\bigl(1-\langle\boldsymbol{w}_i^\star,\boldsymbol{w}_\ell^\star\rangle\bigr)\bigg|\int_{V_i}\|\boldsymbol{w}-\boldsymbol{w}_i^\star\|_2^2\varphi(\mathrm{d}\boldsymbol{w})\le C_q\int_{V_i}\|\boldsymbol{w}-\boldsymbol{w}_i^\star\|_2^2\varphi(\mathrm{d}\boldsymbol{w}),
\end{align*}where we have used the identity $\langle\boldsymbol{w}_i^\star,\boldsymbol{w}-\boldsymbol{w}_i^\star\rangle = -\frac{1}{2}\|\boldsymbol{w}-\boldsymbol{w}_i^\star\|_2^2$ in the second equality. Summing over $i\neq j$, we obtain $|\sum_{i\neq j}\langle \int_{V_i}(\boldsymbol{w}-\boldsymbol{w}_i^\star)\mathrm{d}\varphi(\boldsymbol{w}),\nabla q(\boldsymbol{w}_i^\star)\rangle|\leq C_q\sum_{i\neq j}\int_{V_i}\|\boldsymbol{w}-\boldsymbol{w}_i^\star\|_2^2\mathrm{d}\varphi(\boldsymbol{w})\leq C\|f_\varphi - f^\star\|_{L^2(\bP_X)}^2$.

Take $\ve = -\vw_j^\star$; then $\langle\int_{V_j}(\vw-\vw_j^\star)\varphi(\mathrm d\vw),\ve\rangle = \frac{1}{2}\int_{V_j}\|\vw-\vw_j^\star\|_2^2\varphi(\mathrm d\vw)$, hence by triangular inequality, $\int_{V_j}\|\boldsymbol{w}-\boldsymbol{w}_j^\star\|_2^2\varphi(\mathrm{d}\boldsymbol{w})\le C_{\rm vor}\|f_\varphi-f^\star\|_{L^2(\bP_X)}.$ Taking supreme over $\ve\in S_2^{d-1}$ together with the triangular inequality yields $\|\int_{V_j}\boldsymbol{w}-\boldsymbol{w}_j^\star\varphi(\mathrm{d}\boldsymbol{w})\|_2\le C_{\rm vor}\|f_\varphi-f^\star\|_{L^2(\bP_X)}.$

\item It remains to control the mass. Apply the same display to $q(\boldsymbol{w})=\prod_{\ell\ne j}\big(1-\langle \boldsymbol{w},\boldsymbol{w}_\ell^\star\rangle\big).$
For $i\ne j$, $q(\boldsymbol{w}_i^\star)=0$ and for $i=j$, $q(\boldsymbol{w}_i^\star) = \prod_{\ell\neq j}(1-\langle\boldsymbol{w}_j^\star,\boldsymbol{w}_\ell^\star\rangle)$. Therefore, the first term of the left-hand-side of \eqref{eq:taylor_for_polynomial_q} is $(\int_{V_j}\mathrm{d}\varphi(\boldsymbol{w})-a_j^\star)\prod_{\ell\neq j}(1-\langle\boldsymbol{w}_j^\star,\boldsymbol{w}_\ell^\star\rangle)$. For $i\neq j$, $\nabla q(\boldsymbol{w}_i^\star) = -\boldsymbol{w}_i^\star\prod_{\ell\neq j,i}(1-\langle\boldsymbol{w}_i^\star,\boldsymbol{w}_\ell^\star\rangle)$. Using again that $\langle\boldsymbol{w}-\boldsymbol{w}_i^\star,\boldsymbol{w}_i^\star\rangle = -\frac{1}{2}\|\boldsymbol{w}-\boldsymbol{w}_i^\star\|_2^2$ and taking sum over $i\neq j$, $|\sum_{i\neq j}\langle\int_{V_i}(\boldsymbol{w}-\boldsymbol{w}_i^\star)\mathrm{d}\varphi(\boldsymbol{w}),\nabla q(\boldsymbol{w}_i^\star)\rangle|\leq C_q\|f_\varphi - f^\star\|_{L^2(\bP_X)}$. Therefore, by triangular inequality and \eqref{eq:taylor_for_polynomial_q},
\begin{align*}
&\bigg|\bigg(\int_{V_j}\varphi(\mathrm{d}\boldsymbol{w})-a_j^\star\bigg)\prod_{\ell\ne j}\big(1-\langle \boldsymbol{w}_j^\star,\boldsymbol{w}_\ell^\star\rangle\big)+\bigg\langle\int_{V_j}(\boldsymbol{w}-\boldsymbol{w}_j^\star)\varphi(\mathrm{d}\boldsymbol{w}),\nabla q(\boldsymbol{w}_j^\star)\bigg\rangle\bigg| \lesssim\|f_\varphi-f^\star\|_{L^2(\bP_X)}.
\end{align*}
Using again $\prod_{\ell\ne j}\big(1-\langle \boldsymbol{w}_j^\star,\boldsymbol{w}_\ell^\star\rangle\big)\ge \Delta_\star^{M-1}$ together with the conclusion of item~\emph{1.} proves the proposition.
\end{enumerate}
\endproof

\begin{Proposition}
\label{prop:multi-index-rademacher}
Let $D_{d,N}:=d\log(edN)$ and $\mathfrak D_{M,d,N}:=Md+D_{d,N}$. For $r>0$, define
\begin{align*}
&(\mathcal{F}-f^\star)\cap B_{L^2(\bP_X)}(0;r):=\{f_\varphi-f^\star:\varphi\in\mathcal P(S_2^{d-1}),\ \|f_\varphi-f^\star\|_{L^2(\bP_X)}\le r\}.
\end{align*}
There is a constant $C_{\rm rad}$, depending only on $M$, $\Delta_\star^{-1}$, $L_\sigma$, $M_\sigma$, $T_\sigma$, and $\max_{1\le k\le M}|b_k|^{-1}$, such that
\begin{align*}
&\mathbb E\sup_{g\in(\mathcal{F}-f^\star)\cap B_{L^2(\bP_X)}(0;r)}\bigg|\frac1N\sum_{i=1}^N\varepsilon_ig(X_i)\bigg|\le C_{\rm rad}r\bigg(\sqrt{\frac{\mathfrak D_{M,d,N}}{N}}+\frac{\mathfrak D_{M,d,N}}{N}\bigg).
\end{align*}
For the localized loss class
\begin{align*}
&\mathcal L(r):=\{(\boldsymbol{x},y)\mapsto (y-f_\varphi(\boldsymbol{x}))^2-(y-f^\star(\boldsymbol{x}))^2:f_\varphi-f^\star\in(\mathcal{F}-f^\star)\cap B_{L^2(\bP_X)}(0;r)\},
\end{align*}
there is a constant $C'_{\rm rad}$, depending additionally on $B_\sigma$ and $B_\xi$, such that
\begin{align*}
&\mathbb E\sup_{h\in\mathcal L(r)}\bigg|\frac1N\sum_{i=1}^N\varepsilon_ih(X_i,Y_i)\bigg|\le C'_{\rm rad}r\bigg(\sqrt{\frac{\mathfrak D_{M,d,N}}{N}}+\frac{\mathfrak D_{M,d,N}}{N}\bigg).
\end{align*}
\end{Proposition}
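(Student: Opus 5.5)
I will mirror the proof of Proposition~\ref{prop:parametric-rademacher-general-ie}, replacing the single anchor $s(\vw)\vw_\star$ by the Voronoi anchor. Fix $\varphi$ with $\|f_\varphi-f^\star\|_{L^2(\bP_X)}\le r$ and set $g_\varphi=f_\varphi-f^\star$. For $\vw\in V_j$, write $\vw=\vw_j^\star+\|\vw-\vw_j^\star\|_2\,\ve_\vw$. A second-order Taylor expansion of $\sigma$ at $\langle\vw_j^\star,\vx\rangle$ then gives the decomposition $g_\varphi=H_\varphi+R_\varphi$. The main term is
\begin{align*}
H_\varphi(\vx)=\sum_{j=1}^M\Big(\varphi(V_j)-a_j^\star\Big)\sigma(\langle\vw_j^\star,\vx\rangle)+\sum_{j=1}^M\sigma'(\langle\vw_j^\star,\vx\rangle)\Big\langle\int_{V_j}(\vw-\vw_j^\star)\varphi(\mathrm d\vw),\vx\Big\rangle .
\end{align*}
It lies in the space $\mathcal V$ of \eqref{eq:def_finite_dimensional_space}, of dimension at most $M(d+1)$. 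The remainder is $R_\varphi=\sum_j\int_{V_j}\|\vw-\vw_j^\star\|_2^2\,q_\vw\,\varphi(\mathrm d\vw)$, where, exactly as before, $q_\vw\in\frac12\overline{\conv}(\mathcal Q_1)$, since every segment between $\vw_j^\star$ and $\vw$ lies in $B_2^d$.

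\textbf{Remainder.} Proposition~\ref{prop:multi-index-localization} gives the total mass $S_\star(\varphi)\le C_{\rm loc}\|g_\varphi\|_{L^2(\bP_X)}\le C_{\rm loc}r$. After normalizing by $S_\star(\varphi)$, $R_\varphi\in \frac{C_{\rm loc}r}{2}\overline{\conv}(\mathcal Q_1)$. Lemma~\ref{lem:second-order-multiplier-general-ie} then bounds the Rademacher complexity of this part by $Cr(\sqrt{D_{d,N}/N}+D_{d,N}/N)$. The same bound gives $\|R_\varphi\|_{L^2(\bP_X)}\le \frac{\sqrt3M_\sigma}{2}C_{\rm loc}r$.

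\textbf{Main term.} By the triangle inequality, $\|H_\varphi\|_{L^2(\bP_X)}\le\|g_\varphi\|_{L^2(\bP_X)}+\|R_\varphi\|_{L^2(\bP_X)}\le C r$. The supremum over the $H_\varphi$ is therefore at most the supremum over an $L^2(\bP_X)$-ball of radius $Cr$ in $\mathcal V$. By Cauchy--Schwarz in an orthonormal basis, as in \eqref{eq:20.m.14}, this is at most $Cr\sqrt{\dim\mathcal V/N}\le Cr\sqrt{Md/N}$. Summing the two contributions and using $\sqrt a+\sqrt b\le 2\sqrt{a+b}$ gives the first claim with $\mathfrak D_{M,d,N}=Md+D_{d,N}$. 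Here I use Proposition~\ref{prop:multi-index-voronoi-concentration} only implicitly; it is not needed, because the $L^2$ bound on $H_\varphi$ already follows from the triangle inequality. One could alternatively bound the coefficients via that proposition, but the orthonormal-basis argument avoids any conditioning of the basis of $\mathcal V$.

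\textbf{Loss class.} As before, write $h=g^2-2\xi g$ with $|g|\le 2B_\sigma$, since $f^\star$ is a convex combination of ridge functions. The contraction principle with Lipschitz constant $4B_\sigma+2B_\xi$ transfers the bound to the localized loss class.

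\textbf{Main obstacle.} I expect the only delicate point to be the measurability/Voronoi-tie bookkeeping, i.e.\ making $\vw\mapsto q_\vw$ a measurable selection so the convex-hull argument is legitimate. Everything else is a direct transcription of the single-index proof.
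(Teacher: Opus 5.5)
Your proposal is correct and follows essentially the same route as the paper: Taylor expansion around the Voronoi anchors $\vw_j^\star$, a main term lying in the $M(d+1)$-dimensional first-order span, and a remainder in $\frac{S_\star(\varphi)}{2}\overline{\conv}(\mathcal Q_1)$ controlled by Proposition~\ref{prop:multi-index-localization} and Lemma~\ref{lem:second-order-multiplier-general-ie}, followed by the orthonormal-basis bound and the contraction step exactly as in the single-index proof. The paper additionally cites Proposition~\ref{prop:multi-index-voronoi-concentration} to bound the main-term coefficients, whereas you obtain the needed $L^2$-ball bound directly from $\|H_\varphi\|_{L^2(\bP_X)}\le\|g_\varphi\|_{L^2(\bP_X)}+\|R_\varphi\|_{L^2(\bP_X)}$; this is equally valid. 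The measurability issue you flag is harmless, because on each Borel cell $V_j$ the remainder integrand is continuous in $\vw$.
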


\beginproof
By Proposition~\ref{prop:multi-index-voronoi-concentration}, for every $\varphi$ such that $\|f_\varphi-f^\star\|_{L^2(\bP_X)}\le r$,
\begin{align*}
&\max_{1\le j\le M}\bigg|\int_{V_j}\varphi(\mathrm{d}\boldsymbol{w})-a_j^\star\bigg|+\max_{1\le j\le M}\bigg\|\int_{V_j}(\boldsymbol{w}-\boldsymbol{w}_j^\star)\varphi(\mathrm{d}\boldsymbol{w})\bigg\|_2\le C_{\rm vor}r.
\end{align*}
Set
\begin{align*}
&(T_\varphi^{(1)}\1)(\boldsymbol{x})=\sum_{j=1}^M\bigg(\int_{V_j}\varphi(\mathrm{d}\boldsymbol{w})\bigg)\sigma(\langle\boldsymbol{w}_j^\star,\boldsymbol{x}\rangle)+\sum_{j=1}^M\sigma'(\langle\boldsymbol{w}_j^\star,\boldsymbol{x}\rangle)\bigg\langle\int_{V_j}(\boldsymbol{w}-\boldsymbol{w}_j^\star)\varphi(\mathrm{d}\boldsymbol{w}),\boldsymbol{x}\bigg\rangle.
\end{align*}
Taylor's formula gives, for $\boldsymbol{w}\in V_j$,
\begin{align*}
&\sigma(\langle \boldsymbol{w},\boldsymbol{x}\rangle)-\sigma(\langle \boldsymbol{w}_j^\star,\boldsymbol{x}\rangle)-\sigma'(\langle \boldsymbol{w}_j^\star,\boldsymbol{x}\rangle)\langle \boldsymbol{w}-\boldsymbol{w}_j^\star,\boldsymbol{x}\rangle\\
&=\|\boldsymbol{w}-\boldsymbol{w}_j^\star\|_2^2\int_0^1(1-t)\sigma''(\langle \boldsymbol{w}_j^\star+t(\boldsymbol{w}-\boldsymbol{w}_j^\star),\boldsymbol{x}\rangle)\bigg\langle\frac{\boldsymbol{w}-\boldsymbol{w}_j^\star}{\|\boldsymbol{w}-\boldsymbol{w}_j^\star\|_2},\boldsymbol{x}\bigg\rangle^2\mathrm{d}t,
\end{align*}
with $\frac{\boldsymbol{w}-\boldsymbol{w}_j^\star}{\|\boldsymbol{w}-\boldsymbol{w}_j^\star\|_2}$ replaced by an arbitrary unit vector in the last display when $\boldsymbol{w}=\boldsymbol{w}_j^\star$. Since $\boldsymbol{w}_j^\star+t(\boldsymbol{w}-\boldsymbol{w}_j^\star)\in B_2^d$, the integral in the last display belongs to $\frac12\overline{\operatorname{conv}}(\mathcal Q_1)$, where
\begin{align*}
&\mathcal Q_1:=\{\boldsymbol{x}\mapsto\sigma''(\langle \boldsymbol{u},\boldsymbol{x}\rangle)\langle \boldsymbol{e},\boldsymbol{x}\rangle^2:\|\boldsymbol{u}\|_2\le1,\ \boldsymbol{e}\in S_2^{d-1}\}.
\end{align*}
Integrating over the Voronoi partition yields $(T_\varphi-T_\varphi^{(1)})\1\in \frac{S_\star(\varphi)}2\overline{\operatorname{conv}}(\mathcal Q_1)\subset C_{\mathrm{rad}}r\overline{\conv}(\cQ_1)$ by
Proposition~\ref{prop:multi-index-localization}.
Moreover, since $f^\star(\boldsymbol{x}) = \sum_{j=1}^M a_j^\star\sigma(\langle\boldsymbol{w}_j^\star,\boldsymbol{x}\rangle)$, $T_\varphi^{(1)}\1-f^\star(\boldsymbol{x}) = \sum_{j=1}^M(\int_{V_j}\mathrm{d}\varphi(\boldsymbol{w}) - a_j^\star)\sigma(\langle\boldsymbol{w}_j^\star,\boldsymbol{x}\rangle)+\sum_{j=1}^M\sigma'(\langle\boldsymbol{w}_j^\star,\boldsymbol{x}\rangle)\langle\int_{V_j}(\boldsymbol{w}-\boldsymbol{w}_j^\star)\mathrm{d}\varphi(\boldsymbol{w}),\boldsymbol{x}\rangle$, which implies
\begin{align*}
&T_\varphi^{(1)}\1-f^\star\in \operatorname{span}\{\boldsymbol{x}\mapsto\sigma(\langle \boldsymbol{w}_j^\star,\boldsymbol{x}\rangle),\ \boldsymbol{x}\mapsto\sigma'(\langle \boldsymbol{w}_j^\star,\boldsymbol{x}\rangle)x_\ell:1\le j\le M,\ 1\le \ell\le d\}.
\end{align*}
Thus every $g\in(\mathcal F-f^\star)\cap B_{L^2(\bP_X)}(0;r)$ is contained in the sum of the $C_{\rm rad}r$-ball of an $M(d+1)$-dimensional subspace and $C_{\rm rad}r\,\overline{\operatorname{conv}}(\mathcal Q_1)$. The remaining part of the proof is the same as in that of Proposition~\ref{prop:parametric-rademacher-general-ie}, thus omitted.
\endproof

\begin{Lemma}
\label{lem:multi-index-isomorphism}
Under the assumptions above, there is a constant $C_{\rm iso}$, depending only on $M$, $\Delta_\star^{-1}$, $B_\sigma$, $L_\sigma$, $M_\sigma$, $T_\sigma$, $B_\xi$, and $\max_{1\le k\le M}|b_k|^{-1}$, such that, for every $x\ge1$ and $N\ge2$,
\[
\bP\left(\forall\varphi\in\mathcal P(S_2^{d-1}),\, \left|(P-P_N)\cL_\varphi\right|\le\frac14\|f_\varphi-f^\star\|_{L^2(\bP_X)}^2 + C_{\rm iso}\frac{\mathfrak D_{M,d,N}+x}{N}\right)\geq 1-4\exp(-x).
\]
\end{Lemma}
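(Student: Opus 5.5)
The plan is to reproduce the peeling argument of Lemma~\ref{lem:localized-isomorphism-general-ie} verbatim, replacing the single-index Rademacher bound of Proposition~\ref{prop:parametric-rademacher-general-ie} by the multi-index bound of Proposition~\ref{prop:multi-index-rademacher}, with complexity $\mathfrak D_{M,d,N}=Md+D_{d,N}$ in place of $D_{d,N}$.

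\textbf{Step 1: loss decomposition and envelopes.} Since $Y=f^\star(X)+\xi$ with $\xi$ centered and independent of $X$, we have $\cL_\varphi=(f_\varphi-f^\star)^2-2\xi(f_\varphi-f^\star)$, so $P\cL_\varphi=\|f_\varphi-f^\star\|_{L^2(\bP_X)}^2$. Because $\sum_j a_j^\star=1$ and $\|\sigma\|_\infty\le B_\sigma$, both $f_\varphi$ and $f^\star$ are bounded by $B_\sigma$. Hence $|f_\varphi-f^\star|\le 2B_\sigma$, and the constants $U=8B_\sigma^2+8B_\sigma B_\xi$ and $V=2B_\sigma+2B_\xi$ from the single-index proof still control the envelope and the variance proxy $P\cL_\varphi^2\le V^2r^2$ on $\mathcal L(r)$.

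\textbf{Step 2: expected supremum.} Symmetrization together with Proposition~\ref{prop:multi-index-rademacher} gives
$$\mathbb E Z_r\le C_{\mathcal L}\, r\Big(\sqrt{\mathfrak D_{M,d,N}/N}+\mathfrak D_{M,d,N}/N\Big),\qquad Z_r:=\sup_{\cL_\varphi\in\mathcal L(r)}|(P-P_N)\cL_\varphi|.$$
The constant $C_{\mathcal L}$ depends only on the listed parameters.

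\textbf{Step 3: concentration and peeling.} The trivial case $(\mathfrak D_{M,d,N}+x)/N\ge1$ is handled by $|(P-P_N)\cL_\varphi|\le U$. Otherwise, apply Bousquet's inequality to $\mathcal L(r)\cup(-\mathcal L(r))$. Then peel over the radii $r_j=2^jr_0$, with $r_0^2=C_2(\mathfrak D_{M,d,N}+x)/N$, confidence levels $u_j=x+j+1$, and $C_2=(256C_1)^2$. A union bound gives an event of probability at least $1-4e^{-x}$ on which $Z_{r_j}\le r_j^2/64$ for all $j\ge0$. This uses $x+j+1\le 4^j(\mathfrak D_{M,d,N}+x)$, valid since $\mathfrak D_{M,d,N}\ge1$. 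Finally, locate each $\varphi$ in its shell exactly as before: $\|f_\varphi-f^\star\|\le r_0$ gives the additive term $C_{\rm iso}(\mathfrak D_{M,d,N}+x)/N$, and otherwise $r_j^2\le 4\|f_\varphi-f^\star\|^2$ gives the $\frac1{16}\le\frac14$ multiplicative term.

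\textbf{Main obstacle.} The genuinely nontrivial input is Step~2, i.e.\ Proposition~\ref{prop:multi-index-rademacher}, whose proof was abbreviated. It must be checked that the remainder $R_\varphi$ there sits in $C r\,\overline{\operatorname{conv}}(\mathcal Q_1)$ linearly in $r$, using $S_\star(\varphi)\le C_{\rm loc}\|f_\varphi-f^\star\|$ from Proposition~\ref{prop:multi-index-localization}. It must also be checked that the main part lies in an $r$-ball of the $M(d+1)$-dimensional span, which needs Proposition~\ref{prop:multi-index-voronoi-concentration} plus the triangle inequality. Given these two facts, Lemma~\ref{lem:second-order-multiplier-general-ie} and the finite-dimensional bound $r\sqrt{M(d+1)/N}$ yield Step~2. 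The rest is routine bookkeeping of constants.
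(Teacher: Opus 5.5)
Your proposal is correct and follows the route the paper intends: the paper omits this proof as identical to that of Lemma~\ref{lem:localized-isomorphism-general-ie}, with Proposition~\ref{prop:multi-index-rademacher} supplying the localized Rademacher bound and $\mathfrak D_{M,d,N}$ replacing $D_{d,N}$ in the Bousquet-plus-peeling argument, exactly as you describe. One minor remark: the $r$-ball bound on the first-order part needs only the triangle inequality together with $\|R_\varphi\|_{L^2(\bP_X)}\lesssim S_\star(\varphi)\le C_{\rm loc}\|f_\varphi-f^\star\|_{L^2(\bP_X)}$, so Proposition~\ref{prop:multi-index-voronoi-concentration} is not actually required at that step.
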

The proof is exactly the same as in that of Lemma~\ref{lem:localized-isomorphism-general-ie}, thus omitted.


\begin{Lemma}
\label{lem:multi-dirac-competitor}
Grant Assumption~\ref{ass:multi_index}. For any $d\in \N$, there is a constant $C_{\rm app}$ depending only on $L_\sigma$ and $M_\sigma$ such that, whenever $0\le \lambda d\le1$, $\inf_{\varphi\in\mathcal P(S_2^{d-1})}\left\{\|f_\varphi-f^\star\|_{L^2(\bP_X)}^2+\lambda\Ent_\tau^-(\varphi)\right\}\le C_{\rm app}\psi(\lambda d).$
\end{Lemma}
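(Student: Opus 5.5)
The plan is to mimic the proof of Lemma~\ref{lem:smoothed-dirac-general-ie}, replacing the single Dirac by a mixture of smoothed Diracs placed at the hidden indices. If $\lambda=0$, take $\varphi_\star=\sum_{j=1}^M a_j^\star\delta_{\vw_j^\star}$, so that $f_{\varphi_\star}=f^\star$ and the left-hand side vanishes.

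For $\lambda>0$ and $0<\phi\le1$, let $\varphi_{j,\phi}:=\tau(\cdot\mid B_S(\vw_j^\star,\phi))$ be the normalized uniform measure on the spherical cap around $\vw_j^\star$, and set
\[
\varphi_\phi:=\sum_{j=1}^M a_j^\star\varphi_{j,\phi}.
\]

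For the approximation term, use $f_{\varphi_\phi}-f^\star=\sum_j a_j^\star\big(f_{\varphi_{j,\phi}}-\sigma(\langle\vw_j^\star,\cdot\rangle)\big)$. Each summand is exactly the single-index quantity bounded in the proof of Lemma~\ref{lem:smoothed-dirac-general-ie}: the rotational-symmetry argument gives $\|\bE\boldsymbol\Delta_{j,\phi}\|_2\le\phi^2/2$, and the second-order Taylor remainder is controlled via $\bE\langle\vu,X\rangle^4=3\|\vu\|_2^4$. Hence each summand has $L^2(\bP_X)$-norm at most $(L_\sigma/2+\sqrt3M_\sigma/2)\phi^2$. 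By the triangle inequality and $\sum_j a_j^\star=1$, this gives $\|f_{\varphi_\phi}-f^\star\|_{L^2(\bP_X)}^2\le C\phi^4$, with $C$ depending only on $L_\sigma,M_\sigma$ and uniform in $M$.

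The entropy term is the main point requiring care, since the caps may overlap and the density is a mixture. I would use convexity of $\varphi\mapsto\Ent_\tau^-(\varphi)$ (equivalently, joint convexity of the KL divergence): $\Ent_\tau^-(\varphi_\phi)\le\sum_j a_j^\star\Ent_\tau^-(\varphi_{j,\phi})$. Each term equals $-\log\tau(B_S(\vw_j^\star,\phi))\le(d-1)\log(C_{\rm cap}/\phi)$ by the same spherical-cap lower bound. This avoids both the overlap issue and any $\log(1/a_j^\star)$ or $\log M$ term, which is why the constant is independent of $M$.

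Combining the two bounds,
\[
\|f_{\varphi_\phi}-f^\star\|^2_{L^2(\bP_X)}+\lambda\Ent_\tau^-(\varphi_\phi)\le C\phi^4+\lambda(d-1)\log(C_{\rm cap}/\phi).
\]
Choosing $\phi=(\lambda d)^{1/4}\le1$ gives $C\lambda d+\tfrac14\lambda d\log(C_{\rm cap}^4/(\lambda d))\lesssim\psi(\lambda d)$, exactly as in the single-index case. No step should be a genuine obstacle; the only care needed is to invoke convexity of the entropy rather than computing the mixture density directly.
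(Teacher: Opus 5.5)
Your proposal is correct and is the natural adaptation the paper intends: the paper omits this proof, saying only that it is similar to Lemma~\ref{lem:smoothed-dirac-general-ie}. Your mixture $\sum_j a_j^\star\tau(\cdot\mid B_S(\vw_j^\star,\phi))$ fills in that omitted argument: the per-cap Taylor bound, the convexity of $\Ent_\tau^-$ (equivalently, the pointwise bound $\mathrm{d}\varphi_\phi/\mathrm{d}\tau\le 1/\tau(B_S(\vw_1^\star,\phi))$), and the choice $\phi=(\lambda d)^{1/4}$ give $C_{\rm app}\psi(\lambda d)$ with a constant depending only on $L_\sigma,M_\sigma$.
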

We omit its proof since it is similar to that of Lemma~\ref{lem:smoothed-dirac-general-ie}.


\begin{Theoremrewrite}{thm:multi-index-fixed-output}
    Grant Assumption~\ref{ass:multi_index}.
    Then there exist constants $c_0\in(0,1)$, $C\ge1$, and $C_0\ge e$, depending only on $M$, $\Delta_\star^{-1}$, $B_\sigma$, $L_\sigma$, $M_\sigma$, $T_\sigma$, $B_\xi$, and $\max_{1\le k\le M}|b_k|^{-1}$, such that for any $x\ge1$, $N\ge2$, and $\lambda d\le c_0$, for $r_*$ in Theorem~\ref{thm:multi-index-fixed-output},    with probability at least $1-4e^{-x}$, for any $1\leq m\leq M$,
\begin{align*}
    &\|f_{\hat\varphi_\lambda}-f^\star\|_{L^2(\bP_X)}^2+\lambda\Ent_\tau^-(\hat\varphi_\lambda) \le r_*^2,\mbox{ and } \norm{\bE\left[(\hat{\boldsymbol{W}})^{\otimes m}\big|(X_i,Y_i)_{i=1}^N\right]-\sum_{j=1}^M a_j^\star(\boldsymbol{w}_j^\star)^{\otimes m}}_F^2\leq \frac{m!}{b_m^2}r_*^2.
\end{align*}
\end{Theoremrewrite}
Its proof is almost identical to that of Theorem~\ref{thm:fixed-output-general-ie}, and is therefore omitted.

\subsection{Proof of Theorem~\ref{thm:rip-multi-index-fixed-output}}

The proof strategy in this subsection is the same as in
Section~\ref{sec:proof_thm_multi-index-fixed-output}. The essential differences are that the empirical comparison is carried out with a different low-degree polynomial $Q_{\mathrm{rip}}$, as constructed in Lemma~\ref{lem:rip-low-degree-localization}.

\begin{Lemma}
\label{lem:rip-low-degree-localization}
Grant Assumption~\ref{ass:rip_multi_index}.  For every $\varphi\in\mathcal P(S_2^{d-1})$,
\[
S_\star(\varphi) := \int_{S_2^{d-1}} \min_{1\le j\le M} \|\boldsymbol{w}-\boldsymbol{w}_j^\star\|_2^2 \,\varphi(\mathrm{d}\boldsymbol{w}) \le C_{\eta,\sigma}\sqrt M \|f_\varphi-f^\star\|_{L^2(\mathbb P_X)}.
\]
\end{Lemma}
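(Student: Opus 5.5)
The plan is to repeat the certificate argument of Proposition~\ref{prop:multi-index-localization}, with a different certificate. The product polynomial $Q_\star$ has degree $M$ and tensor coefficients of size $\binom Mm$. I will replace it by a \emph{sum} of univariate polynomials of degree at most four, evaluated along the dual frame. Only the chaoses $m=2,3,4$ will be used, since Assumption~\ref{ass:rip_multi_index} only guarantees $b_2b_3b_4\neq0$.

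\textbf{Step 1 (dual frame).} Let $W_\star=[\vw_1^\star|\cdots|\vw_M^\star]$ and $G_\star=W_\star^\top W_\star$. Define $V_\star:=W_\star G_\star^{-1}=[\vv_1|\cdots|\vv_M]$, so that $\langle \vw_\ell^\star,\vv_j\rangle=\mathbf 1_{\ell=j}$. From $\|G_\star-I_M\|_{\rm op}\le\eta<1/2$ I get $V_\star^\top V_\star=G_\star^{-1}$, hence $\|V_\star^\top V_\star\|_{\rm op}\le (1-\eta)^{-1}$. In particular $\sum_j\langle\vw,\vv_j\rangle^2\le(1-\eta)^{-1}$ for every unit $\vw$, and $\|\vv_j-\vw_j^\star\|_2\lesssim\eta$.

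\textbf{Step 2 (certificate).} I fix a univariate polynomial $h(t)=\alpha t^2+\beta t^3+\gamma t^4$ with $h(1)=1$ and $h(0)=0$. The cubic term is there to break the $\pm$ symmetry, so that $h(-1)$ is well below $1$. I also require $h(t)\le t^2$ on $[-1,1]$, with a quantitative gap $t^2-h(t)\gtrsim (1-t)t^2$; for instance $h(t)=\tfrac12(t^3+t^4)$ or a similar choice. Set
\[Q_{\rm rip}(\vw):=1-\sum_{j=1}^M h(\langle\vw,\vv_j\rangle).\]
By biorthogonality, $Q_{\rm rip}(\vw_\ell^\star)=1-h(1)-(M-1)h(0)=0$ for every $\ell$, so $\int Q_{\rm rip}\,\mathrm d\varphi_\star=0$. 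Exactly as in the proof of Proposition~\ref{prop:multi-index-localization}, the constant term then cancels. Writing $A_m=-c_m\sum_j\vv_j^{\otimes m}$ with $(c_2,c_3,c_4)=(\alpha,\beta,\gamma)$, this gives
\[\int Q_{\rm rip}\,\mathrm d\varphi=\sum_{m=2}^4\langle A_m,\cT_m(\varphi)-\cT_m(\varphi_\star)\rangle_F .\]
Moreover $\|\sum_j\vv_j^{\otimes m}\|_F^2=\sum_{i,j}\langle\vv_i,\vv_j\rangle^m\le \sum_{i,j}\langle\vv_i,\vv_j\rangle^2=\|G_\star^{-1}\|_F^2\le M(1-\eta)^{-2}$. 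Hence $\|A_m\|_F\lesssim\sqrt M$. Combining this with \eqref{eq:tensor_loc} for $m=2,3,4$ yields $\int Q_{\rm rip}\,\mathrm d\varphi\le C_{\eta,\sigma}\sqrt M\,\|f_\varphi-f^\star\|_{L^2(\bP_X)}$.

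\textbf{Step 3 (geometric domination), the main obstacle.} It remains to show the pointwise bound $\min_j\|\vw-\vw_j^\star\|_2^2\le C_\eta\,Q_{\rm rip}(\vw)$ on $S_2^{d-1}$, with $C_\eta$ independent of $M$ and $d$. Write $t_j=\langle\vw,\vv_j\rangle$. Then
\[Q_{\rm rip}(\vw)\ge \Bigl(1-\sum_j t_j^2\Bigr)+\sum_j\bigl(t_j^2-h(t_j)\bigr).\]
The first bracket is only bounded below by $-\eta/(1-\eta)$, so the plan is to refine it. I will decompose $\vw=W_\star\vt'+\vw_\perp$ with $\vw_\perp\perp\mathrm{range}(W_\star)$, and note that then $t_j$ equals the $j$-th coordinate of $\vt'$. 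Next I express $1=\|W_\star\vt'\|_2^2+\|\vw_\perp\|_2^2=\vt'^\top G_\star\vt'+\|\vw_\perp\|_2^2$. The loss coming from $\vt'^\top(G_\star-I)\vt'$ then has to be absorbed by the gap $\sum_j(t_j^2-h(t_j))$, which is large unless a single $t_j$ is close to $1$. If that balancing fails with the first polynomial choice, I will try raising the weight of the gap, i.e.\ changing $\alpha,\beta,\gamma$ using the smallness of $\eta<1/2$, and I will split into two cases. In the case where some $t_j\ge 1-c$, I Taylor-expand around $\vw_j^\star$ as in \eqref{eq:poly_lower_bound}. In the complementary case all $|t_j|<1-c$, and I show $Q_{\rm rip}\ge c'>0$, which suffices because $\delta\le4$. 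Once Step 3 is in place, integrating it against $\varphi$ and applying Step 2 gives the lemma.
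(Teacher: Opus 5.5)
Your overall strategy is the paper's: a low-degree certificate built on the dual frame, using only chaoses $2,3,4$, with $\sqrt M$ Frobenius bounds. Step~2 is essentially right. Step~3, however, is not merely the hard part. The domination $\min_j\|\vw-\vw_j^\star\|_2^2\le C_\eta\,Q_{\rm rip}(\vw)$ is \emph{false} for any certificate of the form $Q_{\rm rip}=1-\sum_j h(\langle\vw,\vv_j\rangle)$. This holds for every choice of $(\alpha,\beta,\gamma)$ once $G_\star\neq I_M$ and $d>M$.

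First, $h'(1)>0$ is forced. Take $\vw=\cos\theta\,\vw_1^\star+\sin\theta\,\vu$ with $\vu\perp\Range(W_\star)$. Then $t_1=\cos\theta$ and $t_k=0$ for $k\neq1$, so $Q_{\rm rip}=1-h(\cos\theta)\approx h'(1)\theta^2/2$, which must dominate $\theta^2$.

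Second, $h'(1)>0$ makes the certificate negative near $\vw_1^\star$. Since $h'(0)=0$ and $\langle\vv_1,\vw_1^\star\rangle=1$, the Riemannian gradient of $Q_{\rm rip}$ at $\vw_1^\star$ is $-h'(1)(\vv_1-\vw_1^\star)$. This is a nonzero tangent vector whenever $\vw_1^\star$ is not orthogonal to all other indices. Hence $Q_{\rm rip}<0$ at points of $S_2^{d-1}$ arbitrarily close to, but distinct from, $\vw_1^\star$.

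A concrete instance: take $M=2$, $\langle\vw_1^\star,\vw_2^\star\rangle=g\neq0$, your $h=\tfrac12(t^3+t^4)$, and $\vw=\vv_1/\|\vv_1\|_2$. Then $t_1=(1-g^2)^{-1/2}>1$ and $t_2=-g(1-g^2)^{-1/2}$. This gives $Q_{\rm rip}(\vw)=-\tfrac74g^2+O(g^3)<0$, about $-0.17$ at $g=0.3$, while $\|\vw-\vw_1^\star\|_2^2\approx g^2>0$.

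The cause is what you flagged. The constant $1$ equals $\boldsymbol t^\top G_\star\boldsymbol t+r^2$, not $\sum_jt_j^2+r^2$. Near $\ve_j$, the defect $\boldsymbol t^\top(G_\star-I_M)\boldsymbol t$ is first order in the off-diagonal coordinates. Your gap $t^2-h(t)=t^2(1-t)(1-\alpha+\gamma t)$ is only first order in $1-t$ and changes sign, because $t_j$ can exceed $1$. No case split or tuning of coefficients can absorb a first-order term of the wrong sign.

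The missing idea is to build the certificate from the exact orthogonal decomposition $\vw=W_\star\boldsymbol t+(I-P_\star)\vw$, with $P_\star:=W_\star G_\star^{-1}W_\star^\top$. Each coordinate should get a double zero at both $0$ and $1$. This is the paper's choice:
$Q_{\rm rip}(\vw)=\|(I-P_\star)\vw\|_2^2+\sum_jt_j^2(1-t_j)^2$.
It is a sum of squares, vanishes exactly at the $\vw_j^\star$, and has zero Riemannian gradient there.

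On the sphere it equals $1-\langle P_\star,\vw^{\otimes2}\rangle+\sum_j(t_j^2-2t_j^3+t_j^4)$. So it still uses only chaoses $2,3,4$. Now $A_2=-P_\star+\sum_j\vv_j^{\otimes2}$, which is not a multiple of $\sum_j\vv_j^{\otimes2}$. Since $\|P_\star\|_F=\sqrt M$, your Step~2 goes through unchanged.

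The domination then reduces to the elementary rounding argument of Lemma~\ref{lem:rip-coordinate-rounding}. Set $r=\|(I-P_\star)\vw\|_2$ and $\delta=Q_{\rm rip}(\vw)$, so that $\boldsymbol t^\top G_\star\boldsymbol t+r^2=1$.
\begin{itemize}
\item The bound $|t_j||1-t_j|\le\sqrt\delta$ puts each $t_j$ near $0$ or near $1$.
\item For small $\delta$, the normalization rules out having no coordinate near $1$.
\item The condition $\eta<1/2$ rules out having two coordinates near $1$.
\item Finally, $\|\vw-\vw_j^\star\|_2^2=\|W_\star(\boldsymbol t-\ve_j)\|_2^2+r^2\le(1+\eta)\|\boldsymbol t-\ve_j\|_2^2+r^2\lesssim\delta$.
\end{itemize}

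A minor point in Step~2: the inequality $\langle\vv_i,\vv_j\rangle^m\le\langle\vv_i,\vv_j\rangle^2$ needs $|\langle\vv_i,\vv_j\rangle|\le1$. This fails on the diagonal, since $\|\vv_i\|_2^2=(G_\star^{-1})_{ii}$ may reach $(1-\eta)^{-1}$. Use absolute values and a factor $(1-\eta)^{-(m-2)}$.
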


\beginproof Define $\boldsymbol{v}_j^\star:=W_\star G_\star^{-1}\boldsymbol{e}_j$ and $P_\star:=W_\star G_\star^{-1}W_\star^\top$. Then $P_\star$ is the Euclidean projection onto $\Range(W_\star)$, and $\langle\boldsymbol{v}_j^\star,\boldsymbol{w}_k^\star\rangle=\delta_{jk}$. For $\boldsymbol{w}\in S_2^{d-1}$, we have $P_\star\boldsymbol{w}=\sum_{j=1}^M\langle\boldsymbol{v}_j^\star,\boldsymbol{w}\rangle\boldsymbol{w}_j^\star$, $\boldsymbol{w}=P_\star\boldsymbol{w}+(I-P_\star)\boldsymbol{w}$, $(I-P_\star)\boldsymbol{w}\perp\Range(W_\star)$, and since $\|\boldsymbol{w}\|_2=1$, we have $\|P_\star\boldsymbol{w}\|_2^2+\|(I-P_\star)\boldsymbol{w}\|_2^2=1$.

Define the polynomial $Q_{\rm rip}(\boldsymbol{w}):=\|(I-P_\star)\boldsymbol{w}\|_2^2+\sum_{j=1}^M\langle\boldsymbol{v}_j^\star,\boldsymbol{w}\rangle^2(1-\langle\boldsymbol{v}_j^\star,\boldsymbol{w}\rangle)^2$. Applying Lemma~\ref{lem:rip-coordinate-rounding} to the coordinates $(\langle\boldsymbol{v}_j^\star,\boldsymbol{w}\rangle)_{j=1}^M$ and to $(I-P_\star)\boldsymbol{w}$ gives $\min_{1\le j\le M}\|\boldsymbol{w}-\boldsymbol{w}_j^\star\|_2^2\le C_\eta Q_{\rm rip}(\boldsymbol{w})$. Thus $S_\star(\varphi)\le C_\eta\int Q_{\rm rip}(\boldsymbol{w})\,\varphi(\mathrm{d}\boldsymbol{w})$.

We now expand $Q_{\rm rip}$. Since $\|(I-P_\star)\boldsymbol{w}\|_2^2=1-\langle P_\star,\boldsymbol{w}^{\otimes2}\rangle_F$ and $\langle\boldsymbol{v}_j^\star,\boldsymbol{w}\rangle^2(1-\langle\boldsymbol{v}_j^\star,\boldsymbol{w}\rangle)^2=\langle\boldsymbol{v}_j^\star,\boldsymbol{w}\rangle^2-2\langle\boldsymbol{v}_j^\star,\boldsymbol{w}\rangle^3+\langle\boldsymbol{v}_j^\star,\boldsymbol{w}\rangle^4$, we have $Q_{\rm rip}(\boldsymbol{w})=1+\sum_{m=2}^4\langle A_m,\boldsymbol{w}^{\otimes m}\rangle_F$, where $A_2=-P_\star+\sum_{j=1}^M(\boldsymbol{v}_j^\star)^{\otimes2}$, $A_3=-2\sum_{j=1}^M(\boldsymbol{v}_j^\star)^{\otimes3}$, and $A_4=\sum_{j=1}^M(\boldsymbol{v}_j^\star)^{\otimes4}$.

We claim $\|A_m\|_F\le C_\eta\sqrt M$ for $m=2,3,4$. Indeed, $\|P_\star\|_F=\sqrt M$. Also, $\langle\boldsymbol{v}_i^\star,\boldsymbol{v}_j^\star\rangle=\boldsymbol{e}_i^\top G_\star^{-1}\boldsymbol{e}_j$ and $\|G_\star^{-1}\|_{\rm op}\le(1-\eta)^{-1}$. For $m=2,3,4$, $\Big\|\sum_{j=1}^M(\boldsymbol{v}_j^\star)^{\otimes m} \Big\|_F^2 = \sum_{i,j=1}^M \langle \boldsymbol{v}_i^\star,\boldsymbol{v}_j^\star\rangle^m \le \sum_{i,j=1}^M |\langle \boldsymbol{v}_i^\star,\boldsymbol{v}_j^\star\rangle|^m \le C_{\eta,m} \sum_{i,j=1}^M |\langle \boldsymbol{v}_i^\star,\boldsymbol{v}_j^\star\rangle|^2 = C_{\eta,m} \|G_\star^{-1}\|_F^2 \le C_{\eta,m}M.$ Thus $\|A_m\|_F\le C_\eta\sqrt M$.

Since $Q_{\rm rip}(\boldsymbol{w}_j^\star)=0$ for every $j$, we have $\int Q_{\rm rip}\,\mathrm{d}\varphi_\star=0$, where $\varphi_\star = \sum_{j=1}^M a_j^\star\delta_{\vw_j^\star}$. Also $\varphi$ and $\varphi_\star$ both have total mass one, so the constant term $1$ cancels: $\int Q_{\rm rip}\,\mathrm{d}\varphi=\int Q_{\rm rip}\,\mathrm{d}(\varphi-\varphi_\star)=\sum_{m=2}^4\left\langle A_m,\mathcal T_m(\varphi)-\mathcal T_m(\varphi_\star)\right\rangle_F.$ Therefore, $\int Q_{\rm rip}\,\mathrm{d}\varphi \le \sum_{m=2}^4 \|A_m\|_F \left\| \mathcal T_m(\varphi)-\mathcal T_m(\varphi_\star) \right\|_F \le C_{\eta,\sigma}\sqrt M \|f_\varphi-f^\star\|_{L^2(\mathbb P_X)}.$
Combining this with $S_\star(\varphi)\le C_\eta\int Q_{\rm rip}\,\mathrm{d}\varphi$ proves the claim.
\endproof

The proof of the remaining part is almost identical to the previous argument. We therefore state only the conclusion and omit the proof.
\begin{Lemma}
\label{lem:bernstein-net-chaining-psi1}
Let $\mathcal G$ be a class of measurable functions on a probability space,
and let $d_{\psi_1}(g,h):=\|g-h\|_{\psi_1}.$
Assume that $0\in\mathcal G$, that $\sup_{g\in\mathcal G}\|g\|_{\psi_1}\le K,$
and that for some $A\ge e$ and $v\ge1$, $N(\mathcal G,d_{\psi_1},\varepsilon)\le \left(\frac{AK}{\varepsilon}\right)^v$ for $0<\varepsilon\le K.$ Let $X_1,\ldots,X_N$ be i.i.d. and let $\varepsilon_1,\ldots,\varepsilon_N$ be independent Rademacher variables. Then
\[
\mathbb E\sup_{g\in\mathcal G}\left|\frac1N\sum_{i=1}^N\varepsilon_i g(X_i)\right|\le C K\left(\sqrt{\frac{v\log(AN)}{N}}+\frac{v\log(AN)}{N}\right).
\]
\end{Lemma}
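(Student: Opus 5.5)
The plan is to bound the Rademacher process by a mixed-tail (Bernstein-type) chaining argument. The first step is to record the increment condition. For $g,h\in\mathcal G$, the variables $\varepsilon_i(g-h)(X_i)$ are i.i.d., centered and symmetric, with $\|\varepsilon_i(g-h)(X_i)\|_{\psi_1}=\|g-h\|_{\psi_1}$. Bernstein's inequality for centered sub-exponential variables (the same one used in Lemma~\ref{lem:second-order-multiplier-general-ie}) then gives, for every $s\ge1$,
\[
\mathbb P\Big(\Big|\tfrac1N\textstyle\sum_{i=1}^N\varepsilon_i(g-h)(X_i)\Big|>C\|g-h\|_{\psi_1}\big(\sqrt{s/N}+s/N\big)\Big)\le 2e^{-s}.
\]
Thus the process $Z_g:=N^{-1}\sum_i\varepsilon_ig(X_i)$ has mixed sub-Gaussian/sub-exponential increments with respect to the two metrics $d_2:=d_{\psi_1}/\sqrt N$ and $d_1:=d_{\psi_1}/N$. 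Since $0\in\mathcal G$ and $Z_0=0$, it suffices to bound $\mathbb E\sup_{g}|Z_g-Z_0|$. As elsewhere in the paper, suprema are understood over a countable (separable) version of the process.

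Next I apply the generic chaining bound for processes with mixed tails (Talagrand's $\gamma_1+\gamma_2$ theorem): $\mathbb E\sup_g|Z_g-Z_0|\le C\big(\gamma_2(\mathcal G,d_2)+\gamma_1(\mathcal G,d_1)\big)$. Each functional is then dominated by the corresponding Dudley entropy integral over $0<\varepsilon\le K$ (the $d_{\psi_1}$-diameter of $\mathcal G$ is at most $2K$, which only changes constants):
\[
\gamma_2(\mathcal G,d_2)\lesssim \frac1{\sqrt N}\int_0^{K}\sqrt{\log N(\mathcal G,d_{\psi_1},\varepsilon)}\,\mathrm d\varepsilon,\qquad
\gamma_1(\mathcal G,d_1)\lesssim \frac1N\int_0^{K}\log N(\mathcal G,d_{\psi_1},\varepsilon)\,\mathrm d\varepsilon .
\]

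Then I insert the covering hypothesis $\log N\le v\log(AK/\varepsilon)$ and substitute $\varepsilon=Ku$. This gives $\int_0^1\sqrt{v\log(A/u)}\,\mathrm du\le C\sqrt{v\log A}$ and $\int_0^1 v\log(A/u)\,\mathrm du=v(\log A+1)\le Cv\log A$, where $A\ge e$ is used to absorb constants. The result is $CK(\sqrt{v\log A/N}+v\log A/N)$, which is bounded by the claimed $CK(\sqrt{v\log(AN)/N}+v\log(AN)/N)$.

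I expect the main obstacle to be justifying the mixed-tail chaining step cleanly, in particular the application to the $\gamma_1$ part. If I prefer to avoid citing Talagrand's theorem, the fallback is an explicit truncated Dudley chain, in the same style as the proof of Lemma~\ref{lem:second-order-multiplier-general-ie}. I would take nets $\mathcal N_k$ at scales $K2^{-k}$ for $k\le k_0$ with $2^{-k_0}\asymp 1/N$, and bound each link of the chain by Bernstein's inequality combined with a union bound at level $s_k\asymp v\log(A2^{k})+k$. This makes each level contribute $K2^{-k}\big(\sqrt{s_k/N}+s_k/N\big)$, and summing over $k$ yields the same bound with $\log(AN)$ in place of $\log A$. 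The final residual at scale $K/N$ is then controlled by a crude estimate: the $\psi_1$-distance to the finest net is at most $K/N$, and taking expectations of $\sup|Z_g-Z_{\pi g}|$ via $\mathbb E|(g-\pi g)(X)|\lesssim\|g-\pi g\|_{\psi_1}$ is not uniform in $g$. For that reason, the last step is instead handled by a tail-integrated union bound over the $\log N$ further dyadic levels, which is exactly where the extra $\log N$ inside the logarithm comes from.
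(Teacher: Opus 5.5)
Your main argument is correct, but it is not the route the paper has in mind. The paper gives no proof of this lemma; it only says the argument is ``almost identical to the previous argument''. The natural template there is Lemma~\ref{lem:second-order-multiplier-general-ie}, which has three steps:
\begin{itemize}
\item Bernstein's inequality for each fixed index;
\item a union bound over a \emph{single} net at a fine resolution (here of order $K/N$ in $d_{\psi_1}$, which is what produces the $\log(AN)$);
\item a separate deterministic control of the discretization error.
\end{itemize}
In Lemma~\ref{lem:second-order-multiplier-general-ie} the last step used the explicit Lipschitz dependence of $A_N(\boldsymbol u)$ on $\boldsymbol u$ together with $\mathbb E\|X\|_2^3\le Cd^{3/2}$. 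Under the bare $\psi_1$-covering hypothesis of the present lemma no such structure is available. With $\pi g$ the nearest net point, $\sup_g\|g-\pi g\|_{\psi_1}\le K/N$ alone does not control $\mathbb E\sup_g|N^{-1}\sum_i\varepsilon_i(g-\pi g)(X_i)|$.

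Your reduction avoids this. Bernstein gives mixed tails with $d_2=d_{\psi_1}/\sqrt N$ and $d_1=d_{\psi_1}/N$, and you then apply Talagrand's $\gamma_1+\gamma_2$ bound and the Dudley entropy integrals. This treats all scales at once and uses only the stated hypotheses. Since $0\in\mathcal G$ and $\sup_g\|g\|_{\psi_1}\le K$, one has $N(\mathcal G,d_{\psi_1},\varepsilon)=1$ for $\varepsilon\ge K$, so truncating the integrals at $K$ is exact. The argument in fact yields the sharper bound $CK(\sqrt{v\log A/N}+v\log A/N)$. What it costs is reliance on the generic-chaining theorem instead of an elementary net argument.

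Your fallback paragraph should be dropped or rewritten. Adding ``further dyadic levels'' only moves the residual to a finer scale without removing it, and no extra $\log N$ is needed. The elementary version goes as follows:
\begin{itemize}
\item Prove the bound for each finite $T\subset\mathcal G$, with nets chosen inside $T$, so that $N(T,d_{\psi_1},\varepsilon)\le N(\mathcal G,d_{\psi_1},\varepsilon/2)$.
\item Chain with Bernstein plus union bounds at scales $K2^{-k}$ until the chain reaches $g$, which is possible because $T$ is finite.
\item Sum the geometric series $\sum_k K2^{-k}\big(\sqrt{v\log(A2^k)/N}+v\log(A2^k)/N\big)\lesssim K\big(\sqrt{v\log A/N}+v\log A/N\big)$.
\item Pass to the countable separable version by monotone convergence.
\end{itemize}
This again gives $\log A$ rather than $\log(AN)$.
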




\begin{Lemma}[Projected second-order multiplier]
\label{lem:projected-second-order-multiplier}
Grant Assumption~\ref{ass:rip_multi_index}. Let
$$\mathcal V_\star := \operatorname{span} \left\{ \sigma(\langle \boldsymbol{w}_j^\star,\cdot\rangle), \ \boldsymbol{x}\mapsto \sigma'(\langle \boldsymbol{w}_j^\star,\boldsymbol{x}\rangle)x_\ell : 1\le j\le M,\ 1\le\ell\le d \right\}.$$
Let $\Pi_\star$ be the $L^2(\mathbb P_X)$-orthogonal projection onto
$\mathcal V_\star$. Define
$$\mathcal Q_1 := \left\{ \boldsymbol{x}\mapsto \sigma''(\langle \boldsymbol{u},\boldsymbol{x}\rangle)\langle \boldsymbol{e},\boldsymbol{x}\rangle^2 : \boldsymbol{u}\in B_2^d,\ \boldsymbol{e}\in S_2^{d-1} \right\}.$$
Then
\[
\mathbb E\sup_{q\in\mathcal Q_1} \left| \frac1N\sum_{i=1}^N \varepsilon_i(I-\Pi_\star)q(X_i) \right| \le C_{\eta,\sigma} \left( \sqrt{\frac{d\log(C_0dN)}{N}} + \frac{d\log(C_0dN)}{N} \right).
\]
\end{Lemma}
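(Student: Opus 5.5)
The plan is to reduce the projected Rademacher supremum to the generic chaining bound of Lemma~\ref{lem:bernstein-net-chaining-psi1}, applied to the class $\mathcal G:=\{(I-\Pi_\star)q:q\in\mathcal Q_1\}\cup\{0\}$. The point of projecting is that the dimension of $\mathcal V_\star$, which is $M(d+1)$, must not enter the entropy. The class $\mathcal Q_1$ is indexed by $(\boldsymbol u,\boldsymbol e)\in B_2^d\times S_2^{d-1}$, a $2d$-dimensional parameter set. We therefore aim for $v\lesssim d$ and $K\le C_{\eta,\sigma}$, which yields exactly the claimed rate $\sqrt{d\log(C_0dN)/N}+d\log(C_0dN)/N$.

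The first step is a uniform $\psi_1$ envelope. For $q\in\mathcal Q_1$ we have $|q(\boldsymbol x)|\le M_\sigma\langle\boldsymbol e,\boldsymbol x\rangle^2$, so $\|q\|_{\psi_1}\le CM_\sigma$. It remains to bound $\Pi_\star q$. Write $\Pi_\star q=\sum_k\langle q,\psi_k\rangle\psi_k$ in an orthonormal basis of $\mathcal V_\star$. The elements of $\mathcal V_\star$ have the form $\sum_j c_j\sigma(\langle\boldsymbol w_j^\star,\boldsymbol x\rangle)+\sum_j\sigma'(\langle\boldsymbol w_j^\star,\boldsymbol x\rangle)\langle\boldsymbol u_j,\boldsymbol x\rangle$. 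Under the frame condition $\|G_\star-I_M\|_{\rm op}\le\eta$ and $b_2b_3b_4\neq0$, the $L^2(\bP_X)$ Gram matrix of this generating system should be well conditioned uniformly in $M$. Well conditioning gives $\|(c,\boldsymbol u_1,\dots,\boldsymbol u_M)\|_2\le C_\eta\|h\|_{L^2}$ for every $h\in\mathcal V_\star$, and hence
\[
\|\Pi_\star q\|_{\psi_1}\le C_{\eta,\sigma}\|\Pi_\star q\|_{L^2}\le C_{\eta,\sigma}\|q\|_{L^2}.
\]
For the first inequality, I would bound $\|h\|_{\psi_1}$ by $\|h\|_{\psi_2}$ and note that $h$ is a Lipschitz function of the Gaussian vector $X$ whose Lipschitz constant is controlled by the coefficient norm, via the frame bound on $W_\star$. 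I would then deduce $\|q\|_{\psi_1}\le C_{\eta,\sigma}$ for all $q\in\mathcal G$, taking $K=C_{\eta,\sigma}$.

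The second step is to bound the covering numbers of $\mathcal G$ in $d_{\psi_1}$. The map $(\boldsymbol u,\boldsymbol e)\mapsto q_{\boldsymbol u,\boldsymbol e}$ is Lipschitz into $\psi_1$. The increment $|\sigma''(\langle\boldsymbol u,\boldsymbol x\rangle)-\sigma''(\langle\boldsymbol u',\boldsymbol x\rangle)|\langle\boldsymbol e,\boldsymbol x\rangle^2$ is at most $T_\sigma|\langle\boldsymbol u-\boldsymbol u',\boldsymbol x\rangle|\langle\boldsymbol e,\boldsymbol x\rangle^2$. Its $\psi_1$ norm is infinite because of the cubic growth, so I would either work with a truncated $\psi_{2/3}$ norm or replace the bound by $T_\sigma\|\boldsymbol u-\boldsymbol u'\|_2\sqrt d\,\cdot$(sub-exponential) on the event $\|X\|_2\lesssim\sqrt{d\log N}$, absorbing the complement as in \eqref{eq:19.m.9}. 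Alternatively, one can use the $L^\infty$ bound $2M_\sigma\langle\boldsymbol e,\boldsymbol x\rangle^2$ together with interpolation. The $\boldsymbol e$-increment is straightforward: $\langle\boldsymbol e,\boldsymbol x\rangle^2-\langle\boldsymbol e',\boldsymbol x\rangle^2=\langle\boldsymbol e-\boldsymbol e',\boldsymbol x\rangle\langle\boldsymbol e+\boldsymbol e',\boldsymbol x\rangle$ has $\psi_1$ norm $\lesssim\|\boldsymbol e-\boldsymbol e'\|_2$. The projection $I-\Pi_\star$ is $\psi_1$-Lipschitz with constant $C_{\eta,\sigma}$ by the first step. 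Together these give $N(\mathcal G,d_{\psi_1},\varepsilon)\le(C\sqrt d K/\varepsilon)^{2d}$. Since $\log(C\sqrt d N)\lesssim\log(C_0dN)$, we can take $A=C\sqrt d$ and $v=2d$.

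The main obstacle is the first step: establishing uniformly in $M$ that the Gram operator of $\mathcal V_\star$ is well conditioned. I would attack it through the Hermite expansion. The inner products $\mathbb E[\sigma'(\langle\boldsymbol w_i^\star,X\rangle)\sigma'(\langle\boldsymbol w_j^\star,X\rangle)X_\ell X_{\ell'}]$ can be expanded via Lemma~\ref{lemma:conditional_expectation_Hermite} as power series in $\langle\boldsymbol w_i^\star,\boldsymbol w_j^\star\rangle$. The diagonal blocks are then bounded below using the nondegenerate coefficients $b_2,b_3,b_4$, and the off-diagonal blocks are controlled in operator norm by $\eta$ via Schur-product bounds for $G_\star^{\circ k}$. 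If this turns out to be too delicate, a fallback is to avoid $\Pi_\star$ in $\psi_1$ altogether: bound the supremum of $\frac1N\sum_i\varepsilon_i\Pi_\star q(X_i)$ by Cauchy--Schwarz over an orthonormal basis, as in \eqref{eq:20.m.14}. That route costs only $\sqrt{Md/N}$, which is at worst the leading term of $r_{\rm frame}$. The remaining term, $\frac1N\sum_i\varepsilon_i q(X_i)$, is handled by Lemma~\ref{lem:second-order-multiplier-general-ie}.
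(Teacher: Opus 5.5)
The paper omits the proof of this lemma, so there is nothing to compare line by line. Your architecture (Lemma~\ref{lem:bernstein-net-chaining-psi1} applied to $(I-\Pi_\star)\mathcal Q_1$ with $v\asymp d$ and an $M$-free envelope) is the natural one, and you correctly locate the difficulty. That difficulty is, however, the whole content of the lemma beyond Lemma~\ref{lem:second-order-multiplier-general-ie}: an $M$-uniform bound $\|h\|_{\psi_1}\le C_{\eta,\sigma}\|h\|_{L^2(\bP_X)}$ on $\mathcal V_\star$. Your argument for it does not work. Write $Z_j=\langle\vw_j^\star,X\rangle$.

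\emph{The Gram matrix is not well conditioned uniformly in $M$.} The zeroth Hermite level gives $\bE[\sigma(Z_i)\sigma(Z_j)]=b_0^2+\cdots$ and $\bE[\sigma'(Z_i)\sigma'(Z_j)]=b_1^2+\cdots$ regardless of $\langle\vw_i^\star,\vw_j^\star\rangle$. So the off-diagonal blocks contain multiples of the all-ones matrix $J_M$, which has operator norm $M$. Schur-product bounds only handle $G_\star^{\circ k}$ for $k\ge1$. For instance, $h=M^{-1/2}\sum_j\sigma(Z_j)$ has unit coefficient norm but $\|h\|_{L^2}\ge\sqrt M|b_0|$. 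Hence neither $\|h\|_{\psi_2}$ nor even $\|h\|_{L^2}$ is controlled by the coefficient norm.

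\emph{Elements of $\mathcal V_\star$ are not Lipschitz, nor uniformly sub-Gaussian.} The gradient of $\sigma'(\langle\vw_j^\star,\vx\rangle)\langle\vu_j,\vx\rangle$ contains $\sigma''(\langle\vw_j^\star,\vx\rangle)\langle\vu_j,\vx\rangle\vw_j^\star$, which grows linearly. For sub-Gaussianity, take orthonormal $\vw_j^\star$, $b_1=0$ (allowed by Assumption~\ref{ass:rip_multi_index}) and $\vu_j=M^{-1}\sum_{k\ne j}\vw_k^\star$. Then $h=S_1S_2-M^{-1}\sum_j\sigma'(Z_j)Z_j$, where $S_1=M^{-1/2}\sum_j\sigma'(Z_j)$ and $S_2=M^{-1/2}\sum_kZ_k$ are asymptotically Gaussian. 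So $\|h\|_{L^2}=O(1)$ while $\|h\|_{\psi_2}\to\infty$. The $\sigma'$-terms are second-chaos-like bilinear forms $\sum_{j,k}\alpha_{jk}\sigma'(Z_j)Z_k$, sub-exponential only in the Hanson--Wright sense. The chain ``$\psi_1\le\psi_2\lesssim$ Lipschitz constant $\lesssim$ coefficients $\lesssim L^2$'' is therefore false in its middle links.

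What is needed is a direct $\psi_1$-versus-$L^2$ comparison that never passes through coefficient norms. One possible route:
\begin{itemize}
\item write $h=A(W_\star^\top X)+\langle B(W_\star^\top X),(I-P_\star)X\rangle$, with $P_\star$ the projection onto $\operatorname{Range}(W_\star)$;
\item treat the mean separately, since it is at most $\|h\|_{L^2}$;
\item bound the centered parts in $\psi_1$ by Bernstein/Hanson--Wright;
\item prove an $M$-uniform lower bound on the \emph{covariance}, using $G_\star^{\circ k}\succeq(1-\eta)I_M$ for $k\ge1$ together with nondegeneracy of the Hermite coefficients.
\end{itemize}
For example, in the orthonormal case, $\operatorname{Var}\sigma'(G)-b_2^2\ge b_3^2/2$ is what prevents the off-diagonal bilinear part from cancelling against its transpose. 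None of this is in your proposal.

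Your fallback does not prove the statement either. Cauchy--Schwarz over a basis of $\mathcal V_\star$ as in \eqref{eq:20.m.14}, plus Lemma~\ref{lem:second-order-multiplier-general-ie}, gives the bound with an extra $\sqrt{M(d+1)/N}$. That is the trivial version of the lemma; the point of the statement is that $C_{\eta,\sigma}$ does not depend on $M$.

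The loss is also not harmless downstream. In the natural proof of Proposition~\ref{prop:rip-multi-index-rademacher}, one writes $f_\varphi-f^\star=\Pi_\star(f_\varphi-f^\star)+(I-\Pi_\star)R_\varphi$ with $R_\varphi\in C\sqrt M r\,\overline{\operatorname{conv}}(\mathcal Q_1)$, the $\sqrt M$ coming from Lemma~\ref{lem:rip-low-degree-localization}. A $\sqrt{Md/N}$ bound for the projected class then becomes $Mr\sqrt{d/N}$. The fixed point degrades from $Md/N$ to $M^2d/N$, and avoiding exactly this $\sqrt M$ is why the projection is there.
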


\begin{Proposition}[Localized Rademacher bound under the frame assumption]
\label{prop:rip-multi-index-rademacher}
Grant Assumption~\ref{ass:rip_multi_index}. 
Then, for every $r>0$,
\[
\mathbb E\sup_{\|f_\varphi-f^\star\|_{L^2(\mathbb P_X)}\le r} \left| \frac1N\sum_{i=1}^N \varepsilon_i(f_\varphi-f^\star)(X_i) \right| \le C_{\eta,\sigma}r \left( \sqrt{\frac{M d\log(C_0dN)}{N}} + \sqrt M\,\frac{d\log(C_0dN)}{N} \right).
\]
The same bound, with $C_{\eta,\sigma}$ replaced by
$C_{\eta,\sigma,B_\xi}$, holds for the localized squared-loss class
$\mathcal L(r) := \left\{ (\boldsymbol{x},y)\mapsto (y-f_\varphi(\boldsymbol{x}))^2-(y-f^\star(\boldsymbol{x}))^2 : \|f_\varphi-f^\star\|_{L^2(\mathbb P_X)}\le r \right\}.$
\end{Proposition}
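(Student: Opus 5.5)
The bound will come from the same two-piece decomposition used in Propositions~\ref{prop:parametric-rademacher-general-ie} and~\ref{prop:multi-index-rademacher}. The only change is that the frame geometry of Lemma~\ref{lem:rip-low-degree-localization} replaces the separation-based localization, and Lemma~\ref{lem:projected-second-order-multiplier} is used to control the remainder.

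\textbf{Decomposition.} Fix $\varphi$ with $\|f_\varphi-f^\star\|_{L^2(\mathbb P_X)}\le r$ and let $V_1,\dots,V_M$ be the Voronoi cells of the $\vw_j^\star$. A second-order Taylor expansion on each cell gives
\[
f_\varphi-f^\star=H_\varphi+R_\varphi .
\]
The first-order part is
\[
H_\varphi=\sum_j\Big(\int_{V_j}\mathrm d\varphi-a_j^\star\Big)\sigma(\langle\vw_j^\star,\cdot\rangle)+\sum_j\sigma'(\langle\vw_j^\star,\cdot\rangle)\Big\langle\int_{V_j}(\vw-\vw_j^\star)\,\mathrm d\varphi,\cdot\Big\rangle ,
\]
which lies in $\mathcal V_\star$. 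The remainder satisfies $R_\varphi\in\frac{S_\star(\varphi)}{2}\overline{\conv}(\mathcal Q_1)$.

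By Lemma~\ref{lem:rip-low-degree-localization}, $S_\star(\varphi)\le C_{\eta,\sigma}\sqrt M\,r$. Hence $\|R_\varphi\|_{L^2}\le C\sqrt M r$, which is too crude. To avoid paying $\sqrt M$ against the dimension of $\mathcal V_\star$, I will instead split with the projection $\Pi_\star$:
\[
f_\varphi-f^\star=\Pi_\star(f_\varphi-f^\star)+(I-\Pi_\star)R_\varphi .
\]
This uses $(I-\Pi_\star)H_\varphi=0$.

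\textbf{Projected term.} Since $\|\Pi_\star(f_\varphi-f^\star)\|_{L^2}\le r$ and $\dim\mathcal V_\star\le M(d+1)$, the orthonormal-basis Cauchy--Schwarz argument of \eqref{eq:20.m.14} bounds its Rademacher average by
\[
r\sqrt{M(d+1)/N}\le Cr\sqrt{Md\log(C_0dN)/N}.
\]
This is the leading term, and the projection removes any dependence on constants of $M$.

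\textbf{Remainder term.} Here $(I-\Pi_\star)R_\varphi\in\frac{S_\star(\varphi)}{2}(I-\Pi_\star)\overline{\conv}(\mathcal Q_1)$ with $S_\star(\varphi)\le C\sqrt M r$. The supremum of a linear functional over a closed convex hull equals the supremum over $\mathcal Q_1$. Lemma~\ref{lem:projected-second-order-multiplier} therefore gives the bound
\[
C\sqrt M\, r\Big(\sqrt{d\log(C_0dN)/N}+d\log(C_0dN)/N\Big).
\]
Its first piece is absorbed into $r\sqrt{Md\log/N}$, and its second piece is exactly the $\sqrt M\,d\log/N$ term of the statement. One point needs care: $\Pi_\star$ applied to a closed convex hull must commute with the $L^2$ limits. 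This holds because $\Pi_\star$ is a bounded linear operator and every element of $\mathcal Q_1$ has uniformly bounded $L^2$ norm ($\sqrt3M_\sigma$).

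\textbf{Loss class and main obstacle.} For $\mathcal L(r)$, I will write each loss as $g^2-2\xi g$ with $|g|\le2B_\sigma$ and apply the contraction principle exactly as at the end of Proposition~\ref{prop:parametric-rademacher-general-ie}; only the constant changes, to $C_{\eta,\sigma,B_\xi}$. The main obstacle is the remainder step. It requires that the second-order correction enters only through $(I-\Pi_\star)\mathcal Q_1$, so that no factor $M$ multiplies the complexity term $\sqrt{d/N}$. If this fails, a rougher bound $C\sqrt M r\sqrt{D_{d,N}/N}$ via Lemma~\ref{lem:second-order-multiplier-general-ie} still yields the same form of the rate, since $\sqrt M\sqrt{d/N}=\sqrt{Md/N}$. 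So the real content is the $\sqrt M$ scaling of $S_\star$, which Lemma~\ref{lem:rip-low-degree-localization} already provides.
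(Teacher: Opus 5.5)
Your main argument is correct and is the route the paper intends. The paper omits this proof, but Lemma~\ref{lem:rip-low-degree-localization} and Lemma~\ref{lem:projected-second-order-multiplier} are set up for exactly your split $f_\varphi-f^\star=\Pi_\star(f_\varphi-f^\star)+(I-\Pi_\star)R_\varphi$. That split produces precisely the two terms $r\sqrt{Md\log(C_0dN)/N}$ and $\sqrt M\,r\,d\log(C_0dN)/N$ of the statement.

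Your closing fallback is wrong, however. If you rely only on Lemma~\ref{lem:second-order-multiplier-general-ie}, you must pay separately for $\Pi_\star R_\varphi$, or for $H_\varphi$ if you do not project. Its $L^2$ norm can be of order $\sqrt M\,r$. The ball argument of \eqref{eq:20.m.14} over the $M(d+1)$-dimensional space $\mathcal V_\star$ then costs $CMr\sqrt{d/N}$, which loses a factor of order $\sqrt M$ against the target rate. So the real content is not only the $\sqrt M$ scaling of $S_\star$: the $M$-free constant in Lemma~\ref{lem:projected-second-order-multiplier} is an essential ingredient.

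A minor rigor point, which the paper shares: the functional $h\mapsto N^{-1}\sum_i\varepsilon_i h(X_i)$ is not $L^2$-continuous. So the care you need is not that $\Pi_\star$ commutes with $L^2$ limits. Instead, use that $R_\varphi$ is an honest pointwise mixture of elements of $\mathcal Q_1$. Then note that $\Pi_\star$ commutes with this mixture pointwise, because evaluation at a point is continuous on the finite-dimensional space $\mathcal V_\star$ of continuous functions.
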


\begin{Lemma}
\label{lem:rip-multi-index-isomorphism}
Grant Assumption~\ref{ass:rip_multi_index}. 
Then, for every $x\ge1$ and $N\ge2$, with probability at least $1-4e^{-x}$, simultaneously for all $\varphi\in\mathcal P(S_2^{d-1})$,
\[
\left| (P-P_N)\cL_\varphi \right| \le \frac14 \|f_\varphi-f^\star\|_{L^2(\mathbb P_X)}^2 + C_{\eta,\sigma,B_\xi} \left[ \frac{M d\log(C_0dN)+x}{N} + \frac{M (d\log(C_0dN))^2}{N^2} \right].
\]
\end{Lemma}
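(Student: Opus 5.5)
The plan is to repeat the peeling argument of Lemma~\ref{lem:localized-isomorphism-general-ie}, feeding it the localized Rademacher bound of Proposition~\ref{prop:rip-multi-index-rademacher} in place of Proposition~\ref{prop:parametric-rademacher-general-ie}. Write $D:=d\log(C_0dN)$. As before, $\cL_\varphi=g^2-2\xi g$ with $g=f_\varphi-f^\star$, so $P\cL_\varphi=\|g\|_{L^2(\bP_X)}^2$, $|\cL_\varphi-P\cL_\varphi|\le U$ and $P\cL_\varphi^2\le V^2r^2$ on $\mathcal L(r)$, with $U,V$ depending only on $B_\sigma,B_\xi$.

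By symmetrization and the loss-class part of Proposition~\ref{prop:rip-multi-index-rademacher},
\[
\mathbb E Z_r\le C_{\mathcal L}\,r\Bigl(\sqrt{MD/N}+\sqrt M\,D/N\Bigr),\qquad Z_r:=\sup_{\cL_\varphi\in\mathcal L(r)}|(P-P_N)\cL_\varphi|,
\]
where $C_{\mathcal L}$ is independent of $M$. Bousquet's inequality then gives, with probability at least $1-e^{-u}$,
\[
Z_r\le C_1\Bigl[r\sqrt{MD/N}+r\sqrt M D/N+r\sqrt{u/N}+u/N\Bigr].
\]
The cross term $\sqrt{2U\mathbb E Z_r\,u/N}$ is absorbed through $2\sqrt{ab}\le a+b$ into $\mathbb E Z_r+Uu/N$.

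For the peeling, set
\[
r_0^2=C_2\Bigl[\tfrac{MD+x}{N}+\tfrac{M D^2}{N^2}\Bigr],\qquad r_j=2^jr_0,\qquad u_j=x+j+1.
\]
A union bound yields an event of probability at least $1-4e^{-x}$ on which each term above is at most $r_j^2/256$ once $C_2=(256C_1)^2$. Each term is handled separately.
\begin{enumerate}
\item $r_j\sqrt{MD/N}\le r_j\cdot r_j/\sqrt{C_2}$, since $r_j^2\ge C_2MD/N$.
\item $r_j\sqrt{u_j/N}$ and $u_j/N$ are controlled as before, using $u_j\le 4^j(MD+x)$.
\item The new term $r_j\sqrt M D/N$ is bounded by $r_j\cdot r_j/\sqrt{C_2}$, since $r_j^2\ge C_2 M D^2/N^2$.
\end{enumerate}
The third item is the only genuine difference from before, and it is exactly why the extra $M D^2/N^2$ appears in the statement.

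Then, exactly as in Lemma~\ref{lem:localized-isomorphism-general-ie}, we split $\varphi$ according to the shell $r_{j-1}<\|g\|\le r_j$. In the innermost shell the deviation is at most $r_0^2/64$, which is at most the additive term. In outer shells it is at most $r_j^2/64\le\|g\|^2/16$. If $(MD+x)/N\ge1$, the trivial bound $U$ suffices. The main obstacle is keeping every constant free of $M$; this rests entirely on Proposition~\ref{prop:rip-multi-index-rademacher} and on the fact that $U$ and $V$ are $M$-free, which holds because $\sum_j a_j^\star=1$ gives $|f^\star|\le B_\sigma$.
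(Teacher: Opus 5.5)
Your proposal is correct and is exactly the argument the paper has in mind. The paper omits this proof as a repetition of the Bousquet-plus-peeling argument of Lemma~\ref{lem:localized-isomorphism-general-ie}, with Proposition~\ref{prop:rip-multi-index-rademacher} supplying the localized Rademacher bound, and your check that $r_j\sqrt{M}D/N\le r_j^2/\sqrt{C_2}$ is the only new step. As a minor aside, in the regime $(MD+x)/N<1$ one already has $\sqrt{M}D/N\le\sqrt{MD/N}$, and the trivial bound covers the complementary regime, so the $MD^2/N^2$ term is not actually needed.
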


\begin{Theorem}\label{thm:rip-multi-index-fixed-output}
Grant Assumption~\ref{ass:rip_multi_index}. There exist constants $c_0\in(0,1)$, $C\ge1$, and $C_0\ge e$, depending only on $\eta$, $B_\sigma,L_\sigma,M_\sigma,T_\sigma, |b_2|^{-1},|b_3|^{-1},|b_4|^{-1}, K_\sigma, \{b_m,b_{m+2}:m\in K_\sigma\}, \kappa_\sigma^{-1}$, and $B_\xi$, such that the following holds. Let $x\ge1$, $N\ge2$, and $\lambda\ge0$ satisfy $\lambda d\le c_0$. For $\lambda\ge 0$, define
\[
r_{\rm rip}^2:=C\left[\frac{M d\log(C_0dN)+x}{N}+\frac{M (d\log(C_0dN))^2}{N^2}+\psi(\lambda d)\right].
\]
Then, with probability at least $1-4e^{-x}$, $\|f_{\hat\varphi_\lambda}-f^\star\|_{L^2(\mathbb P_X)}^2+ \lambda\Ent_\tau^-(\hat\varphi_\lambda)\le r_{\rm rip}^2$. Moreover, for $m=2,3,4$,
\[
\Big\|\int_{S_2^{d-1}} \boldsymbol{w}^{\otimes m}\hat\varphi_\lambda(\mathrm{d}\boldsymbol{w})-\sum_{j=1}^M a_j^\star (\boldsymbol{w}_j^\star)^{\otimes m}\Big\|_F^2\le \frac{m!}{b_m^2}r_{\rm rip}^2 \,\mbox{ and }~\, \int_{S_2^{d-1}}\min_{1\le j\le M}\|\boldsymbol{w}-\boldsymbol{w}_j^\star\|_2^2\,\hat{\varphi}_{\lambda}(\mathrm{d}\boldsymbol{w})\le C_{\eta,\sigma}\sqrt M\,r_{\rm rip}.
\]
If $N\ge d\log(C_0dN)$, then the second statistical term is absorbed, and for $\lambda\ge 0$, $\|f_{\hat\varphi_\lambda}-f^\star\|_{L^2(\mathbb P_X)}^2+\lambda\Ent_\tau^-(\hat\varphi_\lambda)\le C\left[\frac{M d\log(C_0dN)+x}{N}+\psi(\lambda d)\right]$.
\end{Theorem}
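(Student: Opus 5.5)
The argument repeats the proof of Theorem~\ref{thm:fixed-output-general-ie} almost verbatim, with Lemma~\ref{lem:rip-multi-index-isomorphism} as the uniform comparison event. Fix $x\ge1$ and work on the event of probability at least $1-4e^{-x}$ from Lemma~\ref{lem:rip-multi-index-isomorphism}, on which, for all $\varphi\in\mathcal P(S_2^{d-1})$,
\[
|(P-P_N)\cL_\varphi|\le \tfrac14\|f_\varphi-f^\star\|_{L^2(\bP_X)}^2+\Delta_N,
\]
with $\Delta_N:=C_{\eta,\sigma,B_\xi}[(Md\log(C_0dN)+x)/N+M(d\log(C_0dN))^2/N^2]$. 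Existence of $\hat\varphi_\lambda$ holds exactly as in Lemma~\ref{lem:existence-general-ie}, since that argument used only boundedness and continuity of $\sigma$.

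\textbf{Prediction bound.} By optimality of $\hat\varphi_\lambda$, for every competitor $\varphi$ of finite entropy,
\[
P_N(\ell_{\hat\varphi_\lambda}-\ell_\star)+\lambda\Ent_\tau^-(\hat\varphi_\lambda)\le P_N(\ell_\varphi-\ell_\star)+\lambda\Ent_\tau^-(\varphi).
\]
Applying the isomorphism twice, once to $\hat\varphi_\lambda$ and once to $\varphi$, and using $P(\ell_\varphi-\ell_\star)=\|f_\varphi-f^\star\|^2_{L^2(\bP_X)}$, gives
\[
\tfrac34\|f_{\hat\varphi_\lambda}-f^\star\|^2_{L^2(\bP_X)}+\lambda\Ent^-_\tau(\hat\varphi_\lambda)\le \tfrac54\|f_\varphi-f^\star\|^2_{L^2(\bP_X)}+\lambda\Ent^-_\tau(\varphi)+2\Delta_N .
\]
\emph{Approximation term.} I take the infimum over $\varphi$ by mixing spherical caps, $\varphi=\sum_j a_j^\star\,\tau(\cdot\mid B_S(\vw_j^\star,\phi))$. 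The per-cap Taylor argument of Lemma~\ref{lem:smoothed-dirac-general-ie} gives $\|f_\varphi-f^\star\|_{L^2(\bP_X)}^2\le C\phi^4$, since $\sum_j a_j^\star=1$ means no factor $M$ appears. For the entropy, the density ratio $d\varphi/d\tau$ on cap $j$ is $a_j^\star/\tau(\text{cap})$ when the caps are disjoint (bounded above when they overlap), and $\sum_j a_j^\star\log a_j^\star\le0$. Hence $\Ent_\tau^-(\varphi)\le (d-1)\log(C_{\rm cap}/\phi)$. Choosing $\phi=(\lambda d)^{1/4}$ yields $C\psi(\lambda d)$, independently of $M$. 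This is the step where I must check that no $M$-dependence sneaks in, and the argument above seems to give that.

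\textbf{Tensor and concentration bounds.} Combining the two parts gives the first displayed claim with $r_{\rm rip}^2$. The tensor bounds for $m=2,3,4$ then follow directly from \eqref{eq:tensor_loc} in Proposition~\ref{prop:multi-index-localization}, which needs neither separation nor the frame condition: square it and insert the prediction bound. The geometric concentration bound follows from Lemma~\ref{lem:rip-low-degree-localization}, namely $S_\star(\hat\varphi_\lambda)\le C_{\eta,\sigma}\sqrt M\|f_{\hat\varphi_\lambda}-f^\star\|_{L^2(\bP_X)}\le C_{\eta,\sigma}\sqrt M\, r_{\rm rip}$.

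\textbf{Absorption and main obstacle.} When $N\ge d\log(C_0dN)$, the second-order term satisfies $M(d\log(C_0dN))^2/N^2\le Md\log(C_0dN)/N$, which gives the simplified rate. The real difficulty is entirely inside Lemma~\ref{lem:rip-multi-index-isomorphism}, through Lemma~\ref{lem:projected-second-order-multiplier} and Proposition~\ref{prop:rip-multi-index-rademacher}. The point is to obtain the $\sqrt{M}$ (rather than $\exp(M)$) scaling of the remainder by projecting $\mathcal Q_1$ off $\mathcal V_\star$. The stated result takes these as given, so the remaining work is the bookkeeping above.
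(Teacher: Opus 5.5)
Your proposal is correct and follows the paper's route: work on the uniform event of Lemma~\ref{lem:rip-multi-index-isomorphism}, run the basic-inequality argument of Theorem~\ref{thm:fixed-output-general-ie} with an $M$-free smoothed-Dirac competitor, and read off the tensor and $S_\star$ bounds from \eqref{eq:tensor_loc} and Lemma~\ref{lem:rip-low-degree-localization}. Your mixture-of-caps competitor fills in the omitted Lemma~\ref{lem:multi-dirac-competitor} (convexity of $\KL(\cdot\|\tau)$ gives the entropy bound in one line). One small point: \eqref{eq:tensor_loc} is stated only for $m\le M$, but its proof holds for every $m$ with $b_m\neq0$, so it still covers $m=3,4$ when $M<4$.
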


\subsection{Proof of Corollary~\ref{coro:feature_learning_multi}}\label{sec:prop_feature_learning_multi}

\beginproof
Item~\emph{\ref{item:feature_evolution}} is verified by Corollary~\ref{cor:feature-kernel-movement}.
Let $\mathbf 1_{\fea}\in{L^2(\hat\varphi_\lambda)}$ be the constant-one function. For the spherical MFLD considered here, $f_{\hat\varphi_\lambda}(\cdot)=\langle\mathbf 1_{\fea},\phi_\neu(\cdot)\rangle_{L^2(\hat\varphi_\lambda)}$. We take $a_{\fea}=\hat a_N=\mathbf 1_{\fea}$. 
Since \(g_{\fea}=\hat g_N\), the alignment condition is trivially satisfied, for instance with \(\omega_N\equiv0\). Therefore, Item~\emph{\ref{item:alignment}} is verified. 
Moreover, since $g_{\fea}\circ\phi_\fea=f_{\hat\varphi_\lambda}$, Item~\emph{\ref{item:approximation_error}} follows from Theorem~\ref{thm:fixed-output-general-ie} and Theorem~\ref{thm:multi-index-fixed-output}. Finally, it remains to prove Item~\emph{\ref{item:top_k}}. Here we take the multi-index case as an example; the single-index case is analogous. For any $\vw$, let $1\leq j(\vw)\leq M$ be the unique index such that $\vw\in V_{j(\vw)}$, where $V_{j(\vw)}$ is the partition defined in Section~\ref{sec:proof_strategy}. Let $\cU$ be a coefficient sub-space defined as
\begin{align*}
&\cU = \left\{ \vw\mapsto \sum_{j=1}^M\1_{V_j}(\vw)(\alpha_j + \langle\bbeta_j,\vw-\vw_j^*\rangle): \alpha_j\in\bR,\, \bbeta_j\in\bR^d \right\}\subset L^2(\hat\varphi_\lambda).
\end{align*}
Then $\dim(\cU)\leq M(d+1)$. Let $\mathrm{Proj}^{\mathcal H}_{\cU}$ be the orthogonal projection in $\mathcal H_{\mathrm{feat}}$ onto the closed linear subspace of all $g\in\mathcal H_{\mathrm{feat}}$ identified by \eqref{eq:quotient_H_fea}; in other words, if $g\in\cH_\fea$ with $L^2(\hat\varphi_\lambda)$ coefficient $a$, i.e. $g(\cdot) = \inr{a, \varphi_\neu(\cdot)}_{L^2(\hat\varphi_\lambda)}$ then for $a_\cU=\argmin_{u\in\cU}\norm{u-a}_{L^2(\hat\varphi_\lambda)}$, we have   $\mathrm{Proj}^{\mathcal H}_{\cU}(g)=\inr{a_\cU,\varphi_{\neu}(\cdot)}_{\hat\varphi_\lambda}$. Let
\begin{align*}
&\sigma^{(1)}:(\vx,\vw) \mapsto \sigma(\langle\vw_{j(\vw)}^*,\vx\rangle) + \sigma'(\langle\vw_{j(\vw)}^*,\vx\rangle)\langle\vw-\vw_{j(\vw)}^*,\vx\rangle.
\end{align*}
For any $\vx$, there holds $\sigma^{(1)}(\vx,\cdot)\in\cU$. By Ky Fan's maximum principle (see \cite[Lemma 8.1.8]{stormerPositiveLinearMaps2013}), for any orthogonal projection $\mathrm{Proj}$ of rank at most $k$, there holds $\sum_{j>k}\sigma_j\leq \Tr((I-\mathrm{Proj})\Sigma)$. Therefore, $\sum_{j>k}\sigma_j\leq \Tr((I-\mathrm{Proj}^{\mathcal H}_{\cU})\Sigma)$ where $k=M(d+1)$. Now, since $\Sigma = \bE[\phi_{\mathrm{feat}}(X)\otimes\phi_{\mathrm{feat}}(X)|(X_i,Y_i)_{i=1}^N]$ is the covariance operator on $\mathcal H_{\mathrm{feat}}$, there holds
\begin{align*}
&\Tr((I-\mathrm{Proj}^{\mathcal H}_{\cU})\Sigma) = \bE[\|(I-\mathrm{Proj}^{\mathcal H}_{\cU})\phi_{\mathrm{feat}}(X)\|_{\mathcal H_{\mathrm{feat}}}^2|(X_i,Y_i)_{i=1}^N].
\end{align*}
Since $\phi_{\mathrm{feat}}(X)$ is represented by the coefficient $\sigma(\langle X,\cdot\rangle)$ in $L^2(\hat \varphi_\lambda)$ and $\sigma^{(1)}(X,\cdot)\in\cU$ for almost all $X$, the quotient characterization of the $\mathcal H_{\mathrm{feat}}$ norm gives
\begin{align*}
&\|(I-\mathrm{Proj}^{\mathcal H}_{\cU})\phi_{\mathrm{feat}}(X)\|_{\mathcal H_{\mathrm{feat}}}^2\leq \|\sigma(\langle X,\cdot\rangle)-\sigma^{(1)}(X,\cdot)\|_{L^2(\hat\varphi_\lambda)}^2.
\end{align*}
Consequently,
\begin{align*}
&\sum_{j>k}\sigma_j \leq \bE\left[\norm{\sigma(\langle X,\cdot\rangle) - \sigma^{(1)}(X,\cdot)}_{L^2(\hat\varphi_\lambda)}^2\big|(X_i,Y_i)_{i=1}^N\right].
\end{align*}
Now, by Taylor's expansion, if $\vw\in V_j$, then
\begin{align*}
&\left| \sigma(\langle\vw,X\rangle) - \sigma(\langle\vw_j^*,X\rangle) - \sigma'(\langle\vw_j^*,X\rangle)\langle\vw-\vw_j^*,X\rangle \right|\leq\frac{1}{2}M_\sigma\left|\langle \vw-\vw_j^*,X\rangle\right|^2.
\end{align*}
Therefore,
\begin{align*}
&\bE\left[\norm{\sigma(\langle X,\cdot\rangle) - \sigma^{(1)}(X,\cdot)}_{L^2(\hat\varphi_\lambda)}^2\big|(X_i,Y_i)_{i=1}^N\right]\leq \frac{3}{4}M_\sigma^2\int_{S_2^{d-1}}\norm{\vw-\vw_{j(\vw)}^*}_2^4d\hat\varphi_\lambda(\vw)\leq 3M_\sigma^2S_*(\hat\varphi_\lambda),
\end{align*}
where $S_*(\hat\varphi_\lambda) = \int_{S_2^{d-1}}\min_{1\leq j\leq M}\norm{\vw-\vw_j^*}_2^2 d\hat\varphi_\lambda(\vw).$ By Proposition~\ref{prop:stationary_distribution} and Proposition~\ref{prop:concentration_Levy_Milman} respectively, we have $S_*(\hat\varphi_\lambda)=o_\bP(1)$. Since \(\gamma_j=\sigma_j\), and \(a_{\fea}=\mathbf 1_{\fea}\) is a coefficient representation of \(g_{\fea}\), and \(\|g_{\fea}\|_{\mathcal H_{\mathrm{feat}}}\leq \|a_{\fea}\|_{L^2(\hat\varphi_\lambda)}=1\),
\begin{align*}
&\sum_{j>k}\gamma_j\langle g_{\fea},e_j\rangle_{\mathcal H_{\mathrm{feat}}}^2\leq \|g_{\fea}\|_{\mathcal H_{\mathrm{feat}}}^2\sum_{j>k}\sigma_j\leq \sum_{j>k}\sigma_j=o_{\mathbb P}(1).
\end{align*}
Thus Item~\emph{\ref{item:top_k}} is verified.
\endproof

\subsection{Proof of Corollary~\ref{cor:feature-kernel-movement}}\label{sec:proof_coro_feature_kernel_movement}
\begin{proof}[Proof of Corollary~\ref{cor:feature-kernel-movement}]
We work on the random event of Theorem~\ref{thm:fixed-output-general-ie} and of Theorem~\ref{thm:multi-index-fixed-output} respectively.

For the single-index case, by Proposition~\ref{prop:concentration_Levy_Milman}, there holds $\int\min\{ \|\vw-\vw_\star\|_2^2, \|\vw+\vw_\star\|_2^2 \}\mathrm{d}\hat\varphi_\lambda(\vw)\leq r_*$. Since $\sigma$ is bounded and Lipschitz, $\vw\mapsto\sigma(\langle\vw,\cdot\rangle)\sigma(\langle\vw,\cdot\rangle)\in L^2(\bP_X\otimes\bP_X)$ is Lipschitz with Lipschitz constant $2B_\sigma L_\sigma$, then
\begin{align*}
    \norm{K_{\hat\varphi_\lambda} - K_{\tilde\varphi_\lambda}}_{L^2(\bP_X\otimes\bP_X)}\leq C_{\mathrm{ker}}\sqrt{r_*},\mbox{ where } \tilde\varphi_\lambda = \hat\varphi_\lambda(\{\langle\vw,\vw_\star\rangle\geq 0\})\delta_{\vw_\star} + \hat\varphi_\lambda(\{\langle\vw,\vw_\star\rangle< 0\})\delta_{-\vw_\star}.
\end{align*}Take $X'$ be the independent copy of $X$. For any $\varphi$ such that $\varphi(\{\vw_\star,-\vw_\star\})=1$, since $\mathrm{He}_{\mathrm{IE}(\sigma)}(-t)=(-1)^{\mathrm{IE}(\sigma)}\mathrm{He}_{\mathrm{IE}(\sigma)}(t)$ for any $t$, there holds
\begin{align*}
    &\int \mathrm{He}_{\mathrm{IE}(\sigma)}(\langle\vw,X\rangle)\mathrm{He}_{\mathrm{IE}(\sigma)}(\langle\vw,X'\rangle)\mathrm{d}\varphi(\vw) \\
    &= \varphi(\{\vw_\star\})\mathrm{He}_{\mathrm{IE}(\sigma)}(\langle\vw,X\rangle)\mathrm{He}_{\mathrm{IE}(\sigma)}(\langle\vw,X'\rangle) + \varphi(\{-\vw_\star\})\mathrm{He}_{\mathrm{IE}(\sigma)}(\langle-\vw,X\rangle)\mathrm{He}_{\mathrm{IE}(\sigma)}(\langle-\vw,X'\rangle)\\
    &=\mathrm{He}_{\mathrm{IE}(\sigma)}(\langle\vw,X\rangle)\mathrm{He}_{\mathrm{IE}(\sigma)}(\langle\vw,X'\rangle).
\end{align*}For any $f,g\in L^2(\bP_X)$,  $\mathrm{Proj}_{\mathrm{IE}(\sigma),\mathrm{IE}(\sigma)}(fg) := \mathrm{Proj}_{\mathrm{IE}(\sigma)}f\mathrm{Proj}_{\mathrm{IE}(\sigma)}g$ is the projection from $L^2(\bP_X\otimes\bP_X)$ onto $C_{\mathrm{IE}(\sigma)}\otimes C_{\mathrm{IE}(\sigma)}$, where $C_{\mathrm{IE}(\sigma)}$ is the $\mathrm{IE}(\sigma)$-th homogeneous Wiener chaos in $L^2(\bP_X)$ defined in Lemma~\ref{lemma:C_m_is_Span_Hermite}, see also, for instance, \cite[Section 0.4]{aubrunAliceBobMeet2017}, for the tensor product of Hilbert spaces and its operators. From Lemma~\ref{lemma:proj-ridge-feature}, for any $\varphi(\{\vw_\star,-\vw_\star\})=1$, there holds
\begin{align*}
    &\mathrm{Proj}_{\mathrm{IE}(\sigma),\mathrm{IE}(\sigma)} K_\varphi(X,X') = \frac{b_{\mathrm{IE}(\sigma)}^2}{(\mathrm{IE}(\sigma)!)^2}\int \mathrm{He}_{\mathrm{IE}(\sigma)}(\langle\vw,X\rangle)\mathrm{He}_{\mathrm{IE}(\sigma)}(\langle\vw,X'\rangle)\mathrm{d}\varphi(\vw)\\
    &= \frac{b_{\mathrm{IE}(\sigma)}^2}{(\mathrm{IE}(\sigma)!)^2}\mathrm{He}_{\mathrm{IE}(\sigma)}(\langle\vw_\star,X\rangle)\mathrm{He}_{\mathrm{IE}(\sigma)}(\langle\vw_\star,X'\rangle).
\end{align*}Similarly, $\mathrm{Proj}_{\mathrm{IE}(\sigma),\mathrm{IE}(\sigma)} K_\tau(X,X') = \frac{b_{\mathrm{IE}(\sigma)}^2}{(\mathrm{IE}(\sigma)!)^2}\int \mathrm{He}_{\mathrm{IE}(\sigma)}(\langle\vw,X\rangle)\mathrm{He}_{\mathrm{IE}(\sigma)}(\langle\vw,X'\rangle)\mathrm{d}\tau(\vw).$ Moreover, by $\|K_\varphi - K_\tau\|_{L^2(\bP_X\otimes\bP_X)}^2 \geq \|\mathrm{Proj}_{\mathrm{IE}(\sigma),\mathrm{IE}(\sigma)} (K_\varphi - K_\tau)\|_{L^2(\bP_X\otimes\bP_X)}^2$ together with \eqref{eq:inner_product_Hermite_polynomials}, there holds
\begin{align*}
    &\|K_\varphi - K_\tau\|_{L^2(\bP_X\otimes\bP_X)}^2 \\
    &\geq \frac{|b_{\mathrm{IE}(\sigma)}|^4}{(\mathrm{IE}(\sigma)!)^4}\bigg( (\mathrm{IE}(\sigma)!)^2 - 2(\mathrm{IE}(\sigma)!)^2\int \langle\vw,\vw_\star\rangle^{2\mathrm{IE}(\sigma)}\mathrm{d}\tau(\vw) + (\mathrm{IE}(\sigma)!)^2\int\int\langle\vw,\vv\rangle^{2\mathrm{IE}(\sigma)}\mathrm{d}\tau(\vw)\mathrm{d}\tau(\vv) \bigg)\\
    &=\frac{|b_{\mathrm{IE}(\sigma)}|^4}{(\mathrm{IE}(\sigma)!)^2}\left( 1-\int \langle\vw,\vw_\star\rangle^{2\mathrm{IE}(\sigma)}\mathrm{d}\tau(\vw) \right) = \frac{|b_{\mathrm{IE}(\sigma)}|^4}{(\mathrm{IE}(\sigma)!)^2}\left(1 - \frac{(2\mathrm{IE}(\sigma))!!}{d(d+2)\cdots(d+2\mathrm{IE}(\sigma)-2)}\right),
\end{align*}where the last equality follows from the following observation. Let $G\sim\cN(\vzero,I_d)$, then $G/\|G\|_2\sim\tau$, and $\|G\|_2$ is independent of $G/\|G\|_2$, see, for instance, \cite[Appendix A.2]{aubrunAliceBobMeet2017}. Therefore, $$\int \langle\vw,\vw_\star\rangle^{2\mathrm{IE}(\sigma)}\mathrm{d}\tau(\vw) = \frac{\bE\langle G,\vw_\star\rangle^{2\mathrm{IE}(\sigma)}}{\bE\|G\|_2^{2\mathrm{IE}(\sigma)}} = \frac{(2\mathrm{IE}(\sigma))!!}{d(d+2)\cdots(d+2\mathrm{IE}(\sigma)-2)},$$ where the numerator is the moment of a standard Gaussian random variable, while the denominator is that of a chi-square random variable with degree-of-freedom $d$.

Now we deal with the multi-index case. We use Proposition~\ref{prop:stationary_distribution}. Similar to the single-index case, we have
\begin{align*}
    &\|K_{\hat\varphi_\lambda}-K_{\tilde\varphi_\lambda}\|_{L^2(\bP_X\otimes\bP_X)}\leq 2B_\sigma L_\sigma\sum_{j=1}^M\int_{V_j}\|\vw-\vw_j^\star\|_2\,\mathrm{d}\hat\varphi_\lambda(\vw)\lesssim \sqrt{r_*},\mbox{ where }\tilde\varphi_\lambda = \sum_{j=1}^M \hat\varphi_\lambda(V_j)\delta_{\vw_j^\star}.
\end{align*}
On the other hand, $\|K_{\tilde\varphi_\lambda}-K_{\varphi^\star}\|_{L^2(\bP_X\otimes\bP_X)} \leq B_\sigma^2 \sum_{j=1}^M \left|\int_{V_j}\mathrm{d}\hat\varphi_\lambda-a_j^\star\right|\lesssim \sqrt{r_*}.$ By the triangular inequality, $\|K_{\hat\varphi_\lambda}-K_{\varphi^\star}\|_{L^2(\bP_X\otimes\bP_X)}\lesssim\sqrt{r_*}$. Now we compute the $(m,m)$-Hermite chaos projection for $1\le m\le M$. Since $\mathrm{Proj}_{m,m}=\mathrm{Proj}_m\otimes\mathrm{Proj}_m$, the Hermite identity and the independence of $X,X'$ give
\begin{align*}
    &\|\mathrm{Proj}_{m,m}(K_{\varphi^\star}-K_\tau)\|_{L^2(\bP_X\otimes\bP_X)}=\frac{|b_m|^2}{m!}\bigg\|\sum_{j=1}^M a_j^\star(\vw_j^\star)^{\otimes m}\otimes(\vw_j^\star)^{\otimes m}-\int_{S_2^{d-1}}\vw^{\otimes m}\otimes\vw^{\otimes m}\,\mathrm{d}\tau(\vw)\bigg\|_F.
\end{align*}
Hence, by the contraction property of orthogonal projections and the triangular inequality, $\|K_{\hat\varphi_\lambda}-K_\tau\|_{L^2(\bP_X\otimes\bP_X)}\ge \max_{1\le m\le M}\frac{|b_m|^2}{m!}\big\|\sum_{j=1}^M a_j^\star(\vw_j^\star)^{\otimes m}\otimes(\vw_j^\star)^{\otimes m}-\int_{S_2^{d-1}}\vw^{\otimes m}\otimes\vw^{\otimes m}\,\mathrm{d}\tau(\vw)\big\|_F-C_{\rm ker}\sqrt{r_*}$. By the assumption on $r_*$, the proof is complete.
\end{proof}

\section{Supplementary Lemmas}\label{sec:supp_lemmas}
This section collects technical lemmas used in several parts of the appendix. They are separated from the main proof blocks to keep the main arguments easier to follow.

\begin{Lemma}\label{lemma:levy_milman_concentration}
    Let $(\cX,d_\cX)$ be a metric space, $\mu\in\cP(\cX)$. Suppose there exists $z\in\cX$ such that $\int d_\vx^2(\vx,z)\mathrm{d}\mu(\vx)\leq \varepsilon$ for some $\varepsilon>0$. For any $r>0$, let $A_r=\{\vx\in\cX: d_\cX(\vx,A)\leq r\}$ and $\alpha_\mu(r)=\sup\{1-\mu(A_r): \mu(A)\geq \frac{1}{2}\}$ be the L\'evy concentration function, \cite[Section 1.3]{ledouxConcentrationMeasurePhenomenon2005}. Then for any $r>2\sqrt{2\varepsilon}$, $\alpha_\mu(r)\leq\frac{4\varepsilon}{r^2}$. Moreover,
    \begin{align*}
        \forall 1\mbox{-Lipschitz } F:\cX\to\bR,\quad \forall r>2\sqrt{2\varepsilon},\quad \mu\left(\left| F(X) - \bE_{X\sim \mu}[F(X)] \right| \geq r\right)\leq \frac{8\varepsilon}{r^2}.
    \end{align*}
\end{Lemma}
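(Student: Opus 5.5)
The statement to prove is Lemma~\ref{lemma:levy_milman_concentration}: if $\mu$ is a probability measure on $(\cX,d_\cX)$ whose second moment of the distance to a fixed point $z$ is at most $\varepsilon$, then its Lévy concentration function decays like $4\varepsilon/r^2$, and every $1$-Lipschitz function deviates from its mean with probability at most $8\varepsilon/r^2$.

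The plan is to reduce everything to Markov's inequality applied to the single random variable $d_\cX(X,z)$ with $X\sim\mu$. The hypothesis and Chebyshev/Markov give
\begin{equation*}
\mu\bigl(d_\cX(X,z)>s\bigr)\le \varepsilon/s^2 \qquad\text{for every } s>0.
\end{equation*}

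\emph{Concentration function.} Fix a Borel set $A$ with $\mu(A)\ge 1/2$ and fix $r>2\sqrt{2\varepsilon}$.
\begin{enumerate}
\item Set $s=r/2$, so that $\varepsilon/s^2=4\varepsilon/r^2<1/2$. Hence the ball $B(z,r/2)$ has mass strictly larger than $1/2$.
\item Since $\mu(A)\ge 1/2$, the sets $A$ and $B(z,r/2)$ must intersect; pick a point $a$ in both.
\item By the triangle inequality, every $x\in B(z,r/2)$ satisfies $d_\cX(x,A)\le d_\cX(x,a)\le d_\cX(x,z)+d_\cX(z,a)\le r$. So $B(z,r/2)\subset A_r$.
\item Therefore $1-\mu(A_r)\le \mu\bigl(d_\cX(X,z)>r/2\bigr)\le 4\varepsilon/r^2$. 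Taking the supremum over $A$ gives $\alpha_\mu(r)\le 4\varepsilon/r^2$.
\end{enumerate}

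\emph{Deviation from the mean.} I will not pass through the median, since that would inflate the constant. Instead I compare $F$ directly with $F(z)$.
\begin{enumerate}
\item For $1$-Lipschitz $F$, we have $|F(X)-F(z)|\le d_\cX(X,z)$.
\item By Jensen, $|\bE F(X)-F(z)|\le \bE\, d_\cX(X,z)\le \sqrt{\varepsilon}$.
\item Hence, on the event $|F(X)-\bE F(X)|\ge r$, we get $d_\cX(X,z)\ge r-\sqrt\varepsilon$. This gives
\begin{equation*}
\mu\bigl(|F(X)-\bE F(X)|\ge r\bigr)\le \frac{\varepsilon}{(r-\sqrt{\varepsilon})^2}.
\end{equation*}
\item For $r>2\sqrt{2\varepsilon}$ we have $\sqrt\varepsilon<r/(2\sqrt2)$, so $r-\sqrt\varepsilon>r\bigl(1-1/(2\sqrt2)\bigr)$. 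Then $(r-\sqrt\varepsilon)^2>r^2\bigl(1-1/(2\sqrt 2)\bigr)^2\approx 0.42\,r^2>r^2/8$.
\item This yields the bound $8\varepsilon/r^2$, in fact with room to spare.
\end{enumerate}

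Integrability is not an issue. $F$ is Lipschitz and $d_\cX(\cdot,z)\in L^2(\mu)$, so $F(X)\in L^1$ and the mean in the statement is well defined.

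The only real obstacle is bookkeeping. I need the threshold $2\sqrt{2\varepsilon}$ to be exactly what makes both steps work: $4\varepsilon/r^2<1/2$, so that the ball meets $A$, and $(r-\sqrt\varepsilon)^2\ge r^2/8$. The degenerate case where $\mu(A)=1/2$ exactly and the ball has mass exactly $1/2$ does not arise, because the inequality $4\varepsilon/r^2<1/2$ is strict for $r>2\sqrt{2\varepsilon}$.
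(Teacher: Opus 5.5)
Your proposal is correct and follows the paper's proof essentially step for step. Both arguments apply Markov's inequality to $d_\cX(X,z)$, use the ball $B(z,r/2)$ (of mass $>1/2$, hence meeting $A$) to get $\alpha_\mu(r)\le 4\varepsilon/r^2$, and compare $F(X)$ with $F(z)$ via $|\bE F(X)-F(z)|\le\sqrt{\varepsilon}$ to reach $\varepsilon/(r-\sqrt{\varepsilon})^2$. The only difference is cosmetic: the paper bounds this last quantity by $4\varepsilon/r^2$ using $r-\sqrt{\varepsilon}\ge r/2$, which like your constant lies well below the stated $8\varepsilon/r^2$.
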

\beginproof
By Markov's inequality, for any $u>0$, $\mu(d_\cX(X,z)\geq u)\leq \frac{\varepsilon}{u^2}$, where $X$ is a random variable distributed according to $\mu$. Since $F$ is $1$-Lipschitz, $|F(X)-F(z)|\leq d_\cX(X,z)$ and $|\bE[F(X)]-F(z)|\leq \int_\cX d_\cX(\vx,z)\mathrm{d}\mu(\vx)\leq\sqrt{\varepsilon}$. Therefore, when $t>\sqrt{\varepsilon}$, $\{|F(X)-\bE[F(X)]|\geq t\}\subset\{|F(X)-F(z)|\geq t-\sqrt{\varepsilon}\}\subset \{d_\cX(X,z)\geq t-\sqrt{\varepsilon}\}$. Applying to $u=t-\sqrt{\varepsilon}$, for any $t>\sqrt{\varepsilon}$, there holds $\mu(|F(X)-\bE[F(X)]|\ge t)\leq \frac{\varepsilon}{(t-\sqrt{\varepsilon})^2}$. Setting $t=r$ together with $\frac{\varepsilon}{(r-\sqrt{\varepsilon})^2}\leq \frac{4\varepsilon}{r^2}$ yield the desired concentration inequality. For the L\'evy concentration function, for any $A$ such that $\mu(A)\geq \frac{1}{2}$, let $u=\frac{r}{2}$. If $r>2\sqrt{2\varepsilon}$, then $\frac{\varepsilon}{u^2}<\frac{1}{2}$, and consequently, $\mu(B_\cX(z,u)) = \mu(d(X,z)\leq u)\geq 1-\frac{\varepsilon}{u^2}>\frac{1}{2}$. Since $\mu(A)\geq\frac12$, there holds $\mu(A\cap B(z,u))\geq \mu(A)+\mu(B(z,u))-1>0$, and hence $A\cap B(z,u)\neq\emptyset$. Take $\vy\in A\cap B(z,u)$. Take any $\vx\notin A_r$, then $d_\cX(\vx,A)>r$, hence $r<d_\cX(\vx,A)\leq d_\cX(\vx,\vy)\leq d_\cX(\vx,z) + d_\cX(z,\vy)\leq d_\cX(\vx,z)+u,$ that is, $d_\cX(\vx,z)>r-u=\frac{r}{2}$. This implies that $A_r^c\subset\{\vx\in\cX: d_\cX(\vx,z)>\frac{r}{2}\}$. By Markov's inequality again, $\mu(A_r^c)\leq \mu\left(d_\cX(\vx,z)>\frac{r}{2}\right)\leq\frac{\varepsilon}{(r/2)^2}=\frac{4\varepsilon}{r^2}.$ Since the above analysis holds for any $\mu(A)\geq \frac{1}{2}$, there holds $\alpha_\mu(r)\leq\frac{4\varepsilon}{r^2}$.
\endproof

\begin{Proposition}
\label{prop:no_interpolation_spherical_mean_field}
Let
$
\underline\sigma:=\inf_{t\in\mathbb R}\sigma(t)$, and $ \overline\sigma:=\sup_{t\in\mathbb R}\sigma(t).$
Let $p_{\mathrm{out}}:=\mathbb P\left(Y\notin [\underline\sigma,\overline\sigma]\right).$
If $p_{\mathrm{out}}>0$, then
\[
\mathbb P\left(\exists \varphi\in\mathcal P(S^{d-1}_2): f_\varphi(X_i)=Y_i,\ i=1,\ldots,N\right)
\leq (1-p_{\mathrm{out}})^N.
\]
\end{Proposition}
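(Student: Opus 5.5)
The plan is a direct argument: an exact interpolation constraint is violated as soon as a single label falls outside the range of $\sigma$, so the bound reduces to the probability that no such label occurs. For every $\varphi\in\mathcal P(S_2^{d-1})$ and every $\vx\in\bR^d$, we have $f_\varphi(\vx)=\int\sigma(\langle\vw,\vx\rangle)\,\varphi(\mathrm d\vw)$. This is an average of values of $\sigma$, all of which lie in $[\underline\sigma,\overline\sigma]$. Since $\varphi$ is a probability measure, it follows that $f_\varphi(\vx)\in[\underline\sigma,\overline\sigma]$ for every $\vx$. Here $\sigma$ is bounded and continuous, so the interval is well defined and closed. I will record this as the first step, and it holds uniformly over $\varphi$.

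The second step is an inclusion of events. On the event $\{\exists\varphi:\ f_\varphi(X_i)=Y_i,\ i=1,\ldots,N\}$, each $Y_i$ equals some $f_\varphi(X_i)$ and therefore lies in $[\underline\sigma,\overline\sigma]$. Hence this event is contained in $\bigcap_{i=1}^N\{Y_i\in[\underline\sigma,\overline\sigma]\}$. The inclusion holds deterministically; no measurable selection of $\varphi$ is required.

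The third step uses the i.i.d. structure. Since $(X_i,Y_i)_{i=1}^N$ are independent copies of $(X,Y)$, the intersection has probability $\prod_{i=1}^N\bP(Y_i\in[\underline\sigma,\overline\sigma])=(1-p_{\mathrm{out}})^N$. This gives the claimed bound.

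The only point needing care is measurability of the event, since it is defined by an existential quantifier over the uncountable set $\mathcal P(S_2^{d-1})$. I would handle it by reading the left-hand side as an outer probability, or by noting that the event is closed. Indeed, $\mathcal P(S_2^{d-1})$ is weakly compact and $\varphi\mapsto(f_\varphi(X_i))_i$ is continuous, so the event is the projection of a closed set along a compact factor. In either reading, monotonicity applied to the inclusion of the second step completes the proof. I do not expect a genuine obstacle anywhere in this argument.
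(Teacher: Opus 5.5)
Your proof is correct and follows the paper's argument: the range inclusion $f_\varphi(\vx)\in[\underline\sigma,\overline\sigma]$ (the paper phrases it as $P_\sigma\mathcal F\subset[\underline\sigma,\overline\sigma]^N$), the resulting event inclusion, and independence of the samples to get $(1-p_{\mathrm{out}})^N$. Your measurability remark is a small addition that the paper leaves implicit; for the closedness claim, cite joint continuity of $(\varphi,\vx)\mapsto f_\varphi(\vx)$ together with compactness of $\mathcal P(S_2^{d-1})$, not just continuity in $\varphi$.
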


\beginproof
Let
$
P_\sigma\mathcal F=\left\{(f_\varphi(X_1),\ldots,f_\varphi(X_N)):\varphi\in\mathcal P(S^{d-1}_2)\right\}\subset\mathbb R^N.
$
Next, for every $\boldsymbol{w}\in S^{d-1}_2$ and every $\boldsymbol{x}\in\mathbb R^d$, $\sigma(\langle \boldsymbol{w},\boldsymbol{x}\rangle)\in[\underline\sigma,\overline\sigma].$
Hence, for every $\varphi\in\mathcal P(S^{d-1}_2)$,
$
f_\varphi(\boldsymbol{x})=\int_{S^{d-1}_2}\sigma(\langle \boldsymbol{w},\boldsymbol{x}\rangle)\,\varphi(d\boldsymbol{w})
\in
[\underline\sigma,\overline\sigma].
$
It follows that $P_\sigma\mathcal F
\subset
[\underline\sigma,\overline\sigma]^N.$
Therefore, if $Y_i\notin [\underline\sigma,\overline\sigma]$ for some $i$, then no $\varphi\in\mathcal P(S^{d-1}_2)$ can satisfy $f_\varphi(X_i)=Y_i$, and hence no such $\varphi$ can interpolate the whole training sample. The probabilistic statement follows trivially. 
\endproof

\begin{Lemma}\label{lem:rip-coordinate-rounding}
Assume $\|G_\star-I_M\|_{\rm op}\le\eta<1/2$. There exist constants $c_\eta>0$ and $C_\eta<\infty$, depending only on $\eta$, such that the following holds.
Let $\boldsymbol{z}\in\mathbb R^M$ and $r\ge0$ satisfy $\boldsymbol{z}^\top G_\star \boldsymbol{z}+r^2=1$. Define $\delta :=r^2+\sum_{j=1}^M z_j^2(1-z_j)^2$. Then
\[
\forall\vr_\perp\perp\Range(W_\star),\,\|\vr_\perp\|_2=1,\quad \min_{1\le j\le M} \|W_\star \boldsymbol{z}+r\boldsymbol{r}_\perp-\boldsymbol{w}_j^\star\|_2^2 \le C_\eta \delta.
\]
\end{Lemma}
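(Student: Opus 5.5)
The plan is a case split on whether $\delta$ is small. If $\delta\ge c_\eta$ for a threshold $c_\eta>0$ to be fixed, the claim is trivial: every point of the sphere satisfies $\|\boldsymbol u-\boldsymbol w_j^\star\|_2^2\le 4$, so it suffices to take $C_\eta\ge 4/c_\eta$. All the work is in the regime $\delta<c_\eta$. There I will show that exactly one coordinate of $\boldsymbol z$ is close to $1$, that all other coordinates are close to $0$, and that $r$ is small. The corresponding $\boldsymbol w_j^\star$ is then the candidate minimizer.

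Write $\boldsymbol u=W_\star\boldsymbol z+r\boldsymbol r_\perp$. Since $\boldsymbol r_\perp\perp\Range(W_\star)$, orthogonality gives
$$\|\boldsymbol u-\boldsymbol w_j^\star\|_2^2=(\boldsymbol z-\boldsymbol e_j)^\top G_\star(\boldsymbol z-\boldsymbol e_j)+r^2\le(1+\eta)\|\boldsymbol z-\boldsymbol e_j\|_2^2+r^2 .$$
It therefore suffices to find an index $j$ with $\|\boldsymbol z-\boldsymbol e_j\|_2^2\lesssim\delta$.

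First, each scalar satisfies $\min\{z_i^2,(1-z_i)^2\}\le 2z_i^2(1-z_i)^2+\ldots$. More precisely, if $|z_i|\le 1/2$ then $(1-z_i)^2\ge 1/4$, so $z_i^2\le 4z_i^2(1-z_i)^2$. Similarly, if $z_i\ge 1/2$ then $(1-z_i)^2\le 4z_i^2(1-z_i)^2$. If $z_i\le -1/2$, the term $z_i^2(1-z_i)^2\ge 9/16$ exceeds $c_\eta$, which is excluded. So every coordinate is rounded to $\epsilon_i\in\{0,1\}$ with $\sum_i(z_i-\epsilon_i)^2\le 4\delta$. Let $J=\{i:\epsilon_i=1\}$ and $\boldsymbol\epsilon=\mathbf 1_J$.

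The key step, and the main obstacle, is showing $|J|=1$. For this I will use the constraint $\boldsymbol z^\top G_\star\boldsymbol z=1-r^2\in[1-\delta,1]$. Since $\|\boldsymbol z-\boldsymbol\epsilon\|_2\le 2\sqrt\delta$ and $\|G_\star\|_{\rm op}\le 1+\eta$, a triangle inequality in the $G_\star$-seminorm gives
$$\bigl|\sqrt{\boldsymbol\epsilon^\top G_\star\boldsymbol\epsilon}-\sqrt{1-r^2}\bigr|\le 2\sqrt{(1+\eta)\delta}.$$
On the other hand, $(1-\eta)|J|\le\boldsymbol\epsilon^\top G_\star\boldsymbol\epsilon\le(1+\eta)|J|$. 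If $J=\emptyset$, this forces $\sqrt{1-\delta}\le 2\sqrt{(1+\eta)\delta}$, which is false for small $\delta$. If $|J|\ge2$, then $\sqrt{2(1-\eta)}-1\le 2\sqrt{(1+\eta)\delta}$. Here $\eta<1/2$ is used, since it makes $\sqrt{2(1-\eta)}>1$, and this is where the frame assumption enters. Choosing $c_\eta$ small enough in terms of $\eta$ excludes both cases, so $J=\{j\}$. The choice of $c_\eta$ must be compatible with all three exclusions: $z_i\le-1/2$, $J=\emptyset$ and $|J|\ge2$.

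With $|J|=1$, we get $\|\boldsymbol z-\boldsymbol e_j\|_2^2\le 4\delta$, and together with $r^2\le\delta$ this gives $\min_j\|\boldsymbol u-\boldsymbol w_j^\star\|_2^2\le(4(1+\eta)+1)\delta$. Taking $C_\eta=\max\{4(1+\eta)+1,\,4/c_\eta\}$ covers both regimes.
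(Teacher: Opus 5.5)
Your proposal is correct and follows essentially the same route as the paper: the same case split on $\delta$ versus $c_\eta$, the same rounding of each $z_i$ to $0$ or $1$ at the threshold $1/2$, the same exclusion of zero or several rounded-up coordinates via the bounds $(1\pm\eta)$ on $G_\star$ together with $\eta<1/2$, and the same final estimate $(1+\eta)\|\boldsymbol{z}-\boldsymbol{e}_j\|_2^2+r^2$. The differences are cosmetic: you compare $\boldsymbol{z}$ with its rounding through the triangle inequality for the seminorm $\boldsymbol{v}\mapsto\|W_\star\boldsymbol{v}\|_2$, whereas the paper bounds $\|\boldsymbol{z}\|_2^2$ directly; you should also delete the stray fragment ``$\le 2z_i^2(1-z_i)^2+\ldots$''.
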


\beginproof
If $\delta\ge c_\eta$, then the claim is immediate because both $W_\star \boldsymbol{z}+\boldsymbol{r}_\perp$ and $\boldsymbol{w}_j^\star$ are unit vectors, so the distance squared is at most $4$, and $4\le \frac4{c_\eta}\delta.$ Thus it remains to consider $\delta<c_\eta$, where $c_\eta$ will be chosen small enough.

For each coordinate, we have
$|z_j|\,|1-z_j| \le \sqrt\delta.$
If $|z_j|\le1/2$, then $|1-z_j|\ge1/2$, so we get
$|z_j|\le2\sqrt\delta.$
If $|z_j|>1/2$, then
$|1-z_j|\le2\sqrt\delta.$
Thus each coordinate is either close to $0$ or close to $1$.

We first show that at least one coordinate is close to $1$. If no coordinate satisfies $|z_j|>1/2$, then all $|z_j|\le1/2$, so $z_j^2(1-z_j)^2\ge \frac14z_j^2.$
Hence
$\|\boldsymbol{z}\|_2^2\le4\delta.$
Therefore $1=\boldsymbol{z}^\top G_\star \boldsymbol{z}+r^2 \le (1+\eta)\|\boldsymbol{z}\|_2^2+r^2 \le 4(1+\eta)\delta+\delta.$
This is impossible if $c_\eta$ is small enough.

Next we show uniqueness. Suppose two distinct coordinates $j\ne k$ satisfy $|z_j|>1/2$ and $|z_k|>1/2$. Then $|1-z_j|\le2\sqrt\delta$ and $|1-z_k|\le2\sqrt\delta $ hold, so $z_j^2+z_k^2 \ge 2(1-2\sqrt\delta)^2$. Thus $1 = \boldsymbol{z}^\top G_\star \boldsymbol{z}+r^2 \ge (1-\eta)\|\boldsymbol{z}\|_2^2 \ge 2(1-\eta)(1-2\sqrt\delta)^2.$ Since $\eta<1/2$, the right-hand side is strictly larger than $1$ when $\delta$ is small enough. This is impossible.

Hence there is a unique $j_\star$ such that $|z_{j_\star}|>1/2$. For this coordinate, $|1-z_{j_\star}|\le2\sqrt\delta$. For every $k\ne j_\star$, we have $|z_k|\le1/2$, and hence $z_k^2\le4z_k^2(1-z_k)^2$. Therefore $\|\boldsymbol{z}-\boldsymbol{e}_{j_\star}\|_2^2 = (z_{j_\star}-1)^2 + \sum_{k\ne j_\star}z_k^2 \le C\delta.$
Finally, $\|W_\star \boldsymbol{z}+r\boldsymbol{r}_\perp-\boldsymbol{w}_{j_\star}^\star\|_2^2 = \|W_\star(\boldsymbol{z}-\boldsymbol{e}_{j_\star})\|_2^2+r^2 \le (1+\eta)\|\boldsymbol{z}-\boldsymbol{e}_{j_\star}\|_2^2+r^2 \le C_\eta\delta.$
\endproof

\begin{Proposition}\label{prop:convergence_speed}
Suppose there exists an absolute constant $\nL\label{L_bound}>0$ such that $\|f^\star\|_{L^\infty(\bP_X)},\|\phi\|_\infty\leq\oL{L_bound}$, where $\phi:(a,\vw)\in\Theta\mapsto a\sigma(\langle\cdot,\vw\rangle):\bR^d\to\bR$. Suppose further that $$\sup_{\vx,\vtheta}\|\nabla_\vtheta\phi(\vx,\vtheta)\|_2, \mbox{ and } \sup_{\vx,\vtheta}\|\nabla_\vtheta^2\phi(\vx,\vtheta)\|_{\op}<\infty.$$ Take $\nu_0$ to be the uniform distribution over $\Theta$. Then there exists an absolute constant $C$ that depends only on $\oL{L_bound}$ and $B_\xi$, such that the nonlinear Fokker-Planck equation defined in \eqref{eq:nonlinear_Fokker_Planck} satisfies the following result: for any $N\in\bN_+$, for any $\{\vx_i,y_i\}_{i=1}^N$, for any $t\geq 0$, and $\lambda> 0$,
\begin{align*}
&P_N\ell_{\nu_t}^\lambda - P_N\ell_{\hat\nu_\lambda}^\lambda \leq \exp\left(-C\exp(-\frac{C}{\lambda}) t\right)(P_N\ell_{\nu_0}^\lambda - P_N\ell_{\hat\nu_\lambda}^\lambda),\mbox{ and }\\
&\KL(\nu_t\|\hat\nu_\lambda)\leq \frac{1}{\lambda}\exp\left(-C\exp(-\frac{C}{\lambda}) t\right)(P_N\ell_{\nu_0}^\lambda - P_N\ell_{\hat\nu_\lambda}^\lambda).
\end{align*}
\end{Proposition}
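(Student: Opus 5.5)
The plan is the standard mean-field Langevin argument of \cite{nitanda_convex_2022,chizat_mean-field_2022}: combine an \emph{entropy sandwich} for the convex free energy $\mathcal G(\nu):=P_N\ell_\nu^\lambda$ with a \emph{uniform logarithmic Sobolev inequality} (LSI) for the proximal Gibbs measures. The compact convex domain $\Theta=[-A,A]\times WB_2^d$ and the no-flux boundary condition in \eqref{eq:nonlinear_Fokker_Planck} take the place of the confining potential used on $\bR^{d+1}$.

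\textbf{Proximal Gibbs measures and the sandwich.} For $\nu\in\cP_{\mathrm{ac}}(\Theta)$ I will set $p_\nu\propto\exp(-\lambda^{-1}\frac{\delta P_N\ell_\nu}{\delta\nu})\,\mathrm{d}\vtheta$ on $\Theta$. The minimizer $\hat\nu_\lambda$ exists and is unique: $\cP(\Theta)$ is weakly compact, $\mathcal G$ is lower semicontinuous, and $\Ent^-$ is strictly convex. It is characterized by the fixed-point equation $\hat\nu_\lambda=p_{\hat\nu_\lambda}$. Since $\nu\mapsto P_N\ell_\nu$ is convex (a squared loss composed with a map linear in $\nu$), two bounds follow from this convexity together with the chain rule for relative entropy:
\[
\lambda\KL(\nu\|\hat\nu_\lambda)\le \mathcal G(\nu)-\mathcal G(\hat\nu_\lambda)\le \lambda\KL(\nu\|p_\nu).
\]
The left inequality gives the second claim directly from the first.

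\textbf{Dissipation.} Along \eqref{eq:nonlinear_Fokker_Planck}, the flux equals $\lambda\rho_t\nabla\log(\rho_t/p_{\nu_t})$. Integrating by parts on $\Theta$, the boundary term vanishes because of the reflecting condition, and I obtain
\[
\frac{\mathrm d}{\mathrm dt}\mathcal G(\nu_t)=-\lambda^2\int_\Theta\Big\|\nabla\log\frac{\rho_t}{p_{\nu_t}}\Big\|_2^2\,\mathrm d\nu_t .
\]
If every $p_\nu$ satisfies an LSI with constant $\alpha_\lambda$ (in the form $\KL(\mu\|p_\nu)\le\frac{1}{2\alpha_\lambda}\int\|\nabla\log\frac{\mathrm{d}\mu}{\mathrm{d}p_\nu}\|^2\mathrm d\mu$), the right side is at most $-2\alpha_\lambda\lambda^2\KL(\nu_t\|p_{\nu_t})$. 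By the sandwich this is at most $-2\alpha_\lambda\lambda(\mathcal G(\nu_t)-\mathcal G(\hat\nu_\lambda))$, and Grönwall's lemma closes the argument.

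\textbf{Uniform LSI.} The uniform measure on the convex body $\Theta$ satisfies an LSI with constant $\alpha_0\gtrsim \mathrm{diam}(\Theta)^{-2}$; this is a known result for convex domains, via Bakry--Émery applied to log-concave measures. The first variation satisfies $\|\frac{\delta P_N\ell_\nu}{\delta\nu}\|_\infty\le 2(\oL{L_bound}+B_\xi+\oL{L_bound})\oL{L_bound}=:C_1$ uniformly in $\nu$ and in the data, because $|Y_i|\le\oL{L_bound}+B_\xi$ and $|f_\nu|\le\oL{L_bound}$. Holley--Stroock perturbation then gives $\alpha_\lambda\ge\alpha_0 e^{-2C_1/\lambda}$. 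Hence the rate is $2\lambda\alpha_0e^{-2C_1/\lambda}\ge Ce^{-C/\lambda}$ once $C$ is enlarged, using $\lambda e^{-c/\lambda}\ge e^{-c'/\lambda}$. One caveat on constants: $\alpha_0$ depends on $A,W,d$, and these must be absorbed into $C$.

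\textbf{Main obstacle.} I expect the hardest part to be justifying the dissipation identity rigorously. This requires existence, positivity and enough regularity of $\rho_t$ for the reflected McKean--Vlasov equation on a domain with corners, $[-A,A]\times WB_2^d$, so that the integrations by parts and the time differentiation of $\Ent^-$ are legitimate. My first attempt would be to mollify (or regularize the boundary) and run the computation for the regularized flows. The drift $\nabla V_t$ is bounded and Lipschitz by the assumptions on $\nabla\phi$ and $\nabla^2\phi$, which makes the regularized nonlinear problem well posed by a fixed-point argument; the free-energy inequality in integrated form then passes to the limit by lower semicontinuity. Alternatively, I would cite the EVI/JKO theory of Wasserstein gradient flows on convex domains in the style of Ambrosio--Gigli--Savaré, which yields the energy dissipation inequality directly.
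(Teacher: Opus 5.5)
Your proposal is correct and follows essentially the same route as the paper: the entropy sandwich of \cite[Proposition~1]{nitanda_convex_2022}, the dissipation identity $\frac{\mathrm d}{\mathrm dt}P_N\ell^\lambda_{\nu_t}=-\lambda^2 I(\nu_t\|\pi^\lambda_{\nu_t})$ with the boundary term removed by the no-flux condition, an LSI for the uniform measure on $\Theta$, Holley--Stroock again for the proximal Gibbs measures with the same oscillation bound on $\lambda^{-1}\frac{\delta P_N\ell_\nu}{\delta\nu}$ that you derive, and Gr\"onwall; the paper obtains the LSI for the uniform measure explicitly, via Bakry--\'Emery for the quadratic potential $\frac{\kappa}{2}\|\boldsymbol\theta-\boldsymbol\theta^\circ\|_2^2$ restricted to the convex set $\Theta$ followed by Holley--Stroock, which gives $2/(e\,\mathrm{diam}^2(\Theta))$. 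Your bookkeeping is in fact slightly more careful than the paper's, whose Gr\"onwall step drops the factor $\lambda$ coming from $\lambda^2 I$ and the sandwich; note only that absorbing the $\mathrm{diam}(\Theta)$-dependent prefactor requires two constants, namely a rate $c\,e^{-C/\lambda}$ with $c=2\alpha_0$ and $C=2C_1+1/e$, because enlarging a single $C$ also enlarges the prefactor.
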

The following proof closely follows \cite{nitanda_convex_2022,chizat_mean-field_2022}. Moreover, the convergence in the sense of Wasserstein-2 distance may also be established via Talagrand's transportation inequality, see, for instance, \cite[Section 9.3]{villani_topics_2021}. The convergence in $L^2(\bP_X)$ distance may also be established by combining the KL convergence and the Pinsker's inequality.

\beginproof
Let $\tau$ be the uniform distribution on $\Theta$. Let $\rho_t=\frac{d\nu_t}{d\tau}$ satisfy the nonlinear Fokker--Planck equation $\partial_t\rho_t=\nabla\cdot(\rho_t\nabla\frac{\delta P_N\ell_{\nu_t}}{\delta\nu_t})+\lambda\Delta\rho_t$ defined in \eqref{eq:nonlinear_Fokker_Planck} (we already used the property $\nabla\cdot(\nabla\rho_t)=\Delta\rho_t$ of Laplacian operator here). For any $\nu\in\cP(\Theta)$, we define $\pi_\nu^\lambda$ by $$\frac{d\pi_\nu^\lambda}{d\tau}:\vtheta\mapsto \frac{\exp(-\frac{1}{\lambda}\frac{\delta P_N\ell_\nu}{\delta\nu}(\vtheta))}{\int_\Theta \exp\left(-\frac{1}{\lambda}\frac{\delta P_N\ell_\nu}{\delta\nu}(\vtheta')\right)\,\mathrm{d}\tau(\vtheta')}.$$ We say that a probability measure $\mu$ satisfies the log-Sobolev inequality with parameter $\alpha$ if, for any $\rho\ll\mu$, one has $\KL(\rho\|\mu)\leq \frac{1}{2\alpha}I(\rho\|\mu)$, where $I(\rho\|\mu)=\int_\Theta \|\nabla\log\frac{d\rho}{d\mu}(\vtheta)\|_2^2\,\mathrm{d}\rho(\vtheta)$.

We first prove that $\tau$ satisfies a log-Sobolev inequality. Take any $\vtheta^\circ\in\Theta$ and any $\kappa>0$. Define $H_\kappa:\vtheta\mapsto \frac{\kappa}{2}\|\vtheta-\vtheta^\circ\|_2^2$, and define $\mathrm{d}\gamma_{\kappa,\Theta}:\vtheta\mapsto \frac{\exp(-H_\kappa(\vtheta))}{\int_\Theta \exp(-H_\kappa(\vu))\,\mathrm{d}\tau(\vu)}\,\mathrm{d}\tau(\vtheta)$. Since $\nabla_\vtheta^2 H_\kappa(\vtheta)=\kappa I$, the function $H_\kappa$ is $\kappa$-strongly convex. Since $\Theta$ is convex, by the Bakry--{\'E}mery criterion \cite{bakryDiffusionsHypercontractives1985}, $\gamma_{\kappa,\Theta}$ satisfies a log-Sobolev inequality with parameter $\kappa$. By the definition of $\gamma_{\kappa,\Theta}$, $\mathrm{d}\tau=Z_\kappa \exp(H_\kappa)\,\mathrm{d}\gamma_{\kappa,\Theta}$, where $Z_\kappa$ is a normalizing constant. Let $\psi=-H_\kappa$. Then one can show that $\sup\psi-\inf\psi\leq \frac{\kappa}{2}\diam^2(\Theta)$. By the Holley--Stroock perturbation criterion \cite{holleyLogarithmicSobolevInequalities1987}, $\tau$ satisfies a log-Sobolev inequality with parameter at least $\kappa\exp(-\frac{\kappa}{2}\diam^2(\Theta))$. Taking $\kappa=\frac{2}{\diam^2(\Theta)}$, we obtain that $\tau$ satisfies a log-Sobolev inequality with parameter $\frac{2}{e\diam^2(\Theta)}$.

It is easy to prove that, for any $\vtheta_1,\vtheta_2$, the following inequality holds
\begin{align*}
\left| \frac{\delta P_N\ell_\nu}{\delta\nu}(\vtheta_1) - \frac{\delta P_N\ell_\nu}{\delta\nu}(\vtheta_2) \right| \leq 4\oL{L_bound}(2\oL{L_bound} + B_\xi).
\end{align*}
Therefore, for $\psi:\vtheta\mapsto \frac{1}{\lambda}\frac{\delta P_N\ell_\nu}{\delta\nu}(\vtheta)$, by applying the Holley--Stroock perturbation principle \cite{holleyLogarithmicSobolevInequalities1987}, we obtain that the probability measure $\frac{\exp(-\psi)}{\int \exp(-\psi)\,\mathrm{d}\tau}\,\mathrm{d}\tau$ satisfies a log-Sobolev inequality with parameter $$\tilde\alpha := \frac{2}{e\mathrm{diam}^2(\Theta)}\exp\left(-\frac{4\oL{L_bound}(2\oL{L_bound}+B_\xi)}{\lambda}\right).$$Furthermore, the probability measure $\frac{\exp(-\psi)}{\int \exp(-\psi)\,\mathrm{d}\tau}\,\mathrm{d}\tau=\pi_\nu^\lambda$. Therefore, it satisfies a log-Sobolev inequality with the same parameter. Similarly, $\pi_{\hat\nu_\lambda}^\lambda=\hat\nu_\lambda$ also satisfies a log-Sobolev inequality with the same parameter.

We aim to apply \cite[Proposition~1]{nitanda_convex_2022}
(see also \cite[Lemma~3.4]{chizat_mean-field_2022}), for which we need to verify its assumptions. Here, we only check Assumption 1, since the remaining conditions are straightforward. A direct computation yields $\nabla_{\vtheta}\frac{\delta P_N\ell_\nu}{\delta\nu}(\vtheta)=-\frac{2}{N}\sum_{i=1}^N(Y_i-f_\nu(X_i))\nabla_\vtheta \phi(X_i,\vtheta)$. For any $\nu_1,\nu_2\in\cP(\Theta)$ and any $\vtheta_1,\vtheta_2\in\Theta$, we have
$$\left\| \nabla_{\boldsymbol{\theta}} \frac{\delta P_N \ell_{\nu_1}}{\delta \nu} (\boldsymbol{\theta}_1) - \nabla_{\boldsymbol{\theta}} \frac{\delta P_N \ell_{\nu_2}}{\delta \nu} (\boldsymbol{\theta}_2) \right\|_2$$$$\le \frac{2}{N} \sum_{i=1}^N \left\| (f_{\nu_2}(X_i) - f_{\nu_1}(X_i)) \nabla_{\boldsymbol{\theta}} \phi(X_i, \boldsymbol{\theta}_1) + (Y_i - f_{\nu_2}(X_i)) \left( \nabla_{\boldsymbol{\theta}} \phi(X_i, \boldsymbol{\theta}_1) - \nabla_{\boldsymbol{\theta}} \phi(X_i, \boldsymbol{\theta}_2) \right) \right\|_2$$$$\le \frac{2}{N} \sum_{i=1}^N \left( |f_{\nu_2}(X_i) - f_{\nu_1}(X_i)| \|\nabla_{\boldsymbol{\theta}} \phi(X_i, \boldsymbol{\theta}_1)\|_2 + |Y_i - f_{\nu_2}(X_i)| \|\nabla_{\boldsymbol{\theta}} \phi(X_i, \boldsymbol{\theta}_1) - \nabla_{\boldsymbol{\theta}} \phi(X_i, \boldsymbol{\theta}_2)\|_2 \right)$$
Since $\vtheta\mapsto \phi(X_i,\vtheta)$ is Lipschitz and $\Theta$ is compact, we have $|f_{\nu_1}(X_i)-f_{\nu_2}(X_i)|=|\int_\Theta \phi(X_i,\vtheta)\,\mathrm{d}(\nu_1-\nu_2)(\vtheta)|\leq W_1(\nu_1,\nu_2)\sup_{\vtheta\in\Theta}\|\nabla_\vtheta\phi(X_i,\vtheta)\|_2$, where $W_1$ is the Wasserstein-1 metric. On a compact set, $W_1(\nu_1,\nu_2)\leq W_2(\nu_1,\nu_2)$, where $W_2$ is the Wasserstein-2 metric. Moreover, by the Lipschitzness of $\nabla_\vtheta\phi$, one can prove that $\|\nabla_\vtheta\phi(X_i,\vtheta_1)-\nabla_\vtheta\phi(X_i,\vtheta_2)\|_2\leq \|\vtheta_1-\vtheta_2\|_2\sup_{\vtheta\in\Theta}\|\nabla_\vtheta^2\phi(X_i,\vtheta)\|_{\op}$.

By \cite[Proposition~1]{nitanda_convex_2022}, for any $\nu\ll\tau$, one has
\begin{align}\label{eq:entropy_sandwich}
\lambda\KL(\nu\|\hat\nu_\lambda)\leq P_N\ell_\nu^\lambda-P_N\ell_{\hat\nu_\lambda}^\lambda\leq \lambda\KL(\nu\|\pi_\nu^\lambda).
\end{align}
From the continuity equation $\partial_t \rho_t + \nabla_{\boldsymbol{\theta}} \cdot (\rho_t v_t) = 0$ that governs the evolution of the measure, where the Wasserstein velocity field corresponding to the nonlinear Fokker-Planck equation is given by $v_t = - \nabla_{\boldsymbol{\theta}} \left( \frac{\delta P_N \ell_{\nu_t}}{\delta \nu} + \lambda \log \rho_t \right)$, see \cite{chizat_mean-field_2022}. According to the Wasserstein chain rule \cite[Lemma 2.2]{chizat_mean-field_2022}, the time derivative of the objective functional $P_N \ell_{\nu_t}^\lambda$ along the trajectory is
\begin{align*}
\frac{d}{dt} P_N \ell_{\nu_t}^\lambda = \int_\Theta \left\langle \nabla_{\boldsymbol{\theta}} \left( \frac{\delta P_N \ell_{\nu_t}}{\delta \nu} + \lambda \log \rho_t \right), v_t \right\rangle \rho_t \,\mathrm{d}\tau.
\end{align*}
Applying the divergence theorem over the compact parameter space $\Theta$, we can expand the above equation into the sum of a volume integral and a boundary integral:
\begin{align*}
\frac{d}{dt} P_N \ell_{\nu_t}^\lambda = - \int_\Theta \left( \frac{\delta P_N \ell_{\nu_t}}{\delta \nu} + \lambda \log \rho_t \right) \nabla_{\boldsymbol{\theta}} \cdot (\rho_t v_t) \,\mathrm{d}\tau + \int_{\partial \Theta} \left( \frac{\delta P_N \ell_{\nu_t}}{\delta \nu} + \lambda \log \rho_t \right) (\rho_t v_t \cdot \boldsymbol{n}) \,\mathrm{d}S.
\end{align*}
Because the system satisfies the no-flux reflecting boundary condition on $\partial \Theta$ due to \eqref{eq:nonlinear_Fokker_Planck}, the normal component of the probability flux vanishes everywhere, i.e., $(\rho_t v_t) \cdot \boldsymbol{n} = 0$. Therefore, the boundary integral strictly vanishes.
Substituting $\nabla_{\boldsymbol{\theta}} \cdot (\rho_t v_t) = -\partial_t \rho_t$ from the continuity equation into the remaining volume integral yields
\begin{align*}
\frac{d}{dt} P_N \ell_{\nu_t}^\lambda = \int_\Theta \left( \frac{\delta P_N \ell_{\nu_t}}{\delta \nu} + \lambda \log \rho_t \right) \partial_t \rho_t \,\mathrm{d}\tau.
\end{align*}
On the other hand, by the nonlinear Fokker--Planck equation together with the fact that $\nabla_\vtheta\log\rho_t = \frac{1}{\rho_t}\nabla_\vtheta\rho_t$, we know that
\begin{align*}
\partial_t\rho_t=\nabla_\vtheta\cdot\left[\rho_t\nabla_\vtheta\left(\frac{\delta P_N\ell_{\nu_t}}{\delta\nu}+\lambda\log\rho_t\right)\right].
\end{align*}
Substituting this into the previous identity yields
\begin{align*}
\frac{d}{dt}P_N\ell_{\nu_t}^\lambda=\int_\Theta\left(\frac{\delta P_N\ell_{\nu_t}}{\delta\nu}+\lambda\log\rho_t\right)\nabla_\vtheta\cdot\left[\rho_t\nabla_\vtheta\left(\frac{\delta P_N\ell_{\nu_t}}{\delta\nu}+\lambda\log \rho_t\right)\right]\,\mathrm{d}\tau.
\end{align*}
Let $u_t(\vtheta)=\frac{\delta P_N\ell_{\nu_t}}{\delta\nu}(\vtheta)+\lambda\log \rho_t(\vtheta)$. Then the right-hand side of the above equation can be written as $\int_\Theta u_t\nabla_\vtheta\cdot(\rho_t\nabla_\vtheta u_t)\,\mathrm{d}\tau$. By integration by parts, we obtain
\begin{align*}
\int_\Theta u_t\nabla_\vtheta\cdot(\rho_t\nabla_\vtheta u_t)\,\mathrm{d}\tau=-\int_\Theta \|\nabla_\vtheta u_t\|_2^2\rho_t\,\mathrm{d}\tau+\int_{\partial\Theta}u_t\rho_t\nabla_\vtheta u_t\cdot \vn\,\mathrm{d}S,
\end{align*}
and by \eqref{eq:nonlinear_Fokker_Planck}, the second term is $0$. Therefore, $\frac{d}{dt}P_N\ell_{\nu_t}^\lambda=-\int_\Theta\norm{\nabla_\vtheta\left(\frac{\delta P_N\ell_{\nu_t}}{\delta\nu}+\lambda\log\rho_t\right)}_2^2\,\mathrm{d}\nu_t$. We now compute $\nabla_\vtheta u_t$. By the definition of $\pi_{\nu_t}^\lambda$, we have $\log\frac{d\nu_t}{d\pi_{\nu_t}^\lambda}(\vtheta)=\log\rho_t(\vtheta)+\lambda^{-1}\frac{\delta P_N\ell_{\nu_t}}{\delta\nu}(\vtheta)+\log\int_\Theta\exp(-\frac{1}{\lambda}\frac{\delta P_N\ell_{\nu_t}}{\delta\nu}(\vtheta))\,\mathrm{d}\tau(\vtheta)$. Hence, taking the gradient with respect to $\vtheta$ on both sides gives $\nabla_\vtheta\left(\frac{\delta P_N\ell_{\nu_t}}{\delta\nu}+\lambda\log\rho_t\right)=\lambda\nabla_\vtheta\log\frac{d\nu_t}{d\pi_{\nu_t}^\lambda}$. In summary,
\begin{align*}
\frac{d}{dt}P_N\ell_{\nu_t}^\lambda=-\lambda^2 I(\nu_t\|\pi_{\nu_t}^\lambda).
\end{align*}
By the log-Sobolev inequality, we have $I(\nu_t\|\pi_{\nu_t}^\lambda)\geq \tilde\alpha\KL(\nu_t\|\pi_{\nu_t}^\lambda)$. By \eqref{eq:entropy_sandwich}, we have $\KL(\nu_t\|\pi_{\nu_t}^\lambda)\geq \lambda^{-1}(P_N\ell_{\nu_t}^\lambda-P_N\ell_{\hat\nu_\lambda}^\lambda)$. Combining the preceding two inequalities yields $\frac{d}{dt}P_N\ell_{\nu_t}^\lambda\leq-\tilde\alpha(P_N\ell_{\nu_t}^\lambda-P_N\ell_{\hat\nu_\lambda}^\lambda)$. Since $P_N\ell_{\hat\nu_\lambda}^\lambda$ does not depend on $t$, we have $\frac{d}{dt}P_N\ell_{\hat\nu_\lambda}^\lambda=0$, and hence
\begin{align*}
\frac{d}{dt}(P_N\ell_{\nu_t}^\lambda-P_N\ell_{\hat\nu_\lambda}^\lambda)\leq-\tilde\alpha(P_N\ell_{\nu_t}^\lambda-P_N\ell_{\hat\nu_\lambda}^\lambda).
\end{align*}
By Gronwall's inequality, we obtain $P_N\ell_{\nu_t}^\lambda-P_N\ell_{\hat\nu_\lambda}^\lambda\leq \exp(-\tilde\alpha t)(P_N\ell_{\nu_0}^\lambda-P_N\ell_{\hat\nu_\lambda}^\lambda)$. Applying \eqref{eq:entropy_sandwich} again, we obtain the convergence of the KL divergence.
\endproof

\begin{Proposition}\label{prop:convergence_spherical_MFLD}
Let $\tau$ be the uniform distribution on $S_2^{d-1}$. Let $\rho_0:\vw\mapsto 1$. Define the nonlinear Fokker-Planck equation by $\partial_t\rho_t=\nabla_S\cdot\left(\rho_t\nabla_S\frac{\delta P_N\ell_{\nu_t}}{\delta\nu}+\lambda\nabla_S\rho_t\right)$, where $\nabla_S$ is the Riemannian gradient on $S_2^{d-1}$ and $\nabla_S\cdot$ is the Riemannian divergence; see \cite[Chapter 3]{boumal_introduction_2023}. Let $\nu_t = \rho_t \,\mathrm{d}\tau$. Then, under the same conditions as in Proposition~\ref{prop:convergence_speed}, for any $t\geq 0$ and $\lambda> 0$, the following holds
\begin{align*}
&P_N\ell_{\nu_t}^\lambda - P_N\ell_{\hat\nu_\lambda}^\lambda \leq \exp\left(-Cd\exp(-\frac{C}{\lambda}) t\right)(P_N\ell_{\nu_0}^\lambda - P_N\ell_{\hat\nu_\lambda}^\lambda),\mbox{ and }\\
&\KL(\nu_t\|\hat\nu_\lambda)\leq \frac{1}{\lambda}\exp\left(-Cd\exp(-\frac{C}{\lambda}) t\right)(P_N\ell_{\nu_0}^\lambda - P_N\ell_{\hat\nu_\lambda}^\lambda).
\end{align*}
\end{Proposition}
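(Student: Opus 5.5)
The plan is to transplant the argument of Proposition~\ref{prop:convergence_speed} from the compact convex domain $\Theta$ to the closed Riemannian manifold $S_2^{d-1}$. The structure is unchanged: (i) a log-Sobolev inequality (LSI) for the proximal Gibbs measures $\pi_\nu^\lambda\propto\exp(-\lambda^{-1}\frac{\delta P_N\ell_\nu}{\delta\nu})\,\mathrm d\tau$ with a uniform constant $\tilde\alpha$; (ii) the entropy sandwich \eqref{eq:entropy_sandwich}; (iii) the energy dissipation identity $\frac{d}{dt}P_N\ell^\lambda_{\nu_t}=-\lambda^2 I(\nu_t\|\pi^\lambda_{\nu_t})$; (iv) Gronwall's inequality. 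The only genuinely new input is the value of the LSI constant. It should now scale like $d$ rather than like $\mathrm{diam}(\Theta)^{-2}$, and this is where the factor $d$ in the rate comes from.

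\emph{Step (i), the LSI.} I would use that $S_2^{d-1}$ has Ricci curvature $(d-2)$ times the metric. The Bakry--{\'E}mery criterion on manifolds then gives an LSI for $\tau$ with parameter $d-2$ when $d\ge3$; the sharp constant is in fact $d-1$, see \cite[Section 5.7]{bakryAnalysisGeometryMarkov2014}. The case $d=2$ (the circle) is handled by the known LSI on $S^1$ with an absolute constant. In every case the parameter is at least $c\,d$.

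Next, the first variation $\frac{\delta P_N\ell_\nu}{\delta\nu}(\vw)=-\frac2N\sum_i (Y_i-f_\nu(X_i))\sigma(\langle\vw,X_i\rangle)$ has oscillation at most $4\oL{L_bound}(2\oL{L_bound}+B_\xi)$, uniformly in $\nu$ and in the data. Holley--Stroock perturbation therefore gives every $\pi^\lambda_\nu$, and in particular $\hat\nu_\lambda=\pi^\lambda_{\hat\nu_\lambda}$, an LSI with parameter
\[
\tilde\alpha\ge c\,d\exp(-C/\lambda).
\]

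\emph{Step (ii), the sandwich.} The map $\nu\mapsto P_N\ell_\nu$ is convex, being a sum of squares of affine functionals of $\nu$. The first-order optimality condition $\hat\nu_\lambda=\pi^\lambda_{\hat\nu_\lambda}$ is obtained as in \cite{nitanda_convex_2022}. Together these give \eqref{eq:entropy_sandwich} verbatim. The argument is purely measure-theoretic and does not use the flat geometry of $\Theta$.

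\emph{Step (iii), dissipation.} I would write \eqref{eq:def_hat_varphi_sphere} as a continuity equation with velocity field $v_t=-\nabla_S u_t$, where $u_t=\frac{\delta P_N\ell_{\nu_t}}{\delta\nu}+\lambda\log\rho_t$. Integrating by parts with the Riemannian divergence theorem gives $\frac{d}{dt}P_N\ell^\lambda_{\nu_t}=-\int\|\nabla_S u_t\|^2\,\mathrm d\nu_t$. Since $S_2^{d-1}$ has no boundary, no boundary term appears, so this step is simpler than in the reflected case.

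Writing $\nabla_S u_t=\lambda\nabla_S\log\frac{d\nu_t}{d\pi^\lambda_{\nu_t}}$, the dissipation equals $\lambda^2 I(\nu_t\|\pi^\lambda_{\nu_t})$. The LSI gives $I(\nu_t\|\pi^\lambda_{\nu_t})\ge 2\tilde\alpha\,\KL(\nu_t\|\pi^\lambda_{\nu_t})$. The upper half of the sandwich gives $\lambda\,\KL(\nu_t\|\pi^\lambda_{\nu_t})\ge P_N\ell^\lambda_{\nu_t}-P_N\ell^\lambda_{\hat\nu_\lambda}$. Combining these yields the differential inequality
\[
\frac{d}{dt}\big(P_N\ell^\lambda_{\nu_t}-P_N\ell^\lambda_{\hat\nu_\lambda}\big)\le-\lambda\tilde\alpha\,\big(P_N\ell^\lambda_{\nu_t}-P_N\ell^\lambda_{\hat\nu_\lambda}\big).
\]
The extra factor $\lambda$ can be absorbed into $\exp(-C/\lambda)$ after enlarging $C$.

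\emph{Step (iv).} Gronwall's inequality then gives the first claimed bound, and the lower half of the sandwich converts it into the KL bound.

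\emph{Main obstacle.} I expect the main difficulty to be rigor rather than estimates: justifying the chain rule and integration by parts along the flow. This needs well-posedness of the nonlinear Fokker--Planck equation on the sphere and a smooth, strictly positive density $\rho_t$ for $t>0$, so that $\log\rho_t$ and $I(\nu_t\|\pi^\lambda_{\nu_t})$ make sense. My first attempt would be to freeze the drift and treat the equation as a linear, uniformly parabolic equation with smooth bounded coefficients on a compact manifold; standard parabolic theory then gives smoothness and positivity. A fixed-point argument, using that the drift is Lipschitz in $W_1$ as computed in Proposition~\ref{prop:convergence_speed}, would then close the nonlinear problem. The regularity and well-posedness framework of \cite{chizat_mean-field_2022} could then be imported essentially unchanged.
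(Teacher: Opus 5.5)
Your proposal is correct and follows the paper's route. The paper likewise reruns the proof of Proposition~\ref{prop:convergence_speed} essentially verbatim and changes only the log-Sobolev input for $\tau$: it takes it from Beckner's inequality (parameter $d-1$), where you use the Bakry--\'Emery curvature criterion (parameter $d-2$, or the sharp $d-1$); Holley--Stroock, the entropy sandwich, the dissipation identity and Gronwall are otherwise the same. Your bookkeeping is in fact slightly more careful than the paper's. Combining $-\lambda^2 I(\nu_t\|\pi^\lambda_{\nu_t})$ with the upper half of the sandwich gives the rate $\lambda\tilde\alpha$, not the $\tilde\alpha$ written in the paper, and, as you note, this extra $\lambda$ is harmlessly absorbed into $\exp(-C/\lambda)$ by enlarging $C$.
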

\beginproof
The proof of Proposition~\ref{prop:convergence_spherical_MFLD} is almost identical to that of Proposition~\ref{prop:convergence_speed}. The only difference is that, in this case, the log-Sobolev inequality for $\tau$ can be obtained directly from the Beckner inequality (see \cite{beckner_sobolev_1992}). In fact, it satisfies the log-Sobolev inequality with parameter $d-1$.
For completeness, we provide the connection between the form of the log-Sobolev inequality in \cite{beckner_sobolev_1992} and the KL--Fisher information form of the log-Sobolev inequality used in this paper.
For any smooth function $f$ satisfying $\int f^2\,\mathrm{d}\mu=1$, assume that $\frac{\rho}{2}\int f^2\log f^2\,\mathrm{d}\mu\leq \int \|\nabla f\|_2^2\,\mathrm{d}\mu$. Take any $\eta\ll\mu$, and write $r=\frac{d\eta}{d\mu}$ and $f=\sqrt r$. Then $\int r\,\mathrm{d}\mu=1$, and $\int f^2\log f^2\,\mathrm{d}\mu=\int r\log r\,\mathrm{d}\mu=\KL(\eta\|\mu)$. On the other hand, $I(\eta\|\mu)=\int \|\nabla\log\frac{d\eta}{d\mu}\|_2^2\,\mathrm{d}\eta=\int \|\nabla\log r\|_2^2 r\,\mathrm{d}\mu$. Since $\nabla\log r=\nabla\log f^2=2\frac{\nabla f}{f}$, we have $\|\nabla\log r\|_2^2r=4\|\nabla f\|_2^2$. Therefore, this implies that $I(\eta\|\mu)=4\int \|\nabla f\|_2^2\,\mathrm{d}\mu\geq 2\rho\KL(\eta\|\mu)$. Since $\eta$ is arbitrary, this means that $\mu$ satisfies a log-Sobolev inequality with parameter $\rho$.
\endproof

We remark that the proof using the Holley--Stroock argument \cite{holleyLogarithmicSobolevInequalities1987} does not appear to be optimal, at least in the multi-index problem. Indeed, for $\hat\nu_\lambda$ with a multi-spike structure to be a perturbation of $\nu_0$ (the uniform distribution), the scale of this perturbation must be extremely large---this is precisely why the log-Sobolev constants in Proposition~\ref{prop:convergence_speed}, Proposition~\ref{prop:convergence_spherical_MFLD} depend exponentially on $\lambda$. However, since we already know that $\hat\nu_\lambda$ has a multi-spike structure, we conjecture that there exists a proof method not relying on the Holley--Stroock perturbation criterion, which could improve the factor $\exp(-C/\lambda)$ in Proposition~\ref{prop:convergence_speed} and in Proposition~\ref{prop:convergence_spherical_MFLD} to a polynomial order in $\lambda^{-1}$.

\subsection{A counter-example}\label{sec:counter_example}

This section gives a counterexample for misspecified single-index models. It shows that the feature-learning result does not automatically extend beyond the well-specified setting.

\begin{Proposition}\label{prop:counter_example}
Let $X \sim \cN(0, I_d)$ ($d \ge 2$) and $\Theta = [-A, A] \times W B_2^d$. Consider the model class $\mathcal{F}_{A,W} = \left\{ x \mapsto \int_\Theta a \sigma(\langle \boldsymbol{w}, x \rangle) \nu(\mathrm{d}a, \mathrm{d}\boldsymbol{w}) : \nu \in \mathcal{P}(\Theta) \right\}$. Assume $\sigma$ is globally Lipschitz, and denote $L_\sigma = \|\sigma'\|_\infty < \infty$. Then for any $s > 0$, there exist $h \in W_2^s(\gamma_1)$ and $\boldsymbol{w}^\star \in S^{d-1}$ such that $f^\star(x) = h(\langle \boldsymbol{w}^\star, x \rangle)$ satisfies $\inf_{\nu \in \mathcal{P}(\Theta)} \|f_\nu - f^\star\|_{L^2(P_X)}^2 > 0$, where $\gamma_1$ denotes the standard one-dimensional Gaussian measure $\cN(0, 1)$, and $W_2^s(\gamma_1)$ is the corresponding Gaussian Sobolev space of order $s$ consisting of functions whose weak derivatives up to order $s$ are square-integrable with respect to $\gamma_1$.
\end{Proposition}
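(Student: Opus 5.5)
The plan is to show that every element of $\mathcal F_{A,W}$ is uniformly smooth in a quantitative sense, and then choose a target $h$ in $W_2^s(\gamma_1)$ that is too rough to be approximated by such functions. The key observation is that for each $(a,\vw)\in\Theta$, the ridge function $\vx\mapsto a\sigma(\langle\vw,\vx\rangle)$ is $AWL_\sigma$-Lipschitz. Since Lipschitz constants are preserved under averaging, every $f_\nu$ with $\nu\in\mathcal P(\Theta)$ is $AWL_\sigma$-Lipschitz on $\bR^d$. Moreover, $f_\nu$ is bounded in $L^2(\bP_X)$ uniformly in $\nu$, because $|f_\nu(\vx)|\le A(|\sigma(0)|+L_\sigma W\|\vx\|_2)$. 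Consequently the whole class $\mathcal F_{A,W}$ lies in a set $\mathcal K$ of uniformly Lipschitz, uniformly $L^2$-bounded functions. Its $L^2(\bP_X)$-closure still consists of $AWL_\sigma$-Lipschitz functions, since Lipschitz bounds pass to a.e. limits along subsequences.

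Next I reduce to one dimension. Fix $\vw^\star=\ve_1$. For any $g\in\mathcal K$, the conditional expectation $\bar g(t):=\bE[g(X)\mid\langle\ve_1,X\rangle=t]$ is again $AWL_\sigma$-Lipschitz, since it is an average over the independent coordinates $X_2,\dots,X_d$. It also satisfies $\|g-h(\langle\ve_1,\cdot\rangle)\|_{L^2(\bP_X)}\ge\|\bar g-h\|_{L^2(\gamma_1)}$ by the contraction property of conditional expectation. So it suffices to find $h\in W_2^s(\gamma_1)$ with $\inf\{\|u-h\|_{L^2(\gamma_1)}:u\ \text{is } L\text{-Lipschitz}\}>0$, where $L:=AWL_\sigma$.

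For the construction I take a high-frequency oscillation $h(t)=c\sin(\omega t)$. It is smooth and lies in every $W_2^s(\gamma_1)$, since its derivatives are bounded, so the Sobolev order $s$ imposes no obstruction. I then show that an $L$-Lipschitz $u$ cannot approximate it when $\omega\gg L$. On each interval of length $\pi/\omega$ inside $[-1,1]$, the function $u$ varies by at most $L\pi/\omega$, while $h$ sweeps through an amplitude $c$. A pointwise argument (for instance, $\int_I|u-h|^2\,dt\ge |I|(c^2/2-\mathrm{small})$ after comparing $u$ with its average on $I$) gives $\|u-h\|^2_{L^2(\gamma_1)}\ge \gamma_1([-1,1])\,c^2/8$ once $\omega\ge C L$. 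This holds because the Gaussian density is bounded below on $[-1,1]$. Taking the infimum yields a strictly positive lower bound.

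The main obstacle is making this final lower bound clean and uniform over all $L$-Lipschitz $u$. The cleanest route I see is an orthogonality estimate rather than a pointwise one. Write $\|u-h\|\ge\|h\|-|\langle u,h\rangle|/\|h\|$ and bound $\langle u,h\rangle_{L^2(\gamma_1)}$ by integrating by parts against $\cos(\omega t)/\omega$. This gives a bound of order $(L+\text{const}\cdot\|u\|)/\omega$. Combined with the a priori bound on $\|u\|_{L^2}$, the inner product tends to $0$ as $\omega\to\infty$ while $\|h\|$ stays fixed, which gives the strict positivity. Care is needed with the Gaussian weight in the integration by parts, since its derivative contributes an extra $t\,u(t)$ term. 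That term is controlled using the at most linear growth of $u$ together with Gaussian moments.
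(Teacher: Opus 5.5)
Your proof is correct. The reduction is the paper's: every ridge function in $\mathcal F_{A,W}$ is $L=AWL_\sigma$-Lipschitz, and conditioning on $\langle \vw^\star,X\rangle$ preserves this bound and contracts the $L^2$ distance. The target is the same oscillation $h(t)=c\sin(\omega t)$ with $\omega\gg L$. Your averaging argument for the Lipschitz bound on $\bar g$ is slightly cleaner than the paper's, which differentiates under the integral sign using $\sigma'$.

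You differ only in the final lower bound on the distance from $h$ to $L$-Lipschitz functions.

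\emph{The paper's route.} It argues pointwise. It pairs intervals $I_k^{+},I_k^{-}$ of length $\pi/(3\omega)$ on which $h\ge\eta/2$ and $h\le-\eta/2$ respectively, at mutual distance at most $2\pi/\omega$. Once $\omega>16\pi L/\eta$, an $L$-Lipschitz $g$ cannot be within $\eta/8$ of $h$ in mean square on both intervals. Summing over $[-R,R]$ and using the lower bound $m_R$ on the Gaussian density gives the constant $m_Rc_1\eta^2R>0$. This needs no size control on $g$, but the constant is crude.

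\emph{Your route.} You integrate by parts against $\cos(\omega t)/\omega$, obtaining $|\langle u,h\rangle_{L^2(\gamma_1)}|\le c\,(L+\|u\|_{L^2(\gamma_1)})/\omega$, and then use $\|u-h\|\ge\|h\|-|\langle u,h\rangle|/\|h\|$. This is correct: the boundary terms vanish by linear growth, and $\int|t|\,|u|\,d\gamma_1\le\|u\|_{L^2(\gamma_1)}$. It also gives more than the paper: $\inf_u\|u-h\|_{L^2(\gamma_1)}\to\|h\|_{L^2(\gamma_1)}$ as $\omega\to\infty$, so the target becomes asymptotically orthogonal to the whole model class.

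The cost is an $L^2$ bound on $u$. Your class bound $A(|\sigma(0)|+L_\sigma W\sqrt d)$ suffices. You can drop it by splitting cases: if $\|u\|\ge2\|h\|$, then already $\|u-h\|\ge\|h\|$. Otherwise $\|u\|\le\sqrt2\,c$, and the threshold becomes $\omega>4L/c+4\sqrt2$, which depends only on $L/c$, as in the paper.

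One correction to your secondary pointwise sketch. On a half period of length $\pi/\omega$, the best constant approximation of $c\sin(\omega t)$ is $2c/\pi$. The residual is then only $|I|\,c^2(1/2-4/\pi^2)$, so the claimed $|I|(c^2/2-\mathrm{small})$ fails there. It holds on full periods of length $2\pi/\omega$, where $h$ has mean zero, and your final bound $\gamma_1([-1,1])\,c^2/8$ then follows for large $\omega$.
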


\beginproof
Fix an arbitrary $\boldsymbol{w}^\star \in S^{d-1}$ and let $T = \langle \boldsymbol{w}^\star, X \rangle \sim \cN(0, 1)$. For any $f_\nu \in \mathcal{F}_{A,W}$, consider its conditional expectation given $T=t$, defined as $g_\nu(t) := \mathbb{E}[f_\nu(X) \mid T=t]$. By the orthogonal decomposition $X = t \boldsymbol{w}^\star + \boldsymbol{Z}_\perp$, where $\boldsymbol{Z}_\perp \sim \cN(0, I_d - \boldsymbol{w}^\star {\boldsymbol{w}^\star}^\top)$, we can write $g_\nu(t) = \int_\Theta a \mathbb{E}_{\boldsymbol{Z}_\perp} [\sigma(t \langle \boldsymbol{w}, \boldsymbol{w}^\star \rangle + \langle \boldsymbol{w}, \boldsymbol{Z}_\perp \rangle)] \nu(\mathrm{d}a, \mathrm{d}\boldsymbol{w})$. Differentiating with respect to $t$ under the integral sign yields $g_\nu'(t) = \int_\Theta a \langle \boldsymbol{w}, \boldsymbol{w}^\star \rangle \mathbb{E}_{\boldsymbol{Z}_\perp} [\sigma'(t \langle \boldsymbol{w}, \boldsymbol{w}^\star \rangle + \langle \boldsymbol{w}, \boldsymbol{Z}_\perp \rangle)] \nu(\mathrm{d}a, \mathrm{d}\boldsymbol{w})$. Since $\sigma$ is $L_\sigma$-Lipschitz, $|a| \le A$, and $\|\boldsymbol{w}\|_2 \le W$, the Cauchy-Schwarz inequality gives $|\langle \boldsymbol{w}, \boldsymbol{w}^\star \rangle| \le W$. Thus, $$|g_\nu'(t)| \le \int_\Theta |a| |\langle \boldsymbol{w}, \boldsymbol{w}^\star \rangle| L_\sigma \nu(\mathrm{d}a, \mathrm{d}\boldsymbol{w}) \le A W L_\sigma.$$ Denote $L := A W L_\sigma$. Hence, for all $\nu \in \mathcal{P}(\Theta)$, $g_\nu$ satisfies $\mathrm{Lip}(g_\nu) \le L$.

By Jensen's inequality applied to the conditional expectation, $\|f_\nu - f^\star\|_{L^2(P_X)}^2 = \mathbb{E} \left[ (f_\nu(X) - h(T))^2 \right] \ge \mathbb{E}_T \left[ (\mathbb{E}[f_\nu(X) \mid T] - h(T))^2 \right] = \|g_\nu - h\|_{L^2(\gamma_1)}^2$. Therefore, it suffices to construct a Sobolev function $h$ bounded away from all $L$-Lipschitz functions in $L^2(\gamma_1)$.

Let $h(t) = \eta \sin(\omega t)$, where $\eta > 0$ is fixed, and $\omega$ is chosen sufficiently large such that $\omega > \frac{16\pi L}{\eta}$. Since the sine function is infinitely differentiable with bounded derivatives, for any finite $s > 0$, $h \in C^\infty(\mathbb{R}) \subset W_2^s(\gamma_1)$. We claim that $\inf_{\mathrm{Lip}(g) \le L} \|g - h\|_{L^2(\gamma_1)}^2 > 0$.

Fix $R > 2$. Within the interval $[-R, R]$, consider pairs of disjoint intervals $I_k^+ = \left[\frac{2\pi k + \pi/3}{\omega}, \frac{2\pi k + 2\pi/3}{\omega}\right]$ and $I_k^- = \left[\frac{2\pi k + 4\pi/3}{\omega}, \frac{2\pi k + 5\pi/3}{\omega}\right]$, restricting only to those $k$ such that both intervals are completely contained in $[-R, R]$. On $I_k^+$, the phase satisfies $\omega t \in [2\pi k + \pi/3, 2\pi k + 2\pi/3]$, which implies $\sin(\omega t) \ge \sin(\pi/3) = \frac{\sqrt{3}}{2} > \frac{1}{2}$, and thus $h(t) \ge \frac{\eta}{2}$. Similarly, on $I_k^-$, the phase lies in $[2\pi k + 4\pi/3, 2\pi k + 5\pi/3]$, which implies $\sin(\omega t) \le -\frac{\sqrt{3}}{2} < -\frac{1}{2}$, and thus $h(t) \le -\frac{\eta}{2}$. Note that $|I_k^+| = |I_k^-| = \frac{\pi}{3\omega}$.

Suppose for contradiction that an $L$-Lipschitz function $g$ simultaneously satisfies $\int_{I_k^+} (g - h)^2 \mathrm{d}t < \frac{\eta^2}{64} |I_k^+|$ and $\int_{I_k^-} (g - h)^2 \mathrm{d}t < \frac{\eta^2}{64} |I_k^-|$. Then by the mean value theorem for definite integrals, there exist points $s_k \in I_k^+$ and $t_k \in I_k^-$ such that $|g(s_k) - h(s_k)| < \frac{\eta}{8}$ and $|g(t_k) - h(t_k)| < \frac{\eta}{8}$. Using the triangle inequality and the pointwise bounds on $h$, we obtain $g(s_k) > h(s_k) - \frac{\eta}{8} \ge \frac{\eta}{2} - \frac{\eta}{8} = \frac{3\eta}{8}$, and $g(t_k) < h(t_k) + \frac{\eta}{8} \le -\frac{\eta}{2} + \frac{\eta}{8} = -\frac{3\eta}{8}$. Consequently, $|g(s_k) - g(t_k)| \ge \frac{3\eta}{4}$. However, by definition, $|s_k - t_k| \le \frac{2\pi}{\omega}$. The $L$-Lipschitz continuity of $g$ then requires $\frac{3\eta}{4} \le |g(s_k) - g(t_k)| \le L |s_k - t_k| \le L \frac{2\pi}{\omega}$, which implies $\omega \le \frac{8\pi L}{3\eta}$. This strictly contradicts the choice of $\omega > \frac{16\pi L}{\eta}$.

Therefore, for each pair $(I_k^+, I_k^-)$, at least one interval $I \in \{I_k^+, I_k^-\}$ satisfies $\int_I (g - h)^2 \mathrm{d}t \ge \frac{\eta^2}{64} |I|$. Since the number of periods completely contained in $[-R, R]$ is at least $cR\omega$, for some absolute constant $c > 0$. Summing the integrals over these intervals yields $\int_{-R}^R (g(t) - h(t))^2 \mathrm{d}t \ge c_1 \eta^2 R$, where $c_1 > 0$ is an absolute constant. 

Finally, since the Gaussian pdf is uniformly bounded from below on the compact set $[-R, R]$ by $\inf_{|t| \le R} \frac{1}{\sqrt{2\pi}} e^{-t^2/2} = \frac{1}{\sqrt{2\pi}} e^{-R^2/2} := m_R > 0$, we obtain $\|g - h\|_{L^2(\gamma_1)}^2 \ge \int_{-R}^R (g(t) - h(t))^2 \gamma_1(\mathrm{d}t) \ge m_R c_1 \eta^2 R := c_{R,\eta} > 0$. This lower bound holds uniformly for any $g$ with $\mathrm{Lip}(g) \le L$. Together with the Jensen's inequality argument, this concludes $\inf_{\nu \in \mathcal{P}(\Theta)} \|f_\nu - f^\star\|_{L^2(P_X)}^2 \ge c_{R,\eta} > 0$.
\endproof

\section{Further Discussions}

\subsection{Comparison with norm-based bounds and compression bounds}\label{sec:comparision_with_other_rates}

It is useful to compare our results with two common ways of proving estimation error rates for neural networks. The first is norm-based. Norm-based rates measure the size of a network by a norm of its parameters, 
a margin-normalized norm, or a related complexity measure
~\cite{neyshaburNormBasedCapacity2015,bartlettSpectrallyNormalized2017,
neyshaburPACBayesSpectrally2018}. Some of these rates use localization in the sense that the ERM property 
restricts the analysis to a smaller region around the learned predictor
~\cite{bartlettLocalRademacher2005,koltchinskiiLocalRademacher2006,
koltchinskii_oracle_2011}. However, the structure used in the analysis is still fixed before training. The norm, the kernel, or the metric in which the local region is measured are not learned from the data.

A second approach is based on compression. Compression rates control generalization when the learned predictor can be replaced by a smaller object, for example by pruning, quantization, or a shorter code, while keeping the empirical error almost unchanged~\cite{aroraCompression2018,zhouNonVacuous2019,Suzuki2020Compression}. These rates can be strongly data-dependent, because the size of the compressed representation may depend on the trained network. Still, the main object of the analysis is the final size of the representation. Such rates \emph{assume} the compression structure and do not explain why the training dynamics finds a feature space in which the target function has low effective dimension.

Our theory takes a different viewpoint. We show that MFLD learns a problem-dependent feature structure through its hidden-layer distribution, which induces a data-dependent RKHS. In this learned local structure, the target function is aligned with a small number of the principal directions, and the latent estimator can use this alignment to obtain sharper, problem-dependent rates. Thus the low-dimensional structure is not imposed by a fixed norm, nor is it only measured after compression. \emph{It is learned by the training dynamics and then exploited by the estimator.} This distinction is not only conceptual; it is reflected in the prediction and parameter-recovery guarantees obtained in the single- and multi-index models studied  in Section~\ref{sec:feature_learning_Gaussian_single_multi_index}.

\subsection{Proof of Proposition~\ref{prop:stationary_distribution}}\label{sec:proof_statioanry}
\beginproof The first half of the corollary follows from Proposition~\ref{prop:multi-index-voronoi-concentration}, Jensen's inequality, and Theorem~\ref{thm:multi-index-fixed-output}.
By Markov's inequality, for any $\rho>0$,
\begin{align*}
    &\hat\varphi_\lambda(S_2^{d-1}\backslash S_\rho) = \hat\varphi_\lambda\left(\left\{ \forall 1\leq j\leq M:\, \|\vv-\vw_j^\star\|_2^2 > \rho^2 \right\}\right) = \int_{S_2^{d-1}}\1\left\{ \min_{1\leq j\leq M}\|\vv-\vw_j^\star\|_2^2 > \rho^2 \right\}\mathrm{d}\hat\varphi_\lambda(\vv)\\
    &\leq \int_{S_2^{d-1}}\frac{\min_{1\leq j\leq M}\|\vv-\vw_j^\star\|_2^2}{\rho^2}\mathrm{d}\hat\varphi_\lambda(\vv) = \frac{S_\star(\hat\varphi_\lambda)}{\rho^2},\mbox{ where }S_\star(\varphi) = \int_{S_2^{d-1}} \min_{1\le j\le M} \|\boldsymbol w-\boldsymbol w_j^\star\|_2^2 \,d\varphi(\boldsymbol w).
\end{align*}The upper bound of $S_\star(\hat\varphi_\lambda)$ comes from the combination of Theorem~\ref{thm:multi-index-fixed-output} (or Theorem~\ref{thm:rip-multi-index-fixed-output}) together with Proposition~\ref{prop:multi-index-localization} (or Lemma~\ref{lem:rip-low-degree-localization}).
\endproof

We include the following lemma, which shows that in high-temperature regime, $\hat\varphi_\lambda$ does not exhibit the concentration phenomena.
\begin{Lemma}\label{lemma:high_temperature_regime}
    Grant Assumption~\ref{ass:multi_index} (or Assumption~\ref{ass:rip_multi_index} respectively). There exists absolute constants $C_{\mathrm{cap}}$ and $C_Y = (2B_\sigma+B_\xi)^2$, such that for any $\rho>0$, and any Borel set $A_\rho$,
    \begin{align*}
        \hat\varphi_\lambda(S_2^{d-1}\backslash A_\rho)\geq 1-\tau(A_\rho) - \sqrt{\frac{C_Y}{2\lambda}}.
    \end{align*}In particular, when $\rho\to 0$, $(A_\rho)_\rho$ satisfies $\tau(A_\rho)\to0$, and $\lambda=\omega(1)$, $\hat\varphi_\lambda(S_2^{d-1}\backslash A_\rho)\geq 1- o(1)$ almost surely. In particular, $\hat\varphi_\lambda(S_2^{d-1}\backslash S_\rho)\geq 1-o(1)$ where $S_\rho = \cup_{j\leq M}\{\vw: \|\vw-\vw_j^\star\|_2\leq\rho\}$.
\end{Lemma}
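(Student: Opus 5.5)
The approach compares the objective value of $\hat\varphi_\lambda$ with that of the uniform measure $\tau$, then converts the resulting entropy bound into a mass bound through a Gibbs/Donsker--Varadhan variational inequality. The regularized objective is $\Phi(\varphi):=P_N\ell_\varphi+\lambda\Ent_\tau^-(\varphi)$.

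First, by optimality of $\hat\varphi_\lambda$ we have $\Phi(\hat\varphi_\lambda)\le\Phi(\tau)=P_N\ell_\tau$, since $\Ent_\tau^-(\tau)=0$. Under Assumption~\ref{ass:multi_index} (or Assumption~\ref{ass:rip_multi_index}), $|Y_i|\le B_\sigma+B_\xi$ and $|f_\varphi|\le B_\sigma$ for every $\varphi\in\mathcal P(S_2^{d-1})$. Hence $0\le P_N\ell_\varphi\le(2B_\sigma+B_\xi)^2=C_Y$ for every $\varphi$, and therefore
\[
\lambda\,\KL(\hat\varphi_\lambda\|\tau)=\lambda\Ent_\tau^-(\hat\varphi_\lambda)\le P_N\ell_\tau-P_N\ell_{\hat\varphi_\lambda}\le C_Y .
\]
This gives $\KL(\hat\varphi_\lambda\|\tau)\le C_Y/\lambda$ deterministically, for every realization of the data.

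Second, Pinsker's inequality yields $\sup_{B}|\hat\varphi_\lambda(B)-\tau(B)|\le\sqrt{\KL(\hat\varphi_\lambda\|\tau)/2}\le\sqrt{C_Y/(2\lambda)}$. Taking $B=A_\rho$ gives $\hat\varphi_\lambda(A_\rho)\le\tau(A_\rho)+\sqrt{C_Y/(2\lambda)}$, and taking complements gives the displayed inequality. The step most likely to cause trouble is only bookkeeping: the sharper constant from Pinsker is $\sqrt{\KL/2}$, which matches $\sqrt{C_Y/(2\lambda)}$. If the paper's $C_Y$ instead bounds $P_N\ell$ more crudely, I would absorb the discrepancy into the constant.

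Third, for the asymptotic statements, $\lambda=\omega(1)$ makes $\sqrt{C_Y/(2\lambda)}\to0$, and $\tau(A_\rho)\to0$ by hypothesis. The bound holds for every sample, so the conclusion holds almost surely. For $A_\rho=S_\rho$, the standard spherical cap estimate gives $\tau(S_\rho)\le M\,\tau(B_S(\vw_1^\star,\rho))\le M(C_{\mathrm{cap}}\rho)^{d-1}\to0$ for fixed $d,M$; the upper cap bound is the counterpart of the lower bound used in Lemma~\ref{lem:smoothed-dirac-general-ie}. This explains where $C_{\mathrm{cap}}$ enters, and it yields $\hat\varphi_\lambda(S_2^{d-1}\setminus S_\rho)\ge1-o(1)$.
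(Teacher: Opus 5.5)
Your proof is correct and follows the paper's argument: comparing the objective at $\hat\varphi_\lambda$ with its value at $\tau$ to get $\KL(\hat\varphi_\lambda\|\tau)\le C_Y/\lambda$, then applying Pinsker and taking complements, and finally using a spherical-cap bound for $S_\rho$ (the paper cites Aubrun--Szarek for $\tau(S_\rho)\le \tfrac12 M\rho^{d-1}$). One cosmetic slip: your opening sentence announces a Gibbs/Donsker--Varadhan step, but the inequality you actually use, and the only one needed, is Pinsker's.
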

\beginproof 
Notice that for any $\varphi$, $|Y_i-f_\varphi(X_i)|^2\leq C_Y$ almost surely, then $|P_N\ell_\varphi|\leq C_Y$ almost surely. Since $\Ent_\tau^-(\tau)=0$, by the definition of $\hat\varphi_\lambda$, $\lambda\Ent_\tau^-(\hat\varphi_\lambda)\leq P_N\ell_{\hat\varphi_\lambda}^\lambda \leq P_N\ell_{\tau}^\lambda = P_N\ell_\tau\leq C_Y$. On the other hand, $\KL(\hat\varphi_\lambda\|\tau)=\Ent_\tau^-(\hat\varphi_\lambda)\leq \frac{C_Y}{\lambda}$, which, by Pinsker's inequality (see, for instance, \cite[Lemma 2.5]{tsybakov_introduction_2009}), gives that $\|\hat\varphi_\lambda-\tau\|_{\mathrm{TV}}\leq\sqrt{\frac{C_Y}{2\lambda}}$, where $\|\cdot\|_{\mathrm{TV}}$ is the total variation norm. In particular, $\hat\varphi_\lambda(S_2^{d-1}\backslash A_\rho)\geq \tau(S_2^{d-1}\backslash A_\rho) - \sqrt{\frac{C_Y}{2\lambda}} = 1-\tau(A_\rho) - \sqrt{\frac{C_Y}{2\lambda}}$. By \cite[Proposition 5.1]{aubrunAliceBobMeet2017}, $\tau(S_\rho) \leq \sum_{j=1}^M\tau\{\vw: \|\vw-\vw_j^\star\|_2\leq\rho\}\leq \frac{1}{2}M\rho^{d-1}$, which completes the proof.
\endproof

\subsection{On the activation functions}\label{sec:comment_activation_functions}

Assumption~\ref{ass:fixed-output-general-ie} is mild and is satisfied by many standard smooth bounded nonconstant activations, such as sigmoid, tanh, Gaussian bump $t\mapsto \exp(-t^2/2)$, sine, and arctan. The multi-index assumption is stronger: for the chosen value of $M$, it requires the Hermite coefficients $b_1,\ldots,b_M$ of $\sigma$ to all be nonzero. This is a finite non-degeneracy condition and is not automatic from smoothness or boundedness. It is typically satisfied by generic shifts of smooth bounded activations, for example $\sigma(t+\mu)$ outside a finite or discrete exceptional set of shifts $\mu$. Unshifted symmetric activations should be treated with care: odd activations such as tanh, sine, and arctan have zero even Hermite coefficients, even activations such as the Gaussian bump have zero odd Hermite coefficients, and the unshifted sigmoid has zero positive even Hermite coefficients after centering. Hence these unshifted activations are not, in general, examples of the multi-index assumption unless the required coefficients are checked directly.

\subsection{Additional Classes of Problems Effectively Solved by MFLD}\label{sec:problems_solved_byMFLD}

Fix an activation function $\sigma$. For $K\ge1$, write $\beta_K(\sigma):=\min_{1\le m\le K}|b_m(\sigma)|$, and say that $\sigma$ is $K$-admissible if $\sigma\in C_b^3(\mathbb R)$ and $\beta_K(\sigma)>0$. For $0<\Delta\le2$, define
\[
    \mathcal A_{d,K,\Delta,\sigma} := \left\{ x\mapsto \sum_{j=1}^K\alpha_j\sigma(\langle w_j,x\rangle): \alpha_j>0,\ \sum_{j=1}^K\alpha_j=1,\  w_j\in S_2^{d-1},\ \min_{i\ne j}(1-\langle w_i,w_j\rangle)\ge\Delta \right\}, \]
with the separation condition understood as void when $K=1$.

By Theorem~\ref{thm:multi-index-fixed-output}, if $f^\circ\in\mathcal A_{d,K,\Delta,\sigma}$ and $\sigma$ is $K$-admissible, then, for $\lambda d$ small enough, spherical MFLD satisfies with probability at least $1-4e^{-x}$, 
\[ 
     \|f_{\hat\varphi_\lambda}-f^\circ\|_{L^2(\mathbb P_X)}^2 + \lambda\Ent_\tau^-(\hat\varphi_\lambda) \le R_{N,\lambda,x,\sigma}(K,\Delta) \lesssim_{K,\Delta,\sigma,B_\xi} \frac{Kd+d\log(dN)+x}{N} + \psi(\lambda d), 
\] 
where $R_{N,\lambda,x,\sigma}(K,\Delta) := C_{K,\Delta,\sigma,B_\xi}\Big\{ \tfrac{Kd+d\log(C_{0,K,\Delta,\sigma,B_\xi}dN)+x}{N} + \psi(\lambda d)\Big\}$.
Thus, for $L^2$-accuracy $\varepsilon$, MFLD is certified to learn the union of all classes $\mathcal A_{d,K,\Delta,\sigma}$ for which $R_{N,\lambda,x,\sigma}(K,\Delta)\le\varepsilon^2$.

This observation explains why some misspecified single-index links are still covered by our theory. Let $v^\star\in S_2^{d-1}$, set $w^\star=2v^\star$, and consider $f^\circ(x)=h(\langle w^\star,x\rangle)$ with 
\[ 
    h(t)=\frac12\sigma(t/2)+\frac12\sigma(-t/2), \qquad f^\circ(x) = \frac12\sigma(\langle v^\star,x\rangle) + \frac12\sigma(\langle -v^\star,x\rangle) = f_{\frac12\delta_{v^\star}+\frac12\delta_{-v^\star}}(x). 
\] 
Viewed as a single-index model with link $h$, this target is misspecified relative to the learner’s activation $\sigma$. However, as a mean-field model with activation $\sigma$, it is a well-specified two-index model with separation $\Delta=2$. Therefore, if $\sigma$ is $2$-admissible, MFLD learns this target at rate $O((d\log(dN)+x)/N)$ in the low-temperature regime, up to constants depending on $\sigma$ and the noise level.

\bibliographystyle{alpha}
\bibliography{biblio}
\end{document}